\newtheorem{thm}{Theorem}
\newtheorem{lem}{Lemma}[section]
\newtheorem{rem}[lem]{Remark}
\newcommand{\dis}{\displaystyle}
\newcommand{\R}{{\Bbb R}}
\newcommand{\N}{{\Bbb N}}
\newcommand{\pa}{\partial}
\title{\Large\sf Oscillatory behavior of solutions to the critical Fujita equation in 6D}
\author{Junichi Harada
\\{\small Faculty of Education and Human Studies, Akita University}
%\\[1mm]{\small 1-1 Tegata-Gakuenmachi, Akita 010-8502, Japan}
\\[1mm]{\small email: harada-j@math.akita-u.ac.jp}}
\begin{document}
\maketitle
\tableofcontents
\thispagestyle{empty}
%%%%%%%%%%%%%%%%%%%%%%%%%%%%%%%%%%%%%%%%%%%%%%%%%%%%%%%%%%%%%%%%

%%%%%%%%%%%%%%%%%%%%%%%%%%%%%%%%%%%%%%%%%%%%%%%%%%%%%%%%%%%%%%%%
 \begin{abstract}
 Long time dynamics of solutions to the 6D energy critical heat equation
 $u_t=\Delta u+|u|^{p-1}u$ on $\R^6\times(0,\infty)$ is investigated.
 It is shown that
 there exists a radially symmetric global solution $u(x,t)\in C([0,\infty);\dot H^1(\R^6))$
 of the form
 \begin{align*}
 u(x,t)
 =
 \lambda(t)^{-\frac{n-2}{2}}
 {\sf Q}(\tfrac{x}{\lambda(t)})
 +
 \text{error}
 (x,t),
 \end{align*}
 where the function \( \lambda(t) \) satisfies:
 \begin{itemize}
 \item 
 $\dis\lim_{t\to\infty}\|\text{error}(\cdot,t)\|_{\dot H_x^1(\R^6)}=0$,
 \item
 $\dis\liminf_{t\to\infty}\lambda(t)=0$,
 \item
 $\dis\limsup_{t\to\infty}\lambda(t)=\infty$.
 \end{itemize}
 The solutions constructed here
 demonstrate that the dynamical behavior in 
 \( \dot H^1(\mathbb{R}^n) \) can differ significantly 
 from the behavior in \( H^1(\mathbb{R}^n) \).  
 \end{abstract}
%%%%%%%%%%%%%%%%%%%%%%%%%%%%%%%%%%%%%%%%%%%%%%%%%%%%%%%%%%%%%%%%

%%%%%%%%%%%%%%%%%%%%%%%%%%%%%%%%%%%%%%%%%%%%%%%%%%%%%%%%%%%%%%%%
\noindent
 {\bf Keyword}: semilinear heat equation; energy critical; type II blowup;
 matched asymptotic expansion
%%%%%%%%%%%%%%%%%%%%%%%%%%%%%%%%%%%%%%%%%%%%%%%%%%%%%%%%%%%%%%%

\section{Introduction}
%%%%%%%%%%%%%%%%%%%%%%%%%%%%%%%%%%%%%%%%%%%%%%%%%%%%%%%%%%%%%%%%
 This paper deals with the Fujita equation:
 \begin{equation}\label{equation_1.1}
 \begin{cases}
 u_t = \Delta u+|u|^{p-1}u
 &
 \text{in } \R^n\times(0,\infty),\\
 u(x,0)=u_0(x)
 &
 \text{on } \R^n
 \end{cases}
 \end{equation}
 with the Sobolev critical case
 \[
 n\geq3
 \quad
 \text{and}
 \quad
 p=\tfrac{n+2}{n-2}.
 \]
 For any initial data $u_0\in\dot H^1(\R^n)$,
 equation \eqref{equation_1.1} admits a unique local in time solution $u(x,t)$ of \eqref{equation_1.1} satisfying
 \[
 u(t)\in C([0,T);\dot H^1(\R^n))\cap C((0,T);L^\infty(\R^n))
 \]
 for some $T>0$.
 If there exists $T>0$ such that $\limsup_{t\to T}\|u(t)\|_\infty=\infty$,
 we say that the solution blows up at finite time \( T \).
 In this paper, we focus on the long-time behavior of global solutions to \eqref{equation_1.1}.  
In the Sobolev critical case \( p = \frac{n+2}{n-2} \),  
the dynamics are strongly influenced by the so-called \emph{ground state},  
which is defined as a minimizer of the energy functional
\[
E[u] = \frac{1}{2} \int_{\mathbb{R}^n} |\nabla u(x)|^2 \, dx
- \frac{1}{p+1} \int_{\mathbb{R}^n} |u(x)|^{p+1} \, dx,
\]
among all functions \( u \in \dot{H}^1(\mathbb{R}^n) \setminus \{0\} \)  
that satisfy the corresponding Euler-Lagrange equation
\[
- \Delta u = |u|^{p-1} u \quad \text{in } \mathbb{R}^n.
\]
 It is well known that
 the gourd state is given by the family of Aubin-Talenti functions:
 \[
 \lambda^{-\frac{n-2}{2}}
 {\sf Q}\left(\tfrac{x-z}{\lambda}\right)
 \]
 where
 $\lambda>0$,
 $z\in\R^n$
 and
 \begin{align*}
 {\sf Q}(x)
 =
 \left(1+\tfrac{|x|^2}{n(n-2)}\right)^{-\frac{n-2}{2}}.
 \end{align*}
 Equation \eqref{equation_1.1} has an energy structure,  
 and any sufficiently regular solution \( u(x,t) \) satisfies the energy inequality
%%%%%%%%%%%%%%%%%%%%%%%%%%%%%%%%%%%%%%%%%%%%%%%%%%%%%%%%%%%%%%%%
\begin{align*}
\frac{d}{dt} E[u(t)]
=
- \int_{\R^n} |u_t(x,t)|^2 \, dx
< 0.
\end{align*}
From this inequality,  
we see that if a global solution has uniformly bounded energy for all time,  
then its energy converges to a constant value as \( t \to \infty \).  
In particular, we are interested in the case
\[
\lim_{t \to \infty} E[u(t)] = E({\sf Q}).
\]
The most straightforward scenario is the convergence to a ground state profile:
there exist constants \( \lambda_\infty > 0 \) and \( z_\infty \in \mathbb{R}^n \) such that
\begin{align}
\label{equation_1.2}
\lim_{t \to \infty} u(x,t)
=
\lambda_\infty^{-\frac{n-2}{2}}
{\sf Q}\left( \tfrac{x - z_\infty}{\lambda_\infty} \right).
\end{align}
However, the Fila-King conjecture \cite{Fila-King} suggests that  
there exist more complex solutions that do not simply converge  
to a static Aubin–Talenti profile as \( t \to \infty \).  
Indeed, they formally constructed positive, radially symmetric solutions  
whose \( L^\infty \)-norms exhibit the following asymptotic behavior:
 \begin{table}[h]
 \hspace{20mm}
 \renewcommand{\arraystretch}{1.0}
 \begin{tabular}{p{3em}|p{6em}p{5em}p{3em}c}
 & \centering $\frac{n-2}{2}<\gamma<2$ & \centering $\gamma=2$ & \centering $\gamma>2$ &
 \\ \hline
 $n=3$ & \centering $t^\frac{\gamma-1}{2}$ & \centering $t^{\frac{1}{2}}(\log t)^{-1}$
 & \centering $t^{\frac{1}{2}}$ &
 \\
 $n=4$ & \centering $t^{-\frac{2-\gamma}{2}}\log t$ & \centering $1$ &
 \centering $\log t$ &
 \\
 $n=5$ & \centering $t^{-\frac{3(2-\gamma)}{2}}$ & \centering $(\log t)^{-3}$
 & \centering $1$ &
 \\ \hline
 \end{tabular}
 \\[2mm]
 \hspace{20mm}
 If $n\geq6$ and $\gamma>\frac{n-2}{2}$,
 then
 $\|u(t)\|_\infty=1$.

 \caption{Fila-King Conjecture (Growth rate of $L^\infty$-norm of the solution)}
 \label{table_1}
 \end{table}
 \\
 Here, the parameter \( \gamma \) characterizes the spatial decay of the initial data \( u_0(x) \), specifically:
 \[
 \lim_{|x| \to \infty} |x|^\gamma u_0(x) = A > 0.
 \]
 In all cases shown in Table~\ref{table_1},  
 the solution has the same asymptotic profile:
 \begin{align}
 \label{equation_1.3}
 u(x,t)
 &=
 \lambda(t)^{-\frac{n-2}{2}} {\sf Q} \left( \tfrac{x}{\lambda(t)} \right) + \text{error}(x,t), 
 \\
 \nonumber
 &
 \text{where }
 \|\text{error}(\cdot, t)\|_{\dot{H}^1(\mathbb{R}^n)} \to 0 \quad \text{as } t \to \infty.
 \end{align}
%%%%%%%%%%%%%%%%%%%%%%%%%%%%%%%%%%%%%%%%%%%%%%%%%%%%%%%%%%%%%%%%
Most of the cases listed in Table~\ref{table_1}
have been affirmatively resolved in various works,
where solutions with the prescribed asymptotic profiles described in the table were constructed.
Below is a summary of the known results:
 \begin{itemize}
 \item $n=3$ ($\gamma>1$) \ by Manuel del Pino - Monica Musso - Juncheng Wei in \cite{delPino-Musso-Wei_3dim_long}
 \item $n=4$ ($\gamma>2$) \ by Juncheng Wei - Qidi Zhang - Yifu Zhou in \cite{Wei-Qidi-Yifu}
 \item $n=4$ ($1<\gamma\leq2$) \ by Juncheng Wei - Yifu Zhou in \cite{Wei-Yifu}
 \item $n=5$ ($\gamma>\frac{3}{2}$) \ by Zaizheng Li - Juncheng Wei - Qidi Zhang - Yifu Zhou in \cite{Li-Wei-Qidi-Yifu}
 \item $n=6$ ($\gamma>2$) \ by Juncheng Wei - Yifu Zhou in \cite{Wei-Yifu}
 \end{itemize}
 In \cite{Wei-Yifu},
 Juncheng Wei - Yifu Zhou did not include the full proofs for the remaining cases $n=3$ with $\frac{1}{2}<\gamma\leq1$ and
 $n\geq7$ with $\gamma>\frac{n-2}{2}$, but they provided remarks on these cases.
%%%%%%%%%%%%%%%%%%%%%%%%%%%%%%%%%%%%%%%%%%%%%%%%%%%%%%%%%%%%%%%%

On the other hand, for \( n \geq 7 \),
Charles Collot - Frank Merle - Pierre Rapha\"el in \cite{Collot-Merle-Raphael}
established a complete classification of the dynamics  
of solutions near \( {\sf Q}(x) \).
(We omit the full statement of their theorems here.)
Their results imply that
if a solution \( u(x,t) \) behaves as in \eqref{equation_1.3} for all \( t > 0 \),
then the solution must converge to a static Aubin-Talenti profile
as \( t \to \infty \).
That is, \eqref{equation_1.2} holds
for some \( \lambda_\infty \in (0, \infty) \)
and \( z_\infty \in \mathbb{R}^n \).
%%%%%%%%%%%%%%%%%%%%%%%%%%%%%%%%%%%%%%%%%%%%%%%%%%%%%%%%%%%%%%%%

A central goal of this paper is to construct three types of global, radially symmetric solutions of the form \eqref{equation_1.3}
in the 6D case.
The first solution is positive and exhibits vanishing behavior as $t\to\infty$;
the second is sign-changing and concentrates at a point as $t\to\infty$;
the third is sign-changing and exhibits oscillatory behavior, alternating between vanishing and concentration as $t\to\infty$.
All of these solutions are novel and are not included among the cases listed in Table \ref{table_1}.
%%%%%%%%%%%%%%%%%%%%%%%%%%%%%%%%%%%%%%%%%%%%%%%%%%%%%%%%%%%%%%%%

\section{Main Result}
\label{section_2}
%%%%%%%%%%%%%%%%%%%%%%%%%%%%%%%%%%%%%%%%%%%%%%%%%%%%%%%%%%%%%%%%
 Let ${\sf Q}(x)$ denote the Aubin-Talenti function,
 defined by
 \begin{align*}
 {\sf Q}(x)
 =
 ( 1+\tfrac{|x|^2}{n(n-2)} )^{-\frac{n-2}{2}},
 \end{align*}
 and let $\chi(s):[0,\infty)\to\R$ be a smooth cut off function defined by
 \[
 \chi(s)
 =
 \begin{cases}
 1 & \text{for } 0\leq s\leq1, \\
 0 & \text{for } s>2.
 \end{cases}
 \]
%%%%%%%%%%%%%%%%%%%%%%%%%%%%%%%%%%%%%%%%%%%%%%%%%%%%%%%%%%%%%%%%
 Furthermore,
 let $\theta_\beta(x,t)$ denote the unique solution of the heat equation:
 \begin{align*}
 \begin{cases}
 \pa_t\theta=\Delta_x\theta
 & \text{in } \R^n\times(0,\infty),
 \\
 \theta(x,0)
 =
 \theta_0^{(\beta)}(x)
 & \text{on } \R^n,
 \end{cases}
 \end{align*}
 where the initial data $\theta_0^{(\beta)}(x)$ is a positive, radially symmetric function defined by
 \begin{align*}
 \theta_0^{(\beta)}(x)
 =
 \begin{cases}
 |x|^{-2}(\log|x|)^{-\beta} & \text{if } |x|>e,
 \\
 e^{-2} & \text{if } |x|<e.
 \end{cases}
 \end{align*}
 Note that
 \begin{align*}
 \theta_0^{(\beta)}(x)\in \dot H^1(\R^6)
 \quad
 \text{if }
 \beta>\tfrac{1}{2}. 
 \end{align*}
 Therefore,
 for the case $\beta>\frac{1}{2}$,
 it satisfies
 (see Lemma \ref{lemma_4.4})
 \begin{align}
 \label{equation_2.1}
 \lim_{t\to\infty}
 \theta_\beta(x,t)
 =0
 \quad
 \text{in }
 \dot H^1(\R^6).
 \end{align}
%%%%%%%%%%%%%%%%%%%%%%%%%%%%%%%%%%%%%%%%%%%%%%%%%%%%%%%%%%%%%%%%
 We define the function space in which the solution $u(x,t)$ is constructed.
 \begin{enumerate}[\rm(c1)]
 \item
 $u(\cdot,t)\in C([0,\infty);\dot H^1(\R^6))$,
 \item
 $u(x,t)\in C(\R^6\times[0,\infty))\cap C^{2,1}(\R^6\times(0,\infty))$,
 \item
 $u(x,t)\in L^\infty(\R^6\times(0,T))$
 \quad
 for any
 $T>0$.
 \end{enumerate}
 We now present three theorems that describe different long-time behaviors of solutions,
 depending on the structure of the initial data.
%%%%%%%%%%%%%%%%%%%%%%%%%%%%%%%%%%%%%%%%%%%%%%%%%%%%%%%%%%%%%%%%
 \begin{thm}[Positive $\dot H^1$-Solution]
 \label{theorem_1}
 Let $n=6$ and $p=\frac{n+2}{n-2}$.
 For any  $\beta\in(\frac{1}{2},1)$,
 there exist a constant $t_I>0$ and
 a global, positive, radially symmetric solution solution $u_\beta(x,t)$ satisfying {\rm (c1) - (c3)},
 such that
 \begin{align*}
 u_{\beta}(x,t)
 =
 \lambda_{\beta}(t)^{-\frac{n-2}{2}}
 {\sf Q}(\tfrac{x}{\lambda_\beta(t)})
 \chi_1
 +
 \theta_\beta(x,t+t_I)
 \cdot
 (1-\chi_1)
 +
 \text{\rm error}_1(x,t),
 \end{align*}
 where
 $\lambda_\beta(t)$, $\chi_1$ and $\text{\rm error}_1(x,t)$ are given as follows{\rm:}
%%%%%%%%%%%%%%%%%%%%%%%%%%%%%%%%%%%%%%%%%%%%%%%%%%%%%%%%%%%%%%%%
 \begin{enumerate}[\rm(i)]
 \item
 The function $\lambda_{\beta}(t)\in C^1((0,\infty))\cap C([0,\infty))$ satisfies
 \begin{align*}
 \lambda_\beta(t)
 =
 (1+o(1))e^{\frac{5A_1}{4(1-\beta)}(\log t)^{1-\beta}}
 \quad
 \text{as }
 t\to\infty,
 \end{align*}
 where 
 $A_1$ is a positive constant{\rm;}

 \item
 The cut off function $\chi_1$ is defined by
 $\chi_1=\chi(\frac{|x|}{\lambda_\beta(t)^\kappa(\sqrt t)^{1-\kappa}})$
 for some $\kappa\in(0,\frac{1}{2})${\rm;}
 \item
 The error term $\text{\rm error}_1(x,t)$ satisfies
 $\dis\lim_{t\to\infty}\|\text{\rm error}_1(\cdot,t)\|_{\dot H_x^1(\R^6)}=0$.
 \end{enumerate}
 \end{thm}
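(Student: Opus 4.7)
My plan is to construct $u_\beta$ via a matched asymptotic expansion, gluing a rescaled Aubin--Talenti bubble $\lambda(t)^{-2}{\sf Q}(x/\lambda(t))$ in the inner zone $|x|\lesssim\lambda(t)$ to the linear heat flow $\theta_\beta(x,t+t_I)$ in the outer zone $|x|\gtrsim\sqrt t$, with the cutoff $\chi_1$ living on an intermediate scale $|x|\sim\lambda(t)^\kappa(\sqrt t)^{1-\kappa}$ that lies strictly between them (note that $\lambda_\beta$ is subpolynomial in $t$, so $\lambda\ll\sqrt t$ at large times). The approximate solution is
\[
U_{\mathrm{app}}(x,t) = \lambda(t)^{-2}{\sf Q}(x/\lambda(t))\chi_1 + \theta_\beta(x,t+t_I)(1-\chi_1),
\]
and the real solution will be $u=U_{\mathrm{app}}+w$ for a small correction $w$ produced by a fixed-point argument.

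Computing the residual $\mathcal R=\partial_t U_{\mathrm{app}}-\Delta U_{\mathrm{app}}-U_{\mathrm{app}}^2$, the inner contribution reduces to the modulation term $-\lambda^{-3}\dot\lambda\,(\Lambda{\sf Q})(x/\lambda)$ with $\Lambda f=2f+x\cdot\nabla f$, because $-\Delta{\sf Q}={\sf Q}^2$ in 6D; the outer contribution vanishes at leading order since $\theta_\beta$ satisfies the heat equation and $\theta_\beta^2$ is lower order; the remainder comes from the $\chi_1$-commutators and from the interaction of the soliton tail $\lambda^2/|x|^4$ with the outer profile $\theta_\beta\sim|x|^{-2}(\log|x|)^{-\beta}$ on the matching scale. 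The law for $\lambda(t)$ is then determined by demanding that the projection of $\mathcal R$ onto the scaling direction $\Lambda{\sf Q}(\cdot/\lambda)$ vanish. Plugging the explicit self-similar behavior of $\theta_\beta$ on the matching scale into this solvability relation yields an ODE of the shape
\[
\frac{\dot\lambda}{\lambda}=\frac{c_1 A_1}{t(\log t)^\beta}(1+o(1)),
\]
where $A_1>0$ is the coefficient of the outer profile and $c_1>0$ is fixed by the 6-dimensional integrals $\int{\sf Q}\,\Lambda{\sf Q}$ and $\int{\sf Q}^2$; integrating gives precisely $\lambda_\beta(t)=(1+o(1))\exp\!\bigl(\tfrac{5A_1}{4(1-\beta)}(\log t)^{1-\beta}\bigr)$. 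The admissible range $\beta\in(\tfrac12,1)$ is exactly the one for which $\theta_0^{(\beta)}\in\dot H^1(\R^6)$ and $(\log t)^{1-\beta}\to\infty$.

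The correction $w$ satisfies
\[
w_t-\Delta w-2U_{\mathrm{app}}w=w^2-\mathcal R.
\]
I would first verify that $\|\mathcal R(t)\|_{\dot H^{-1}}\to 0$ at a rate controlled by $(\log t)^{-\beta}$, using Lemma~\ref{lemma_4.4} together with the pointwise decay of $\theta_\beta$. Then I would develop a linear theory for $-\Delta-2U_{\mathrm{app}}$ exploiting that, on the soliton scale, $-\Delta-2{\sf Q}$ is nonnegative on the orthogonal complement of $\Lambda{\sf Q}$; orthogonality is maintained dynamically by absorbing a correction into the leading law for $\lambda(t)$. A weighted energy estimate combined with a contraction-mapping argument then yields $w\in C([0,\infty);\dot H^1)$ with $\|w(t)\|_{\dot H^1}\to 0$, and positivity of $u_\beta$ follows from the maximum principle because $U_{\mathrm{app}}>0$ and $w$ is a small perturbation.

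The main obstacle is the 6-dimensional critical resonance: $\Lambda{\sf Q}$ decays only like $|y|^{-4}$, placing it at the borderline of integrability in the natural weighted inner products used for the projection, and this is precisely why the modulation law is logarithmic, $\log\lambda_\beta\sim(\log t)^{1-\beta}$, rather than algebraic. As a consequence every estimate in the linear theory must be sharp up to $(\log t)^{-\beta}$ factors: losing a single extra $\log$ at the wrong place would destroy the $\dot H^1$-decay of the error. Carrying out this bookkeeping, and verifying that the dynamically adjusted $\lambda(t)$ truly realizes the stated asymptotic, is the technical heart of the argument.
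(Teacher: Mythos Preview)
Your overall strategy---inner--outer gluing with the rescaled Aubin--Talenti bubble glued to the linear heat flow $\theta_\beta$, a modulation law for $\lambda(t)$ read off from a solvability condition, and a fixed-point argument for the remainder---is the same architecture the paper uses. But there are two concrete gaps that would prevent the argument from closing as written.

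First, your ansatz $U_{\mathrm{app}}=\lambda^{-2}{\sf Q}(x/\lambda)\chi_1+\theta_\beta(1-\chi_1)$ is one order too crude. On the matching scale $|x|\sim\lambda^\kappa(\sqrt t)^{1-\kappa}$ the soliton tail $\lambda^{-2}{\sf Q}(x/\lambda)\sim\lambda^2|x|^{-4}$ is negligible, while $\theta_\beta\approx\theta_\beta(0,t)\sim A_1 t^{-1}(\log t)^{-\beta}$ is not; the two profiles therefore do not match, and the $\chi_1$-commutator residual is of size $t^{-1}(\log t)^{-\beta}$, too large to be absorbed by an error decaying in $\dot H^1$. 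The paper fixes this by inserting the first inner correction $\tfrac{5}{4}{\sf b}(t)\,T_1(x/\lambda)$, where $H_yT_1=-\Lambda_y{\sf Q}$ and $T_1(y)\to\tfrac45$ as $|y|\to\infty$; it is precisely the relation $\tfrac{5}{4}{\sf b}\cdot\tfrac45=\theta_\beta(0,t)$ that produces the matching and, simultaneously, the ODE $\dot\lambda/\lambda=\tfrac54\theta_\beta(0,t)$ whose integration gives the claimed asymptotic for $\lambda_\beta$. Your heuristic ``interaction of the soliton tail with the outer profile'' does not substitute for this.

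Second, and more seriously, your linear theory is based on the assertion that $-\Delta-2{\sf Q}$ is nonnegative on the orthogonal complement of $\Lambda{\sf Q}$. This is false: the linearized operator $-H_y=-\Delta_y-2{\sf Q}$ in $6$D has a genuine negative eigenvalue $\mu_1=-e_0<0$ with an exponentially decaying radial eigenfunction $\psi_1$, in addition to the kernel $\Lambda{\sf Q}$. Modulating $\lambda(t)$ handles the neutral direction $\Lambda{\sf Q}$, but it does nothing for the unstable direction $\psi_1$. The paper controls this by choosing the initial data of the inner correction to lie along $\psi_1^{(R)}$ with a coefficient $\alpha_I$ tuned (via a backward-in-time integral) so that the projection onto $\psi_1^{(R)}$ stays bounded for all $t$; without such a mechanism your contraction argument cannot close. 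Finally, the paper carries out the fixed point in weighted $L^\infty$ spaces rather than in $\dot H^1$ energy norms; this is not a conceptual obstruction to your route, but the pointwise framework is what makes the sharp logarithmic bookkeeping tractable.
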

%%%%%%%%%%%%%%%%%%%%%%%%%%%%%%%%%%%%%%%%%%%%%%%%%%%%%%%%%%%%%%%%

%%%%%%%%%%%%%%%%%%%%%%%%%%%%%%%%%%%%%%%%%%%%%%%%%%%%%%%%%%%%%%%%
 \begin{thm}[Sign-Changing $\dot H^1$-Solution]
 \label{theorem_2}
 Let $n=6$ and $p=\frac{n+2}{n-2}$.
 For any  $\beta\in(\frac{1}{2},1)$,
 there exist a constant $t_I>0$ and
 a global, sign changing, radially symmetric solution solution $u_\beta(x,t)$ satisfying {\rm (c1) - (c3)},
 such that
 \begin{align*}
 u_{\beta}(x,t)
 =
 \lambda_{\beta}(t)^{-\frac{n-2}{2}}
 {\sf Q}(\tfrac{x}{\lambda_\beta(t)})
 \chi_1
 -
 \theta_\beta(x,t+t_I)
 \cdot
 (1-\chi_1)
 +
 \text{\rm error}_2(x,t),
 \end{align*}
 where
 $\lambda_\beta(t)$, $\chi_1$ and $\text{\rm error}_1(x,t)$ are given as follows{\rm:}
%%%%%%%%%%%%%%%%%%%%%%%%%%%%%%%%%%%%%%%%%%%%%%%%%%%%%%%%%%%%%%%%
 \begin{enumerate}[\rm(i)]
 \item
 The function $\lambda_{\beta}(t)\in C^1((0,\infty))\cap C([0,\infty))$ satisfies
 \begin{align*}
 \lambda_\beta(t)
 =
 (1+o(1))e^{-\frac{5A_1}{4(1-\beta)}(\log t)^{1-\beta}}
 \quad
 \text{as }
 t\to\infty,
 \end{align*}
 where 
 $A_1$ is the same positive constant as in {\rm(i)} of Theorem \ref{theorem_1}{\rm;}

 \item
 $\chi_1$
 is the same cut off function as in Theorem \ref{theorem_1}{\rm;}

 \item
 The error term \( \mathrm{error}_2(x,t) \) satisfies the same estimate as in {\rm(iii)} of Theorem \ref{theorem_1}.
 \end{enumerate}
 \end{thm}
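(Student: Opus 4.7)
The proof follows the inner--outer gluing and matched asymptotic strategy used for Theorem~\ref{theorem_1}, and I emphasize only the modifications needed to handle the sign change. Define the approximate solution
\[
u^{\mathrm{app}}(x,t)
=
\lambda(t)^{-\frac{n-2}{2}}{\sf Q}(\tfrac{x}{\lambda(t)})\chi_1
-
\theta_\beta(x,t+t_I)(1-\chi_1),
\]
and look for $u_\beta = u^{\mathrm{app}}+\phi$. Inserting into \eqref{equation_1.1} and separating scales produces an inner residual, governed (after rescaling $y=x/\lambda$) by the linearized operator $L=-\Delta_y-p{\sf Q}^{p-1}$; an outer residual, governed by the heat operator $\pa_t-\Delta$ acting on $\theta_\beta$; and a coupling term supported in the matching annulus $|x|\sim \lambda^\kappa(\sqrt t)^{1-\kappa}$.

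\emph{Modulation.} The modulation $\lambda(t)$ is fixed by imposing orthogonality of $\phi$ against the scaling zero mode $\Lambda{\sf Q}=\tfrac{n-2}{2}{\sf Q}+y\cdot\nabla_y{\sf Q}$. Projecting the residual onto $\Lambda{\sf Q}$ gives a scalar ODE whose leading driving term is linear in the outer tail through an interaction integral involving ${\sf Q}$ and $\theta_\beta$ in the matching region. The substitution $\theta_\beta\mapsto -\theta_\beta$ flips the sign of this driving term, producing
\[
\frac{\dot\lambda}{\lambda}
=
-\frac{A_1}{t(\log t)^\beta}+\text{l.o.t.},
\]
with exactly the same positive constant $A_1$ as in Theorem~\ref{theorem_1}. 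Integration yields $\lambda_\beta(t)=(1+o(1))\exp\!\bigl(-\tfrac{5A_1}{4(1-\beta)}(\log t)^{1-\beta}\bigr)$. The logarithmic factor $(\log t)^{1-\beta}$ comes from the 6D resonance between the borderline $|y|^{-4}$ decay of $\Lambda{\sf Q}$ and the prescribed tail of $\theta_0^{(\beta)}$.

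\emph{Error construction.} With $\lambda_\beta$ so chosen, build $\phi$ as the fixed point of a contraction in the same weighted $\dot H^1$-based norm used in Theorem~\ref{theorem_1}. The two linear ingredients---heat kernel estimates in the outer region and coercivity of $L$ modulo $\Lambda{\sf Q}$ in the inner region---are insensitive to the sign of the outer tail, so most of the bookkeeping transfers verbatim. The convergence $\|\mathrm{error}_2(\cdot,t)\|_{\dot H^1_x}\to 0$ then follows by combining the fixed-point bound with \eqref{equation_2.1}, and (c1)--(c3) are verified as in Theorem~\ref{theorem_1}.

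\emph{Main obstacle.} The genuinely new difficulty compared with Theorem~\ref{theorem_1} is that $u^{\mathrm{app}}$ now changes sign in the matching annulus. Since $p=2$ in 6D, the nonlinearity $|u|u$ is only $C^1$ (not $C^2$), so one cannot simply expand $|u^{\mathrm{app}}+\phi|(u^{\mathrm{app}}+\phi)$ to third order near the zero set of $u^{\mathrm{app}}$. Instead one uses the pointwise bound $\bigl||a|a-|b|b\bigr|\leq(|a|+|b|)\,|a-b|$ to control the nonlinear contribution across the sign-crossing set. Because $|u^{\mathrm{app}}|$ remains of the same order as $|{\sf Q}(x/\lambda)|\chi_1+|\theta_\beta|(1-\chi_1)$ outside a thin neighborhood of $\{u^{\mathrm{app}}=0\}$, the effective Lipschitz constant of the nonlinearity does not degenerate in the norms used for the fixed point. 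Establishing this sign-crossing estimate is the technical crux of the argument; once it is in hand, the contraction closes and Theorem~\ref{theorem_2} follows along the lines of Theorem~\ref{theorem_1}.
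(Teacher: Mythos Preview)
Your outline is correct and matches the paper's approach: the paper treats Theorem~\ref{theorem_2} as the special case of the Theorem~\ref{theorem_3} construction in which the alternating-sign outer tail $\Theta_\beta$ is replaced by the constant-sign tail $-\theta_\beta$, so the sign flip in the modulation ODE and the resulting $\lambda_\beta(t)\to 0$ fall out exactly as you describe.

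One correction: the ``main obstacle'' you identify is overstated. For $f(u)=|u|u$ the second-order remainder bound
\[
\bigl|f(a+b)-f(a)-f'(a)b\bigr|\leq C\,b^2
\]
holds for \emph{all} real $a,b$ (a direct case check, no $C^2$ expansion needed), so the nonlinear term $N$ is controlled by the square of the perturbation uniformly across the sign-crossing set of $u^{\mathrm{app}}$. The paper's estimate of $N$ (Lemma~\ref{lemma_9.4}) is already written for sign-changing $\theta$ and applies to Theorems~\ref{theorem_1}, \ref{theorem_2}, \ref{theorem_3} without modification; there is no ``technical crux'' specific to Theorem~\ref{theorem_2}.

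Two minor discrepancies with the paper's implementation: the modulation condition is imposed by projecting onto the truncated eigenfunction $\psi_2^{(R)}$ on $B_R$ rather than onto $\Lambda{\sf Q}$ itself (Section~\ref{section_7.1}), and the fixed point is obtained via Schauder's theorem in a weighted $L^\infty$ space $X_\tau$ (Section~\ref{section_6.2}), not by contraction in a $\dot H^1$-based norm. Neither affects the validity of your sketch.
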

%%%%%%%%%%%%%%%%%%%%%%%%%%%%%%%%%%%%%%%%%%%%%%%%%%%%%%%%%%%%%%%%

 In preparation for the next result,
 we define a new function $\Theta_\beta(x,t)$,
 which will play a central role in our construction.
 Let $\{R_j\}_{j=1}^\infty$ be a sequence of positive numbers satisfying $R_j\to\infty$ as $j\to\infty$.
 We carefully  choose a radially symmetric initial function $\Theta_0^{(\beta)}(x)$
 so that it satisfies the following properties:
 \begin{align}
 \label{equation_2.2}
 \Theta_0^{(\beta)}(x)
 &=
 \begin{cases}
 e^{-\frac{n-2}{2}}
 & \text{for } |x|<e,
 \\
 |x|^{-2}
 (\log|x|)^{-\beta}
 & \text{for } e<|x|<R_1,
 \\
 -|x|^{-2}
 (\log|x|)^{-\beta}
 & \text{for } 2R_1<|x|<R_2,
 \\
 |x|^{-2}
 (\log|x|)^{-\beta}
 & \text{for } 2R_2<|x|<R_3,
 \\
 -
 |x|^{-2}
 (\log|x|)^{-\beta}
 & \text{for } 2R_3<|x|<R_4,
 \\
 \cdots
 \end{cases}
 \end{align}
 In the remaining regions of \( x\in\mathbb{R}^n \),  
 the function \( \Theta_0^{(\beta)}(x) \) is defined so as to smoothly interpolate  
 between these alternating regions.
 Let $\Theta_\beta(x,t)$ denote the unique solution of
 $\pa_t\Theta=\Delta_x\Theta$ on $(x,t)\in\R^n\times(0,\infty)$
 with the initial dat $\Theta(x,0)=\Theta_0^{(\beta)}(x)$.
 By the same reason as in the case $\theta_\beta(x,t)$ (see \eqref{equation_2.1}),
 we have
 \begin{align}
 \label{equation_2.3}
 \lim_{t\to\infty}
 \Theta_\beta(x,t)
 =0
 \quad
 \text{in }
 \dot H^1(\R^6).
 \end{align}

%%%%%%%%%%%%%%%%%%%%%%%%%%%%%%%%%%%%%%%%%%%%%%%%%%%%%%%%%%%%%%%%
 \begin{thm}[Oscillatory $\dot H^1$-Solution]
 \label{theorem_3}
 Let $n=6$ and $p=\frac{n+2}{n-2}$.
  For any  $\beta\in(\frac{1}{2},1)$,
 there exist a constant $t_I>0$ and
 a global, sign changing, radially symmetric solution satisfying {\rm (c1) - (c3)},
 such that
 \begin{align*}
 u(x,t)
 =
 \lambda_\beta(t)^{-\frac{n-2}{2}}
 {\sf Q}(\tfrac{x}{\lambda_\beta(t)})
 \chi_1
 +
 \Theta_\beta(x,t+t_I)
 \cdot
 (1-\chi_1)
 +
 \text{\rm error}_3(x,t),
 \end{align*}
 where
 $\lambda_\beta(t)$, $\chi_1$ and $\text{\rm error}_3(x,t)$ are given as follows{\rm:}
 \begin{enumerate}[\rm(i)]
 \item
 The function $\lambda_\beta(t)\in C^1((0,\infty))\cap C([0,\infty))$ satisfies
 \begin{align*}
 \liminf_{t\to\infty}
 \lambda_\beta(t)=0
 \quad
 \text{ and }
 \quad
 \limsup_{t\to\infty}
 \lambda_\beta(t)=\infty.
 \end{align*}

 \item
 $\chi_1$
 is the same cut off function as in Theorem \ref{theorem_1}{\rm;}
 
 \item
 The error term \( \mathrm{error}_3(x,t) \) satisfies the same estimate as in {\rm(iii)} of Theorem \ref{theorem_1}.
 \end{enumerate}
 \end{thm}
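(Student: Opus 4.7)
The plan is to build directly on the constructions of Theorems~\ref{theorem_1} and~\ref{theorem_2}. In those constructions the sign of the slowly-decaying outer solution $\pm\theta_\beta$ entirely determines whether the modulation parameter $\lambda_\beta(t)$ grows (as in Theorem~\ref{theorem_1}) or decays (as in Theorem~\ref{theorem_2}). The function $\Theta_\beta$ of~\eqref{equation_2.2} is engineered so that $\Theta_\beta(0,t)$ changes sign infinitely often as $t\to\infty$; the strategy is therefore to run the inner/outer matched asymptotic construction of Theorems~\ref{theorem_1}--\ref{theorem_2} with $\pm\theta_\beta$ replaced by $\Theta_\beta$, and to drive the oscillation $\liminf\lambda=0$, $\limsup\lambda=\infty$ by choosing the radii $\{R_j\}$ in~\eqref{equation_2.2} suitably.

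First I would take the ansatz
\[
u(x,t)=\lambda(t)^{-2}{\sf Q}\!\left(\tfrac{x}{\lambda(t)}\right)\chi_1+\Theta_\beta(x,t+t_I)(1-\chi_1)+\phi(x,t),
\]
and compute the residual $\pa_t u-\Delta u-|u|^{p-1}u$. As in Theorems~\ref{theorem_1}--\ref{theorem_2} I would split this residual into an inner part supported on $|x|\lesssim\lambda(t)^\kappa(\sqrt t)^{1-\kappa}$ and an outer part whose leading contribution $\pa_t\Theta_\beta-\Delta\Theta_\beta$ vanishes identically, and then impose orthogonality of the inner source against the dilation zero mode $\Lambda{\sf Q}$. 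This produces a modulation ODE for $\lambda$ whose driving term is a universal constant times $\Theta_\beta(0,t+t_I)$: when this quantity is positive the ODE reproduces the expanding behaviour of Theorem~\ref{theorem_1}, $\lambda\sim e^{+\frac{5A_1}{4(1-\beta)}(\log t)^{1-\beta}}$, and when it is negative it reproduces the contracting behaviour of Theorem~\ref{theorem_2}, $\lambda\sim e^{-\frac{5A_1}{4(1-\beta)}(\log t)^{1-\beta}}$.

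Next I would choose the sequence $\{R_j\}$ in~\eqref{equation_2.2} growing very rapidly, and define corresponding times $T_j\sim R_j^2$ such that on each interval $(T_j,T_{j+1})$ the sign of $\Theta_\beta(0,t+t_I)$ is constant and alternates with $j$. Because the heat-kernel smoothing of $\Theta_0^{(\beta)}$ is essentially one-scale, the value of $\Theta_\beta(0,t)$ is dominated by the contribution of the annulus of radius $\sim\sqrt t$; spacing the $R_j$ far apart on a doubly-exponential scale makes each interval $(T_j,T_{j+1})$ so long that within it the expanding/contracting asymptotics of Theorems~\ref{theorem_1}--\ref{theorem_2} let $\lambda$ travel by an arbitrarily large factor before the next sign flip. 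Together with the monotonicity within each interval this forces $\limsup\lambda(t)=\infty$ and $\liminf\lambda(t)=0$, giving conclusion~(i).

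The main obstacle will be controlling $\phi=\mathrm{error}_3$ uniformly through the transition times $T_j$, where $\dot\lambda$ changes sign and the bubble, the matching annulus and the outer profile interact more strongly; the estimates of Theorems~\ref{theorem_1} and~\ref{theorem_2} cannot be quoted as black boxes because each of them uses the signed monotonicity of $\lambda$ to close the contraction for $\phi$. My plan is to (i) solve the inner linear problem using the spectral theory of the linearisation at ${\sf Q}$, (ii) solve the outer problem as a small perturbation of the heat equation around $\Theta_\beta$, (iii) glue via a contraction mapping in a weighted Hölder space adapted to the instantaneous scale $\lambda(t)$, working separately on each interval $(T_j,T_{j+1})$ and imposing uniform matching of the inner and outer profiles at the transition times $t=T_j$, and (iv) eliminate the single unstable direction of the linearisation at ${\sf Q}$ through a topological shooting argument on the initial data, as in the constructions cited in the introduction. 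Once $\|\phi(\cdot,t)\|_{\dot H^1}\to 0$ is established uniformly across the oscillations, the three conclusions of the theorem follow.
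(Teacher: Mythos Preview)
Your overall strategy is correct and matches the paper: the ansatz, the modulation ODE driven by the central value of the outer solution, and the idea that the sign alternations of $\Theta_\beta(0,t)$ produce the oscillation of $\lambda$. But several of your anticipated obstacles are phantoms, and your plan for attacking them introduces unnecessary complications.

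First, the ansatz needs one more correction: the paper includes an inner term $\tfrac{5{\sf b}(t)}{4}T_1(y)\chi_1$ with $H_yT_1=-\Lambda_y{\sf Q}$, which is exactly what makes the inner and outer profiles match to leading order. Without it the residual is too large to close any fixed-point argument.

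Second, and more importantly, your concern that ``the estimates of Theorems~\ref{theorem_1} and~\ref{theorem_2} use the signed monotonicity of $\lambda$'' is mistaken. In the paper all three theorems share a single proof; the inner and outer estimates depend only on the uniform bound $\lambda(t)\le e^{C(\log t)^{1-\beta}}$ and on $|\dot\lambda/\lambda|\le C/(t(\log t)^\beta)$, never on the sign of $\dot\lambda$. Consequently there is nothing special about the transition times, and the paper works \emph{globally} on $(t_I,\infty)$ rather than interval by interval. Your plan to solve separately on each $(T_j,T_{j+1})$ and then match at the endpoints would create a genuine new difficulty (enforcing continuity of $\lambda,\epsilon,w$ simultaneously at infinitely many times) that the paper's approach simply avoids.

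Third, the paper uses a Schauder fixed point on the outer unknown $w$ (for each finite horizon $\tau$, then a diagonal limit), with $\lambda$ and the inner correction $\epsilon$ determined explicitly from $w$. The unstable direction of $H_y$ is eliminated not by shooting but by prescribing the initial projection $\langle\epsilon(t_I),\psi_1^{(R)}\rangle$ through an integral formula over $(t_I,\infty)$; this is cleaner than a topological argument. The modulation law comes from forcing $\langle\epsilon,\psi_2^{(R)}\rangle\equiv 0$ on a large ball $B_R$ rather than orthogonality to $\Lambda{\sf Q}$ on all of $\R^n$, which keeps the inner problem on a bounded domain with good spectral control. Finally, the oscillation $\liminf\lambda=0$, $\limsup\lambda=\infty$ is extracted from the modulation ODE by a clean quantitative lemma (Lemma~\ref{lemma_5.1}) once the error terms are shown to be of size $O(t^{-1}(\log t)^{-\beta'})$ with $\beta'>1$; the choice $R_j=\exp(\tfrac12{\sf n}_1^{j/(1-\beta)})$ suffices, and ``doubly-exponential'' spacing is not needed.
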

%%%%%%%%%%%%%%%%%%%%%%%%%%%%%%%%%%%%%%%%%%%%%%%%%%%%%%%%%%%%%%%%
\begin{rem}
In Theorem~\ref{theorem_3}, the qualitative behavior of the solution is determined by the dominant contribution of the initial data \( \Theta_0^{(\beta)}(x) \).
If the positive part of \( \Theta_0^{(\beta)}(x) \) dominates, the solution behaves similarly to the one described in Theorem~\ref{theorem_1}.
On the other hand, if the negative part dominates, the solution exhibits behavior as in Theorem~\ref{theorem_2}.
The initial data \( \Theta_0^{(\beta)}(x) \) is carefully constructed so that positive and negative regions alternate in space.
As a consequence, the solution exhibits oscillatory behavior in time.
\end{rem}
%%%%%%%%%%%%%%%%%%%%%%%%%%%%%%%%%%%%%%%%%%%%%%%%%%%%%%%%%%%%%%%%
\begin{rem}
 We emphasize that
 the initial data used in Theorems \ref{theorem_1} {\rm-} \ref{theorem_3}
 satisfy
 $u_0(x)\in\dot H^1(\R^6)$,
 but $u_0\not\in L^q(\R^n)$ for any $q\in[1,\frac{n+2}{n-2})$.
 In {\rm\cite{Harada_6Ddynamics}},
 the author proves the following result{\rm:}
 Let $n=6$.
 If the initial data $u_0$ satisfies
 \begin{align*}
 u_0\in H^1(\R^6)
 \quad
 \text{and}
 \quad
 \|u_0-{\sf Q}\|_{\dot H^1(\R^6)}
 \ll1,
 \end{align*}
 then the corresponding global solution of \eqref{equation_1.1}
 exhibits exactly one of the following two types of behavior{\rm:}
 \begin{enumerate}
 \item $\dis \lim_{t\to\infty}u(x,t)=0$ in $\dot H^1(\R^6)${\rm;}
 \item There exist $\lambda_\infty\in(0,\infty)$ and $z_\infty\in\R^n$ such that
 $\dis\lim_{t\to\infty}u(x,t)=\lambda_\infty^{-\frac{n-2}{2}}{\sf Q}(\tfrac{x-z_\infty}{\lambda_\infty})$
 in $\dot H^1(\R^6)$.
 \end{enumerate}
 Therefore,
 solutions constructed Theorem \ref{theorem_1} {\rm-} \ref{theorem_3}
 demonstrate that the dynamical behavior in 
 \( \dot H^1(\mathbb{R}^n) \) can differ significantly 
 from the behavior in \( H^1(\mathbb{R}^n) \).  
 Such a phenomenon appears to occur only in the 
 critical 6D case.
\end{rem}
%%%%%%%%%%%%%%%%%%%%%%%%%%%%%%%%%%%%%%%%%%%%%%%%%%%%%%%%%%%%%%%
 \begin{rem}
 Oscillatory behavior of solutions to \eqref{equation_1.1} has already been observed 
 in the JL-supercritical case by Peter Pol\'a\v{c}ik - Eiji Yanagida 
 {\rm\cite{Polacik-Yanagida_2003}} {\rm(}see also {\rm\cite{Polacik-Yanagida_2009})}. 
 The essential difference between their results and ours is that 
 our solutions have finite energy, 
 whereas their solutions are not well-defined in the energy space, 
 since the corresponding energy diverges to infinity.
 \end{rem}
%%%%%%%%%%%%%%%%%%%%%%%%%%%%%%%%%%%%%%%%%%%%%%%%%%%%%%%%%%%%%%%
 \begin{rem}
It is known that the $D$-equivariant harmonic map heat flow
from \( \mathbb{R}^2 \) to \( \mathbb{S}^2 \subset \mathbb{R}^3 \)  
is closely related to the radial semilinear heat equation \eqref{equation_1.1}
in spatial dimension \( n = 2D + 2 \)
{\rm(}see Kihyun Kim and Frank Merle~{\rm\cite{Kim-Merle}}
for recent work on harmonic map heat flow{\rm)}.
 Stephen Gustafson - Kenji Nakanishi - Tai-Peng Tsai
 investigated the asymptotic stability of solutions to the $2$-equivariant
 harmonic map heat flow from \( \mathbb{R}^2 \to \mathbb{S}^2 \subset \mathbb{R}^3 \)
 near the ground state in~{\rm\cite{Gustafson-Nakanishi-Tsai}},  
 and discovered the existence of solutions that exhibit oscillatory behavior.
 Our Theorem \ref{theorem_3} can be regarded as an analogue of their result
 in the context of the semilinear heat equation.
 \end{rem}
%%%%%%%%%%%%%%%%%%%%%%%%%%%%%%%%%%%%%%%%%%%%%%%%%%%%%%%%%%%%%%%
 The analytical framework employed in this paper essentially follows the approach developed in the 5D case by
 Zaizheng Li - Juncheng Wei - Qidi Zhang - Yifu Zhou \cite{Li-Wei-Qidi-Yifu}
 (see also \cite{delPino-Musso-Wei_3dim_long, Wei-Qidi-Yifu} for the 3D and 4D cases),
 and does not introduce any new technical tools.
 We first construct an approximate solution of the form
 \begin{align*}
 u(x,t)
 =
 \lambda(t)^{-\frac{n-2}{2}}
 {\sf Q}(\tfrac{x}{\lambda(t)})
 \chi_1
 -
 \theta(x,t)
 \cdot
 (1-\chi_1)
 +
 \text{\rm error}(x,t),
 \end{align*}
 using the matched asymptotic expansion technique.
 Once this approximation is established,
 we obtain the desired solution by applying the inner-outer gluing method.
 The novelty of our work lies in identifying a new class of approximate solutions, which in turn gives rise to a new type of solution behavior.
 In particular, we carry out a detailed asymptotic analysis of the linear heat equation,
 which governs the tail behavior of the solution.

In this paper,  
we present only the proof of Theorem \ref{theorem_3},  
which also covers, as special cases, the proofs of Theorems \ref{theorem_1} and \ref{theorem_2}.
%%%%%%%%%%%%%%%%%%%%%%%%%%%%%%%%%%%%%%%%%%%%%%%%%%%%%%%%%%%%%%%%

\section{Preliminary}
\label{section_3}
\subsection{Notation}
\label{section_3.1}
%%%%%%%%%%%%%%%%%%%%%%%%%%%%%%%%%%%%%%%%%%%%%%%%%%%%%%%%%%%%%%%%
 Let $B(x_0,R)$ denote the open ball in $\R^n$ centered at $x_0$ with radius $R$,
 that is
 \begin{align*}
 B(x_0,R)
 =
 \{x\in\R^n;|x-x_0|<R\}.
 \end{align*}
 We also use the notation
 \begin{align*}
 B_R
 =
 \{x\in\R^n;|x|<R\}
 \end{align*}
 to denote the open ball centered at the origin with radius $R$.
 We next introduce a smooth cut off function
 $\chi(\zeta):[0,\infty)\to\R$,
 which satisfies
 \[
 \chi(\zeta)=
 \begin{cases}
 1 & \text{if } 0<\zeta<1,\\
 0 & \text{if } \zeta>2.
 \end{cases}
 \]
 The nonlinear term in \eqref{equation_1.1} is given by
 \[
 f(u)=|u|^{p-1}u,
 \quad
 p=\tfrac{n+2}{n-2}.
 \]
 We define the operator $\Lambda_y$, associated with the transformation
 $u(x)\to\lambda^\frac{1}{p-1}u(\lambda x)$,
 by
 \begin{align*}
 \Lambda_y
 =
 \tfrac{n-2}{2}
 +
 y\cdot\nabla_y.
 \end{align*}
%%%%%%%%%%%%%%%%%%%%%%%%%%%%%%%%%%%%%%%%%%%%%%%%%%%%%%%%%%%%%%%%

\subsection{Linearization around the Ground State}
\label{section_3.2}
%%%%%%%%%%%%%%%%%%%%%%%%%%%%%%%%%%%%%%%%%%%%%%%%%%%%%%%%%%%%%%%%
 Let us consider the linearization around the ground state ${\sf Q}(y)$.
 Define
 \[
 H_y=\Delta_y+V(y)
 \quad
 \text{with }
 V(y)
 =
 p{\sf Q}(y)^{p-1}.
 \]
 An associated eigenvalue problem is given by
 \begin{align}
 \label{EQ_3.1}
 -H_y\psi=\mu\psi \quad\text{in } \R^n,
 \end{align}
 We recall that
 the operator $H_y$ has a unique negative eigenvalue $\mu_1=-e_0$, where $e_0>0$
 (see Proposition 5.5 p. 37 \cite{Duyckaerts}).
 Let $\psi_1(r)$ denote the corresponding positive radially symmetric eigenfunction
 associated with the negative eigenvalue, normalized so that $\psi_1(0)=1$.
 It is known that
 there exists $C>0$ such that (see p. 18 \cite{Cortazar-delPino-Musso})
 \[
 0<
 \psi_1(r)
 <
 C\left( 1+r \right)^{-\frac{n-1}{2}}
 e^{-r\sqrt{e_0}}
 \quad
 \text{for }
 r>0.
 \]
 In the radially symmetric setting,
 the kernel of $H_y$ is given by
 \begin{align*}
 \text{Ker } H_y
 =
 \text{span}
 \{
 (\Lambda_y{\sf Q})
 \}
 \quad
 \text{when }
 n\geq5.
 \end{align*}
 Furthermore,
 when $n\geq5$,
 it holds that
 \begin{align*}
 \int
 (-H_y\epsilon(y))\epsilon(y)
 dy
 >0
 \end{align*}
 for any radial functions $\epsilon\in H^1(\R^n)$ satisfying
 $\int\epsilon(y)\psi_1(y)dy=\int\epsilon(y)(\Lambda_y{\sf Q})(y)dy=0$.
 Note that
 for $n \geq 7$, the same inequality holds for any radial functions
 $\epsilon\in\dot H^1(\R^n)$ satisfying the same orthogonality conditions.
%%%%%%%%%%%%%%%%%%%%%%%%%%%%%%%%%%%%%%%%%%%%%%%%%%%%%%%%%%%%%%%%

\subsection{Perturbed Eigenvalue Problem Associated with the Linearized Equation}
\label{section_3.3}
%%%%%%%%%%%%%%%%%%%%%%%%%%%%%%%%%%%%%%%%%%%%%%%%%%%%%%%%%%%%%%%%
 Consider the eigenvalue problem \eqref{EQ_3.1} in a bounded but very large domain.
 \begin{equation}
 \label{EQ_3.2}
 \begin{cases}
 -H_y\psi=\mu\psi & \text{in } B_R,
 \\
 \psi=0 & \text{on } \pa B_R,
 \\
 \psi \text{ is radially symmetric}.
 \end{cases}
 \end{equation}
 Let $\mu_j^{(R)}$ denote the $j$-th eigenvalue of \eqref{EQ_3.2},
 and let $\psi_j^{(R)}$ be the associated eigenfunction.
 Most of the lemmas in this subsection are proved in Section 7 of
 \cite{Cortazar-delPino-Musso} (see also Section 3.2 of \cite{Harada_5D}).
 We recall that $\mu_1 = -e_0$ is the first eigenvalue of
 the operator $-H_y$ on $\mathbb{R}^n$, where $e_0 > 0$.
%%%%%%%%%%%%%%%%%%%%%%%%%%%%%%%%%%%%%%%%%%%%%%%%%%%%%%%%%%%%%%%%
 \begin{lem}
 \label{lemma_3.1}
 There exists a constant $R_1 > 1$ such that for all $R > R_1$, we have
 \[
 \mu_1^{(R)}<0<\mu_2^{(R)}\leq\mu_3^{(R)}\leq\cdots.
 \]
 Furthermore
 it holds that
 \[
 \lim_{R\to\infty}\mu_1^{(R)}=-e_0.
 \]
 \end{lem}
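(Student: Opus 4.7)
The plan is to combine a min-max comparison against the whole-space operator $-H_y$ with a Sturm oscillation / ODE argument that resolves the borderline case $\mu_2^{(R)}=0$. The key input, recalled in Section~\ref{section_3.2}, is the spectral picture of $-H_y$ on $\R^n$: a unique negative eigenvalue $-e_0$ with an exponentially decaying radial ground state $\psi_1$, and essential spectrum $[0,\infty)$.

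First I would establish the baseline lower bound. Extending a function in $H_0^1(B_R)$ by zero defines an injection into $H^1(\R^n)$ that preserves both the $L^2$ norm and the quadratic form associated with $-H_y$, so the min-max principle immediately gives $\mu_j^{(R)}\geq\mu_j^{\infty}$, where $\mu_j^{\infty}$ denotes the $j$-th min-max value of $-H_y$ on $H^1(\R^n)$. From the spectral picture, $\mu_1^{\infty}=-e_0$ and $\mu_j^{\infty}=0$ for every $j\geq 2$, which yields $\mu_1^{(R)}\geq -e_0$ and $\mu_j^{(R)}\geq 0$ for $j\geq 2$. For the matching upper bound on $\mu_1^{(R)}$, I would test the Rayleigh quotient against the truncated ground state $\chi(|y|/R)\psi_1(y)$; using the exponential decay estimate on $\psi_1$ cited from \cite{Cortazar-delPino-Musso}, all commutator terms with the cutoff are $O(e^{-cR})$, so $\mu_1^{(R)}\leq -e_0+O(e^{-cR})$. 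Combined with the baseline bound this proves $\lim_{R\to\infty}\mu_1^{(R)}=-e_0$, and in particular $\mu_1^{(R)}<0$ for every $R$ larger than some $R_0$.

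The more delicate step is promoting $\mu_2^{(R)}\geq 0$ to the strict inequality $\mu_2^{(R)}>0$. Suppose for contradiction that $\mu_2^{(R)}=0$ for some $R$ in the regime of interest. Then the radial Dirichlet eigenfunction $\psi_2^{(R)}$ satisfies $-H_y\psi=0$ on $B_R$ and, by Sturm oscillation for the radial Sturm-Liouville problem with weight $r^{n-1}$, carries exactly one interior zero in $(0,R)$. However, the space of solutions of $-H_y\psi=0$ that are regular at the origin is one-dimensional, spanned by $\Lambda_y{\sf Q}$, and a direct computation gives
\[
\Lambda_y{\sf Q}(y)=\tfrac{n-2}{2}\cdot\frac{n(n-2)-|y|^2}{n(n-2)+|y|^2}\,{\sf Q}(y),
\]
so $\Lambda_y{\sf Q}$ is strictly positive on $\{|y|<R_\ast\}$, vanishes only at $|y|=R_\ast:=\sqrt{n(n-2)}$, and is strictly negative beyond. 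Hence $\Lambda_y{\sf Q}$ cannot simultaneously satisfy a Dirichlet condition on $\partial B_R$ and carry an interior node, which rules out $\mu_2^{(R)}=0$ for every $R>R_\ast$. Eigenvalue ordering then delivers $\mu_2^{(R)}\leq\mu_3^{(R)}\leq\cdots$, and setting $R_1:=\max(R_0,R_\ast)$ finishes the proof.

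I expect the main obstacle to lie precisely in this last step: the whole-space operator $-H_y$ has $0$ embedded at the bottom of its essential spectrum (with $L^2$ zero-mode $\Lambda_y{\sf Q}$ for $n\geq 5$), so the variational lower bound $\mu_2^{(R)}\geq 0$ is borderline and cannot be improved by abstract spectral reasoning alone. The Sturm argument at the ODE level is what converts the Dirichlet condition $\psi(R)=0$ into an algebraic constraint incompatible with the required node count, and so pins $\mu_2^{(R)}$ strictly above zero.
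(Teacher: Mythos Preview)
Your argument is correct. The paper itself does not prove Lemma~\ref{lemma_3.1}; it defers to Section~7 of \cite{Cortazar-delPino-Musso} and Section~3.2 of \cite{Harada_5D}. Your self-contained approach---domain monotonicity for the lower bounds $\mu_j^{(R)}\ge\mu_j^\infty$, a cutoff trial function for the upper bound on $\mu_1^{(R)}$, and the observation that the only regular solution of $H_y\psi=0$ is $\Lambda_y{\sf Q}$, which fails the Dirichlet condition for $R\neq R_\ast$---is exactly the standard route and matches what the cited references do. One cosmetic remark: the Sturm node-count statement is not actually needed once you know $\psi_2^{(R)}$ would have to be a multiple of $\Lambda_y{\sf Q}$; the contradiction for $R>R_\ast$ comes directly from $\Lambda_y{\sf Q}(R)<0\neq 0$, so the Dirichlet condition alone already fails.
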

%%%%%%%%%%%%%%%%%%%%%%%%%%%%%%%%%%%%%%%%%%%%%%%%%%%%%%%%%%%%%%%%
 \begin{lem}[Lemma 3.1-Lemma 3.2 \cite{Harada_5D} p. 312]
 \label{lemma_3.2}
 There exist constants $c > 0$ and $R_1 > 1$ such that for all $R > R_1$,
 the following estimates hold{\rm:}
 \begin{align*}
 0
 <
 \psi_1^{(R)}(r)
 &<
 c
 \left( 1+r \right)^{-\frac{n-1}{2}}
 e^{-r\sqrt{e_0}}
 \quad\text{for } r\in(0,R),
 \\
 |\psi_2^{(R)}(r)|
 &<
 c
 \left( 1+r \right)^{-(n-2)}
 \quad\text{for } r\in(0,R).
 \end{align*}
 \end{lem}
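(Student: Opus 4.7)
The plan is to obtain the two estimates by separate arguments reflecting the limits of the two eigenvalues: $\mu_1^{(R)}\to -e_0<0$ gives exponential decay for $\psi_1^{(R)}$, while $\mu_2^{(R)}\to 0$ makes $\psi_2^{(R)}$ behave like the radial kernel generator $\Lambda_y {\sf Q}$, which has algebraic decay $(1+r)^{-(n-2)}$.

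\emph{Step 1 (setup).} By Lemma \ref{lemma_3.1}, fix $R_1$ so large that for $R>R_1$ the numbers $\nu_R:=\sqrt{-\mu_1^{(R)}}$ lie in a small neighborhood of $\sqrt{e_0}$ and $|\mu_2^{(R)}|\leq \varepsilon$ for an $\varepsilon$ to be fixed later. Recall $V(y)=p{\sf Q}^{p-1}(y)=O((1+|y|)^{-4})$, and normalize the eigenfunctions by $\|\psi_j^{(R)}\|_{L^\infty(B_R)}=1$. Positivity of $\psi_1^{(R)}$ is the standard Perron property of the first Dirichlet eigenfunction on $B_R$. Interior elliptic regularity applied on $B_{R_0}\subset B_R$ gives a uniform bound $|\psi_j^{(R)}|\leq C$ on $\overline{B_{R_0}}$ for any fixed $R_0$ independent of $R$.

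\emph{Step 2 (exponential bound for $\psi_1^{(R)}$).} Rewrite the eigenvalue equation in the outer region $\{R_0<r<R\}$ as
\begin{align*}
-\Delta \psi_1^{(R)}+\nu_R^{2}\,\psi_1^{(R)}
=V(y)\,\psi_1^{(R)}.
\end{align*}
I will exhibit a radial supersolution of the form
\begin{align*}
\bar\psi(r)=A\,(1+r)^{-\frac{n-1}{2}}e^{-r\nu_R}.
\end{align*}
Differentiating and using the radial Laplacian, a direct calculation yields
\begin{align*}
-\bar\psi''-\tfrac{n-1}{r}\bar\psi'+\nu_R^{2}\bar\psi
=\tfrac{(n-1)(n-3)}{4r^{2}}\bar\psi+O(r^{-2})\bar\psi,
\end{align*}
for large $r$; hence on $\{r>R_0\}$, for $R_0$ chosen large enough that $V(y)\ll r^{-2}$, the operator $-\Delta -V+\nu_R^{2}$ applied to $\bar\psi$ is nonnegative. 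Choosing $A$ large so that $\bar\psi(R_0)\geq \psi_1^{(R)}(R_0)$ (and noting $\bar\psi(R)\geq 0=\psi_1^{(R)}(R)$), the maximum principle on $\{R_0<r<R\}$ gives the desired bound. The interior part $r\leq R_0$ is handled by the uniform $L^\infty$ bound of Step 1, enlarging the constant if necessary.

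\emph{Step 3 (algebraic bound for $\psi_2^{(R)}$).} I will use the Dirichlet Green representation for $-\Delta$ on $B_R$:
\begin{align*}
\psi_2^{(R)}(y)=\int_{B_R}G_R(y,z)\,\bigl(V(z)+\mu_2^{(R)}\bigr)\psi_2^{(R)}(z)\,dz,
\qquad
G_R(y,z)\leq C|y-z|^{-(n-2)}.
\end{align*}
Introduce the weighted norm $\|\phi\|_*=\sup_{y\in B_R}(1+|y|)^{n-2}|\phi(y)|$. Splitting the integral according to $|z|\leq R_0$ and $|z|>R_0$ and using the rapid decay of $V$ together with the smallness of $\mu_2^{(R)}$, a standard convolution estimate gives
\begin{align*}
\|\psi_2^{(R)}\|_*\leq C\|V\psi_2^{(R)}\|_*
+C\,|\mu_2^{(R)}|\,R^{2}\|\psi_2^{(R)}\|_\infty,
\end{align*}
and a more careful treatment of the constant-coefficient term shows that $|\mu_2^{(R)}|$ is small enough (using Lemma \ref{lemma_3.1} and the matching decay rates) to absorb this contribution into the left-hand side. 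Since $\|\psi_2^{(R)}\|_{L^\infty}=1$, bootstrapping gives $\|\psi_2^{(R)}\|_*\leq C$, which is the claimed bound.

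\emph{Main obstacle.} The delicate step is Step 2: the candidate $\bar\psi$ already saturates the fundamental-solution behavior of the limiting Helmholtz operator $-\Delta+e_0$, so the supersolution estimate is tight, and one must carefully control both the discrepancy $\nu_R^{2}-e_0\to 0$ and the potential term $V$ by the $r^{-2}$-surplus produced by the algebraic factor $(1+r)^{-(n-1)/2}$. Choosing $R_0$ and the threshold $R_1$ so that these errors are genuinely dominated is the only technical point; once this is arranged, both conclusions follow by the maximum principle and the Green kernel estimate.
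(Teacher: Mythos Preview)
The paper does not prove this lemma; it is quoted directly from \cite{Harada_5D}. So there is no in-paper argument to compare against, and your sketch must stand on its own. The overall strategy (supersolution/maximum principle for $\psi_1^{(R)}$, Green-kernel bootstrap for $\psi_2^{(R)}$) is reasonable, but both steps have gaps that are not just cosmetic.

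\textbf{Step 2.} Your supersolution $\bar\psi(r)=A(1+r)^{-(n-1)/2}e^{-r\nu_R}$ with $\nu_R=\sqrt{-\mu_1^{(R)}}$ indeed dominates $\psi_1^{(R)}$ on $(R_0,R)$, but this yields decay at rate $\nu_R$, not $\sqrt{e_0}$. Since $H_0^1(B_R)\subset H^1(\R^n)$, the variational principle gives $\mu_1^{(R)}\ge -e_0$, hence $\nu_R\le\sqrt{e_0}$ and $e^{-r\nu_R}\ge e^{-r\sqrt{e_0}}$: your bound is strictly weaker than the one claimed. To pass from $e^{-r\nu_R}$ to $e^{-r\sqrt{e_0}}$ uniformly on $(0,R)$ you would need $\sup_{R>R_1} R\,(\sqrt{e_0}-\nu_R)<\infty$, i.e.\ a quantitative rate $e_0+\mu_1^{(R)}=O(R^{-1})$ (in fact it is $O(e^{-cR})$, obtained by inserting the truncated whole-space eigenfunction $\psi_1\chi(\cdot/R)$ into the Rayleigh quotient). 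You flag ``the discrepancy $\nu_R^2-e_0\to 0$'' as the main obstacle, but your computation never uses it: the supersolution inequality is written for the operator $-\Delta+\nu_R^2-V$, in which no discrepancy term appears. Either build the supersolution with the target rate $\sqrt{e_0}$ and absorb the extra term $(\nu_R^2-e_0)\bar\psi$ into the $r^{-2}$-surplus (this is exactly where the quantitative convergence of $\mu_1^{(R)}$ is needed), or keep rate $\nu_R$ and add a separate lemma bounding $R(\sqrt{e_0}-\nu_R)$.

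\textbf{Step 3.} The displayed inequality $\|\psi_2^{(R)}\|_*\le C\|V\psi_2^{(R)}\|_*+C|\mu_2^{(R)}|R^2\|\psi_2^{(R)}\|_\infty$ does not close. Feeding $|\psi_2^{(R)}|\le\|\psi_2^{(R)}\|_*(1+|z|)^{-(n-2)}$ into the Green representation, the $\mu_2^{(R)}$-contribution in the weighted norm is of size $C\,\mu_2^{(R)}R^{2}\,\|\psi_2^{(R)}\|_*$ (the convolution of two $(1+|\cdot|)^{-(n-2)}$ kernels decays only like $(1+|y|)^{-(n-4)}$ for $n\ge5$). Absorbing this term therefore requires $\mu_2^{(R)}R^{2}\to 0$, i.e.\ an \emph{upper} bound $\mu_2^{(R)}=o(R^{-2})$. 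None of Lemmas~\ref{lemma_3.1}--\ref{lemma_3.4} gives this; Lemma~\ref{lemma_3.3} is a lower bound, and Lemma~\ref{lemma_3.1} only asserts positivity. The missing input is $\mu_2^{(R)}\le CR^{-(n-2)}$, which one gets by testing against $(\Lambda_y{\sf Q})\chi(\cdot/R)$ in the second variational level; without it the bootstrap does not contract.
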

%%%%%%%%%%%%%%%%%%%%%%%%%%%%%%%%%%%%%%%%%%%%%%%%%%%%%%%%%%%%%%%%
 \begin{lem}[Lemma 7.2 \cite{Cortazar-delPino-Musso}, Lemma 3.3 \cite{Harada_5D} p. 313]
 \label{lemma_3.3}
 Let $n\geq5$.
 Then we have
 \[
 \inf_{R>1}R^{n-2}\mu_2^{(R)}>0.
 \]
 \end{lem}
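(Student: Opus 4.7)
The plan is to combine an integral identity with a matched asymptotic analysis of the radial eigenvalue equation in order to show that $R^{n-2}\mu_2^{(R)}$ converges to a strictly positive constant as $R\to\infty$. Since $\mu_2^{(R)}>0$ for $R>R_1$ by Lemma~\ref{lemma_3.1} and $R\mapsto\mu_2^{(R)}$ depends continuously on $R$ over compact subsets of $(R_1,\infty)$, this asymptotic positivity yields the claimed uniform lower bound.

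For the identity, multiply the eigenvalue equation $-H_y\psi_2^{(R)}=\mu_2^{(R)}\psi_2^{(R)}$ by $\Lambda_y{\sf Q}$ and integrate over $B_R$. Using $H_y(\Lambda_y{\sf Q})=0$ together with $\psi_2^{(R)}|_{\pa B_R}=0$, two applications of Green's formula give
\begin{align*}
\mu_2^{(R)}\int_{B_R}\psi_2^{(R)}\cdot\Lambda_y{\sf Q}\,dy
=
-\omega_{n-1}\,R^{n-1}\,\pa_r\psi_2^{(R)}(R)\cdot\Lambda_y{\sf Q}(R),
\end{align*}
where $\omega_{n-1}$ is the area of the unit sphere in $\R^n$. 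From the explicit form of ${\sf Q}$, one has $\Lambda_y{\sf Q}(r)\sim -C_\Lambda r^{-(n-2)}$ at infinity with $C_\Lambda=\tfrac{n-2}{2}(n(n-2))^{(n-2)/2}>0$, so the prefactor $R^{n-1}\Lambda_y{\sf Q}(R)$ is of order $R$. Moreover, Lemma~\ref{lemma_3.2}, elliptic regularity, and a diagonal argument show that (in the normalization implicit in Lemma~\ref{lemma_3.2}) $\psi_2^{(R)}$ converges in $C^2_{\text{loc}}(\R^n)$ along subsequences to a nontrivial limit, which by $\Ker(H_y)=\text{span}(\Lambda_y{\sf Q})$ must be $c_*\,\Lambda_y{\sf Q}$ for some $c_*\neq 0$; consequently $\int_{B_R}\psi_2^{(R)}\Lambda_y{\sf Q}\,dy\to c_*\int_{\R^n}(\Lambda_y{\sf Q})^2\,dy$, which is finite and nonzero for $n\geq 5$.

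It remains to determine $\pa_r\psi_2^{(R)}(R)$. I would argue by contradiction: suppose there is a sequence $R_k\to\infty$ with $R_k^{n-2}\mu_2^{(R_k)}\to 0$, so in particular $\mu_2^{(R_k)}R_k^2\to 0$ for $n\geq 5$. In the outer region $1\ll r\leq R_k$ both $V(r)=O(r^{-4})$ and the spectral parameter are negligible with respect to the Laplacian scale, so the radial ODE reduces at leading order to $\psi''+\frac{n-1}{r}\psi'\approx 0$, whose linearly independent radial solutions are $1$ and $r^{-(n-2)}$. Matching with the inner asymptotics $\psi_2^{(R_k)}(r)\approx c_*\Lambda_y{\sf Q}(r)\sim -c_*C_\Lambda r^{-(n-2)}$ and imposing $\psi_2^{(R_k)}(R_k)=0$ yields
\begin{align*}
\psi_2^{(R_k)}(r)
\approx
c_*C_\Lambda\bigl(R_k^{-(n-2)}-r^{-(n-2)}\bigr),
\qquad
\pa_r\psi_2^{(R_k)}(R_k)\approx (n-2)\,c_*C_\Lambda\,R_k^{-(n-1)}.
\end{align*}
Substituting into the identity, the right-hand side becomes $(n-2)\,\omega_{n-1}\,c_*\,C_\Lambda^2\,R_k^{-(n-2)}+o(R_k^{-(n-2)})$, which forces
\[
\lim_{k\to\infty}R_k^{n-2}\mu_2^{(R_k)}
=
\frac{(n-2)\,\omega_{n-1}\,C_\Lambda^2}{\int_{\R^n}(\Lambda_y{\sf Q})^2\,dy}>0,
\]
contradicting the assumption.

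The main obstacle is the rigorous justification of the matched expansion. One must construct the two linearly independent radial solutions of $H_y\phi=0$ on $(0,\infty)$, identify the precise leading constants in their asymptotics at both $0$ and $\infty$, and control the subleading corrections from $V\psi$ and $\mu_2^{(R_k)}\psi$ in the outer region carefully enough that the boundary derivative is read off at the correct order $R_k^{-(n-1)}$ rather than swamped by perturbative contributions. Once this asymptotic bookkeeping is in place, the integral identity converts the outer ODE picture directly into the desired spectral estimate.
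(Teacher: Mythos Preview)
The paper does not supply its own proof of this lemma; it simply cites Lemma~7.2 of \cite{Cortazar-delPino-Musso} and Lemma~3.3 of \cite{Harada_5D}. Your outline---the Green's identity obtained by testing the eigenvalue equation against $\Lambda_y{\sf Q}$, combined with an analysis of $\pa_r\psi_2^{(R)}(R)$ via the outer ODE $\psi''+\tfrac{n-1}{r}\psi'\approx 0$---is precisely the route taken in those references, and your heuristic computation of the limiting constant is correct.

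One small logical comment: the contradiction framing is slightly artificial. The only place you invoke the hypothesis $R_k^{n-2}\mu_2^{(R_k)}\to 0$ is to deduce $\mu_2^{(R_k)}R_k^2\to 0$, but this already follows from the variational upper bound $\mu_2^{(R)}=O(R^{-(n-2)})$ obtained by using $\Lambda_y{\sf Q}(r)-\Lambda_y{\sf Q}(R)$ as a trial function (orthogonality to $\psi_1^{(R)}$ holds asymptotically since $\psi_1^{(R)}$ decays exponentially). With that upper bound in hand, your matched-asymptotic computation applies along \emph{every} sequence $R\to\infty$ and gives directly that $R^{n-2}\mu_2^{(R)}$ converges to the explicit positive constant you wrote down, with no contradiction needed. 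You have correctly flagged the genuine technical content: making the outer approximation $\psi_2^{(R)}(r)\approx c_*C_\Lambda\bigl(R^{-(n-2)}-r^{-(n-2)}\bigr)$ rigorous requires controlling the perturbations $V\psi$ and $\mu_2^{(R)}\psi$ uniformly on $[r_0,R]$, which is done in the cited references by a careful ODE comparison.
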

%%%%%%%%%%%%%%%%%%%%%%%%%%%%%%%%%%%%%%%%%%%%%%%%%%%%%%%%%%%%%%%%
 \begin{lem}[Lemma 3.3 \cite{Harada_double} p. 9]
 \label{lemma_3.4}
 Let $n\geq5$.
 Then we have
 \[
 \inf_{R>1}R^{\frac{n}{2}}\mu_3^{(R)}>0.
 \]
 \end{lem}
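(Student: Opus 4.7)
The plan is to argue by contradiction via a concentration-compactness scheme. Assume that there exist $R_k \to \infty$ with $R_k^{n/2}\mu_3^{(R_k)} \to 0$; let $\psi_k \in H^1_0(B_{R_k})$ denote the corresponding radial eigenfunctions, normalized by $\|\psi_k\|_{L^2} = 1$, and set $\mu_k := \mu_3^{(R_k)}$. From the identity $\|\nabla\psi_k\|_{L^2}^2 = \mu_k + \int V|\psi_k|^2\,dy$ and a Sobolev/H\"older splitting of the potential integral into $B_M$ and $\R^n \setminus B_M$ (the outer piece controlled by $\|V\|_{L^{n/2}(\R^n\setminus B_M)}S^2\|\nabla\psi_k\|^2$, arbitrarily small as $M\to\infty$), a uniform $H^1(\R^n)$ bound on $\{\psi_k\}$ follows. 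By Rellich, along a subsequence $\psi_k \rightharpoonup \psi_\infty$ weakly in $H^1$ and strongly in $L^2_{loc}$, and $\psi_\infty \in L^2(\R^n)$ with $\|\psi_\infty\|_{L^2}\leq 1$ by Fatou.

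The limit $\psi_\infty$ satisfies $-H_y\psi_\infty = 0$ weakly on $\R^n$ (since $\mu_k \to 0$), and the orthogonalities $\int\psi_k\psi_j^{(R_k)}\,dy = 0$ for $j=1,2$ pass to the limit via Lemma \ref{lemma_3.2} (the exponential decay of $\psi_1^{(R_k)}$ and the uniform $|y|^{-(n-2)}$ decay of $\psi_2^{(R_k)}$, which force the contributions from $B_{R_k}\setminus B_M$ to vanish as $M\to\infty$, uniformly in $k$), yielding $\int\psi_\infty\psi_1\,dy = 0$ and $\int\psi_\infty(\Lambda_y{\sf Q})\,dy = 0$. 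Since for $n\geq 5$ the radial $L^2$-kernel of $H_y$ on $\R^n$ is exactly $\operatorname{span}(\Lambda_y{\sf Q})$, we deduce $\psi_\infty\equiv 0$. Hence $\psi_k \to 0$ in $L^2_{loc}(\R^n)$ while $\|\psi_k\|_{L^2} = 1$, so the mass of $\psi_k$ escapes into the far region $B_{R_k}\setminus B_M$ as $k\to\infty$.

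To close the contradiction, I combine two estimates. First, the radial Hardy-type inequality for $\psi \in H^1_{0,\text{rad}}(B_R)$ (using $\psi(R) = 0$ and integrating from $r$ to $R$) gives $|\psi(r)|^2 \leq C r^{2-n}\|\nabla\psi\|^2$, which upon integration against $r^{n-1}\,dr$ yields $\|\psi\|_{L^2(B_R\setminus B_M)}^2 \leq CR^2\|\nabla\psi\|^2$; applied to $\psi_k$ with $M$ fixed and $\|\psi_k\|_{L^2(B_{R_k}\setminus B_M)}^2\to 1$, this gives $\|\nabla\psi_k\|_{L^2}^2 \geq c R_k^{-2}$. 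Second, choosing $M$ so that $S^2\|V\|_{L^{n/2}(\R^n\setminus B_M)} < \tfrac12$, the Sobolev/H\"older splitting of $\int V|\psi_k|^2$ gives $\mu_k \geq \tfrac{1}{2}\|\nabla\psi_k\|^2 - C\|\psi_k\|_{L^2(B_M)}^2$. The delicate step is to show that $\|\psi_k\|_{L^2(B_M)}^2 = o(R_k^{-2})$ along the sequence: via elliptic regularity (which bounds $\psi_k$ in $C^0(B_M)$ by $\|\psi_k\|_{L^2(B_{2M})}$) plus the fact that on the inner region $\psi_k$ is essentially a multiple of $\Lambda_y{\sf Q}$, whose coefficient is forced to be small by the orthogonality to $\psi_2^{(R_k)}$ together with the quantitative estimate $\|\psi_2^{(R_k)} - c\Lambda_y{\sf Q}\|_{L^2} = O(R_k^{(4-n)/2})$ that underlies the proof of Lemma \ref{lemma_3.3}. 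Once this refinement is in hand, we conclude $\mu_k \geq c R_k^{-2}/4$, which contradicts $R_k^{n/2}\mu_k \to 0$ since $R_k^{n/2-2}\to\infty$ for $n\geq 5$.

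The main obstacle is precisely this last refinement: converting the qualitative statement $\|\psi_k\|_{L^2(B_M)}^2 \to 0$ into the quantitative rate $o(R_k^{-2})$. A direct bound via orthogonality gives only $O(R_k^{4-n})$, which is sufficient in dimensions $n\geq 6$ but critical in $n=5$, so the $n=5$ case requires exploiting a second layer of cancellation (effectively iterating the kernel-projection argument, or re-expanding $\psi_k$ around $\Lambda_y{\sf Q}$ and bounding the resulting corrector via the coercivity of $-H_y$ on $\{\psi_1,\Lambda_y{\sf Q}\}^\perp$ noted in Section \ref{section_3.2}).
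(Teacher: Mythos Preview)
The paper does not prove this lemma; it is quoted verbatim from Lemma~3.3 of \cite{Harada_double}, so there is no in-paper argument to compare against.

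Your compactness framework is sound through the point where you conclude $\psi_\infty\equiv 0$: the uniform $H^1$ bound via the Sobolev/H\"older splitting of $\int V|\psi_k|^2$, the passage of the two orthogonalities using the uniform decay from Lemma~\ref{lemma_3.2}, and the identification of the radial $L^2$-kernel of $H_y$ with $\mathrm{span}(\Lambda_y{\sf Q})$ for $n\geq 5$ are all correct. The radial Hardy computation $\|\psi\|_{L^2(B_R\setminus B_M)}^2\leq CR^2\|\nabla\psi\|_{L^2}^2$ and the resulting lower bound $\|\nabla\psi_k\|_{L^2}^2\geq cR_k^{-2}$ are also fine.

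The gap you flag is genuine, and your accounting of it is slightly optimistic. To conclude $\mu_k\geq \tfrac{c}{4}R_k^{-2}$ from $\mu_k\geq \tfrac12\|\nabla\psi_k\|^2-C\|\psi_k\|_{L^2(B_M)}^2$ you need $\|\psi_k\|_{L^2(B_M)}^2=o(R_k^{-2})$, but the ``direct orthogonality'' bound $O(R_k^{4-n})$ gives this only for $n\geq 7$, not $n\geq 6$: at $n=6$ it is exactly $O(R_k^{-2})$, and at $n=5$ it is $O(R_k^{-1})$. Moreover, that $O(R_k^{4-n})$ bound itself rests on two unproved claims: the rate $\|\psi_2^{(R_k)}-c\Lambda_y{\sf Q}\|_{L^2}=O(R_k^{(4-n)/2})$, and the assertion that on $B_M$ the function $\psi_k$ is ``essentially a multiple of $\Lambda_y{\sf Q}$'' with a remainder small enough not to spoil the estimate. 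The latter is not free---the equation $-H_y\psi_k=\mu_k\psi_k$ only tells you the kernel defect is of order $\mu_k$ on compacts, and feeding this back self-consistently is exactly the ``second layer'' you allude to but do not carry out. As written, the argument is incomplete precisely in the dimensions $n=5,6$, and $n=6$ is the one the paper actually uses. Note also that you are targeting $\mu_k\gtrsim R_k^{-2}$, which is strictly stronger than the stated $\mu_k\gtrsim R_k^{-n/2}$; this suggests the original proof in \cite{Harada_double} may go by a different mechanism (e.g.\ a direct min-max with a carefully chosen two-dimensional test space) rather than contradiction and compactness.
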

%%%%%%%%%%%%%%%%%%%%%%%%%%%%%%%%%%%%%%%%%%%%%%%%%%%%%%%%%%%%%%%%
%%%%%%%%%%%%%%%%%%%%%%%%%%%%%%%%%%%%%%%%%%%%%%%%%%%%%%%%%%%%%%%%

% 使わないから削除
%\subsection{Behavior of the Laplace equation with a perturbation term}
%\label{section_3.4***}
%%%%%%%%%%%%%%%%%%%%%%%%%%%%%%%%%%%%%%%%%%%%%%%%%%%%%%%%%%%%%%%%%
%Consider a radially symmetric solution of
% \[
% \Delta_yp+(1-\chi_M)V(y)p=0 \quad\text{in } \R^n,
% \quad
% \chi_M=\chi( \tfrac{|y|}{M} ).
% \]
% Let $p_M(r)$ be a radially symmetric solution of this problem satisfying
% $p_M(r)=1$ for $r<M$.
%%%%%%%%%%%%%%%%%%%%%%%%%%%%%%%%%%%%%%%%%%%%%%%%%%%%%%%%%%%%%%%%%
% \begin{lem}[see proof of Lemma 7.3 \cite{Cortazar-delPino-Musso}, Lemma 3.3 \cite{Harada_5D}]
% \label{lemma_3.5***}
% There exist $k\in(0,1)$ and $M_1>0$ such that if $M>M_1$
% \[
% k<p_M(r)\leq1
% \quad
% \text{for } r\in(0,\infty).
% \]
% \end{lem}
%%%%%%%%%%%%%%%%%%%%%%%%%%%%%%%%%%%%%%%%%%%%%%%%%%%%%%%%%%%%%%%%%

\subsection{$L^\infty$ Bounds for Parabolic Equations}
\label{section_3.4}
%%%%%%%%%%%%%%%%%%%%%%%%%%%%%%%%%%%%%%%%%%%%%%%%%%%%%%%%%%%%%%%%
 In this subsection, we present several a priori estimates for solutions to parabolic equations.
 \begin{align}
 \label{EQ_3.3}
 u_t
 &=
 \Delta_xu+{\bf b}(x,t)\cdot\nabla_xu+V(x,t)u+f(x,t)
 \\
 \nonumber
 &
 \text{for } (x,t)\in B_2\times(0,1).
 \end{align}
 For $p,q\in[1,\infty]$,
 we define
 \[
 \|f\|_{L^{p,q}(B_2\times(t_1,t_2))}
 =
 \begin{cases}
 \dis
 \left( \int_{t_1}^{t_2}\|f(t)\|_{L^p(B_2)}^qdt \right)^\frac{1}{q}
 & \text{if } q\in[1,\infty),
 \\
 \dis
 \sup_{t\in(t_1,t_2)}\|f(t)\|_{L^p(B_2)}
 & \text{if } q=\infty.
 \end{cases}
 \]
%%%%%%%%%%%%%%%%%%%%%%%%%%%%%%%%%%%%%%%%%%%%%%%%%%%%%%%%%%%%%%%%

 \begin{lem}[Exercise 6.5 p. 154 \cite{Lieberman}, Theorem 8.1 p. 192 \cite{Ladyzenskaja}]
 \label{lemma_3.5}
%%%%%%%%%%%%%%%%%%%%%%%%%%%%%%%%%%%%%%%%%%%%%%%%%%%%%%%%%%%%%%%%
 Assume that
 \begin{itemize}
 \item
 $n\geq3${\rm;}
 \item 
 $p,q\in(1,\infty]$ and
 $\tfrac{n}{2p}+\tfrac{1}{q}<1${\rm;}
 \item
 The functions
 ${\bf b}(x,t)\in\{L^\infty(B_2\times(0,1))\}^n$ and
 $V(x,t)\in L^\infty(B_2\times(0,1))$ satisfy
 \begin{align*}
 \|{\bf b}\|_{L^\infty(B_2\times(0,1))}+\|V\|_{L^\infty(B_2\times(0,1))}<M.
 \end{align*}
 \end{itemize}
%%%%%%%%%%%%%%%%%%%%%%%%%%%%%%%%%%%%%%%%%%%%%%%%%%%%%%%%%%%%%%%%
 Then there exists a constant $c>0$,
 depending only on $p$, $q$, $n$ and $M$,
 such that
 \begin{enumerate}[{\rm (i)}]
 \item
 if $u(x,t)$ is a weak solution of \eqref{EQ_3.3},
 then
 \[
 \|u\|_{L^\infty(B_1\times(\frac{1}{2},1))}
 <
 c( \|u\|_{L^2(B_2\times(0,1))}+\|f\|_{L^{p,q}(B_2\times(0,1))} ),
 \]
 
 \item
 if $u(x,t)\in C(\overline{B_2}\times[0,1))$ is a weak solution of \eqref{EQ_3.3} with
 \[
 u(x,0)=0
 \quad
 \text{for }
 x\in B_2,
 \]
 then
 \begin{align*}
 \|u\|_{L^\infty(B_1\times(0,1))}
 <
 c( \|u\|_{L^2(B_2\times(0,1))}+\|f\|_{L^{p,q}(B_2\times(0,1))} ) ,
 \end{align*}

 \item
 if $u(x,t)\in C(\overline{B_2}\times[0,1))$ is a weak solution of \eqref{EQ_3.3} with
 \[
 u(x,0)=0
 \quad
 \text{for }
 x\in B_2,
 \]
 then
 \begin{align*}
 \|u\|_{L^\infty(B_2\times(0,1))}
 &<
 \|u\|_{L^\infty(\pa B_2\times(0,1))}
 +
 c
 \|f\|_{L^{2,2}(B_2\times(0,1))}
 \\
 &\quad
 +
 c\|f\|_{L^{p,q}(B_2\times(0,1))}.
 \end{align*}
 \end{enumerate}
%%%%%%%%%%%%%%%%%%%%%%%%%%%%%%%%%%%%%%%%%%%%%%%%%%%%%%%%%%%%%%%%
 \end{lem}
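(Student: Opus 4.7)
The plan is to prove all three parts by Moser iteration for parabolic equations with lower-order terms, following the classical scheme of Ladyzhenskaja--Solonnikov--Ural'tseva. The scaling condition $\frac{n}{2p}+\frac{1}{q}<1$ is precisely what makes $f\in L^{p,q}$ \emph{subcritical} for the parabolic operator $\partial_t-\Delta$, so there is an $\varepsilon$-margin to absorb at every iteration step.

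For part (i), I would pick a smooth parabolic cutoff $\eta(x,t)$ supported in $\overline{B_2}\times[0,1]$ that equals $1$ on $B_1\times[\tfrac12,1]$ and vanishes near the parabolic boundary of $B_2\times(0,1)$, test \eqref{EQ_3.3} against $\eta^2 u|u|^{2k-2}$ for $k\geq 1$, and integrate by parts. After using Young's inequality to absorb the contributions of $\mathbf{b}$ and $V$ against the bound $M$, this produces an energy inequality controlling both $\sup_t\|\eta|u|^k\|_{L^2_x}^2$ and $\|\nabla(\eta|u|^k)\|_{L^2_{x,t}}^2$ in terms of $\bigl\|(|\nabla\eta|^2+|\partial_t\eta|)|u|^{2k}\bigr\|_{L^1_{x,t}}$ and $\bigl\|\eta^2 f|u|^{2k-1}\bigr\|_{L^1_{x,t}}$. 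Combining this with the parabolic Sobolev embedding
\begin{align*}
\|v\|_{L^{2\chi}_{x,t}}^2
\leq C\left(\sup_t\|v(\cdot,t)\|_{L^2_x}^2+\|\nabla v\|_{L^2_{x,t}}^2\right),
\qquad \chi=\tfrac{n+2}{n},
\end{align*}
yields a reverse-H\"older step from $L^{2k}$ to $L^{2k\chi}$, with the remainder from $f$ estimated via H\"older as $\|f\|_{L^{p,q}}\cdot\|\eta u\|_{L^{\alpha,\beta}}^{2k-1}$ for a pair $(\alpha,\beta)$ that lies strictly below the critical exponent, precisely because of $\tfrac{n}{2p}+\tfrac{1}{q}<1$. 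Iterating $k\mapsto \chi k\mapsto \chi^2k\mapsto\cdots$ starting from $k=1$ and summing the resulting geometric series in $k$ produces the claimed $L^\infty$ bound.

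Part (ii) follows from the same iteration but with a cutoff $\eta$ that does not vanish near $t=0$: since $u(\cdot,0)\equiv 0$ in $B_2$, the boundary term at $t=0$ arising from integration by parts in time drops out, and the iteration runs verbatim, delivering a bound on $B_1\times(0,1)$. For part (iii), I would reduce to zero lateral boundary data by writing $u=\bar u+w$, where $\bar u$ solves $\partial_t\bar u=\Delta\bar u$ in $B_2\times(0,1)$ with $\bar u=u$ on $\partial B_2\times(0,1)$ and $\bar u(\cdot,0)=0$. The maximum principle gives $\|\bar u\|_{L^\infty(B_2\times(0,1))}\leq \|u\|_{L^\infty(\partial B_2\times(0,1))}$, and $w$ satisfies $w(\cdot,0)=0$, $w|_{\partial B_2}=0$, together with the same equation with right-hand side $f+\mathbf{b}\cdot\nabla\bar u+V\bar u$. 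Applying the Moser scheme of part (ii) to $w$---now with a cutoff that vanishes only on the lateral boundary, since $w$ does---yields an $L^\infty$ bound on $B_2\times(0,1)$, after which adding back $\bar u$ gives the stated estimate; the $\|f\|_{L^{2,2}}$-term appears when handling the $\nabla\bar u$ contribution through an elementary energy estimate for $\bar u$.

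The main technical obstacle is the bookkeeping of the iteration: one has to verify that the H\"older exponents chosen for the forcing term consistently produce a gain that is uniform in $k$, and that the multiplicative constants $(Ck)^{1/(2k)}$ appearing at each step have a finite product as $k\to\infty$. This is by now a routine check once the scaling condition is respected, and it is exactly where the hypothesis $\tfrac{n}{2p}+\tfrac{1}{q}<1$ enters quantitatively; everything else is standard cutoff manipulation and Sobolev embedding.
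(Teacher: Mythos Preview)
The paper does not provide its own proof of this lemma; it is stated with attributions to Lieberman (Exercise~6.5, p.~154) and Ladyzhenskaja--Solonnikov--Ural'tseva (Theorem~8.1, p.~192) and used as a black box. Your Moser-iteration outline for parts (i) and (ii) is precisely the classical argument behind those references and is correct as sketched.

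For part (iii), however, your decomposition has a gap. If $\bar u$ solves the \emph{pure} heat equation $\partial_t\bar u=\Delta\bar u$ with the given lateral data, then the equation for $w=u-\bar u$ acquires the extra forcing term $\mathbf{b}\cdot\nabla\bar u$, and you cannot control $\nabla\bar u$ in any useful $L^{p,q}$ norm from the $L^\infty$ bound on $\bar u$ alone: the gradient of a caloric function may blow up near the lateral boundary. Your remark that ``the $\|f\|_{L^{2,2}}$-term appears when handling the $\nabla\bar u$ contribution through an elementary energy estimate for $\bar u$'' is not coherent, since $\bar u$ has no dependence on $f$ at all. The standard fix is to let $\bar u$ solve the \emph{full} homogeneous equation $\partial_t\bar u=\Delta\bar u+\mathbf{b}\cdot\nabla\bar u+V\bar u$ with the same lateral and initial data; then $w=u-\bar u$ solves the full equation with forcing $f$ and zero parabolic boundary data. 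Testing the $w$-equation against $w$ gives the energy bound $\|w\|_{L^2(B_2\times(0,1))}\le c\,\|f\|_{L^{2,2}}$, and then Moser iteration with no spatial cutoff (legitimate since $w|_{\partial B_2}=0$) yields $\|w\|_{L^\infty}\le c(\|w\|_{L^2}+\|f\|_{L^{p,q}})$. The maximum principle for the homogeneous equation with bounded coefficients controls $\|\bar u\|_{L^\infty}$ by the lateral data. This is where the $\|f\|_{L^{2,2}}$-term genuinely enters.
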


 \begin{lem}[Theorem 4.8 p. 56 \cite{Lieberman}, Theorem 11.1 p. 211 \cite{Ladyzenskaja}]
 \label{lemma_3.6}
%%%%%%%%%%%%%%%%%%%%%%%%%%%%%%%%%%%%%%%%%%%%%%%%%%%%%%%%%%%%%%%%
 Assume that
 \begin{itemize}
 \item
 $n\geq3${\rm;}
 \item 
 $p,q\in(1,\infty]$ and
 $\tfrac{n}{p}+\tfrac{2}{q}<1${\rm;}
 \item
 The functions ${\bf b}(x,t)\in\{C(\overline{B_2}\times(0,1))\}^n$ and
 $V(x,t)\in C(\overline{B_2}\times(0,1))$ satisfy
 \begin{align*}
 \|{\bf b}\|_{L^\infty(B_2\times(0,1))}+\|V\|_{L^\infty(B_2\times(0,1))}<M.
 \end{align*}
 \end{itemize}
 Then
 there exists a constant $c>0$,
 depending only on $p$, $q$, $n$ and $M$,
 such that
 if $u(x,t)\in C^{2,1}(B_2\times(0,1))$ is a solution of \eqref{EQ_3.3},
 then
 \[
 \|\nabla_xu\|_{L^\infty(B_1\times(\frac{1}{2},1))}
 <
 c\left( \|u\|_{L^\infty(B_2\times(0,1))}+\|f\|_{L^{p,q}(B_2\times(0,1))} \right).
 \]
%%%%%%%%%%%%%%%%%%%%%%%%%%%%%%%%%%%%%%%%%%%%%%%%%%%%%%%%%%%%%%%%
 \end{lem}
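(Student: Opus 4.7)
My plan is to reduce Lemma~\ref{lemma_3.6} to the maximal $L^{p,q}$ parabolic regularity for the heat equation, combined with the parabolic Sobolev embedding that becomes available under the scaling condition $\frac{n}{p} + \frac{2}{q} < 1$. This is a standard but technical chain of arguments.

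First I would localize. Pick a smooth cutoff $\eta(x,t)$ supported in $B_2\times(0,1)$ with $\eta\equiv 1$ on $\overline{B_1}\times[\tfrac{1}{2},1]$, and set $v=\eta u$. Then $v$ vanishes on the parabolic boundary of $Q=B_2\times(0,1)$ and
\begin{align*}
v_t-\Delta v
=\eta f+\eta V u+\eta\,{\bf b}\cdot\nabla u+(\eta_t-\Delta\eta-{\bf b}\cdot\nabla\eta)\,u-2\nabla\eta\cdot\nabla u
=:F,
\end{align*}
so $|F|\le C(|f|+|\nabla u|+|u|)$, with $C$ depending only on $\eta$ and $M$. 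Applying maximal $L^{p,q}$ regularity for the Cauchy-Dirichlet heat equation (Ladyzenskaja-Solonnikov-Ural'tseva, or Lieberman Thm.~7.13) yields $\|v_t\|_{L^{p,q}}+\|D^2 v\|_{L^{p,q}}\le C\|F\|_{L^{p,q}}$, and the hypothesis $\frac{n}{p}+\frac{2}{q}<1$ supplies the parabolic Sobolev embedding $W^{2,1}_{p,q}(Q)\hookrightarrow C^{1,\alpha}(Q)$ for some $\alpha>0$. Combining these,
\begin{align*}
\|\nabla v\|_{L^\infty(Q)}\le C\bigl(\|v\|_{L^\infty(Q)}+\|F\|_{L^{p,q}(Q)}\bigr),
\end{align*}
and since $v=u$ on $B_1\times(\tfrac12,1)$ this delivers the stated bound on $\nabla_x u$.

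The main obstacle is the circularity: $F$ itself contains $\nabla u$, the very quantity being estimated, so a single application would be tautological. I would resolve this by the standard interpolation inequality
\begin{align*}
\|\nabla u\|_{L^{p,q}(Q')}\le \epsilon\|D^2 u\|_{L^{p,q}(Q')}+C_\epsilon\|u\|_{L^\infty(Q')},
\end{align*}
valid for every $\epsilon>0$ and every subcylinder $Q'\subset Q$. Running the localization on a telescoping family of nested parabolic cylinders between $B_1\times(\tfrac12,1)$ and $B_2\times(0,1)$ and taking $\epsilon$ small at each stage allows the $\nabla u$ contribution to be absorbed into the left-hand side, leaving only $\|u\|_{L^\infty}$ and $\|f\|_{L^{p,q}}$ on the right. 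An alternative, more elementary route is to bootstrap: first invoke Lemma~\ref{lemma_3.5} to promote $\|u\|_{L^2}$ to $\|u\|_{L^\infty}$ on a slightly enlarged cylinder, then use a Caccioppoli-type energy inequality to get $\nabla u\in L^2$, and iterate via Gagliardo-Nirenberg up to the target $L^{p,q}$ norm. In either approach, the delicate point is the uniform tracking of constants through the nested cylinders so that the final constant depends only on $p,q,n$ and $M$ as claimed.
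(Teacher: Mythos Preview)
The paper does not give its own proof of Lemma~\ref{lemma_3.6}; it simply cites the result from Lieberman (Theorem~4.8, p.~56) and Lady\v{z}enskaja--Solonnikov--Ural'ceva (Theorem~11.1, p.~211) and uses it as a black box. Your sketch is a reasonable outline of the argument underlying those references: localize, apply maximal $L^{p,q}$ parabolic regularity, and invoke the embedding $W^{2,1}_{p,q}\hookrightarrow C^{1,\alpha}$ available precisely when $\tfrac{n}{p}+\tfrac{2}{q}<1$. The absorption of the lower-order $\nabla u$ term via interpolation on nested cylinders is the standard way to close the estimate, and your identification of this as the only nontrivial step is accurate. There is nothing to compare against in the paper itself.
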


 \begin{lem}
 \label{lemma_3.7}
%%%%%%%%%%%%%%%%%%%%%%%%%%%%%%%%%%%%%%%%%%%%%%%%%%%%%%%%%%%%%%%%
 Let $\delta\in(0,1)$.
 Assume that
 \begin{itemize}
 \item
 $n\geq3${\rm;}
 \item 
 $p,q\in(1,\infty]$ and
 $\tfrac{n}{p}+\tfrac{2}{q}<1${\rm;}
 \item
 The functions ${\bf b}(x,t)\in\{C(\overline{B_2}\times[0,\delta])\}^n$ and
 $V(x,t)\in C(\overline{B_2}\times[0,\delta])$ satisfy
 \begin{align*}
 \|{\bf b}\|_{L^\infty(B_2\times(0,\delta))}+\|V\|_{L^\infty(B_2\times(0,\delta))}<M.
 \end{align*}
 \end{itemize}
 Then
 there exists a constant $c>0$,
 depending on $p$, $q$, $n$ and $M$ {\rm(}but not on $\delta${\rm)},
 the following holds{\rm:}
 If $u(x,t)\in C^{2,1}(B_2\times(0,\delta))\cap C(\overline{B_2}\times[0,\delta])$
 is a solution of \eqref{EQ_3.3} satisfying the initial condition
 \begin{align*}
 u(x,0)=0
 \quad
 \text{for }
 x\in B_2,
 \end{align*}
 then
 \[
 \|\nabla_xu\|_{L^\infty(B_1\times(0,\delta))}
 <
 c\left( \|u\|_{L^\infty(B_2\times(0,\delta))}+\|f\|_{L^{p,q}(B_2\times(0,\delta))} \right).
 \]
%%%%%%%%%%%%%%%%%%%%%%%%%%%%%%%%%%%%%%%%%%%%%%%%%%%%%%%%%%%%%%%%
 \end{lem}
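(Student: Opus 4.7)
The plan is to deduce Lemma \ref{lemma_3.7} from the interior-in-time estimate Lemma \ref{lemma_3.6} via an extension-by-zero argument that exploits the initial condition $u(\cdot,0)=0$ to create a time buffer before $t=0$. The obstruction to applying Lemma \ref{lemma_3.6} directly is that its conclusion is an interior-in-time bound on $B_1\times(\tfrac12,1)$, which leaves a neighborhood of the initial time uncovered.

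I would first extend everything by zero to negative times: define $\tilde u(x,t)=u(x,t)$ for $t\in[0,\delta]$ and $\tilde u(x,t)=0$ for $t\in(\delta-1,0)$, and similarly set $\tilde{{\bf b}}$, $\tilde V$, $\tilde f$ to vanish on $t<0$. Because $u(\cdot,0)=0$ and the extension also vanishes as $t\uparrow 0$, the function $\tilde u$ is continuous across $t=0$ and is a weak solution of the extended equation on $B_2\times(\delta-1,\delta)$; the extended coefficients still satisfy $\|\tilde{{\bf b}}\|_\infty+\|\tilde V\|_\infty\le M$, and the $L^{p,q}$ norm of $\tilde f$ coincides with that of $f$.

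Next I would invoke the time-translated form of Lemma \ref{lemma_3.6} (its underlying Lieberman and Lady\v{z}enskaja estimates are manifestly time-translation invariant) on the rectangle $B_2\times(\delta-1,\delta)$ to obtain
\[
 \|\nabla_x\tilde u\|_{L^\infty(B_1\times(\delta-\tfrac12,\delta))}
 \leq
 c\bigl(\|\tilde u\|_{L^\infty(B_2\times(\delta-1,\delta))}+\|\tilde f\|_{L^{p,q}(B_2\times(\delta-1,\delta))}\bigr)
 =
 c\bigl(\|u\|_{L^\infty(B_2\times(0,\delta))}+\|f\|_{L^{p,q}(B_2\times(0,\delta))}\bigr),
\]
where the last equality uses that $\tilde u$ and $\tilde f$ vanish for $t<0$. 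For $\delta\le\tfrac12$ this already gives the desired estimate, since $(\delta-\tfrac12,\delta)\supseteq(0,\delta)$. For $\delta\in(\tfrac12,1)$ I would iterate once: apply the same argument to the restriction $u|_{B_2\times(0,1/2)}$ (which satisfies the hypotheses with $\delta$ replaced by $\tfrac12$) to cover $B_1\times(0,\tfrac12)$, and combine this with the estimate above on $B_1\times(\delta-\tfrac12,\delta)$; since $\delta-\tfrac12<\tfrac12$, the two pieces cover $B_1\times(0,\delta)$.

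The main technical point is the regularity of $\tilde u$ at the gluing time $t=0$: $\tilde u$ is continuous but in general only $C^{2,1}$ away from $t=0$, whereas Lemma \ref{lemma_3.6} is phrased for classical solutions. This is handled by invoking the weak-solution form of the cited parabolic gradient estimates (Theorem 4.8 of \cite{Lieberman} and Theorem 11.1 of \cite{Ladyzenskaja}); $\tilde u$ meets their hypotheses because the only singularity introduced by the extension is a jump of the time derivative at $t=0$ that is absorbed into the right-hand side of the weak formulation, which is already zero for $t<0$. The $\delta$-independence of $c$ is transparent in this scheme because every invocation of Lemma \ref{lemma_3.6} is on a fixed unit-length parabolic rectangle, so the constant depends only on $p$, $q$, $n$, $M$.
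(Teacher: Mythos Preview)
Your extension-by-zero idea is exactly the mechanism the paper uses, but the paper executes it differently in order to sidestep the regularity issue you flag. Rather than extending $u$ directly, the paper (under extra smoothness assumptions on ${\bf b},V,f$, removed at the end by density) solves auxiliary initial--boundary value problems \eqref{EQ_3.4} for functions $w_j$ on $B_2\times(0,\delta)$ with lateral boundary data $u\chi_j$, where $\chi_j(t)$ is a time cutoff vanishing near $t=0$. Since the coefficients and forcing are also taken to vanish on a neighborhood of $t=0$, each $w_j$ is identically zero there, so its zero extension $w_{j,\text{ex}}$ is genuinely $C^\infty$ across $t=0$ and Lemma~\ref{lemma_3.6} applies verbatim. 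The paper then bounds $\|w_{j,\text{ex}}\|_{L^\infty(B_2)}$ via Lemma~\ref{lemma_3.5}(iii), obtains a uniform gradient estimate, and passes to the limit $w_j\to w_\infty=u$ using a Cauchy-sequence argument and uniqueness for the initial--boundary value problem.

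Your route is shorter and avoids both the auxiliary boundary value problems and the appeal to Lemma~\ref{lemma_3.5}(iii), but the phrase ``invoke the weak-solution form'' hides a genuine step: for $\tilde u$ to be a weak solution across $t=0$ you need $\nabla u\in L^2$ on $B_2\times(0,\delta)$, while the hypotheses only give $u\in C^{2,1}$ on the \emph{open} cylinder together with continuity up to $t=0$, so $\nabla u$ could a priori blow up as $t\downarrow 0$. This can be repaired (e.g.\ by a localized energy estimate, or by precisely the kind of smooth approximation the paper performs), but as written it is a gap rather than a detail. The paper's $w_j$ scheme is designed so that the extended object is classically regular and no weak-solution theory is needed. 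Your covering argument for $\delta\in(\tfrac12,1)$ is fine, though unnecessary: one can simply apply the translated Lemma~\ref{lemma_3.6} on $B_2\times(t_0-1,t_0)$ for each $t_0\in(0,\delta]$.
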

 \begin{proof}
 Since we could not find a reference that states the theorem in exactly the same form, we include a proof for completeness.
 Assume that
 \begin{itemize}
 \item
 $u(x,t)\in C^\infty(B_2\times(0,\delta))\cap C(\overline{B_2}\times[0,\delta])$,
 \item
 ${\bf b}(x,t),V(x,t),f(x,t)\in C^\infty(\R^{n+1})$,
 \end{itemize}
 and further suppose that there exists $\nu\in(0,\delta)$ such that
 \begin{itemize}
 \item
 ${\bf b}(x,t)=0$ \quad for $(x,t)\in B_2\times(0,\nu)$,
 \item
 $V(x,t)=f(x,t)=0$ \quad for $(x,t)\in B_2\times(0,\nu)$.
 \end{itemize}
 We provide a proof only in this case.
 The general case can be handled by a standard density argument and is omitted.
 Let $\chi_j$ be a cut off function satisfying $\chi_j=1$ for $t>\frac{1}{j}$ and $\chi_j=0$ for $t<\frac{1}{2j}$.
 Consider
 \begin{align}
 \label{EQ_3.4}
 &
 \begin{cases}
 w_t
 =
 \Delta_xw+{\bf b}(x,t)\cdot\nabla_xw+V(x,t)w
 +
 f(x,t)
 &
 \text{for } (x,t)\in B_2\times(0,\delta),
 \\
 w(x,t)=u(x,t)\chi_j
 &
 \text{for } (x,t)\in\pa B_2\times(0,\delta),
 \\
 w(x,0)=0
 &
 \text{for }
 x\in B_2.
 \end{cases}
 \end{align}
 This equation admits a unique classical solutions
 $w_j(x,t)\in C^\infty(B_2\times(0,\delta))\cap C(\overline{B_2}\times[0,\delta])$
 satisfying
 \[
 w_j(x,t)=0
 \quad
 \text{for }
 x\in B_2\times(0,\tfrac{1}{2j}).
 \]
 For a given function $g(x,t)$,
 we denote by $g_\text{ex}(x,t)$ its extension by zero to $t<0$.
 It is clear that
 $w_{j,\text{ex}}(x,t)\in C^\infty(B_1\times(-\infty,\delta))\cap C(\overline{B_1}\times(-\infty,\delta])$
 and
 it satisfies \eqref{EQ_3.4} with
 ${\bf b}(x,t)$, $V(x,t)$ and $f(x,t)$ replaced by
 ${\bf b}_\text{ex}(x,t)$, $V_\text{ex}(x,t)$ and $f_\text{ex}(x,t)$,
 respectively,
 for $(x,t)\in B_2\times(-\infty,\delta)$.
 \begin{itemize}
 \item
 In what follows,
 the constant $C$ is assumed to be independent of $j$ and $\delta$
 (and might depend on $n$, $p$, $q$ and $M$).
 \end{itemize}
 From Lemma \ref{lemma_3.6},
 we have
 \begin{align}
 \label{EQ_3.5}
 &\|\nabla_xw_{j,\text{ex}}\|_{L^\infty(B_1\times(\delta-1,\delta))}
 \\
 \nonumber
 &<
 c\|w_{j,\text{ex}}\|_{L^\infty(B_2\times(\delta-1,\delta))}
 +
 c\|f_\text{ex}\|_{L^{p,q}(B_2\times(\delta-1,\delta))}.
 \end{align}
 Furthermore,
 from Lemma \ref{lemma_3.5} (iii),
 we deduce that
 \begin{align}
 \nonumber
 &
 \|w_{j,\text{ex}}\|_{L^\infty(B_2\times(\delta-1,\delta))}
 \\
 \nonumber
 &<
 \|w_{j,\text{ex}}\|_{L^\infty(\pa B_2\times(\delta-1,\delta))}
 +
 C\|f_\text{ex}\|_{L^{2,2}(B_2\times(\delta-1,\delta))}
 \\
 \nonumber
 &\quad
 +
 C\|f_\text{ex}\|_{L^{p,q}(B_2\times(\delta-1,\delta))}
 \\
 \label{EQ_3.6}
 &=
 \|u(x,t)\chi_j\|_{L^\infty(\pa B_2\times(\delta-1,\delta))}
 +
 C\|f_\text{ex}\|_{L^{2,2}(B_2\times(\delta-1,\delta))}
 \\
 \nonumber
 &\quad
 +
 C\|f_\text{ex}\|_{L^{p,q}(B_2\times(\delta-1,\delta))}.
 \end{align}
 From the assumption on $p$ and $q$,
 we note that $p>2$ and $q>2$.
 Hence,
 combining \eqref{EQ_3.5} - \eqref{EQ_3.6},
 we obtain
 \begin{align*}
 &\|\nabla_xw_{j,\text{ex}}\|_{L^\infty(B_1\times(\delta-1,\delta))}
 \\
 \nonumber
 &<
 c
 \|u(x,t)\chi_j\|_{L^\infty(\pa B_2\times(\delta-1,\delta))}
 +
 c\|f_\text{ex}\|_{L^{p,q}(B_2\times(\delta-1,\delta))}.
 \end{align*}
 This inequality implies
 \begin{align}
 \label{EQ_3.7}
 &\|\nabla_xw_j\|_{L^\infty(B_1\times(0,\delta))}
 \\
 \nonumber
 &<
 c
 \|u\|_{L^\infty(\pa B_2\times(\delta-1,\delta))}
 +
 c\|f\|_{L^{p,q}(B_2\times(0,\delta))}.
 \end{align}
 Define
 \begin{align*}
 h_{kl}(x,t)
 =
 w_k(x,t)
 -
 w_l(x,t).
 \end{align*}
 Then,
 it satisfies
 \begin{align*}
 \begin{cases}
 \pa_th_{k,l}
 =
 \Delta_xh_{k,l}
 +
 {\bf b}(x,t)\cdot\nabla_xh_{k,l}
 +
 V(x,t)h_{k,l}
 &
 \text{for } (x,t)\in B_2\times(0,\delta),
 \\
 w(x,t)=u(x,t)(\chi_k-\chi_l)
 &
 \text{for } (x,t)\in\pa B_2\times(0,\delta),
 \\
 w(x,0)=0
 &
 \text{for }
 x\in B_2.
 \end{cases}
 \end{align*}
 By the same procedure as above,
 we can verify that
 \begin{align*}
 \|h_{k,l}\|_{L^\infty(B_2\times(0,\delta))}
 &<
 c
 \|u(x,t)(\chi_k-\chi_l)\|_{L^\infty(\pa B_2\times(0,\delta))},
 \\
 \|\nabla_xh_{k,l}\|_{L^\infty(B_1\times(0,\delta))}
 &<
 c
 \|u(x,t)(\chi_k-\chi_l)\|_{L^\infty(\pa B_1\times(0,\delta))}.
 \end{align*}
 Since $u(x,0)=0$ for $x\in\overline{B_2}$,
 it follows that
 \begin{align*}
 \lim_{k,l\to\infty}h_{k,l}(x,t)
 &=
 0
 \quad \text{in } C(\overline{B_2}\times[0,\delta]),
 \\
 \lim_{k,l\to\infty}\nabla_xh_{k,l}(x,t)
 &=
 0
 \quad \text{in } C(\overline{B_1}\times[0,\delta]).
 \end{align*}
 Therefore,
 there exists a function
 $w_\infty(x,t)\in C(\overline{B_2}\times[0,\delta])\cap C^{1,0}(B_1\times(0,\delta))$
 with $\nabla_xw_\infty\in\{C(\overline{B_1}\times[0,\delta])\}^n$
 such that
 \begin{align*}
 \lim_{j\to\infty}
 w_j(x,t)
 &=
 w_\infty(x,t)
 \quad \text{in } C(\overline{B_2}\times[0,\delta]),
 \\
 \lim_{j\to\infty}
 \nabla_xw_j(x,t)
 &=
 \nabla_xw_\infty(x,t)
 \quad \text{in } C(\overline{B_1}\times[0,\delta]).
 \end{align*}
 This $w_\infty(x,t)$ satisfies
 \begin{align}
 \label{EQ_3.8}
 &
 \begin{cases}
 \pa_t
 w_\infty
 =
 \Delta_xw_\infty
 +
 {\bf b}(x,t)\cdot\nabla_xw_\infty
 +
 V(x,t)w_\infty
 +
 f(x,t)
 \\
 \quad
 \text{for } (x,t)\in B_2\times(0,\delta),
 \\
 w(x,t)=u(x,t)
 \quad
 \text{for } (x,t)\in\pa B_2\times(0,\delta),
 \\
 w(x,0)=0
 \quad
 \text{for }
 x\in B_2.
 \end{cases}
 \end{align}
 Since the solution to equation \eqref{EQ_3.8} is unique within an appropriate function class,
 we conclude that $w_\infty(x,t)=u(x,t)$.
 This completes the proof.
 \end{proof}
%%%%%%%%%%%%%%%%%%%%%%%%%%%%%%%%%%%%%%%%%%%%%%%%%%%%%%%%%%%%%%%%

\subsection{Gradient Decay Estimates for Solutions to Parabolic Equations}
\label{section_3.5}
%%%%%%%%%%%%%%%%%%%%%%%%%%%%%%%%%%%%%%%%%%%%%%%%%%%%%%%%%%%%%%%%
 Let $1<t_1<t_2$ be real numbers,
 and let
 $\rho(t)$ be a continuous function on $[t_1,t_2]$
 satisfying the following conditions:
 \begin{itemize}
  \item
  There exists ${\sf r}>0$ such that
  $4<\rho(t)<{\sf r}\sqrt t$
  for all $t \in (t_1,t_2)${\rm;}

  \item
  there exists a constant ${\sf a}\in(0,\frac{1}{2})$ such that
  \begin{align*}
  \inf_{t'\in(\max\{(1-{\sf a})t,t_1\},t)}
  \rho(t')
  >
  \tfrac{3}{4}
  \rho(t)
  \quad
  \text{for all }
  t\in(t_1,t_2).
  \end{align*}
 \end{itemize}
 We define the time-dependent spatial domain
 \begin{align*}
 \Omega(t)
 =
 \{x\in\R^n;1<|x|<\rho(t)\},
 \end{align*}
 and set
 \begin{align*}
 \Omega_{(t_1,t_2)}
 =
 \bigcup_{t\in(t_1,t_2)}
 \Omega(t).
 \end{align*}
 Consider
 \begin{align}
 \label{EQ_3.9}
 \begin{cases}
 u_t
 =
 \Delta_xu
 +
 V(x,t)
 u
 +
 f(x,t)
 &
 \text{for }
 (x,t)\in\Omega_{(t_1,t_2)},
 \\
 u|_{t=t_1}
 =
 0
 &
 \text{for }
 x\in \Omega(t_1).
 \end{cases}
 \end{align}
 No boundary condition is imposed in this subsection.
 The following lemma is based on an argument that appears
 in the proof of Proposition 7.2 in \cite{Cortazar-delPino-Musso} (p. 340).
 We state it separately and include a proof here for clarity.
%%%%%%%%%%%%%%%%%%%%%%%%%%%%%%%%%%%%%%%%%%%%%%%%%%%%%%%%%%%%%%%%
 \begin{lem}
 \label{lemma_3.8}
 Assume that there exists a constant $C_V>0$ such that
 \begin{itemize}
 \item
 $|V(x,t)|<\frac{C_V}{|x|^2}$ \quad for all $(x,t)\in\Omega_{(t_1,t_2)}$.
 \end{itemize}
 Let $u(x,t)\in C^{2,1}(\Omega_{(t_1,t_2)})\cap C(\overline{\Omega_{(t_1,t_2)}})$
 be a solution of \eqref{EQ_3.9}.
 Then,
 for any given ${\sf a}\in(0,\frac{1}{2})$,
 there exists a constant $C>0$,
 independent of $t_1$, $t_2$ and $\rho(t)$,
 such that
 if ${\sf r}^2<\frac{{\sf a}}{64}$,
 the following estimates hold{\rm:}
 \begin{enumerate}[\rm(i)]
 \item 
 If $(x_0,t_0)\in\Omega_{(t_1,t_2)}$ satisfies $1<|x_0|<\frac{\rho(t_0)}{2}$ and $t_0>t_1+\frac{|x_0|^2}{64}$,
 we have
 \begin{align*}
 \nonumber
 |\nabla_xu(x_0,t_0)|
 &<
 \tfrac{C}{|x_0|}
 \sup_{t\in(t_0-\frac{|x_0|^2}{64},t_0)}
 \sup_{x\in B(x_0,\frac{|x_0|}{4})}
 |u(x,t)|
 \\
 &\quad
 +
 C|x_0|
 \sup_{t\in(t_0-\frac{|x_0|^2}{64},t_0)}
 \sup_{x\in B(x_0,\frac{|x_0|}{4})}
 |f(x,t)|.
 \end{align*}
 
 \item
 If $(x_0,t_0)\in\Omega_{(t_1,t_2)}$ satisfies $1<|x_0|<\frac{\rho(t_0)}{2}$ and $t_0<t_1+\frac{|x_0|^2}{64}$,
 we have
  \begin{align*}
 \nonumber
 |\nabla_xu(x_0,t_0)|
 &<
 \tfrac{C}{|x_0|}
 \sup_{t\in(t_1,t_0)}
 \sup_{x\in B(x_0,\frac{|x_0|}{4})}
 |u(x,t)|
 \\
 &\quad
 +
 C|x_0|
 \sup_{t\in(t_1,t_0)}
 \sup_{x\in B(x_0,\frac{|x_0|}{4})}
 |f(x,t)|.
  \end{align*}
 \end{enumerate}
 \end{lem}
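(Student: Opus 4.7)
The proof is a standard parabolic rescaling argument that exploits the scale-invariance of the hypothesis $|V(x,t)|<C_V/|x|^2$ under the transformation $x\mapsto x/r$, $t\mapsto t/r^2$. Setting $r=|x_0|$, I would introduce
\[
\tilde u(y,\tau) = u\bigl(x_0+\tfrac{r}{8}y,\; t_\ast+\tfrac{r^2}{64}\tau\bigr),
\]
with $t_\ast=t_0$ and $\tau\in(-1,0)$ in case (i), and $t_\ast=t_1$ with $\tau\in(0,\delta)$, $\delta=64(t_0-t_1)/r^2\in(0,1)$, in case (ii). Then $\tilde u_\tau=\Delta_y\tilde u+\tilde V\tilde u+\tilde f$ with $\tilde V=\tfrac{r^2}{64}V(\cdot)$ and $\tilde f=\tfrac{r^2}{64}f(\cdot)$. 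For $y\in B_2$ one has $|x_0+\tfrac{r}{8}y|\in[\tfrac{3r}{4},\tfrac{5r}{4}]$, whence $\|\tilde V\|_{L^\infty(B_2\times\cdot)}\le \tfrac{r^2}{64}\cdot\tfrac{C_V}{(3r/4)^2}=\tfrac{C_V}{36}$. This $r$-independent bound on the rescaled potential is the essential point that makes the strategy work.

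Before invoking the standard estimates, I would verify that the rescaled cylinder actually sits inside $\Omega_{(t_1,t_2)}$. Temporally, the hypothesis $r<{\sf r}\sqrt{t_0}$ combined with ${\sf r}^2<\tfrac{{\sf a}}{64}$ gives $\tfrac{r^2}{64}<{\sf a}t_0$, so the relevant slice of time is contained in $((1-{\sf a})t_0,t_0)$. Then the lower-semicontinuity hypothesis $\inf_{(1-{\sf a})t_0<t'<t_0}\rho(t')>\tfrac{3}{4}\rho(t_0)$ yields $\rho(t)>\tfrac{3}{4}\rho(t_0)>\tfrac{3}{2}|x_0|$ throughout, and hence $|x_0|+\tfrac{r}{4}=\tfrac{5}{4}|x_0|<\rho(t)$, so the spatial ball $B(x_0,\tfrac{r}{4})$ remains in $\Omega(t)$.

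In case (i), the shifted initial time $t_0-\tfrac{r^2}{64}>t_1$ lies strictly inside the problem's time range, so Lemma \ref{lemma_3.6} applies (with the admissible choice $p=q=\infty$) on $B_2\times(-1,0)$:
\[
\|\nabla_y\tilde u\|_{L^\infty(B_1\times(-1/2,0))}
\le c\bigl(\|\tilde u\|_{L^\infty(B_2\times(-1,0))}+\|\tilde f\|_{L^\infty(B_2\times(-1,0))}\bigr).
\]
In case (ii), the initial condition $\tilde u(y,0)=u(x_0+\tfrac{r}{8}y,t_1)=0$ is in force, so Lemma \ref{lemma_3.7} applies on $B_2\times(0,\delta)$; the critical feature used here is that the constant in Lemma \ref{lemma_3.7} is independent of $\delta\in(0,1)$, which is precisely why that lemma was proved in the preceding subsection.

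Finally, in both cases I would evaluate the rescaled gradient bound at the origin of the rescaled system. Since $\nabla_y\tilde u=\tfrac{r}{8}\nabla_xu$ and $\|\tilde f\|_{L^\infty}=\tfrac{r^2}{64}\|f\|_{L^\infty}$ on the appropriate cylinder, multiplying through by $\tfrac{8}{r}$ produces
\[
|\nabla_x u(x_0,t_0)|\le \tfrac{C}{|x_0|}\sup|u|+C|x_0|\sup|f|,
\]
where the suprema are taken over $B(x_0,\tfrac{|x_0|}{4})\times (t_0-\tfrac{|x_0|^2}{64},t_0)$ in case (i) and over $B(x_0,\tfrac{|x_0|}{4})\times(t_1,t_0)$ in case (ii). The main technical issue is purely book-keeping: one must carefully check that the rescaled cylinder respects the moving domain (via the condition on $\rho$) and that the $\delta$-uniformity of the constant in Lemma \ref{lemma_3.7} is correctly invoked in case (ii); once these two ingredients are in place, there is no deeper obstruction.
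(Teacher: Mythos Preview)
Your proposal is correct and follows essentially the same approach as the paper: parabolic rescaling by $r=|x_0|$, verification that the rescaled cylinder stays inside $\Omega_{(t_1,t_2)}$ using the assumptions on $\rho(t)$ and the smallness of ${\sf r}$, the scale-invariant bound $\|\tilde V\|_{L^\infty}\le C$ from the hypothesis $|V|<C_V/|x|^2$, and then an application of Lemma~\ref{lemma_3.6} in case~(i) and of Lemma~\ref{lemma_3.7} (with its $\delta$-independent constant) in case~(ii). The only cosmetic difference is that in case~(ii) you anchor the rescaled time at $t_1$ rather than at $t_0$, but this is an equivalent parametrization.
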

%%%%%%%%%%%%%%%%%%%%%%%%%%%%%%%%%%%%%%%%%%%%%%%%%%%%%%%%%%%%%%%%
 \begin{proof}
 Let $(x_0,t_0)$ be a point in $\Omega_{(t_1,t_2)}$ such that $|x_0|<\frac{\rho(t_0)}{2}$.
 Set $d_0:=|x_0|$,
 and introduce the rescaled function
 \begin{align*}
 \tilde u(X,T)
 =
 u(x_0+\tfrac{d_0}{8}X,t_0+\tfrac{d_0^2}{64}T).
 \end{align*}
 Similarly,
 define:
 \begin{itemize}
 \item $\tilde V=(\frac{d_0}{8})^2V(x_0+\frac{d_0}{8}X,t_0+\frac{d_0^2}{64}T)$,
 \item $\tilde f=(\frac{d_0}{8})^2f(x_0+\frac{d_0}{8}X,t_0+\frac{d_0^2}{64}T)$.
 \end{itemize}
 We define the rescaled time interval \((T_1, T_2)\) as
 \[
 T_1=-\tfrac{64(t_0-t_1)}{d_0^2},
 \quad
 T_2=\tfrac{64(t_2-t_0)}{d_0^2},
 \]
 and introduce the space-time domain
 \begin{align*}
 \tilde\Omega_{(T_1,T_2)}
 &=
 \bigcup_{T\in(T_1,T_2)}
 \tilde\Omega(T),
 \end{align*}
 where $\tilde\Omega(T)=\{X\in\R^n;|x_0+\frac{d_0}{8}X|<\rho(t_0+\frac{d_0^2}{64}T)\}$.
 Then $\tilde u(X,T)$ satisfies
 \begin{align}
 \label{EQ_3.10}
 \pa_T\tilde u
 &=
 \Delta_X\tilde u
 +
 \tilde V
 \tilde u
 +
 \tilde f\quad
 \text{for }
 (X,T)\in\tilde\Omega_{(T_1,T_2)}.
 \end{align}
 From the assumption on $V(x,t)$,
 there exists $C>0$ independent of $(x_0,t_0)$ and $\rho(t)$
 such that
 \begin{align}
 \nonumber
 |\tilde V(X,T)|
 &=
 |
 d_0^2
 V(x_0+\tfrac{d_0}{8}X,t_0+\tfrac{d_0^2}{64}T)
 |
 \\
 \label{EQ_3.11}
 &<
 \tfrac{C_Vd_0^2}{|x_0+\frac{d_0}{8}X|^2}
 <
 C
 \\
 \nonumber
 & \text{for }
 (X,T)\in\tilde\Omega_{(T_1,T_2)}
 \text{ with }
 |X|<2.
 \end{align}
%%%%%%%%%%%%%%%%%%%%%%%%%%%%%%%%%%%%%%%%%%%%%%%%%%%%%%%%%%%%%%%%
 We now claim:
 \begin{enumerate}[(ci)]
 \item 
 $\tilde u(X,T)$ is well-defined on the parabolic cylinder $B_2\times(-1,0)$
 if $T_1<-1$,
 which is equivalent to $t_0>t_1+\frac{|x_0|^2}{64}$,
 \item
 $\tilde u(X,T)$ is well-defined on the parabolic cylinder $B_2\times(T_1,0)$
 if $T_1\in(-1,0)$,
 which is equivalent to $t_0<t_1+\frac{|x_0|^2}{64}$.
 \end{enumerate}
 Assume first that $T_1<-1$.
 From the relation $d_0=|x_0|<\frac{\rho(t_0)}{2}$ and the first assumption on $\rho(t)$,
 we observe that
 if ${\sf r}^2<256{\sf a}$,
 then
 \begin{align*}
 t_0+\tfrac{d_0^2}{64}T
 &>
 t_0-\tfrac{d_0^2}{64}
 >
 t_0-\tfrac{\rho(t_0)^2}{256}
 >
 t_0-\tfrac{{\sf r}^2t_0}{256}
 \\
 &
 >
 (1-{\sf a})t_0
 \quad
 \text{for all } T\in(-1,0). 
 \end{align*}
 Using the second assumption on $\rho(t)$,
 we obtain
 \begin{align*}
 \rho(t_0+\tfrac{d_0^2}{64}T)
 >
 \tfrac{3}{4}
 \rho(t_0)
 \quad
 \text{for all }
 T\in(-1,0).
 \end{align*}
 Moreover,
 for $|X|<2$,
 we have
 \begin{align*}
 |x_0+\tfrac{d_0}{8}X|
 <
 \tfrac{5}{4}
 |x_0|
 <
 \tfrac{5}{8}
 \rho(t_0).
 \end{align*}
 Therefore
 it follows that
 \begin{align*}
 \rho(t_0+\tfrac{d_0^2}{16}T)>|x_0+\tfrac{d_0}{4}X|
 \quad \text{for all }
 (X,T)\in B_2\times(-1,0),
 \end{align*}
 which shows that $\tilde u(X,T)$ is indeed well-defined on $B_2 \times (-1,0)$.
 Thus the claim (ci) is proved.
 The claim (cii) can be verified in the same manner.
%%%%%%%%%%%%%%%%%%%%%%%%%%%%%%%%%%%%%%%%%%%%%%%%%%%%%%%%%%%%%%%%
 In case (ci),
 it follows from Lemma \ref{lemma_3.6} that
 there exists a constant $C>0$,
 independent of $(x_0,t_0)$ and $\rho(t)$,
 such that
 \begin{align}
 \label{EQ_3.12}
 &
 \|\nabla_X\tilde u\|_{L_{X,T}^\infty(B_1\times(-\frac{1}{2},0))}
 \\
 \nonumber
 &<
 C
 \|\tilde u\|_{L_{X,T}^\infty(B_2\times(-1,0))}
 +
 C
 \|\tilde f(X,T)\|_{L_{X,T}^{p,q}(B_2\times(-1,0))}.
 \end{align}
%%%%%%%%%%%%%%%%%%%%%%%%%%%%%%%%%%%%%%%%%%%%%%%%%%%%%%%%%%%%%%%%
 The last term can be written in the original variables $(x,t)$ as
 \begin{align}
 \nonumber
 &
 \|\tilde f(X,T)\|_{L_{X,T}^{p,q}(B_2\times(-1,0))}.
 \\
 \nonumber
 &<
 C
 \sup_{T\in(-1,0)}
 \sup_{|X|<2}
 |\tilde f(X,T)|
 \\
 \nonumber
 &=
 C
 \sup_{T\in(-1,0)}
 \sup_{|X|<2}
 (\tfrac{d_0}{8})^2
 |f(x_0+\tfrac{d_0}{8}X,t_0+\tfrac{d_0^2}{64}T)|
 \\
 \label{EQ_3.13}
 &=
 C
 \sup_{t\in(t_0-\frac{d_0^2}{64},t_0)}
 \sup_{x\in B(x_0,\frac{d_0}{4})}
 d_0^2
 |f(x,t)|.
 \end{align}
 Since $\nabla_X\tilde u=\frac{d_0}{8}\nabla_xu(x_0+\frac{d_0}{8}X,t_0+\frac{d_0^2}{64}T)$,
 from \eqref{EQ_3.12} - \eqref{EQ_3.13},
 we obtain
 \begin{align}
 \nonumber
 &
 \|\nabla_xu\|_{L_{x,t}^\infty(B(x_0,\frac{d_0}{8})\times(t_0-\frac{d_0}{128},t_0))}
 \\
 \label{EQ_3.14}
 &<
 \tfrac{C}{d_0}
 \sup_{t\in(t_0-\frac{d_0^2}{64},t_0)}
 \sup_{x\in B(x_0,\frac{d_0}{4})}
 |u(x,t)|
 \\
 \nonumber
 &\quad
 +
 Cd_0
 \sup_{t\in(t_0-\frac{d_0^2}{64},t_0)}
 \sup_{x\in B(x_0,\frac{d_0}{4})}
 |f(x,t)|.
 \end{align}
 Therefore,
 the statement of (i) in this lemma has been established.
 Let us now consider the alternative case (cii).
 The definitions of $\tilde u(X,T)$, $\tilde V(X,T)$, and $\tilde f(X,T)$ remain the same as in case (ci).
 The only differences between (ci) and (cii) lie in the time interval on which equation \eqref{EQ_3.10} is defined.
 In case (cii),
 we apply Lemma \ref{lemma_3.7} instead of Lemma \ref{lemma_3.6},
 which enables us to establish the estimate stated in (ii).
 \end{proof}
%%%%%%%%%%%%%%%%%%%%%%%%%%%%%%%%%%%%%%%%%%%%%%%%%%%%%%%%%%%%%%%%

\subsection{Solvability of Inhomogeneous Problem $H_yT+g=0$}
\label{section_3.6}
%%%%%%%%%%%%%%%%%%%%%%%%%%%%%%%%%%%%%%%%%%%%%%%%%%%%%%%%%%%%%%%%
 We provide solutions of the following linearized problem around the ground state
 under the assumption of radial symmetry.
 \begin{equation}
 \label{EQ_3.15}
 H_yT+g=0,
 \end{equation}
 where $H_y=\Delta_y+V(y)$ and $V(y)=p{\sf Q}(y)^{p-1}$.
 We recall that
 \begin{align*}
 \Lambda_y{\sf Q}
 &=
 \left( \tfrac{n-2}{2}+y\cdot\nabla_y \right){\sf Q}
 \\
 &=
 \tfrac{n-2}{2}( 1-\tfrac{|y|^2}{n(n-2)} )
 ( 1+\tfrac{|y|^2}{n(n-2)} )^{-\frac{n}{2}}
 \end{align*}
 is a radially symmetric solution to $H_yT=0$.
 Let $\Gamma$ be another linearly independent, radially symmetric solution to $H_yT=0$.
 It is known that (see Section 3.7 in \cite{Harada_6D})
 \begin{align*}
 \Gamma(r)
 &=
 -
 \tfrac{2n}{n-2}
 \{n(n-2)\}^{-\frac{n}{2}}
 +
 O(r^{-2}) \qquad (n\geq5),
 \\
 \label{3.10}
 \pa_r\Gamma(r)
 &=
 O(r^{-3}) \qquad (n\geq5).
 \end{align*}
 A solution of \eqref{EQ_3.15} can be written as
%%%%%%%%%%%%%%%%%%%%%%%%%%%%%%%%%%%%%%%%%%%%%%%%%%%%%%%%%%%%%%%%
 \begin{equation}
 \label{EQ_3.16}
 T
 =
 -\Gamma
 \int_0^r
 (\Lambda_y{\sf Q})g
 r^{n-1}dr
 +
 (\Lambda_y{\sf Q})
 \int_0^r
 \Gamma gr^{n-1}dr.
 \end{equation}
%%%%%%%%%%%%%%%%%%%%%%%%%%%%%%%%%%%%%%%%%%%%%%%%%%%%%%%%%%%%%%%%

 \subsection{Fundamental Estimates for Solutions to the Linear Heat Equation with Inhomogeneous Terms}
 \label{section_3.7}
%%%%%%%%%%%%%%%%%%%%%%%%%%%%%%%%%%%%%%%%%%%%%%%%%%%%%%%%%%%%%%%%
 Consider
 \begin{align}
 \label{EQ_3.17}
 \begin{cases}
 u_t=
 \Delta_xu
 +
 f(x,t)
 &
 \text{for }
 (x,t)\in\R^n\times(t_0,\infty),
 \\
 u|_{t=t_0}
 =
 0
 &
 \text{for }
 x\in\R^n.
 \end{cases}
 \end{align}
 The solution $u(x,t)$ to \eqref{EQ_3.17} is represented by
 \begin{align*}
 u(x,t)
 &=
 \int_{t_0}^t
 ds
 \int_{\R_y^n}
 (4\pi(t-s))^{-\frac{n}{2}}
 e^{-\frac{|x-y|^2}{4(t-s)}}
 f(y,s)
 dy.
 \end{align*}
%%%%%%%%%%%%%%%%%%%%%%%%%%%%%%%%%%%%%%%%%%%%%%%%%%%%%%%%%%%%%%%%
 The following lemmas are variants of
 results contained in Lemmas A.1 - A.2 of the paper
 by Juncheng Wei - Qidi Zhang - Yifu Zhou \cite{Wei-Qidi-Yifu}
 on p.p 119 - 124 with logarithmic corrections.
 We provide a proof for completeness.
%%%%%%%%%%%%%%%%%%%%%%%%%%%%%%%%%%%%%%%%%%%%%%%%%%%%%%%%%%%%%%%%
 \begin{lem}
 \label{lemma_3.9}
 Let $\gamma\in(0,\frac{n}{2})$ and $q\in\R$.
 Assume that
 \begin{align*}
 |f(x,t)|
 <
 \begin{cases}
 t^{-\gamma}
 (\log t)^q
 &
 \text{for } |x|<K_1\sqrt t,\ t\in(t_0,\infty),
 \\
 0
 &
 \text{for } |x|>K_1\sqrt t,\ t\in(t_0,\infty)
 \end{cases}
 \end{align*}
 for some $K_1>0$ and $t_0>e$.
 Then
 for any $K_2>0$,
 there exists a constant $C>0$,
 depending only on $\gamma$, $q$, $K_1$ and $K_2$,
 but independent of $t_0$,
 such that
 \begin{align*}
 |u(x,t)|
 <
 C
 \begin{cases}
 t^{-\gamma+1}
 (\log t)^q
 & \text{\rm if } |x|<K_2\sqrt t,\ t\in(t_0,\infty),
 \\
 |x|^{-2\gamma+2}
 (\log|x|^2)^q
 & \text{\rm if } |x|>K_2\sqrt t,\ t\in(t_0,\infty).
 \end{cases}
 \end{align*}
 \end{lem}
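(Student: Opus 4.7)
The plan is to exploit Duhamel's representation
\[
u(x,t) = \int_{t_0}^{t}\!\!\int_{\mathbb{R}^n} (4\pi(t-s))^{-n/2} e^{-|x-y|^2/(4(t-s))} f(y,s)\, dy\, ds,
\]
split the $s$-integral at $s=t/2$, and carry out a case analysis on $|x|$.

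For the inner bound $|x|<K_2\sqrt{t}$ I would treat the two time intervals separately. On $(t/2,t)$, the identity $\|K(\cdot,t-s)\|_{L^1_y}=1$ together with $\|f(\cdot,s)\|_\infty\le s^{-\gamma}(\log s)^q$ gives $\int_{t/2}^t s^{-\gamma}(\log s)^q\,ds\le Ct^{1-\gamma}(\log t)^q$. On $(t_0,t/2)$, the kernel satisfies $(t-s)^{-n/2}\le Ct^{-n/2}$ and $\mathrm{supp}\,f(\cdot,s)\subset B_{K_1\sqrt s}$ has measure $Cs^{n/2}$, so the contribution is $\le Ct^{-n/2}\int_{t_0}^{t/2} s^{n/2-\gamma}(\log s)^q\,ds\le Ct^{1-\gamma}(\log t)^q$, where $\gamma<n/2$ ensures the $s$-integrability. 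Crucially, this argument is $x$-independent, so in fact $|u(x,t)|\le Ct^{1-\gamma}(\log t)^q$ holds globally.

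For the outer bound $|x|>K_2\sqrt{t}$ I would fix $K_3:=\max(K_2,2K_1)+1$ and split the far field again. In the transition zone $K_2\sqrt{t}<|x|<K_3\sqrt{t}$, the previous global bound already suffices, since $|x|^2\asymp t$ makes $t^{1-\gamma}(\log t)^q$ comparable to $|x|^{2-2\gamma}(\log|x|^2)^q$ up to a constant depending on $K_1,K_2,K_3$. In the far zone $|x|\ge K_3\sqrt{t}$, the support condition forces $|y|<K_1\sqrt{s}\le K_1\sqrt{t}\le|x|/2$, hence $|x-y|\ge|x|/2$, and
\[
|u(x,t)|\le C\int_{t_0}^{t} (t-s)^{-n/2} e^{-|x|^2/(16(t-s))} s^{n/2-\gamma}(\log s)^q\,ds.
\]
The substitution $\sigma=|x|^2/(16(t-s))$ combined with $\int_0^{\infty}\sigma^{n/2-2}e^{-\sigma}\,d\sigma<\infty$ (valid for $n\ge 3$) controls the right-hand side by $Ct^{n/2-\gamma}(\log t)^q|x|^{2-n}$; using $t<|x|^2/K_3^2$ and $n/2>\gamma$ converts $t^{n/2-\gamma}$ into $C|x|^{n-2\gamma}$ and produces the claimed $|x|^{2-2\gamma}(\log|x|^2)^q$.

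The main obstacle is the bookkeeping of the logarithmic factor when $q<0$, since then $(\log s)^q$ is large for small $s$ and the natural estimate $(\log t)^q\le C(\log|x|^2)^q$ goes the wrong way. My remedy is to reserve half of the Gaussian via $e^{-|x|^2/(16(t-s))}\le e^{-|x|^2/(32(t-s))}\cdot e^{-|x|^2/(32t)}$ (using $t-s\le t$), apply the main polynomial-decay computation to the first factor, and use the spare $e^{-|x|^2/(32t)}$ to absorb the logarithmic discrepancy $(1+\log\xi/\log t)^{|q|}$ with $\xi:=|x|^2/t\ge K_3^2$, since any polynomial in $\xi$ is dominated by any Gaussian in $\xi$ and $\log t\ge\log t_0>0$. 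Apart from this care with logarithms, the argument is a routine heat-kernel estimate.
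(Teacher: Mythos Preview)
Your proposal is correct and follows essentially the same approach as the paper: Duhamel's formula, a time split at $s=t/2$ for the global bound, and the separation $|x-y|\ge|x|/2$ on the support of $f$ in the far zone. The paper's organization differs only cosmetically (it splits at $t/2$ again in the far zone rather than using your single $\sigma$-substitution, and is less explicit than you are about the $q<0$ logarithm; your spare-Gaussian device is exactly the right fix there).
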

%%%%%%%%%%%%%%%%%%%%%%%%%%%%%%%%%%%%%%%%%%%%%%%%%%%%%%%%%%%%%%%%
 \begin{proof}
 We follow the argument used in the proof of Lemma A.1 in \cite{Wei-Qidi-Yifu}.
 For any $x\in\R^n$,
 we have
 \begin{align}
 \nonumber
 &
 \int_{t_0}^\frac{t}{2}
 (t-s)^{-\frac{n}{2}}
 ds
 \int_{\R^n}
 e^{-\frac{|x-y|^2}{4(t-s)}}
 s^{-\gamma}
 (\log s)^q
 {\bf 1}_{|y|<K_1\sqrt s}
 dy
 \\
 \nonumber
 &<
 C
 t^{-\frac{n}{2}}
 \int_{t_0}^\frac{t}{2}
 s^{-\gamma}
 (\log s)^q
 ds
 \int_{\R^n}
 {\bf 1}_{|y|<K_1\sqrt s}
 dy
 \\
 \label{EQ_3.18}
 &<
 C
 t^{-\frac{n}{2}}
 \int_{t_0}^\frac{t}{2}
 s^{-\gamma+\frac{n}{2}}
 (\log s)^q
 ds
 <
 C
 t^{-\gamma+1}
 (\log t)^q
 \\
 \nonumber
 &
 \text{for }
 x\in\R^n
 \end{align}
 and
 \begin{align}
 \nonumber
 &
 \int_\frac{t}{2}^t
 (t-s)^{-\frac{n}{2}}
 ds
 \int_{\R^n}
 e^{-\frac{|x-y|^2}{4(t-s)}}
 s^{-\gamma}
 (\log s)^q
 {\bf 1}_{|y|<K_1\sqrt s}
 dy
 \\
 \nonumber
 &<
 C
 t^{-\gamma}
 (\log t)^q
 \int_\frac{t}{2}^t
 (t-s)^{-\frac{n}{2}}
 ds
 \int_{\R^n}
 e^{-\frac{|x-y|^2}{4(t-s)}}
 dy
 \\
 \label{EQ_3.19}
 &<
 C
 t^{-\gamma+1}
 (\log t)^q
 \quad
 \text{for }
 x\in\R^n.
 \end{align}
 We next consider the case $|x|>2K_1\sqrt t$.
 In this case,
 it is clear that $\frac{|x|}{2}>|y|$ for $|y|<K_1\sqrt s$ and $s\in(t_0,t)$.
 Hence
 \begin{align}
 \nonumber
 &
 \int_{t_0}^\frac{t}{2}
 (t-s)^{-\frac{n}{2}}
 ds
 \int_{\R^n}
 e^{-\frac{|x-y|^2}{4(t-s)}}
 s^{-\gamma}
 (\log s)^q
 {\bf 1}_{|y|<K_1\sqrt s}
 dy
 \\
 \nonumber
 &<
 C
 t^{-\frac{n}{2}}
 \int_{t_0}^\frac{t}{2}
 s^{-\gamma}
 (\log s)^q
 ds
 \int_{\R^n}
 e^{-\frac{|x-y|^2}{4t}}
 {\bf 1}_{|y|<K_1\sqrt s}
 dy
 \\
 \nonumber
 &<
 C
 t^{-\frac{n}{2}}
 \int_{t_0}^\frac{t}{2}
 s^{-\gamma}
 (\log s)^q
 ds
 \int_{\R^n}
 e^{-\frac{|x|^2}{8t}}
 {\bf 1}_{|y|<K_1\sqrt s}
 dy
 \\
 &<
 \nonumber
 C
 t^{-\frac{n}{2}}
 e^{-\frac{|x|^2}{8t}}
 \int_{t_0}^\frac{t}{2}
 s^{-\gamma+\frac{n}{2}}
 (\log s)^q
 ds
 \\
 \nonumber
 &<
 C
 t^{-\gamma}
 e^{-\frac{|x|^2}{8t}}
 \cdot
 (\log t)^q
 \\
 \nonumber
 &<
 C
 |x|^{-2\gamma}
 \cdot
 (\log\tfrac{|x|^2}{4K_1^2})^q
 \\
 \label{EQ_3.20}
 &<
 C
 |x|^{-2\gamma}
 (\log|x|^2)^q
 \quad
 \text{for }
 |x|>2K_1\sqrt t
 \end{align}
 and
 \begin{align}
 \nonumber
 &
 \int_{\frac{t}{2}}^t
 (t-s)^{-\frac{n}{2}}
 ds
 \int_{\R^n}
 e^{-\frac{|x-y|^2}{4(t-s)}}
 s^{-\gamma}
 (\log s)^q
 {\bf 1}_{|y|<K_1\sqrt s}
 dy
 \\
 \nonumber
 &<
 C
 t^{-\gamma}
 (\log t)^q
 \int_{\frac{t}{2}}^t
 (t-s)^{-\frac{n}{2}}
 ds
 \int_{\R^n}
 e^{-\frac{|x|^2}{8(t-s)}}
 {\bf 1}_{|y|<K_1\sqrt s}
 dy
 \\
 \nonumber
 &<
 C
 t^{-\gamma}
 (\log t)^q
 \int_{\frac{t}{2}}^t
 (t-s)^{-\frac{n}{2}}
 e^{-\frac{|x|^2}{8(t-s)}}
 s^\frac{n}{2}
 ds
 \\
 \nonumber
 &<
 C
 t^{-\gamma}
 (\log t)^q
 \int_{\frac{t}{2}}^t
 |x|^{-n}
 t^\frac{n}{2} 
 ds
 \\
 \nonumber
 &<
 C
 t^{-\gamma+\frac{n}{2}+1}
 (\log t)^q
 |x|^{-n}
 \\
 \label{EQ_3.21}
 &<
 C
 |x|^{-2\gamma+2}
 (\log |x|^2)^q
 \quad
 \text{for }
 |x|>2K_1\sqrt t.
 \end{align}
 Therefore,
 inequalities \eqref{EQ_3.18} - \eqref{EQ_3.21} imply
 \begin{align*}
 |u(x,t)|
 &<
 \begin{cases}
 Ct^{-\gamma+1}(\log t)^q
 &
 \text{for }
 |x|<2K_1\sqrt t,
 \\
 C|x|^{-2\gamma+2}(\log|x|^2)^q
 &
 \text{for }
 |x|>2K_1\sqrt t.
 \end{cases}
 \end{align*}
 The proof is completed.
 \end{proof}
%%%%%%%%%%%%%%%%%%%%%%%%%%%%%%%%%%%%%%%%%%%%%%%%%%%%%%%%%%%%%%%%
%%%%%%%%%%%%%%%%%%%%%%%%%%%%%%%%%%%%%%%%%%%%%%%%%%%%%%%%%%%%%%%%
 \begin{lem}
 \label{lemma_3.10}
 Let $\gamma\in(0,\frac{n}{2})$ and $q\in\R$.
 Assume that
 \begin{align*}
 |f(x,t)|
 <
 \begin{cases}
 0
 &
 \text{for } |x|<K_1\sqrt t,\ t\in(t_0,\infty),
 \\
 |x|^{-2\gamma}
 (\log|x|^2)^q
 &
 \text{for } |x|>K_1\sqrt t,\ t\in(t_0,\infty)
 \end{cases}
 \end{align*}
 for some $K_1>0$ and $t_0>e$.
 Then
 for any $K_2>0$,
 there exists a constant $C>0$,
 depending only on $\gamma$, $q$, $K_1$ and $K_2$
 {\rm(}but independent of $t_0${\rm)},
 such that
 \begin{align*}
 |u(x,t)|
 <
 C
 \begin{cases}
 t^{-\gamma+1}
 (\log t)^q
 & \text{\rm if } |x|<K_2\sqrt t,\ t\in(t_0,\infty),
 \\
 |x|^{-2\gamma+2}
 (\log|x|^2)^q
 & \text{\rm if } |x|>K_2\sqrt t,\ t\in(t_0,\infty).
 \end{cases}
 \end{align*}
 \end{lem}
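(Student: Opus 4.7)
The plan is to mirror the argument of Lemma \ref{lemma_3.9}. I would partition the $s$-integral at $s = t/2$, treat the inner regime $|x| < 2K_1\sqrt{t}$ and outer regime $|x| > 2K_1\sqrt{t}$ separately, and in the outer regime further decompose the $y$-integration into three geometric pieces depending on the location of $|y|$ relative to $|x|$. Since the target bounds $t^{-\gamma+1}(\log t)^q$ and $|x|^{-2\gamma+2}(\log|x|^2)^q$ are comparable on the transition strip $|x| \sim \sqrt{t}$, proving each bound on its canonical region suffices to deduce the statement for arbitrary $K_2 > 0$.

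For the inner regime $|x| < 2K_1\sqrt{t}$ with $s \in (t_0, t/2)$, I would replace $(t-s)^{-n/2}$ by $Ct^{-n/2}$, rescale $y = \sqrt{t}\,z$, and reduce to an absolutely convergent integral in $z$ (feasible because $\gamma < n/2$). The logarithmic factor is handled by the pointwise bound $(\log|y|^2)^q \leq C(\log t)^q$ on the effective Gaussian support, combined with a further split at $|y| = A\sqrt{t}$ (for $A$ large) to absorb the $q > 0$ case via $(\log(t|z|^2))^q \leq C((\log t)^q + (\log|z|^2)^q)$. For $s \in (t/2, t)$ I would further split at $|y| = 4K_1\sqrt{t}$: on $K_1\sqrt{s} < |y| < 4K_1\sqrt{t}$ use the uniform bound $|y|^{-2\gamma}(\log|y|^2)^q \leq Ct^{-\gamma}(\log t)^q$, and on $|y| > 4K_1\sqrt{t} \geq 2|x|$ use $|x-y| \geq |y|/2$ to extract exponential damping in $t/(t-s)$. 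In both subcases, integrating in $s$ delivers $Ct^{-\gamma+1}(\log t)^q$.

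For the outer regime $|x| > 2K_1\sqrt{t}$, I would decompose the $y$-integral into three pieces. On (i) $K_1\sqrt{s} < |y| < |x|/2$, the bound $|x-y| \geq |x|/2$ produces heat-kernel decay $(t-s)^{-n/2} e^{-|x|^2/(16(t-s))}$, while polar integration bounds the spatial integral by $C|x|^{n-2\gamma}(\log|x|^2)^q$; the substitution $v = |x|^2/(16(t-s))$ in the $s$-integral then produces the factor $C|x|^{2-n}$, giving $C|x|^{2-2\gamma}(\log|x|^2)^q$. On (ii) $|x|/2 < |y| < 2|x|$, the pointwise estimate $|y|^{-2\gamma}(\log|y|^2)^q \leq C|x|^{-2\gamma}(\log|x|^2)^q$ combined with $\int e^{-|x-y|^2/(4(t-s))}\,dy \leq C(t-s)^{n/2}$ gives $C|x|^{-2\gamma}(\log|x|^2)^q$ per unit $s$; after $s$-integration, the bound $t \leq |x|^2/(4K_1^2)$ converts this to the target. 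On (iii) $|y| > 2|x|$, we have $|x-y| \geq |y|/2$, and rescaling $y = \sqrt{t-s}\,w$ reduces the integral to $(t-s)^{-\gamma}$ times an exponentially damped integral with lower limit $|w| > 2|x|/\sqrt{t-s}$, which integrates to $C|x|^{2-2\gamma}(\log|x|^2)^q$.

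The main technical obstacle will be ensuring the log factor emerges uniformly as $(\log|x|^2)^q$ throughout the outer regime, rather than as $(\log t)^q$ or similar. This is controlled by observing that on the effective Gaussian support in each subregion, $|y|$ is confined to a zone where $\log|y|^2 \sim \log|x|^2$: the middle annulus gives this immediately; the inner piece has the polar integral of $(\log r^2)^q$ dominated by its endpoint $|x|/2$ (using $\gamma < n/2$); and in the far piece, the rescaled argument $(t-s)|w|^2$ is of order $|x|^2$ on the exponentially damped domain because $|x|^2/(t-s) \geq |x|^2/t \geq 4K_1^2$, so $\log((t-s)|w|^2) \sim \log|x|^2$. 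The consistency between the inner and outer bounds on $|x| \sim \sqrt{t}$ uses $\log|x|^2 \sim \log t$.
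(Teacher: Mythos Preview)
Your proposal follows essentially the same Duhamel-plus-splitting strategy as the paper. The paper cuts inner/outer at $|x|=\tfrac{K_1}{4}\sqrt t$, partitions $s$ at $t/2$ in both regimes, and then splits $|y|$ into two pieces (at $\tfrac{K_1}{2}\sqrt t$ in the inner regime, at $|x|/2$ in the outer regime). Your three-way $|y|$-split in the outer regime, done without any $s$-split, is an equivalent rearrangement: the paper simply absorbs your piece $|y|>2|x|$ into $|y|>|x|/2$ via the eventual monotonicity of $r\mapsto r^{-2\gamma}(\log r^2)^q$, and the $s$-split it performs there is not essential.

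One point to sharpen: in the inner regime for $s\in(t_0,t/2)$, your pointwise claim ``$(\log|y|^2)^q\le C(\log t)^q$ on the effective Gaussian support'' is false for $q<0$, since the domain permits $|y|$ as small as $K_1\sqrt{t_0}$, where the logarithm is only of constant size (and the Gaussian $e^{-|\xi-z|^2/4}$ with $|\xi|<2K_1$ does not suppress small $|z|$). The split at $|y|=A\sqrt t$ that you propose actually fixes this for \emph{all} $q$, not only $q>0$: on $\{K_1\sqrt s<|y|<A\sqrt t\}$ one drops the Gaussian and bounds the radial integral $\int^{A\sqrt t} r^{n-1-2\gamma}(\log r^2)^q\,dr$ by its upper-endpoint contribution $Ct^{(n-2\gamma)/2}(\log t)^q$, which is valid for either sign of $q$ because $n-2\gamma>0$. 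This is precisely the mechanism in the paper's estimate \eqref{EQ_3.22}.
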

%%%%%%%%%%%%%%%%%%%%%%%%%%%%%%%%%%%%%%%%%%%%%%%%%%%%%%%%%%%%%%%%
 \begin{proof}
 We follow the argument used in the proof of Lemma A.2 in \cite{Wei-Qidi-Yifu}.
 For any $|x|<\frac{K_1}{4}\sqrt t$,
 we have
 \begin{align}
 \nonumber
 &
 \int_{t_0}^\frac{t}{2}
 (t-s)^{-\frac{n}{2}}
 ds
 \int_{\R^n}
 e^{-\frac{|x-y|^2}{4(t-s)}}
 |y|^{-2\gamma}
 (\log |y|^2)^q
 {\bf 1}_{|y|>K_1\sqrt s}
 dy
 \\
 \nonumber
 &<
 C
 t^{-\frac{n}{2}}
 \int_{t_0}^\frac{t}{2}
 ds
 \left(
 \int_{K_1\sqrt{s}<|y|<\frac{K_1}{2}\sqrt t}
 +
 \int_{|y|>\frac{K_1}{2}\sqrt t}
 \right)
 \\
 \nonumber
 &\qquad
 \times
 e^{-\frac{|x-y|^2}{4(t-s)}}
 |y|^{-2\gamma}
 (\log |y|^2)^q
 dy
 \\
 \nonumber
 &<
 C
 t^{-\frac{n}{2}}
 \int_{t_0}^\frac{t}{2}
 ds
 \underbrace{
 \int_{K_1\sqrt s<|y|<\frac{K_1}{2}\sqrt t}
 |y|^{-2\gamma}
 (\log |y|^2)^q
 dy
 }_{<C(\frac{K_1}{2}\sqrt t)^{n-2\gamma}(\log \frac{K_1^2}{4}t)^q}
 \\
 \nonumber
 &\qquad
 +
 C
 t^{-\frac{n}{2}-2\gamma}
 (\log t)^q
 \int_{t_0}^t
 ds
 \int_{|y|>\frac{K_1}{2}\sqrt t}
 e^{-\frac{|x-y|^2}{4t}}
 dy 
 \\
 \label{EQ_3.22}
 &<
 C
 t^{-\gamma+1}
 (\log\tfrac{K_1^2}{4}t)^q
 +
 C
 t^{-\gamma+1}
 (\log t)^q
 \quad
 \text{for }
 |x|<\tfrac{K_1}{4}\sqrt t
 \end{align}
 and
 \begin{align}
 \nonumber
 &
 \int_\frac{t}{2}^t
 (t-s)^{-\frac{n}{2}}
 ds
 \int_{\R^n}
 e^{-\frac{|x-y|^2}{4(t-s)}}
 |y|^{-2\gamma}
 (\log |y|^2)^q
 {\bf 1}_{|y|>K_1\sqrt s}
 dy
 \\
 \nonumber
 &<
 \int_\frac{t}{2}^t
 (t-s)^{-\frac{n}{2}}
 ds
 \int_{\R^n}
 e^{-\frac{|x-y|^2}{4(t-s)}}
 |y|^{-2\gamma}
 (\log |y|^2)^q
 {\bf 1}_{|y|>\frac{K_1}{2}\sqrt t}
 \\
 \nonumber
 &<
 \int_\frac{t}{2}^t
 (t-s)^{-\frac{n}{2}}
 ds
 \int_{\R^n}
 e^{-\frac{|y|^2}{16(t-s)}}
 |y|^{-2\gamma}
 (\log |y|^2)^q
 {\bf 1}_{|y|>\frac{K_1}{2}\sqrt t}
 \\
 \nonumber
 &<
 C
 t^{-\gamma}
 (\log\tfrac{K_1^2}{4}t)^q
 \int_\frac{t}{2}^t
 (t-s)^{-\frac{n}{2}}
 ds
 \underbrace{
 \int_{\R^n}
 e^{-\frac{|y|^2}{16(t-s)}}
 {\bf 1}_{|y|>\frac{K_1}{2}\sqrt t}
 }_{<C(t-s)^\frac{n}{2}}
 \\
 \nonumber
 &<
 C
 t^{-\gamma}
 (\log\tfrac{K_1^2}{4}t)^q
 \int_\frac{t}{2}^t
 ds
 \\
 \label{EQ_3.23}
 &<
 C
 t^{-\gamma+1}
 (\log\tfrac{K_1^2}{4}t)^q
 \quad
 \text{for }
 |x|<\tfrac{K_1}{4}\sqrt t.
 \end{align}
 Furthermore,
 for any $|x|>\frac{K_1}{4}\sqrt t$,
 we see that
 \begin{align}
 \nonumber
 &
 \int_{t_0}^\frac{t}{2}
 (t-s)^{-\frac{n}{2}}
 ds
 \int_{\R^n}
 e^{-\frac{|x-y|^2}{4(t-s)}}
 |y|^{-2\gamma}
 (\log |y|^2)^q
 {\bf 1}_{|y|>K_1\sqrt s}
 dy
 \\
 \nonumber
 &<
 C
 t^{-\frac{n}{2}}
 \int_{t_0}^\frac{t}{2}
 ds
 \left(
 \int_{K_1\sqrt s<|y|<\frac{|x|}{2}}
 +
 \int_{|y|>\frac{|x|}{2}}
 \right)
 \\
 \nonumber
 &\qquad
 \times
 e^{-\frac{|x-y|^2}{4(t-s)}}
 |y|^{-2\gamma}
 (\log |y|^2)^q
 {\bf 1}_{|y|>K_1\sqrt s}
 dy
 \\
 \nonumber
 &<
 C
 t^{-\frac{n}{2}}
 e^{-\frac{|x|^2}{16t}}
 \int_{t_0}^\frac{t}{2}
 ds
 \int_{K_1\sqrt s<|y|<\frac{|x|}{2}}
 |y|^{-2\gamma}
 (\log |y|^2)^q
 dy
 \\
 \nonumber
 &\quad
 +
 C
 t^{-\frac{n}{2}}
 |x|^{-2\gamma}
 (\log |x|^2)^q
 \int_{t_0}^\frac{t}{2}
 ds
 \int_{|y|>\frac{|x|}{2}}
 e^{-\frac{|x-y|^2}{4t}}
 dy
 \\
 \nonumber
 &<
 C
 t^{-\frac{n}{2}+1}
 e^{-\frac{|x|^2}{16t}}
 |x|^{-2\gamma+n}
 (\log |x|^2)^q
 +
 C
 t
 |x|^{-2\gamma}
 (\log |x|^2)^q
 \\
 \label{EQ_3.24}
 &<
 C
 |x|^{-2\gamma+2}
 (\log |x|^2)^q
 \quad
 \text{for }
 |x|>\tfrac{K_1}{4}\sqrt t
 \end{align}
 and
 \begin{align}
 \nonumber
 &
 \int_{\frac{t}{2}}^t
 (t-s)^{-\frac{n}{2}}
 ds
 \int_{\R^n}
 e^{-\frac{|x-y|^2}{4(t-s)}}
 |y|^{-2\gamma}
 (\log |y|^2)^q
 {\bf 1}_{|y|>K_1\sqrt s}
 dy
 \\
 \nonumber
 &<
 \int_{\frac{t}{2}}^t
 (t-s)^{-\frac{n}{2}}
 ds
 \left(
 \int_{K_1\sqrt s<|y|<\frac{|x|}{2}}
 +
 \int_{|y|>\frac{|x|}{2}}
 \right)
 \\
 \nonumber
 &\qquad
 \times
 e^{-\frac{|x-y|^2}{4(t-s)}}
 |y|^{-2\gamma}
 (\log |y|^2)^q
 {\bf 1}_{|y|>K_1\sqrt s}
 dy
 \\
 \nonumber
 &<
 C
 \int_{\frac{t}{2}}^t
 \underbrace{
 (t-s)^{-\frac{n}{2}}
 e^{-\frac{|x|^2}{16(t-s)}}
 }_{<C|x|^{-n}}
 ds
 \underbrace{
 \int_{K_1\sqrt s<|y|<\frac{|x|}{2}}
 |y|^{-2\gamma}
 (\log |y|^2)^q
 dy
 }_{<C|x|^{n-2\gamma}(\log |x|^2)^q}
 \\
 \nonumber
 &\quad
 +
 C
 |x|^{-2\gamma}
 (\log |x|^2)^q
 \int_{\frac{t}{2}}^t
 (t-s)^{-\frac{n}{2}}
 ds
 \underbrace{
 \int_{|y|>\frac{|x|}{2}}
 e^{-\frac{|x-y|^2}{4(t-s)}}
 dy
 }_{<C(t-s)^\frac{n}{2}}
 \\
 \nonumber
 &<
 C
 |x|^{-2\gamma}
 (\log |x|^2)^q
 \int_{\frac{t}{2}}^t
 ds
 +
 C
 |x|^{-2\gamma}
 (\log |x|^2)^q
 \int_{\frac{t}{2}}^t
 ds
 \\
 \nonumber
 &<
 C
 |x|^{-2\gamma}
 (\log |x|^2)^q
 t
 \\
 \label{EQ_3.25}
 &<
 C
 |x|^{-2\gamma+2}
 (\log |x|^2)^q
 \quad
 \text{for }
 |x|>\tfrac{K_1}{4}\sqrt t.
 \end{align}
 Therefore
 from \eqref{EQ_3.22} - \eqref{EQ_3.25},
 we conclude
 \begin{align*}
 |u(x,t)|
 &<
 \begin{cases}
 Ct^{-\gamma+1}(\log t)^q
 &
 \text{for }
 |x|<\frac{K_1}{4}\sqrt t,
 \\
 C|x|^{-2\gamma+2}(\log|x|^2)^q
 &
 \text{for }
 |x|>\frac{K_1}{4}\sqrt t.
 \end{cases}
 \end{align*}
 The proof is completed.
 \end{proof}
%%%%%%%%%%%%%%%%%%%%%%%%%%%%%%%%%%%%%%%%%%%%%%%%%%%%%%%%%%%%%%%%

 \section{Oscillatory Behavior of Solutions to the Linear Heat Equation}
 \label{section_4}
 \subsection{Fundamental Estimates for Solutions to the Linear Heat Equation}
 \label{section_4.1}
%%%%%%%%%%%%%%%%%%%%%%%%%%%%%%%%%%%%%%%%%%%%%%%%%%%%%%%%%%%%%%%%
 Consider the linear heat equation:
 \begin{align}
 \label{e_4.1}
 \begin{cases}
 \theta_t=\Delta_x\theta
 \quad\text{in } (x,t)\in\R^n\times(0,\infty),
 \\
 \theta(x,t)|_{t=0}
 =
 \theta_0(x).
 \end{cases}
 \end{align}
%%%%%%%%%%%%%%%%%%%%%%%%%%%%%%%%%%%%%%%%%%%%%%%%%%%%%%%%%%%%%%%%
 \begin{lem}
 \label{lemma_4.1}
 Let $n\geq3$, $\gamma\in(0,n)$ and $\beta>0$.
 Assume that $\theta_0(x)$ is radially symmetric and satisfies
 \begin{align}
 \label{e4.2}
 |\theta_0(x)|
 <
 \begin{cases}
 \tfrac{|x|^{-\gamma}}{
 (\log|x|)^\beta}
 &
 \text{for } |x|>e,
 \\
 e^{-\gamma}
 &
 \text{for } |x|<e.
 \\
 \end{cases}
 \end{align}
 Then
 there exist two positive constants $t_1=t_1(n,\gamma,\beta)$ and $C=C(n,\gamma,\beta)$
 such that
 \begin{align}
 \label{e4.3}
 \left|
 \theta(x,t)
 -
 \theta(0,t)
 \right|
 &<
 \tfrac{C}{t^{\frac{\gamma}{2}}(\log t)^\beta}
 \cdot
 \tfrac{|x|^2}{t}
 \\
 \nonumber
 &
 \text{for }
 |x|<\sqrt t \text{ and } t>t_1,
 \\
 \label{e4.4}
 |\nabla_x\theta(x,t)|
 &<
 \tfrac{C}{t^{\frac{\gamma}{2}}(\log t)^\beta}
 \cdot
 \tfrac{|x|}{t}
 \\
 \nonumber
 &
 \text{for }
 |x|<\sqrt t \text{ and } t>t_1.
 \end{align}
 \end{lem}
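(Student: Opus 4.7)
The plan is to reduce both estimates \eqref{e4.3} and \eqref{e4.4} to a single pointwise Hessian bound
\begin{equation*}
\sup_{|y|\leq\sqrt{t}} |\nabla_x^2 \theta(y,t)| \leq \frac{C}{t^{\gamma/2+1}(\log t)^\beta},\qquad t>t_1,
\end{equation*}
and then extract them by Taylor expansion. Since $\theta_0$ is radially symmetric and \eqref{e_4.1} preserves rotational symmetry, $\theta(\cdot,t)$ is radial, so $\nabla_x\theta(0,t)=0$. The first-order Taylor formula in $x$ then yields
\begin{align*}
\nabla_x\theta(x,t) &= \int_0^1 \nabla_x^2\theta(\tau x,t)\,x\,d\tau, \\
\theta(x,t)-\theta(0,t) &= \int_0^1(1-\tau)\,x^{\top}\nabla_x^2\theta(\tau x,t)\,x\,d\tau.
\end{align*}
For $|x|<\sqrt{t}$ the whole segment $\tau x$ stays inside $\{|y|\leq\sqrt{t}\}$, so the Hessian bound immediately implies both \eqref{e4.3} and \eqref{e4.4}.

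For the Hessian bound I would use the heat kernel representation with $K(y,t)=(4\pi t)^{-n/2}e^{-|y|^2/(4t)}$ together with the elementary pointwise bound $|\nabla_y^2 K(y,t)|\leq C\,t^{-n/2-1}e^{-|y|^2/(8t)}$, reducing everything to estimating
\begin{equation*}
I(y_0,t) := \int_{\R^n} e^{-|y_0-z|^2/(8t)}|\theta_0(z)|\,dz,\qquad |y_0|\leq\sqrt{t}.
\end{equation*}
I would split at $|z|=2\sqrt{t}$. On the inner region $|z|<2\sqrt{t}$ one drops the exponential and uses \eqref{e4.2}; since $\gamma<n$,
\begin{equation*}
\int_{|z|<2\sqrt{t}}|\theta_0(z)|\,dz \leq C + C\int_e^{2\sqrt{t}}\frac{r^{n-1-\gamma}}{(\log r)^\beta}\,dr \leq \frac{C\,t^{(n-\gamma)/2}}{(\log t)^\beta}.
\end{equation*}
On the outer region the condition $|y_0|\leq\sqrt{t}$ gives $|y_0-z|\geq |z|/2$, and scaling $z=\sqrt{t}\,w$ produces
\begin{equation*}
\int_{|z|>2\sqrt{t}} e^{-|y_0-z|^2/(8t)}|\theta_0(z)|\,dz \leq t^{(n-\gamma)/2}\int_{|w|>2}e^{-|w|^2/32}\frac{|w|^{-\gamma}}{(\log\sqrt{t}|w|)^\beta}\,dw \leq \frac{C\,t^{(n-\gamma)/2}}{(\log t)^\beta},
\end{equation*}
after noting $\log(\sqrt{t}|w|) \geq \tfrac{1}{2}\log t$ on that region for $t$ large. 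Multiplying $I(y_0,t)$ by $C\,t^{-n/2-1}$ yields the required bound on $|\nabla_x^2\theta(y_0,t)|$.

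The only subtle point is capturing the correct $(\log t)^{-\beta}$ factor in the inner integral. This rests on the asymptotic
\begin{equation*}
\int_e^R \frac{r^{n-1-\gamma}}{(\log r)^\beta}\,dr \sim \frac{R^{n-\gamma}}{(n-\gamma)(\log R)^\beta}\qquad(R\to\infty),
\end{equation*}
valid for $\gamma\in(0,n)$ and $\beta>0$ by L'H\^opital's rule, which shows that the integral is dominated by its tail near $r=R$, where $(\log r)^{-\beta}$ is essentially $(\log R)^{-\beta}$. The threshold $t_1=t_1(n,\gamma,\beta)$ is chosen precisely so that $\sqrt{t}$ is large enough for this asymptotic together with $\log(\sqrt{t}|w|)\geq\tfrac{1}{2}\log t$ (on $|w|>2$) to hold with the stated absolute constants; everything else is routine.
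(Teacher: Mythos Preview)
Your proof is correct and takes a genuinely different route from the paper.  The paper works directly at the level of the gradient: it writes $\nabla_\xi\Theta(\xi,t)$ for $\Theta(\xi,t)=\theta(\xi\sqrt t,t)$ via the heat kernel, then exploits radial symmetry through the cancellation $\int_{\R^n} y\,e^{-|y|^2/(4t)}\theta_0(y)\,dy=0$ to rewrite $\nabla_\xi\Theta$ in a form that manifestly carries an extra factor of $|\xi|$.  The resulting integral $\int e^{-|y|^2/(8t)}|\theta_0(y)|\,dy$ is then split into \emph{four} regions (with cuts at $\frac{\sqrt t}{\log\sqrt t}$ and $\sqrt t\log\sqrt t$) to isolate the $(\log t)^{-\beta}$ factor.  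You instead go one derivative higher: the Hessian of the heat kernel already carries the extra $t^{-1}$, so no algebraic cancellation is needed, and radial symmetry enters only through $\nabla_x\theta(0,t)=0$ in the Taylor step.  Your two-region split at $|z|=2\sqrt t$, combined with the L'H\^opital asymptotic for $\int_e^R r^{n-1-\gamma}(\log r)^{-\beta}\,dr$, recovers the logarithmic factor with less bookkeeping than the paper's four-region decomposition.  The paper's approach has the minor advantage of giving the gradient bound \eqref{e4.4} directly without passing through a second derivative; your approach is more systematic and would generalise more easily to higher-order difference estimates.
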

%%%%%%%%%%%%%%%%%%%%%%%%%%%%%%%%%%%%%%%%%%%%%%%%%%%%%%%%%%%%%%%%
 \begin{proof}
 We introduce a rescaled spatial variable $\xi$ defined by
 \begin{align*}
 \Theta(\xi,t)
 &=
 \theta(\xi\sqrt{t},t)
 \\
 &=
 \tfrac{1}{(4\pi t)^\frac{n}{2}}
 \int_{\R_y^n}
 e^{-\frac{1}{4}|\xi-\frac{y}{\sqrt t}|^2}
 \theta_0(y)
 dy.
 \end{align*}
 Its derivative is given by
 \begin{align}
 \nonumber
 \nabla_{\xi}
 \Theta(\xi,t)
 &=
 \tfrac{1}{(4\pi t)^\frac{n}{2}}
 \int_{\R_y^n}
 (-\tfrac{1}{2})
 (\xi-\tfrac{y}{\sqrt t})
 e^{-\frac{1}{4}|\xi-\frac{y}{\sqrt t}|^2}
 \theta_0(y)
 dy
 \\
 \label{e_4.5}
 &=
 \tfrac{-\xi}{2(4\pi t)^\frac{n}{2}}
 \int_{\R_y^n}
 e^{-\frac{1}{4}|\xi-\frac{y}{\sqrt t}|^2}
 \theta_0(y)
 dy
 \\
 \nonumber
 &\quad
 +
 \tfrac{1}{2(4\pi t)^\frac{n}{2}\sqrt t}
 \int_{\R_y^n}
 y
 e^{-\frac{1}{4}|\xi-\frac{y}{\sqrt t}|^2}
 \theta_0(y)
 dy.
 \end{align}
 Since $\theta_0(y)$ is radially symmetric, the vector integral
 $\int_{\R^n} y e^{-\frac{|y|^2}{4t}} \theta_0(y) \, dy$ vanishes by symmetry.
 Hence,
 \eqref{e_4.5} can be rewritten as
 \begin{align}
 \label{e_4.6}
 \nabla_\xi\Theta(\xi,t)
 &=
 \tfrac{-\xi}{2(4\pi t)^\frac{n}{2}}
 \int_{\R_y^n}
 e^{-\frac{1}{4}|\xi-\frac{y}{\sqrt t}|^2}
 \theta_0(y)
 dy
 \\
 \nonumber
 &\quad
 +
 \tfrac{1}{2(4\pi t)^\frac{n}{2}}
 \int_{\R_y^n}
 \tfrac{y}{\sqrt t}
 \left(
 e^{-\frac{1}{4}|\frac{y}{\sqrt t}-\xi|^2}
 -
 e^{-\frac{1}{4}|\frac{y}{\sqrt t}|^2}
 \right)
 \theta_0(y)
 dy.
 \end{align}
%%%%%%%%%%%%%%%%%%%%%%%%%%%%%%%%%%%%%%%%%%%%%%%%%%%%%%%%%%%%%%%%
 It is easily seen that
 \begin{align*}
 e^{-\frac{1}{4}|\xi-\frac{y}{\sqrt t}|^2}
 &<
 C_1
 e^{-\frac{|y|^2}{8t}}
 \\
 &
 \text{for all }
 |\xi|<1,\ (y,t)\in\R^n\times(0,\infty)
 \end{align*}
 and
 \begin{align*}
 \tfrac{|y|}{\sqrt t}
 \bigl|
 e^{-\frac{1}{4}|\frac{y}{\sqrt t}-\xi|^2}
 -
 e^{-\frac{1}{4}|\frac{y}{\sqrt t}|^2}
 \bigr|
 &=
 \tfrac{|y|}{\sqrt t}
 \int_0^1
 e^{-\frac{1}{4}|\frac{y}{\sqrt t}-\theta\xi|^2}
 (-\tfrac{1}{2})
 (\tfrac{y}{\sqrt t}-\theta\xi)\cdot\xi
 d\theta
 \\
 &<
 \tfrac{C|y||\xi|}{\sqrt t}
 \int_0^1
 e^{-\frac{7}{32}|\frac{y}{\sqrt t}-\theta\xi|^2}
 d\theta
 \\
 &<
 \tfrac{C|y||\xi|}{\sqrt t}
 e^{-\frac{3}{16}|\frac{y}{\sqrt t}|^2}
 <
 C_2|\xi|
 e^{-\frac{1}{8}|\frac{y}{\sqrt t}|^2}
 \\
 &
 \text{for all }
 |\xi|<1,\ (y,t)\in\R^n\times(0,\infty).
 \end{align*}
 The constants $C_1$ and $C_2$ are independent of $\xi$, $y$ and $t$.
 Hence,
 we deduce from \eqref{e_4.6} that
 \begin{align}
 \nonumber
 &
 |\nabla_\xi\Theta(\xi,t)|
 <
 \tfrac{C|\xi|}{t^\frac{n}{2}}
 \int_{\R_y^n}
 e^{-\frac{|y|^2}{8t}}
 |\theta_0(y)|
 dy
 \\
 \nonumber
 &<
 \tfrac{C|\xi|}{t^\frac{n}{2}}
 \int_{|y|<e}
 e^{-\frac{|y|^2}{8t}}
 e^{-\gamma}
 +
 \tfrac{C|\xi|}{t^\frac{n}{2}}
 \int_{|y|>e}
 e^{-\frac{|y|^2}{8t}}
 \tfrac{|y|^{-\gamma}}{(\log |y|)^\beta}
 dy
 \\
 \label{e_4.7}
 &<
 \tfrac{C|\xi|}{t^\frac{n}{2}}
 +
 \tfrac{C|\xi|}{t^\frac{n}{2}}
 \int_{|y|>e}
 e^{-\frac{|y|^2}{8t}}
 \tfrac{|y|^{-\gamma}}{(\log |y|)^\beta}
 dy
 \\
 \nonumber
 &
 \text{for all }
 |\xi|<1.
 \end{align}
The last term of \eqref{e_4.7} can be estimated as follows.
%%%%%%%%%%%%%%%%%%%%%%%%%%%%%%%%%%%%%%%%%%%%%%%%%%%%%%%%%%%%%%%%
 \begin{align}
 \nonumber
 &
 \int_{e<|y|<\frac{\sqrt t}{\log\sqrt t}}
 e^{-\frac{|y|^2}{8t}}
 \tfrac{|y|^{-\gamma}}{
 (\log|y|)^{\beta}}
 dy
 <
 \int_{e<|y|<\frac{\sqrt t}{\log\sqrt t}}
 \tfrac{|y|^{-\gamma}}{
 (\log|y|)^{\beta}}
 dy
 \\
 &<
 \tfrac{C_{n,\beta,\gamma}r_1^{n-\gamma}}{(\log r_1)^\beta}
 |_{r_1=\frac{\sqrt t}{\log \sqrt t}}
 \label{e_4.8}
 <
 \tfrac{C_{n,\beta,\gamma}t^\frac{n-\gamma}{2}}{(\log\sqrt t)^{\beta+n-\gamma}},
 \end{align}
%%%%%%%%%%%%%%%%%%%%%%%%%%%%%%%%%%%%%%%%%%%%%%%%%%%%%%%%%%%%%%%%
 \begin{align}
 \nonumber
 &
 \int_{|y|>\sqrt t\log\sqrt t}
 e^{-\frac{|y|^2}{8t}}
 \tfrac{|y|^{-\gamma}}{
 (\log|y|)^{\beta}}
 dy
 \\
 \nonumber
 &=
 \tfrac{t^{\frac{n-\gamma}{2}}}{(\log \sqrt{t})^\beta}
 \int_{|z|>\log\sqrt{t}}
 e^{-\frac{|z|^2}{8}}
 |z|^{-\gamma}
 ( 1+\tfrac{\log|z|}{\log\sqrt t} )^{-\beta}
 dz
 \\
 \label{e_4.9}
 &<
 \tfrac{t^{\frac{n-\gamma}{2}}}{(\log \sqrt{t})^{\beta+1}}
 \int_{\R_z^n}
 e^{-\frac{|z|^2}{8}}
 |z|^{-\gamma+1}
 dz
 \end{align}
 and
%%%%%%%%%%%%%%%%%%%%%%%%%%%%%%%%%%%%%%%%%%%%%%%%%%%%%%%%%%%%%%%%
 \begin{align*}
 &
 \int_{\frac{\sqrt t}{\log\sqrt t}<|y|<\sqrt t\log\sqrt t}
 e^{-\frac{|y|^2}{8t}}
 \tfrac{|y|^{-\gamma}}{
 (\log|y|)^{\beta}}
 dy
 \qquad
 (\tfrac{y}{\sqrt{t}}=z)
 \\
 &=
 \tfrac{t^{\frac{n-\gamma}{2}}}{(\log\sqrt t)^\beta}
 \int_{\frac{1}{\log\sqrt t}<|z|<\log\sqrt t}
 e^{-\frac{|z|^2}{8}}
 |z|^{-\gamma}
 \bigl|
 1+\tfrac{\log|z|}{\log\sqrt t}
 \bigr|^{-\beta}
 dz.
 \end{align*}
%%%%%%%%%%%%%%%%%%%%%%%%%%%%%%%%%%%%%%%%%%%%%%%%%%%%%%%%%%%%%%%%
 Note that
 \begin{align}
 \label{e_4.10}
 \tfrac{\log|z|}{\log\sqrt t}
 &>
 \tfrac{\log\frac{1}{\log\sqrt t}}{\log\sqrt t}
 =
 -\tfrac{\log\log\sqrt t}{\log\sqrt t}
 \quad
 \text{for all }
 |z|>\tfrac{1}{\log\sqrt t}.
 \end{align}
 Hence,
 the integral on $\frac{\sqrt t}{\log\sqrt t}<|y|<\sqrt t\log\sqrt t$ is bounded by
 \begin{align}
 \label{e_4.11}
 &
 \int_{\frac{\sqrt t}{\log\sqrt t}<|y|<\sqrt t\log\sqrt t}
 e^{-\frac{|y|^2}{8t}}
 \tfrac{|y|^{-\gamma}}{
 (\log|y|)^{\beta}}
 dy
 \\
 \nonumber
 &<
 \tfrac{2^\beta t^{\frac{n-\gamma}{2}}}{(\log\sqrt t)^\beta}
 \int_{\R_z^n}
 e^{-\frac{|z|^2}{8}}
 |z|^{-\gamma}
 dz.
 \end{align}
 Combining \eqref{e_4.8} - \eqref{e_4.9} and \eqref{e_4.11},
 we deduce from \eqref{e_4.7} that there exist constants $C=C(n,\beta,\gamma)>0$ and $t_1>0$ such that
 \begin{align}
 \nonumber
 |\nabla_{\xi}
 \Theta(\xi,t)|
 &<
 \tfrac{C|\xi|}{t^\frac{n}{2}}
 \int_{\R_y^n}
 e^{-\frac{|y|^2}{8t}}
 \tfrac{|y|^{-\gamma}}{(\log |y|)^\beta}
 dy
 \\
 \nonumber
 &<
 \tfrac{C|\xi|}{t^\frac{n}{2}}
 \Bigl(
 1
 + 
 \tfrac{t^{\frac{n-\gamma}{2}}}{(\log\sqrt{t})^{\beta+(n-\gamma)}}
 +
 \tfrac{t^{\frac{n-\gamma}{2}}}{(\log\sqrt{t})^{\beta+1}}
 +
 \tfrac{t^{\frac{n-\gamma}{2}}}{(\log\sqrt{t})^\beta}
 \Bigr)
 \\
 \label{e_4.12}
 &<
 \tfrac{C|\xi|}{t^\frac{\gamma}{2}(\log\sqrt t)^\beta}
 \quad
 \text{for all }
 |\xi|<1
 \text{ and }
 t>t_1.
 \end{align}
 From \eqref{e_4.12},
 we conclude that
 \begin{align*}
 |
 \theta(x,t)
 -
 \theta(0,t)
 |
 &=
 |
 \Theta(\xi,t)
 -
 \Theta(0,t)
 |
 \qquad
 (x=\xi\sqrt t)
 \\
 &<
 |\nabla_{\xi}\Theta(\sigma\xi,t)\cdot\xi|
 \qquad
 (\exists\sigma\in(0,1))
 \\
 &<
 \tfrac{C|\sigma\xi|\cdot|\xi|}{t^\frac{\gamma}{2}(\log\sqrt t)^\beta}
 \\
 &<
 \tfrac{C|\xi|^2}{t^\frac{\gamma}{2}(\log\sqrt t)^\beta}
 \quad
 \text{for }
 |\xi|<1
 \text{ and }
 t>t_1.
 \end{align*}
 \end{proof}
%%%%%%%%%%%%%%%%%%%%%%%%%%%%%%%%%%%%%%%%%%%%%%%%%%%%%%%%%%%%%%%%

%%%%%%%%%%%%%%%%%%%%%%%%%%%%%%%%%%%%%%%%%%%%%%%%%%%%%%%%%%%%%%%%
 \begin{lem}
 \label{lemma_4.2}
 Suppose that $n\geq3$, $\gamma\in(0,n)$ and $\beta>0$.
 Let $R_1$ and $R_2$ be two positive constants
 satisfying
 \begin{align*}
 e<R_1<R_2
 \quad
 \text{ and }
 \quad
 R_1\log R_1<\tfrac{R_2}{\log R_2}.
 \end{align*}
 Assume further that $\theta_0(x)$ satisfies the following{\rm:}
 \begin{itemize}
 \setlength{\itemsep}{2mm}
 \item $\theta_0(x)$ is radially symmetric{\rm;} 
 \item $\theta_0(x)=|x|^{-\gamma}(\log|x|)^{-\beta}$ \quad \text{for} $R_1<|x|<R_2${\rm;}
 \item $|\theta_0(x)|<\begin{cases}
 |x|^{-\gamma}|\log|x||^{-\beta} & \text{for } |x|>e,
 \\
 e^{-\gamma} & \text{for } |x|<e.
 \end{cases}$
 \end{itemize}
 There exist two positive constants $R_0=R_0(n,\gamma,\beta)$ and $C=C(n,\gamma,\beta)$,
 independent of $R_1$ and $R_2$,
 such that
 if $R_1>R_0$,
 then any solution $\theta(x,t)$ of \eqref{e_4.1} satisfies
 \begin{align}
 \label{equation_4.13}
 \left|
 \theta(0,t)
 -
 \tfrac{t^{-\frac{\gamma}{2}}}{(4\pi)^{\frac{n}{2}}
 (\log t)^\beta}
 \int_{\R_z^n}
 e^{-\frac{|z|^2}{4}}
 \tfrac{dz}{|z|^{\gamma}}
 \right|
 &<
 \tfrac{C}{t^{\frac{\gamma}{2}}(\log\sqrt t)^\beta}
 (\tfrac{1}{\log\sqrt t}+\tfrac{1}{(\log\sqrt t)^{n-\gamma}})
 \\
 \nonumber
 &
 \text{for all }
 \sqrt{t}\in(R_1\log R_1,\tfrac{R_2}{\log R_2}).
 \end{align}
 \end{lem}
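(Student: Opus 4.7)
\textbf{Proof proposal for Lemma \ref{lemma_4.2}.}
The starting point is the heat kernel representation
\[
\theta(0,t)
=
\frac{1}{(4\pi t)^{n/2}}
\int_{\R^n} e^{-|y|^2/(4t)}\,\theta_0(y)\,dy
=
\frac{1}{(4\pi)^{n/2}}
\int_{\R^n} e^{-|z|^2/4}\,\theta_0(z\sqrt{t})\,dz,
\]
after the rescaling $y=z\sqrt{t}$. The plan is to decompose the $z$-integration into three pieces
\[
\R^n
=
\{|z|<R_1/\sqrt{t}\}
\cup
\{R_1/\sqrt{t}<|z|<R_2/\sqrt{t}\}
\cup
\{|z|>R_2/\sqrt{t}\},
\]
exploit the explicit form of $\theta_0$ on the middle piece, and absorb everything else into the two error terms on the right of \eqref{equation_4.13}. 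The hypotheses $R_1\log R_1<\sqrt{t}<R_2/\log R_2$ translate into $R_1/\sqrt{t}<1/\log R_1$ and $R_2/\sqrt{t}>\log R_2$, which will make the outer region exponentially negligible and the inner region controllably small.

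On the middle region I write
\[
\theta_0(z\sqrt{t})
=
t^{-\gamma/2}|z|^{-\gamma}(\log|z\sqrt{t}|)^{-\beta}
=
\tfrac{2^\beta\,t^{-\gamma/2}}{(\log t)^\beta}|z|^{-\gamma}\bigl(1+\tfrac{2\log|z|}{\log t}\bigr)^{-\beta},
\]
and isolate the main term by (a) replacing the last parenthesis by $1$ and (b) extending the integral from the middle annulus to all of $\R^n$. The bulk of the $z$-integral concentrates where $|z|\asymp 1$, on which $|\tfrac{2\log|z|}{\log t}|$ is small, so the Taylor-type bound $|(1+x)^{-\beta}-1|\le C|x|$ (valid for $|x|\le\tfrac12$) yields an error integrand of size $(\log\sqrt{t})^{-(\beta+1)}e^{-|z|^2/4}|z|^{-\gamma}\log|z|$. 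For $|z|$ close to $1/\log\sqrt{t}$ or $\log\sqrt{t}$, where the Taylor bound is weakest, I will still control the contribution by exactly the same kind of annular splitting used in the proof of Lemma \ref{lemma_4.1}: the powers of $\log\sqrt{t}$ gained from the Gaussian/polynomial weights dominate the failure of $(1+x)^{-\beta}\approx 1$. This produces the $(\log\sqrt{t})^{-(\beta+1)}$ correction.

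The inner extension/error comes from the disk $|z|<R_1/\sqrt{t}$. In this region $|\theta_0(z\sqrt{t})|\le C t^{-\gamma/2}|z|^{-\gamma}(\log\sqrt{t})^{-\beta}$ (using either the explicit formula at the boundary or the bound $|\theta_0(x)|\le|x|^{-\gamma}(\log|x|)^{-\beta}$ inherited from the hypotheses and monotonicity via $R_1>R_0$). Integrating this against $e^{-|z|^2/4}$ yields
\[
\frac{C\,t^{-\gamma/2}}{(\log\sqrt{t})^\beta}\bigl(R_1/\sqrt{t}\bigr)^{n-\gamma}
\le
\frac{C\,t^{-\gamma/2}}{(\log\sqrt{t})^\beta}\,\frac{1}{(\log R_1)^{n-\gamma}}
\le
\frac{C\,t^{-\gamma/2}}{(\log\sqrt{t})^{\beta+(n-\gamma)}},
\]
where the final inequality uses $R_1\log R_1<\sqrt{t}$, which gives $\log R_1\ge c\log\sqrt{t}$ once $R_1>R_0$ is large enough. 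This is precisely the second correction in \eqref{equation_4.13}. The outer tail $|z|>R_2/\sqrt{t}$ is estimated by $e^{-|z|^2/8}$ together with $|\theta_0(z\sqrt{t})|\le t^{-\gamma/2}|z|^{-\gamma}(\log|z\sqrt{t}|)^{-\beta}$; since $R_2/\sqrt{t}>\log R_2\gtrsim\log\sqrt{t}$, the Gaussian contributes an exponentially small factor $e^{-c(\log\sqrt{t})^2}$, absorbed without difficulty into either error term.

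The main obstacle I anticipate is the bookkeeping in the middle region: several subregions (the bulk $|z|\asymp 1$, the small-$|z|$ annulus $R_1/\sqrt{t}<|z|<1/\log\sqrt{t}$, and the large-$|z|$ annulus $\log\sqrt{t}<|z|<R_2/\sqrt{t}$) require slightly different estimates for the factor $(1+2\log|z|/\log t)^{-\beta}$, and one must verify that their combined contributions do not exceed the stated $(\log\sqrt{t})^{-1}+(\log\sqrt{t})^{-(n-\gamma)}$ error. Once this is carefully carried out along the lines of the logarithmic bookkeeping already used in Lemma \ref{lemma_4.1} (see the splittings \eqref{e_4.8}--\eqref{e_4.11}), the conclusion \eqref{equation_4.13} follows.
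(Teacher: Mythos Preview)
Your strategy is exactly the paper's: heat kernel, rescale $y=z\sqrt t$, split into annular pieces, extract the main term by replacing $(1+\log|z|/\log\sqrt t)^{-\beta}$ by $1$ and extending to $\R^n$, and control the rest via the splittings \eqref{e_4.8}--\eqref{e_4.11}. The only substantive difference is that the paper cuts at $|y|=2\sqrt t/\log\sqrt t$ and $|y|=\sqrt t\log\sqrt t$ (equivalently $|z|=2/\log\sqrt t$ and $|z|=\log\sqrt t$), whereas you cut at $|y|=R_1$ and $|y|=R_2$.

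That difference is where your inner-disk paragraph goes wrong. The pointwise bound $|\theta_0(z\sqrt t)|\le Ct^{-\gamma/2}|z|^{-\gamma}(\log\sqrt t)^{-\beta}$ is false on $|z|<R_1/\sqrt t$: for $|z\sqrt t|$ near $e$ one has $(\log|z\sqrt t|)^{-\beta}=O(1)$, not $O((\log\sqrt t)^{-\beta})$. Worse, your final step ``$R_1\log R_1<\sqrt t$ gives $\log R_1\ge c\log\sqrt t$'' is simply backwards --- the hypothesis gives an \emph{upper} bound on $R_1$, not a lower one, and in fact $R_1$ can stay bounded while $\sqrt t\to R_2/\log R_2$ is enormous. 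So neither the pointwise bound nor the chain $\frac{1}{(\log R_1)^{n-\gamma}}\le\frac{C}{(\log\sqrt t)^{n-\gamma}}$ is valid.

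The fix is immediate from the very estimate \eqref{e_4.8} you cite: since $R_1<\tfrac54\sqrt t/\log\sqrt t$ (this is what $R_1\log R_1<\sqrt t$ with $R_1>R_0$ actually gives), the disk $|y|<R_1$ is contained in $|y|<2\sqrt t/\log\sqrt t$, and \eqref{e_4.8} bounds the integral over that larger region by $Ct^{-\gamma/2}(\log\sqrt t)^{-\beta-(n-\gamma)}$ directly. Equivalently, cut at $2\sqrt t/\log\sqrt t$ and $\sqrt t\log\sqrt t$ from the start, as the paper does; then the annulus where the explicit formula is needed is automatically inside $(R_1,R_2)$, and all error bounds are purely in $t$ with no spurious $R_1,R_2$-dependence to eliminate.
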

%%%%%%%%%%%%%%%%%%%%%%%%%%%%%%%%%%%%%%%%%%%%%%%%%%%%%%%%%%%%%%%%
%%%%%%%%%%%%%%%%%%%%%%%%%%%%%%%%%%%%%%%%%%%%%%%%%%%%%%%%%%%%%%%%
 \begin{proof}
 We divide the integral into four parts.
 \begin{align*}
 &
 \theta(0,t)
 \\
 &=
 \tfrac{1}{(4\pi t)^{\frac{n}{2}}}
 \int_{\R_z^n}
 e^{-\frac{|y|^2}{4t}}
 \theta_0(y)
 dy
 \\
 &=
 \tfrac{1}{(4\pi t)^{\frac{n}{2}}}
 \Biggl(
 \underbrace{
 \int_{|y|<e}
 }_{=h_1}
 +
 \underbrace{
 \int_{e<|y|<\frac{2\sqrt t}{\log\sqrt t}}
 }_{=h_2}
 +
 \underbrace{
 \int_{\frac{2\sqrt t}{\log\sqrt t}<|y|<\sqrt t\log\sqrt t}
 }_{=J}
 +
 \underbrace{
 \int_{|y|>\sqrt t\log\sqrt t}
 }_{=h_3}
 \Biggr)
 \theta_0(y)
 dy
 \end{align*}
 As in \eqref{e_4.8} - \eqref{e_4.9},
 we verify that
 \begin{align}
 \label{equation_4.14}
 |h_1|
 &<
 \tfrac{C}{(4\pi t)^{\frac{n}{2}}},
 \\
 \label{equation_4.15}
 |h_2|
 &<
 \tfrac{Ct^{-\frac{\gamma}{2}}}{(\log\sqrt t)^{\beta+n-\gamma}},
 \\
 |h_3|
 \label{equation_4.16}
 &<
 \tfrac{Ct^{-\frac{\gamma}{2}}}{(4\pi)^{\frac{n}{2}}(\log\sqrt t)^{\beta+1}}.
 \end{align}
 We now evaluate the main contribution $J$ to $\theta(0,t)$.
 Note that there exists $R_0>0$,
 independent of $R_1$ and $R_2$,
 such that
 if $R_1>R_0$,
 then the following estimates hold:
 \begin{itemize}
 \item 
 If
 $\sqrt t>R_1\log R_1$,
 then
 \begin{align*}
 R_1
 <
 \tfrac{\sqrt{t}}{\log\sqrt{t}-2\log\log\sqrt{t}}
 <
 \tfrac{5}{4}
 \tfrac{\sqrt{t}}{\log\sqrt{t}}.
 \end{align*}
 
 \item
 If
 $\sqrt t<\tfrac{R_2}{\log R_2}$,
 then
 \begin{align*}
 R_2
 >
 \sqrt{t}\log\sqrt{t}
 +
 \tfrac{\sqrt{t}}{2}\log\log\sqrt{t}
 >
 \sqrt{t}\log\sqrt{t}.
 \end{align*}
 \end{itemize}
 Therefore,
 from the assumption on $\theta_0(x)$,
 we have
 \begin{align*}
 J
 &=
 \tfrac{1}{(4\pi t)^{\frac{n}{2}}}
 \int_{\frac{2\sqrt t}{\log\sqrt t}<|y|<\sqrt t\log\sqrt t}
 e^{-\frac{|y|^2}{4t}}
 \theta_0(y)
 dy
 \\
 &=
 \tfrac{1}{(4\pi t)^{\frac{n}{2}}}
 \int_{\frac{2\sqrt t}{\log\sqrt t}<|y|<\sqrt t\log\sqrt t}
 e^{-\frac{|y|^2}{4t}}
 |y|^{-\gamma}
 (\log|y|)^{-\beta}
 dy
 \\
 &=
 \tfrac{t^{-\frac{\gamma}{2}}}{(4\pi )^{\frac{n}{2}}(\log\sqrt t)^\beta}
 \int_{\frac{2}{\log\sqrt t}<|z|<\log\sqrt t}
 e^{-\frac{|z|^2}{4}}
 |z|^{-\gamma}
 \left|
 1+\tfrac{\log|z|}{\log\sqrt t}
 \right|^{-\beta}
 dz
 \\
 &=
 \tfrac{t^{-\frac{\gamma}{2}}}{(4\pi )^{\frac{n}{2}}(\log\sqrt t)^\beta}
 \int_{\R_z^n}
 e^{-\frac{|z|^2}{4}}
 |z|^{-\gamma}
 dz
 +
 h_4
 +
 h_5
 +
 h_6,
 \end{align*}
 where
 \begin{align*}
 h_4
 &=
 \tfrac{t^{-\frac{\gamma}{2}}}{(4\pi )^{\frac{n}{2}}(\log\sqrt t)^\beta}
 \int_{\frac{2}{\log\sqrt t}<|z|<\log\sqrt t}
 e^{-\frac{|z|^2}{4}}
 |z|^{-\gamma}
 \left(
 \left|
 1+\tfrac{\log|z|}{\log\sqrt t}
 \right|^{-\beta}
 -
 1
 \right)
 dz,
 \\
 h_5
 &=
 \tfrac{-t^{-\frac{\gamma}{2}}}{(4\pi )^{\frac{n}{2}}(\log\sqrt t)^\beta}
 \int_{|z|<\frac{2}{\log\sqrt t}}
 e^{-\frac{|z|^2}{4}}
 |z|^{-\gamma}
 dz,
 \\
 h_6
 &=
 \tfrac{-t^{-\frac{\gamma}{2}}}{(4\pi )^{\frac{n}{2}}(\log\sqrt t)^\beta}
 \int_{|z|>\log\sqrt t}
 e^{-\frac{|z|^2}{4}}
 |z|^{-\gamma}
 dz.
 \end{align*}
 It is clear that
 \begin{align*}
 |h_5|
 &<
 \tfrac{Ct^{-\frac{\gamma}{2}}}{(4\pi )^{\frac{n}{2}}(\log\sqrt t)^{\beta+n-\gamma}},
 \\
 |h_6|
 &<
 \tfrac{Ct^{-\frac{\gamma}{2}}}{(4\pi )^{\frac{n}{2}}(\log\sqrt t)^{\beta+1}}.
 \end{align*}
%%%%%%%%%%%%%%%%%%%%%%%%%%%%%%%%%%%%%%%%%%%%%%%%%%%%%%%%%%%%%%%%
 We recall from \eqref{e_4.10} that
 $\frac{\log|z|}{\log\sqrt t}>-\frac{\log\log\sqrt t}{\log\sqrt t}$
 for $|z|>\frac{1}{\log\sqrt t}$.
 Hence,
 it follows that
 \begin{align*}
 \left|
 \left|
 1+\tfrac{\log|z|}{\log\sqrt t}
 \right|^{-\beta}
 -
 1
 \right|
 &=
 \left|
 \left(
 1+\tfrac{\log|z|}{\log\sqrt t}
 \right)^{-\beta}
 -
 1
 \right|
 \\
 &=
 \left|
 \int_0^1
 (-\beta)
 \left(
 1+\tfrac{s\log|z|}{\log\sqrt t}
 \right)^{-\beta-1}
 ds
 (\tfrac{\log|z|}{\log\sqrt t})
 \right|
 \\
 &<
 \tfrac{C|\log|z||}{\log\sqrt t}
 \quad
 \text{for all }
 |z|>\tfrac{1}{\log\sqrt t}.
 \end{align*}
 This estimate implies that
 \begin{align*}
 |h_4|
 &<
 \tfrac{t^{-\frac{\gamma}{2}}}{(4\pi )^{\frac{n}{2}}(\log\sqrt t)^\beta}
 \int_{\frac{2}{\log\sqrt t}<|z|<\log\sqrt t}
 e^{-\frac{|z|^2}{4}}
 |z|^{-\gamma}
 \tfrac{C|\log|z||}{\log\sqrt t}
 dz
 \\
 &<
 \tfrac{Ct^{-\frac{\gamma}{2}}}{(4\pi )^{\frac{n}{2}}(\log\sqrt t)^{\beta+1}}
 \int_{\R_z^n}
 e^{-\frac{|z|^2}{4}}
 \tfrac{|\log|z||}{|z|^\gamma}
 dz.
 \end{align*}
 \end{proof}
%%%%%%%%%%%%%%%%%%%%%%%%%%%%%%%%%%%%%%%%%%%%%%%%%%%%%%%%%%%%%%%%

%%%%%%%%%%%%%%%%%%%%%%%%%%%%%%%%%%%%%%%%%%%%%%%%%%%%%%%%%%%%%%%%
 \begin{lem}
 \label{lemma_4.3}
 Assume that $n\geq3$.
 Let $\theta(x,t)$ be a solution of {\rm \eqref{e_4.1}} with the initial data $\theta_0(x)$
 satisfying the following condition{\rm:}
 \begin{itemize}
 \item $|\theta_0(x)|<\begin{cases}
 |x|^{-\gamma}(\log|x|)^{-\beta} & \text{for } |x|>e,
 \\
 e^{-\gamma} & \text{for } |x|<e.
 \end{cases}$
 \end{itemize}
 Then,
 there exist two positive constants $t_1=t_1(n,\gamma,\beta)$ and $C=C(n,\gamma,\beta)$
 such that{\rm:}
 \begin{align}
 \label{equation_4.17}
 |\theta(0,t)|
 &<
 \tfrac{Ct^{-\frac{\gamma}{2}}}{(\log\sqrt t)^\beta}
 \quad
 \text{for }
 t>t_1,
 \\
 \label{equation_4.18}
 |\pa_t\theta(0,t)|
 &<
 \tfrac{Ct^{-\frac{\gamma}{2}-1}}{(\log\sqrt t)^\beta}
 \quad
 \text{for }
 t>t_1.
 \end{align}
 \end{lem}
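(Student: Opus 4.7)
The overall strategy is to estimate the heat-kernel representation
\begin{align*}
\theta(0,t) = \tfrac{1}{(4\pi t)^{n/2}} \int_{\R^n} e^{-|y|^2/(4t)}\, \theta_0(y)\,dy
\end{align*}
by decomposing the domain of integration into the same four annular regions used in Lemma \ref{lemma_4.2}, namely $|y|<e$, $e<|y|<2\sqrt t/\log\sqrt t$, $2\sqrt t/\log\sqrt t <|y|<\sqrt t\log\sqrt t$, and $|y|>\sqrt t\log\sqrt t$. In the proof of Lemma \ref{lemma_4.2}, the equality $\theta_0(y)=|y|^{-\gamma}(\log|y|)^{-\beta}$ on the middle region was used to extract the explicit constant; for a mere upper bound we may simply insert the hypothesized pointwise bound on $|\theta_0|$ and repeat the same four region-by-region computations. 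Doing so yields
\begin{align*}
|\theta(0,t)| \leq Ct^{-n/2} + Ct^{-\gamma/2}(\log\sqrt t)^{-\beta-(n-\gamma)} + Ct^{-\gamma/2}(\log\sqrt t)^{-\beta} + Ct^{-\gamma/2}(\log\sqrt t)^{-\beta-1},
\end{align*}
in which the dominant third term delivers \eqref{equation_4.17} for $t$ sufficiently large.

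For the time-derivative estimate \eqref{equation_4.18}, I would differentiate under the integral sign to obtain
\begin{align*}
\pa_t\theta(0,t) = \int_{\R^n} K_t(0,y)\bigl[-\tfrac{n}{2t}+\tfrac{|y|^2}{4t^2}\bigr]\theta_0(y)\,dy,
\end{align*}
where $K_t(0,y)=(4\pi t)^{-n/2}e^{-|y|^2/(4t)}$. The first piece contributes $-(n/(2t))\theta(0,t)$, which by \eqref{equation_4.17} is already bounded by $Ct^{-\gamma/2-1}(\log\sqrt t)^{-\beta}$. For the second piece I would factor $|y|^2/(4t^2)=(1/t)\cdot|y|^2/(4t)$ and absorb the polynomial prefactor using the elementary inequality $se^{-s}\leq Ce^{-s/2}$ with $s=|y|^2/(4t)$, so that
\begin{align*}
\tfrac{|y|^2}{4t^2}K_t(0,y) \leq \tfrac{C}{t}(4\pi t)^{-n/2}e^{-|y|^2/(8t)}.
\end{align*}
The resulting integral has the same structural form as the one defining $\theta(0,t)$, only with the Gaussian bandwidth widened by a harmless constant factor, and the same four-region decomposition delivers the bound $Ct^{-\gamma/2-1}(\log\sqrt t)^{-\beta}$.

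The only point that needs care is extracting the logarithmic prefactor $(\log\sqrt t)^{-\beta}$ sharply from the dominant middle annulus. After the rescaling $y=\sqrt t\,z$ one has $(\log|y|)^{-\beta}=(\log\sqrt t)^{-\beta}\bigl(1+\log|z|/\log\sqrt t\bigr)^{-\beta}$, and by \eqref{e_4.10} the ratio $\log|z|/\log\sqrt t$ is bounded in absolute value by $(\log\log\sqrt t)/\log\sqrt t$ on $\{2/\log\sqrt t<|z|<\log\sqrt t\}$. Thus the bracketed factor stays between $1/2$ and $2$ for $t$ large, and pulling $(\log\sqrt t)^{-\beta}$ outside the integral is legitimate. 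This is exactly the step carried out inside Lemma \ref{lemma_4.2}, so no genuinely new analytic input is required and the proof reduces to bookkeeping the remainders coming from the three non-dominant annuli.
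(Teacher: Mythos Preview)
Your proposal is correct and follows essentially the same approach as the paper, which simply states that ``the proof of Lemma \ref{lemma_4.2} immediately implies both estimates.'' You have spelled out precisely what this means: reuse the four-region decomposition of Lemma \ref{lemma_4.2} with the pointwise bound on $|\theta_0|$ in place of the exact profile for \eqref{equation_4.17}, and for \eqref{equation_4.18} differentiate the kernel, absorb the polynomial prefactor into the Gaussian, and rerun the same decomposition---all of which is routine.
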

%%%%%%%%%%%%%%%%%%%%%%%%%%%%%%%%%%%%%%%%%%%%%%%%%%%%%%%%%%%%%%%%
 \begin{proof}
 The proof of Lemma \ref{lemma_4.2} immediately implies both estimates.
 \end{proof}
%%%%%%%%%%%%%%%%%%%%%%%%%%%%%%%%%%%%%%%%%%%%%%%%%%%%%%%%%%%%%%%%

\subsection{Choice of Initial Data for Oscillatory Solutions}
\label{section_4.2}
%%%%%%%%%%%%%%%%%%%%%%%%%%%%%%%%%%%%%%%%%%%%%%%%%%%%%%%%%%%%%%%%
 Fix a sequence $\{R_j\}_{j=1}^\infty$ of positive numbers satisfying the following conditions:
 \begin{align}
 \label{equation_4.19}
 e<R_j<R_{j+1},
 \quad
 (2R_j)\log(2R_j)<\tfrac{R_{j+1}}{\log R_{j+1}}
 \quad
 \text{and}
 \quad
 \lim_{j\to\infty}
 R_j=\infty.
 \end{align}
 We now choose a radially symmetric initial function
 \[
 \theta_0(x)\in C(\R^n)\cap L^\infty(\R^n)\cap \dot H^1(\R^n).
 \]
 Let $\chi_j$ be the cut off function defined by
 $\chi_j
 =
 \chi(\tfrac{|x|}{R_j})$,
 and define
 \begin{align*}
 f_0(x)
 =
 |x|^{-\frac{n-2}{2}}(\log |x|)^{-\beta}.
 \end{align*}
 We define $\theta_0(x)$ explicitly as follows:
 \begin{align}
 \label{equation_4.20}
 \theta_0(x)
 &=
 \begin{cases}
 e^{-\frac{n-2}{2}}
 & \text{for } |x|<e,
 \\
 f_0(x)
 & \text{for } e<|x|<R_1,
 \\
 f_0(x)
 \chi_1
 -
 f_0(x)
 (1-\chi_1)
 & \text{for } R_1<|x|<2R_1,
 \\
 -f_0(x)
 & \text{for } 2R_1<|x|<R_2,
 \\
 -
 f_0(x)
 \chi_2
 +
 f_0(x)
 (1-\chi_2)
 & \text{for } R_2<|x|<2R_2,
 \\
 f_0(x)
 & \text{for } 2R_2<|x|<R_3,
 \\
 f_0(x)
 \chi_3
 -
 f_0(x)
 (1-\chi_3)
 & \text{for } R_3<|x|<2R_3,
 \\
 \cdots
 \end{cases}
 \end{align}
% \begin{itemize}
% \item The function \( \theta_0(x) \) is defined piecewise as
% \begin{align}
% \label{equation_4.20}
% \theta_0(x)
% &=
% \begin{cases}
% e^{-\frac{n-2}{2}}
% & \text{for } |x|<e,
% \\
% |x|^{-\frac{n-2}{2}}(\log |x|)^{-\beta}
% & \text{for } e<|x|<R_1,
% \\
% -|x|^{-\frac{n-2}{2}}(\log |x|)^{-\beta}
% & \text{for } 2R_1<|x|<R_2,
% \\
% |x|^{-\frac{n-2}{2}}(\log |x|)^{-\beta}
% & \text{for } 2R_2<|x|<R_3,
% \\
% \cdots
% \end{cases}
% \end{align}
% 
% \item
% The pointwise bound{\rm:}
% \begin{align}
% \label{equation_4.21}
% |\theta_0(x)|
% &<
% \begin{cases}
% |x|^{-\frac{n-2}{2}}(\log |x|)^{-\beta}
% & \text{for } |x|>e,
% \\
% e^{-\frac{n-2}{2}}
% & \text{for } |x|<e. 
% \end{cases}
% \end{align}
% 
% \item
% The gradient bound{\rm:}
% \begin{align}
% \label{equation_4.22}
% |\nabla_x\theta_0(x)|
% \leq
% 16
% \left|\nabla_x\left(
% |x|^{-\frac{n}{2}}(\log |x|)^{-\beta}
% \right)
% \right|
% \quad
% \text{for all } |x|>e.
% \end{align}
% \end{itemize}
 Let $\theta(x,t)$ be the solution to the linear heat equation starting from
 this initial data $\theta_0(x)$.
 From Lemma \ref{lemma_4.2},
 there exist constants $C>0$ and $R_0>0$ such that,
 if $R_1>R_0$,
 then the following estimates hold:
 \begin{align}
 \label{equation_4.23}
 \begin{cases}
 |\theta(0,t)-\frac{-A_1}{t(\log t)^\beta}|
 <
 \frac{C}{t(\log t)^{\beta+1}}
 &
 \text{for } \sqrt t\in(2R_1\log 2R_1,\tfrac{R_{2}}{\log R_{2}}),
 \\
 |\theta(0,t)-\frac{A_1}{t(\log t)^\beta}|
 <
 \frac{C}{t(\log t)^{\beta+1}}
 &
 \text{for } \sqrt t\in(2R_{2}\log 2R_{2},\tfrac{R_{3}}{\log R_{3}}),
 \\
 |\theta(0,t)-\frac{-A_1}{t(\log t)^\beta}|
 <
 \frac{C}{t(\log t)^{\beta+1}}
 &
 \text{for } \sqrt t\in(2R_3\log 2R_3,\tfrac{R_{4}}{\log R_{4}}),
 \\
 |\theta(0,t)-\frac{A_1}{t(\log t)^\beta}|
 <
 \frac{C}{t(\log t)^{\beta+1}}
 &
 \text{for } \sqrt t\in(2R_{4}\log 2R_{4},\tfrac{R_{5}}{\log R_{5}}),
 \\
 \cdots
 \end{cases}
 \end{align}
 Moreover,
 from Lemma \ref{lemma_4.3},
 the absolute value of \( \theta(0,t) \) is uniformly bounded by:
 \begin{align}
 \label{equation_4.24}
 |\theta(0,t)|
 <
 \tfrac{2A_1}{t(\log t)^\beta}
 \quad
 \text{for } t>t_1.
 \end{align}
 The constant $A_1$ is given by
 $A_1=
 (4\pi)^{-\frac{n}{2}}
 \int_{z\in\R^n}
 e^{-\frac{|z|^2}{4}}
 |z|^{-\frac{n-2}{2}}
 dz$.
%%%%%%%%%%%%%%%%%%%%%%%%%%%%%%%%%%%%%%%%%%%%%%%%%%%%%%%%%%%%%%%%

%%%%%%%%%%%%%%%%%%%%%%%%%%%%%%%%%%%%%%%%%%%%%%%%%%%%%%%%%%%%%%%%
\begin{lem}
\label{lemma_4.4}
Assume that \( n \geq 3 \) and \( \beta' > \frac{1}{2} \).  
Let \( \{R_j\}_{j=1}^\infty \) be a sequence satisfying \eqref{equation_4.19},  
and let \( \theta_0(x) \) be the initial function defined by \eqref{equation_4.20}.  
Then the solution \( \theta(x,t) \) satisfies  
\[
\lim_{t \to \infty} \theta(x,t) = 0
\quad \text{in } \dot H^1(\mathbb{R}^n).
\]
\end{lem}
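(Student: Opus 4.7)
The plan reduces the assertion to two separate facts: (a) the initial datum $\theta_0$ lies in $\dot H^1(\R^n)$ when $\beta>\frac{1}{2}$; and (b) the heat semigroup decays to zero on $L^2$, i.e.\ $\|e^{t\Delta}g\|_{L^2}\to0$ as $t\to\infty$ for every $g\in L^2(\R^n)$. Since the heat semigroup commutes with $\nabla$, applying (b) to $g=\nabla\theta_0$ immediately yields $\|\nabla\theta(\cdot,t)\|_{L^2}\to 0$, which is exactly the stated conclusion.

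For (a), I would split $\R^n$ into the ``smooth'' pieces where $\theta_0=\pm f_0$ with $f_0(x)=|x|^{-(n-2)/2}(\log|x|)^{-\beta}$, and the transition annuli $\{R_j<|x|<2R_j\}$ where the cut-off $\chi_j$ interpolates between the two signs. On the smooth pieces one has $|\nabla f_0|^2\le C|x|^{-n}(\log|x|)^{-2\beta}$, so after passing to polar coordinates the total smooth contribution is controlled by $\int_e^\infty r^{-1}(\log r)^{-2\beta}\,dr$, which is finite precisely when $\beta>\frac{1}{2}$. On each transition annulus, writing $\theta_0=\pm f_0(2\chi_j-1)$ and using $|\nabla\chi_j|\le CR_j^{-1}$ together with $|f_0|\le CR_j^{-(n-2)/2}(\log R_j)^{-\beta}$ on $|x|\sim R_j$, a direct product-rule estimate yields
\[
\int_{R_j<|x|<2R_j}|\nabla\theta_0|^2\,dx\le C(\log R_j)^{-2\beta}.
\]
The growth condition \eqref{equation_4.19} forces $\log R_{j+1}\ge\log R_j+\log\log R_j+O(1)$, which in the extremal case gives $\log R_j\sim j\log j$; by the integral test $\sum_j(j\log j)^{-2\beta}<\infty$ as long as $\beta>\frac{1}{2}$, so the total contribution of the transition annuli is finite.

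For (b), I would invoke the Fourier representation
\[
\|e^{t\Delta}g\|_{L^2}^2
=
\int_{\R^n}e^{-2t|\xi|^2}\,|\widehat g(\xi)|^2\,d\xi,
\]
and apply dominated convergence: the integrand is dominated by $|\widehat g(\xi)|^2\in L^1(\R^n)$ (Plancherel) and tends to $0$ for a.e.\ $\xi$ as $t\to\infty$. Combined with $\nabla\theta(\cdot,t)=e^{t\Delta}(\nabla\theta_0)$, this closes the argument.

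The main obstacle is entirely in step (a): the careful bookkeeping of the countably many transition annuli and the verification that \eqref{equation_4.19} is strong enough to make $\sum_j(\log R_j)^{-2\beta}$ summable under the sharp threshold $\beta>\frac{1}{2}$, the same threshold that governs the smooth pieces. Once $\theta_0\in\dot H^1(\R^n)$ has been established, the decay is a classical consequence of the $L^2$-behavior of the heat kernel.
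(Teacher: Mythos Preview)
Your proof is correct and follows exactly the two-step strategy of the paper, which merely asserts $\theta_0\in\dot H^1(\R^n)$ ``by direct computation'' and then invokes standard heat-flow decay in the energy space; you have simply supplied the details the paper omits. One simplification in step~(a): since \eqref{equation_4.19} forces $R_{j+1}>2R_j$, the transition annuli $(R_j,2R_j)$ are pairwise disjoint, and hence $\sum_j(\log R_j)^{-2\beta}\le C\sum_j\int_{R_j}^{2R_j}r^{-1}(\log r)^{-2\beta}\,dr\le C\int_e^\infty r^{-1}(\log r)^{-2\beta}\,dr<\infty$, so your growth-rate analysis of $\log R_j$ is not needed.
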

%%%%%%%%%%%%%%%%%%%%%%%%%%%%%%%%%%%%%%%%%%%%%%%%%%%%%%%%%%%%%%%%
 \begin{proof}
 A direct computation shows that $\theta_0(x)\in\dot H^1(\R^n)$.
 Therefore,
 by standard density arguments for the heat equation in the energy space,  
 we conclude that the solution \( \theta(x,t) \) satisfies  
 \[
 \lim_{t \to \infty} \theta(x,t) = 0 \quad \text{in } \dot H^1(\mathbb{R}^n).
 \]
 \end{proof}
%%%%%%%%%%%%%%%%%%%%%%%%%%%%%%%%%%%%%%%%%%%%%%%%%%%%%%%%%%%%%%%%

 \section{Growth Speed of Oscillatory Solutions in a formal level}
 \label{section_5}
%%%%%%%%%%%%%%%%%%%%%%%%%%%%%%%%%%%%%%%%%%%%%%%%%%%%%%%%%%%%%%%%
 We adopt an argument similar to those in
 Marek Fila - John R. King \cite{Fila-King} and
 Stathis Filippas - Miguel A. Herrero -  Juan J. L. Vel\'azquez \cite{FilippasHV}
 (see also Manuel del Pino - Monica Musso - Juncheng Wei \cite{delPino-Musso-Wei_3dim_long} and
 \cite{Wei-Qidi-Yifu,Li-Wei-Qidi-Yifu}).
 In this section,
 we carry out the construction of the solutions described in Theorem \ref{theorem_3} at a formal level.
 This argument also covers the cases corresponding to Theorem \ref{theorem_1} and \ref{theorem_2}.
 Let $\theta_0(x)$ be the function defined in Section \ref{section_4.2},
 and
 let $\theta(x,t)$ be the unique solution of
 \begin{align*}
 \begin{cases}
 \theta_t=\Delta_x\theta
 &
 \text{for }
 (x,t)\in\R^n\times(0,\infty),
 \\
 \theta(x,t)|_{t=0}
 =
 \theta_0(x)
 &
 \text{for }
 x\in\R^n.
 \end{cases}
 \end{align*}
 The sequence $\{R_j\}_{j=1}^\infty$ (see Section \ref{section_4.2}) will be carefully chosen later.
 We now construct a solution of the form:
 \begin{align}
 \label{e_5.1}
 u(x,t)
 &=
 \lambda(t)^{-\frac{n-2}{2}}
 \bigl\{
 {\sf Q}(\tfrac{x}{\lambda(t)})+\sigma(t)T_1(\tfrac{x}{\lambda(t)})
 \bigr\}
 \chi_1
 +
 \theta(x,t)
 (1-\chi_1)
 +
 v(x,t)
 \\
 \nonumber
 &\text{with }
 0<\lambda(t)\ll\sqrt t,
 \end{align}
 where $v(x,t)$ denotes a remainder.
%%%%%%%%%%%%%%%%%%%%%%%%%%%%%%%%%%%%%%%%%%%%%%%%%%%%%%%%%%%%%%%%

\subsection{Inner Region: $|x|\sim\lambda(t)$}
\label{section_5.1}
%%%%%%%%%%%%%%%%%%%%%%%%%%%%%%%%%%%%%%%%%%%%%%%%%%%%%%%%%%%%%%%%
 We seek an inner solution of the form:
 \begin{align}
 \label{e_5.2}
 u(x,t)
 =
 \lambda(t)^{-\frac{n-2}{2}}({\sf Q}(y)+\sigma(t)T_1(y)+\epsilon(y,t))
 \quad
 \text{with }
 y=\tfrac{x}{\lambda(t)},
 \end{align}
 where $\lambda(t)$ and $\sigma(t)$
 are unknown at this level.
 Here, the inner region refers to the spatial domain defined by $|x| < R\lambda(t)$, or equivalently, $|y| < R$, 
 for a sufficiently large fixed constant $R > 0$.
 Define
 \[
 \Lambda_y=\tfrac{n-2}{2}+y\cdot\nabla_y,
 \quad
 H_y
 =
 \Delta_y+f'({\sf Q}(y)).
 \]
 The function $\epsilon(y,t)$ solves
 \[
 \lambda^2\pa_t\epsilon
 +
 \lambda^2\dot\sigma T_1
 =
 H_y\epsilon
 +
 \{\lambda\dot\lambda(\Lambda_y{\sf Q})+\sigma H_yT_1\}
 +
 \sigma^2\Lambda_yT_1
 +
 \sigma\Lambda_y\epsilon
 +
 N,
 \]
 where $N$ represents a nonlinear term given by
 $N=f({\sf Q}+\epsilon)-f({\sf Q})-f'({\sf Q})\epsilon$.
%%%%%%%%%%%%%%%%%%%%%%%%%%%%%%%%%%%%%%%%%%%%%%%%%%%%%%%%%%%%%%%%
 To cancel the leading-order term:
 $\lambda\dot\lambda(\Lambda_y{\sf Q})+\sigma H_yT_1$,
 we choose $\sigma(t)$ and $T_1(y)$ as follows.
 \begin{align}
 \label{e_5.3}
 \begin{cases}
 \sigma=\lambda\dot\lambda,
 \\
 H_yT_1+\Lambda_y{\sf Q}=0.
 \end{cases}
 \end{align}
 Since $\epsilon(y,t)$ is a remainder term,
 we expect that $|\epsilon(y,t)|\ll\sigma(t)T_1(y)$ in \eqref{e_5.2}.
 Therefore,
 the solution is expected to be approximated by
 \begin{equation}
 \label{e_5.4}
 u(x,t)
 =
 \lambda(t)^{-\frac{n-2}{2}}{\sf Q}(y)
 +
 \lambda(t)^{-\frac{n-2}{2}}\sigma(t)T_1(y)
 \end{equation}
in the inner region.
From (4.2) - (4.3) in \cite{Harada_6D},
we recall that
the radial function $T_1(y)$ satisfies
 \begin{align}
 \label{e_5.5}
 T_1(r)
 &=
 \tfrac{4}{5}+O(r^{-2})
 \qquad
 (n=6),
 \\
 \label{e_5.6}
 \pa_rT_1(r)
 &=
 O(r^{-3})
 \qquad
 (n=6).
 \end{align}
%%%%%%%%%%%%%%%%%%%%%%%%%%%%%%%%%%%%%%%%%%%%%%%%%%%%%%%%%%%%%%%%

\subsection{Selfsimilar Region: $|x|\sim\sqrt{t}$}
\label{section_5.2}
%%%%%%%%%%%%%%%%%%%%%%%%%%%%%%%%%%%%%%%%%%%%%%%%%%%%%%%%%%%%%%%%
 In the selfsimilar region:
 \begin{align*}
 |x|\sim\sqrt{t},
 \end{align*}
 we assume that
 the contribution from the nonlinear term $|u|^{p-1}u$
 in $u_t=\Delta_xu+|u|^{p-1}u$ is negligible.
 Under these circumstances,
 the solution $u(x,t)$ is expected to be well approximated by
 a solution of the linear heat equation in this region.
 Let $\theta(x,t)$ be the function defined at the beginning of Section \ref{section_5}.
 We now assume that
 \begin{align}
 \label{e_5.7}
 u(x,t)
 =
 \theta(x,t)
 \quad
 \text{for }
 |x|\sim\sqrt t.
 \end{align}
%%%%%%%%%%%%%%%%%%%%%%%%%%%%%%%%%%%%%%%%%%%%%%%%

 \subsection{Matching Conditions and Choice of $\{R_j\}_{j=1}^\infty$}
 \label{section_5.3}
%%%%%%%%%%%%%%%%%%%%%%%%%%%%%%%%%%%%%%%%%%%%%%%%%%%%%%%%%%%%%%%%
 We derive conditions under which the two solutions obtained in Sections \ref{section_5.1} and \ref{section_5.2}
 match at the boundary of the two regions.
 By comparing the forms of the solutions in the two regions (see \eqref{e_5.4} and \eqref{e_5.7}),
 we find that
 \begin{align*}
 \lambda^{-\frac{n-2}{2}}{\sf Q}(y)
 &+
 \lambda^{-\frac{n-2}{2}}\sigma T_1(y)
 =
 \theta(0,t)
 \\
 &
 \text{for } |y|\to\infty,\ \ |x|\ll\sqrt{t}.
 \end{align*}
 Neglecting the first term on the left-hand side,
 and using the asymptotic formula \eqref{e_5.5},
 we obtain
 \[
 \begin{cases}
 \lambda^{-2}
 \sigma
 \cdot
 \tfrac{4}{5}
 =
 \theta(0,t),
 \\
 \sigma
 =
 \lambda\dot\lambda
 \end{cases}
 \quad
 (n=6).
 \]
 From \eqref{equation_4.23},
 we expect that $\lambda(t)$ satisfies the following differential equation.
 \begin{align}
 \label{e_5.8}
 \begin{cases}
 \tfrac{\dot\lambda}{\lambda}
 =
 \tfrac{-5A_1}{4t(\log t)^\beta}
 &
 \text{for }
 \sqrt t\in(2R_1\log2R_1,\tfrac{R_2}{\log R_2}),
 \\[2mm]
 \tfrac{\dot\lambda}{\lambda}
 =
 \tfrac{5A_1}{4t(\log t)^\beta}
 &
 \text{for }
 \sqrt t\in(2R_2\log 2R_2,\tfrac{R_3}{\log R_3}),
 \\[2mm]
 \tfrac{\dot\lambda}{\lambda}
 =
 \tfrac{-5A_1}{4t(\log t)^\beta}
 &
 \text{for }
 \sqrt t\in(2R_3\log 2R_3,\tfrac{R_4}{\log R_4}),
 \\[2mm]
 \tfrac{\dot\lambda}{\lambda}
 =
 \tfrac{5A_1}{4t(\log t)^\beta}
 &
 \text{for }
 \sqrt t\in(2R_4\log 2R_4,\tfrac{R_5}{\log R_5}),
 \\
 \cdots
 \end{cases}
 \end{align}
Additionally,
the function $\lambda(t)$ satisfies the bound
 \begin{align}
 \label{e_5.9}
 |\tfrac{\dot\lambda}{\lambda}|
 <
 \tfrac{5A_1}{2t(\log t)^\beta}
 \quad
 \text{for }
 \sqrt t>R_1\log R_1.
 \end{align}
%%%%%%%%%%%%%%%%%%%%%%%%%%%%%%%%%%%%%%%%%%%%%%%%%%%%%%%%%%%%%%%%
 To simplify notation,
 we introduce the notation \( t_j^{\pm} \) as follows:
 \begin{align*}
 \sqrt{t_j^{+}}
 =
 2R_j\log 2R_j,
 \quad
 \sqrt{t_j^{-}}
 =
 \tfrac{R_j}{\log R_j}.
 \end{align*}
We also define
 \begin{align*}
 q_1=\tfrac{5A_1}{4(1-\beta)}.
 \end{align*}
 The differential equation \eqref{e_5.8} - \eqref{e_5.9} can be rewritten as:
 \begin{align}
 \label{e_5.10}
 \begin{cases}
 \tfrac{\dot\lambda}{\lambda}
 =
 \tfrac{-(1-\beta)q_1}{t(\log t)^\beta}
 &
 \text{for }
 t\in(t_1^+,t_2^-),
 \\[2mm]
 \tfrac{\dot\lambda}{\lambda}
 =
 \tfrac{(1-\beta)q_1}{t(\log t)^\beta}
 &
 \text{for }
 t\in(t_2^+,t_3^-),
 \\[2mm]
 \tfrac{\dot\lambda}{\lambda}
 =
 \tfrac{-(1-\beta)q_1}{t(\log t)^\beta}
 &
 \text{for }
 t\in(t_3^+,t_4^-),
 \\[2mm]
 \tfrac{\dot\lambda}{\lambda}
 =
 \tfrac{(1-\beta)q_1}{t(\log t)^\beta}
 &
 \text{for }
 t\in(t_4^+,t_5^-),
 \\
 \cdots
 \end{cases}
 \end{align}
 with the bound
 \begin{align}
 \label{e_5.11}
 |\tfrac{\dot\lambda}{\lambda}|
 <
 \tfrac{2(1-\beta)q_1}{t(\log t)^\beta}
 \quad
 \text{for all }
 t\in(t_j^-,t_j^+).
 \end{align}
%%%%%%%%%%%%%%%%%%%%%%%%%%%%%%%%%%%%%%%%%%%%%%%%%%%%%%%%%%%%%%%%

%%%%%%%%%%%%%%%%%%%%%%%%%%%%%%%%%%%%%%%%%%%%%%%%%%%%%%%%%%%%%%%%
 \begin{lem}
 \label{lemma_5.1}
 Let $\beta\in(\frac{1}{2},1)$,
 and let $\bar{\sf n}\in\N$ be a large constant to be specified in the proof.
 Fix ${\sf n}_1>\bar{\sf n}$.
 Define the sequences $\{t_j\}_{j=1}^\infty$ and $\{R_j\}_{j=1}^\infty$ by
 \begin{itemize}
 \item 
 $t_j
 =
 e^{{\sf p}_j}$ with ${\sf p}_j={\sf n}_1^\frac{j}{1-\beta}${\rm;}
 \item
 $R_j=\sqrt{t_j}$.
 \end{itemize}
 Define the sequence $\{t_j^{\pm}\}_{j\in\N}$ by
 \begin{itemize}
 \item $t_j^{+}=(2R_j\log 2R_j)^2${\rm;}
 \item $t_j^{-}=(\tfrac{R_j}{\log R_j})^2$.
 \end{itemize}
 We also set
 \begin{itemize}
 \item $t_I=t_1^-$.
 \end{itemize}
%%%%%%%%%%%%%%%%%%%%%%%%%%%%%%%%%%%%%%%%%%%%%%%%%%%%%%%%%%%%%%%%
 We consider the following perturbed version of the system \eqref{e_5.10} - \eqref{e_5.11}.
 \begin{align}
 \label{e_5.12}
 \begin{cases}
 \tfrac{\dot\lambda}{\lambda}
 =
 -
 \tfrac{(1-\beta)q_1}{t(\log t)^\beta}
 +
 D_1(t)
 &
 \text{for }
 t\in(t_1^+,t_2^-),
 \\[2mm]
 \tfrac{\dot\lambda}{\lambda}
 =
 +
 \tfrac{(1-\beta)q_1}{t(\log t)^\beta}
 +
 D_2(t)
 &
 \text{for }
 t\in(t_2^+,t_3^-),
 \\[2mm]
 \tfrac{\dot\lambda}{\lambda}
 =
 -
 \tfrac{(1-\beta)q_1}{t(\log t)^\beta}
 +
 D_3(t)
 &
 \text{for }
 t\in(t_3^+,t_4^-),
 \\[2mm]
 \tfrac{\dot\lambda}{\lambda}
 =
 +
 \tfrac{(1-\beta)q_1}{t(\log t)^\beta}
 +
 D_4(t)
 &
 \text{for }
 t\in(t_4^+,t_5^-),
 \\
 \cdots
 \end{cases}
 \end{align}
 with the bound
 \begin{align}
 \label{e_5.13}
 |\tfrac{\dot\lambda}{\lambda}|
 <
 \tfrac{2(1-\beta)q_1}{t(\log t)^\beta}
 \quad
 \text{for }
 t\in(t_j^-,t_j^+)
 \qquad
 (\forall j\in\N),
 \end{align}
 and the initial condition
 \begin{align}
 \label{e_5.14}
 \lambda(t_1^-)
 =
 e^{q_1(\log t_1^-)^{1-\beta}}.
 \end{align}
% ここまで13:10
 Assume that
 there exist constants $\beta'>1$ and $C_1>0$, 
 independent of $j\in\N$, such that
 \begin{align*}
 |D_j(t)|<\tfrac{C_1}{t(\log t)^{\beta'}}
 \quad
 \text{\rm for all }
 t\in(t_j^+,t_{j+1}^-),\
 j\in\N. 
 \end{align*}
 Then,
 we have{\rm:}
 \begin{itemize}
 \item
 For even $j\in\N$,
 \begin{align}
 \label{e_5.15}
 \log[\tfrac{\lambda(t_j^-)}{\lambda(t_1^-)}]
 &<
 -
 \tfrac{q_1}{2}
 {\sf n}_1^j
 +
 6q_1
 {\sf n}_1^{j-1}
 +
 \tfrac{C_1}{\beta'-1}
 (\log t_1^-)^{-(\beta'-1)}.
 \end{align}
 \item
 For odd $j\in\N$,
 \begin{align}
 \label{e_5.16}
 \log[\tfrac{\lambda(t_j^-)}{\lambda(t_1^-)}]
 &>
 \tfrac{q_1}{2}
 {\sf n}_1^j
 -
 6q_1
 {\sf n}_1^{j-1}
 -
 \tfrac{C_1}{\beta'-1}
 (\log t_1^-)^{-(\beta'-1)}.
 \end{align}
 \end{itemize}
 Furthermore,
 we have
 \begin{align}
 \label{e_5.17}
 \lambda(t)
 &<
 e^{-q_1(\log t_I)^{1-\beta}}
 e^{2q_1(\log t)^{1-\beta}}
 \quad
 \text{ \rm for all }
 t\in[t_I,\infty).
 \end{align}
 \end{lem}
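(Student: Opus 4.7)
The plan is to integrate \eqref{e_5.12}--\eqref{e_5.13} interval by interval. First I would write
\[
\log\tfrac{\lambda(t_j^-)}{\lambda(t_1^-)}=\sum_{k=1}^{j-1}\bigl\{[\log\lambda(t_k^+)-\log\lambda(t_k^-)]+[\log\lambda(t_{k+1}^-)-\log\lambda(t_k^+)]\bigr\},
\]
separating the transition sub-intervals $(t_k^-,t_k^+)$, on which only the crude bound \eqref{e_5.13} is available, from the good sub-intervals $(t_k^+,t_{k+1}^-)$, on which the explicit form \eqref{e_5.12} of $\dot\lambda/\lambda$ integrates exactly to
\[
(-1)^k q_1\bigl[(\log t_{k+1}^-)^{1-\beta}-(\log t_k^+)^{1-\beta}\bigr]+\int_{t_k^+}^{t_{k+1}^-}\!D_k(t)\,dt.
\]
The $D_k$ contributions sum to at most $\int_{t_1^-}^\infty \tfrac{C_1}{s(\log s)^{\beta'}}\,ds=\tfrac{C_1}{\beta'-1}(\log t_1^-)^{-(\beta'-1)}$, precisely matching the last term of \eqref{e_5.15}--\eqref{e_5.16}.

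From $\log t_k=\mathsf{n}_1^{k/(1-\beta)}$ together with $\log t_k^\pm-\log t_k=\pm O(\log\log R_k)$, a Taylor expansion yields $(\log t_k^\pm)^{1-\beta}=\mathsf{n}_1^k+O\!\bigl(k(\log\mathsf{n}_1)\mathsf{n}_1^{-k\beta/(1-\beta)}\bigr)$, with an absolutely summable correction in $k$. Replacing $(\log t_k^\pm)^{1-\beta}$ by $\mathsf{n}_1^k$ reduces the dominant contribution to the alternating geometric series
\[
q_1(\mathsf{n}_1-1)\sum_{k=1}^{j-1}(-\mathsf{n}_1)^k
=\begin{cases}
-q_1(\mathsf{n}_1^j+\mathsf{n}_1)\dfrac{\mathsf{n}_1-1}{\mathsf{n}_1+1}, & j\text{ even},\\[1mm]
+q_1(\mathsf{n}_1^j-\mathsf{n}_1)\dfrac{\mathsf{n}_1-1}{\mathsf{n}_1+1}, & j\text{ odd}.
\end{cases}
\]
An elementary algebraic check (valid for $\mathsf{n}_1$ at least moderately large) shows that each of these expressions already lies on the correct side of $\mp\tfrac{q_1}{2}\mathsf{n}_1^j\pm 6q_1 \mathsf{n}_1^{j-1}$ with slack of order $q_1\mathsf{n}_1^{j-1}$. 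The remaining errors, namely the transition contributions $2q_1[(\log t_k^+)^{1-\beta}-(\log t_k^-)^{1-\beta}]$ together with the Taylor corrections, are bounded by a convergent series in $k$ whose total can be made arbitrarily small by taking $\bar{\sf n}$ large, and they are absorbed into this slack to give \eqref{e_5.15}--\eqref{e_5.16}.

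For \eqref{e_5.17} I would choose $\bar{\sf n}$ large enough that $C_1(\log t_I)^{-(\beta'-\beta)}\leq (1-\beta)q_1$. Then the pointwise bound $|\dot\lambda/\lambda|\leq 2(1-\beta)q_1/(t(\log t)^\beta)$ holds on all of $[t_I,\infty)$: it is \eqref{e_5.13} on the transition intervals, and on the good intervals it follows from \eqref{e_5.12} combined with the hypothesis on $D_j$. Direct integration together with the initial condition \eqref{e_5.14} then yields $\log\lambda(t)\leq -q_1(\log t_I)^{1-\beta}+2q_1(\log t)^{1-\beta}$, which is \eqref{e_5.17}.

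The main difficulty is not any single computation but the bookkeeping in the step above: verifying that the transition-interval contributions together with the Taylor-expansion errors can be absorbed into the $\pm 6q_1\mathsf{n}_1^{j-1}$ slack uniformly in $j$. This hinges on the geometric smallness $\mathsf{n}_1^{-k\beta/(1-\beta)}$ of each individual correction and on the freedom to choose $\bar{\sf n}$ as large as needed.
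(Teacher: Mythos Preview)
Your approach is correct and takes a somewhat different route from the paper. The paper does not compute the alternating geometric series directly; instead it first establishes, by induction on $j$ (carried out in an appendix), the intermediate two-term bounds
\[
\log\bigl[\tfrac{\lambda(t_j^-)}{\lambda(t_1^-)}\bigr]
\ \lessgtr\
\mp q_1(\log t_j^-)^{1-\beta}\pm 3q_1(\log t_{j-1}^+)^{1-\beta}\pm\int_{t_1^+}^{t_j^-}|D(s)|\,ds
\]
(the inductive step works because the ``leftover'' contributions from earlier intervals have a definite sign, $t_{k+1}^->t_k^+$, and can simply be discarded), and only then inserts the asymptotics $(\log t_j^\pm)^{1-\beta}=\mathsf n_1^j+O(\mathsf n_1^{-j\beta/(1-\beta)}j\log\mathsf n_1)$ to pass to \eqref{e_5.15}--\eqref{e_5.16}. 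Your direct summation is more explicit about the mechanism (the alternating geometric factor $(\mathsf n_1-1)/(\mathsf n_1+1)$ is what produces the leading $\mp q_1\mathsf n_1^j$), whereas the paper's induction buys a cleaner invariant that avoids tracking the full sum at each step. Both routes rely on exactly the same two ingredients: the exact integration of \eqref{e_5.12} on the good intervals and the crude bound \eqref{e_5.13} on the transitions.

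For \eqref{e_5.17} your argument is actually slightly more careful than the paper's: you point out that the uniform bound $|\dot\lambda/\lambda|\le 2(1-\beta)q_1/(t(\log t)^\beta)$ must be checked separately on the good intervals via \eqref{e_5.12} and the smallness of $D_j$, whereas the paper simply writes ``integrate \eqref{e_5.13} over $(t_I,t)$'' even though \eqref{e_5.13} is only stated on the transition intervals.
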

%%%%%%%%%%%%%%%%%%%%%%%%%%%%%%%%%%%%%%%%%%%%%%%%%%%%%%%%%%%%%%%%
 \begin{proof}
 Integrating both sides of \eqref{e_5.13} over $t\in(t_j^-,t_j^+)$,
 we verify that
 for each $j\in\N$
 \begin{align}
 \label{e_5.18}
 \log\tfrac{\lambda(t)}{\lambda(t_j^-)}
 &<
 2q_1
 (\log t)^{1-\beta}
 -
 2q_1
 (\log t_j^-)^{1-\beta}
 \quad
 \text{for all }
 t\in(t_j^-,t_{j}^+),
 \\
 \label{e_5.19}
 \log\tfrac{\lambda(t)}{\lambda(t_j^-)}
 &>
 -
 2q_1
 (\log t)^{1-\beta}
 +
 2q_1
 (\log t_j^-)^{1-\beta}
 \quad
 \text{for all }
 t\in(t_j^-,t_{j}^+). 
 \end{align}
%%%%%%%%%%%%%%%%%%%%%%%%%%%%%%%%%%%%%%%%%%%%%%%%%%%%%%%%%%%%%%%%
 Furthermore,
 from \eqref{e_5.12},
 we obtain the following identity:
 For even $j\in\N$,
 \begin{align}
 \label{e_5.20}
 \log\tfrac{\lambda(t)}{\lambda(t_j^+)}
 &=
 q_1(\log t)^{1-\beta}
 -
 q_1
 (\log t_j^+)^{1-\beta}
 +
 \int_{t_j^+}^t
 D_j(s)
 ds
 \\
 \nonumber
 &
 \text{for all }
 t\in(t_j^+,t_{j+1}^-).
 \end{align}
 For odd $j\in\N$,
 \begin{align}
 \label{e_5.21}
 \log\tfrac{\lambda(t)}{\lambda(t_j^+)}
 &=
 -
 q_1
 (\log t)^{1-\beta}
 +
 q_1
 (\log t_j^+)^{1-\beta}
 +
 \int_{t_j^+}^t
 D_j(s)
 ds
 \\
 \nonumber
 &
 \text{for all }
 t\in(t_j^+,t_{j+1}^-).
 \end{align}
%%%%%%%%%%%%%%%%%%%%%%%%%%%%%%%%%%%%%%%%%%%%%%%%%%%%%%%%%%%%%%%%
 For simplicity,
 we define a function $D(t)$ on $t\in(t_1^-,\infty)$ by
 \begin{align*}
 D(t)
 =
 \begin{cases}
 D_j(t)
 &
 \text{if }
 t\in(t_j^+,t_{j+1}^-),
 \\[2mm]
 0
 &
 \text{if }
 t\in\bigcup_{j\in\N}(t_j^-,t_j^+).
 \end{cases}
 \end{align*}
%%%%%%%%%%%%%%%%%%%%%%%%%%%%%%%%%%%%%%%%%%%%%%%%%%%%%%%%%%%%%%%%
 We now claim the following two-sided bounds \eqref{e_5.22} - \eqref{e_5.25}
 for the function $\lambda(t)$.
 For even $j\in\N$,
 \begin{align}
 \label{e_5.22}
 \log[\tfrac{\lambda(t_j^-)}{\lambda(t_1^-)}]
 &<
 -
 q_1(\log t_j^-)^{1-\beta}
 +
 3q_1(\log t_{j-1}^+)^{1-\beta}
 +
 \int_{t_1^+}^{t_j^-}
 |D(s)|ds,
 \\
 \label{e_5.23}
 \log[\tfrac{\lambda(t_j^-)}{\lambda(t_1^-)}]
 &>
 -
 q_1
 (\log t_j^-)^{1-\beta}
 -
 q_1(\log t_{j-1}^+)^{1-\beta}
 -
 \int_{t_1^+}^{t_j^-}
 |D(s)|ds.
 \end{align}
 For odd $j\in\N$,
%%%%%%%%%%%%%%%%%%%%%%%%%%%%%%%%%%%%%%%%%%%%%%%%%%%%%%%%%%%%%%%%
 \begin{align}
 \label{e_5.24}
 \log[\tfrac{\lambda(t_j^-)}{\lambda(t_1^-)}]
 &<
 q_1(\log t_j^-)^{1-\beta}
 +
 q_1(\log t_{j-1}^+)^{1-\beta}
 +
 \int_{t_1^+}^{t_j^-}
 |D(s)|ds,
 \\
 \label{e_5.25}
 \log[\tfrac{\lambda(t_j^-)}{\lambda(t_1^-)}]
 &>
 q_1(\log t_j^-)^{1-\beta}
 -
 3q_1
 (\log t_{j-1}^+)^{1-\beta}
 -
 \int_{t_1^+}^{t_j^-}
 |D(s)|ds.
 \end{align}
 See Appendix \ref{section_9.2} for the proof.
%%%%%%%%%%%%%%%%%%%%%%%%%%%%%%%%%%%%%%%%%%%%%%%%%%%%%%%%%%%%%%%%
 From the choice of $R_j$ and $t_j$,
 it follows that
 \[
 \log R_j
 =
 \tfrac{1}{2}\log t_j
 =
 \tfrac{{\sf p}_j}{2}.
 \]
 From the definition of $t_j^\pm$,
 we have
 \begin{align*}
 \log\sqrt{t_j^{+}}
 &=
 \log(2R_j\log 2R_j)
 =
 \log 2R_j
 +
 \log\log 2R_j
% \\
% &=
% \log 2
% +
% \log R_j
% +
% \log(\log 2+\log R_j)
 \\
 &=
 \tfrac{{\sf p}_j}{2}
 +
 \log2
 +
 \log(\tfrac{{\sf p}_j}{2}+\log 2),
 \\
 \log\sqrt{t_j^{-}}
 &=
 \log(\tfrac{R_j}{\log R_j})
 =
 \log R_j
 -
 \log\log R_j
 \\
 &=
 \tfrac{{\sf p}_j}{2}
 -
 \log\tfrac{{\sf p}_j}{2}.
 \end{align*}
 This implies
 \begin{align*}
 \log t_j^{+}
 &=
 {\sf p}_j
 +
 2\log 2
 +
 2\log(\tfrac{{\sf p}_j}{2}+\log2),
%%%%%%%%%%%%%%%%%%%%%%%%%%%%%%%%%%%%%%%%%%%%%%%%%%%%%%%%%%%%%%%%
 \\
 \log t_j^{-}
 &=
 {\sf p}_j
 -
 2\log \tfrac{{\sf p}_j}{2}.
 \end{align*}
%%%%%%%%%%%%%%%%%%%%%%%%%%%%%%%%%%%%%%%%%%%%%%%%%%%%%%%%%%%%%%%%
 %ここまで13:42
 From \eqref{e_5.22},
 we deduce that
 for even $j\in\N$
 \begin{align*}
 &
 \log[\tfrac{\lambda(t_j^-)}{\lambda(t_1^-)}]
 \\
 &<
 -
 q_1(\log t_j^-)^{1-\beta}
 +
 3q_1(\log t_{j-1}^+)^{1-\beta}
 +
 \int_{t_1^+}^\infty
 |D(s)|
 ds 
 \\
 &<
 -
 q_1
 ({\sf p}_{j}-2\log\tfrac{{\sf p}_j}{2})^{1-\beta}
 +
 3q_1
 \Bigl(
 {\sf p}_{j-1}+2\log2
 \\
 &\quad
 +2\log(\tfrac{{\sf p}_{j-1}}{2}+\log2)
 \Bigr)^{1-\beta}
 +
 \int_{t_1^+}^\infty
 |D(s)|
 ds
 \\
 &=
 -
 q_1
 \Bigl(
 {\sf n}_1^\frac{j}{1-\beta}-2\log\tfrac{{\sf n}_1^\frac{j}{1-\beta}}{2}
 \Bigr)^{1-\beta}
 +
 3q_1
 \Bigl({\sf n}_1^\frac{j-1}{1-\beta}+2\log2
 \\
 &\quad
 +
 2\log(\tfrac{{\sf n}_1^\frac{j-1}{1-\beta}}{2}+\log2)
 \Bigr)^{1-\beta}
 +
 \int_{t_1^+}^\infty
 |D(s)|
 ds
 \\
 &<
 -
 \tfrac{q_1}{2}
 {\sf n}_1^j
 +
 6q_1
 {\sf n}_1^{j-1}
 +
 \int_{t_1^+}^\infty
 |D(s)|
 ds.
 \end{align*}
 In the last line,
 we used the fact that ${\sf n}_1$ is sufficiently large.
 The case of odd $j\in\N$ can be treated similarly.
 \begin{align*}
 &
 \log[\tfrac{\lambda(t_j^-)}{\lambda(t_1^-)}]
 \\
 &>
 q_1(\log t_j^-)^{1-\beta}
 -
 3q_1(\log t_{j-1}^+)^{1-\beta}
 -
 \int_{t_1^+}^\infty
 |D(s)|
 ds 
 \\
 &>
 q_1
 ({\sf p}_{j}-2\log\tfrac{{\sf p}_j}{2})^{1-\beta}
 -
 3q_1
 \Bigl(
 {\sf p}_{j-1}+2\log2
 \\
 &\quad
 +2\log(\tfrac{{\sf p}_{j-1}}{2}+\log2)
 \Bigr)^{1-\beta}
 -
 \int_{t_1^+}^\infty
 |D(s)|
 ds
 \\
 &=
 q_1
 ({\sf n}_1^\frac{j}{1-\beta}-2\log\tfrac{{\sf n}_1^\frac{j}{1-\beta}}{2})^{1-\beta}
 -
 3q_1
 \Bigl({\sf n}_1^\frac{j-1}{1-\beta}+2\log2
 \\
 &\quad
 +
 2\log(\tfrac{{\sf n}_1^\frac{j-1}{1-\beta}}{2}+\log2)
 \Bigr)^{1-\beta}
 -
 \int_{t_1^+}^\infty
 |D(s)|
 ds
 \\
 &>
 \tfrac{q_1}{2}
 {\sf n}_1^j
 -
 6q_1
 {\sf n}_1^{j-1}
 -
 \int_{t_1^+}^\infty
 |D(s)|
 ds.
 \end{align*}
 Thus,
 the inequalities \eqref{e_5.15} - \eqref{e_5.16} are established.
 To derive \eqref{e_5.17},
 we integrate \eqref{e_5.13} over $t\in(t_I,t)$.
 This gives
 \begin{align*}
 \log[\tfrac{\lambda(t)}{\lambda(t_I)}]
 <
 2q_1(\log t)^{1-\beta}
 -
 2q_1(\log t_I)^{1-\beta}
 \quad
 \text{for }
 t\in(t_I,\infty).
 \end{align*}
 Combining this with \eqref{e_5.14}, we obtain \eqref{e_5.17}.
\end{proof}
%%%%%%%%%%%%%%%%%%%%%%%%%%%%%%%%%%%%%%%%%%%%%%%%%%%%%%%%%%%%%%%%

\section{Analytical Formulation of the Problem}
\label{section_6}
%%%%%%%%%%%%%%%%%%%%%%%%%%%%%%%%%%%%%%%%%%%%%%%%%%%%%%%%%%%%%%%%
 In this section,
 we prepare the analytical framework for the proof of Theorem \ref{theorem_3}.
 Theorems \ref{theorem_1} and \ref{theorem_2} correspond to simpler cases,
 and their proofs are therefore omitted, as they are naturally included in the construction presented here.
 
 Instead of \eqref{equation_1.1},
 we consider
 \begin{align}
 \label{equation_6.1}
 \begin{cases}
 u_t=\Delta_xu+|u|^{p-1}u & \text{for } (x,t)\in\R^n\times(t_{I},\infty),
 \\
 u|_{t=t_{I}}=u_0(x)& \text{for } x\in\R^n
 \end{cases}
 \end{align}
 for some sufficiently large \( t_I > 0 \).
 In this section,
 we formulate our problem as in our previous work (see Section 5 of \cite{Harada_6D}).
 Let $\theta(x,t)$ denote the same function introduced in Section~\ref{section_5}
 and constructed in Section~\ref{section_4}.
%%%%%%%%%%%%%%%%%%%%%%%%%%%%%%%%%%%%%%%%%%%%%%%%%%%%%%%%%%%%%%%%
 We recall that
 $T_1(y)$ is a solution of
 \begin{align*}
 H_yT_1(y)=-(\Lambda_y{\sf Q})(y)
 \end{align*}
 (see Section \ref{section_5.1}).
 For simplicity,
 we put
 \[
 {\sf b}(t)
 =
 \theta(0,t).
 \]
%%%%%%%%%%%%%%%%%%%%%%%%%%%%%%%%%%%%%%%%%%%%%%%%%%%%%%%%%%%%%%%%

\subsection{Setting}
\label{section_6.1}
%%%%%%%%%%%%%%%%%%%%%%%%%%%%%%%%%%%%%%%%%%%%%%%%%%%%%%%%%%%%%%%%
 Based on the argument in the previous section (see \eqref{e_5.1}),
 we look for solutions of the form:
 \begin{align}
 \label{equation_6.2}
 u(x,t)
 &=
 \lambda^{-\frac{n-2}{2}}
 {\sf Q}(y)
 \chi_1
 +
 \tfrac{5{\sf b}(t)}{4}
 T_1(y)
 \chi_1
 +
 \theta(x,t)
 \chi_1^c
 +
 v(x,t),
 \end{align}
 where $\lambda=\lambda(t)$ is an unknown function,
 and
 $\chi_1$ and $\chi_1^c$ are cut off functions defined by
 \begin{itemize}
 \item
 $\chi_1
 =
 \chi(\tfrac{|x|}{ \lambda(t)^\kappa(\sqrt t)^{1-\kappa} })$
 \quad
 \text{for some constant }
 $\kappa\in(0,1)$,

 \item
 $\chi_1^c
 =
 1-\chi_1$.
 \end{itemize}
 The constant $\kappa$ is determined in Lemma \ref{lemma_9.1} and Lemma \ref{lemma_9.4}.
%%%%%%%%%%%%%%%%%%%%%%%%%%%%%%%%%%%%%%%%%%%%%%%%%%%%%%%%%%%%%%%%
 The derivatives of $u(x,t)$ are given by
 \begin{align*}
 u_t
 &=
 -
 \lambda^{-\frac{n}{2}}
 \dot\lambda
 (\Lambda_{y}{\sf Q})
 \chi_1
 +
 \underbrace{
 \lambda^{-\frac{n-2}{2}}
 {\sf Q}
 \dot\chi_1
 }_{=-g_1[\lambda]}
 +
 \underbrace{
 \tfrac{d}{dt}
 \{
 \tfrac{5{\sf b}T_1}{4}
 \chi_1
 \}
 }_{=-g_2[\lambda]}
 \\
 &\quad
 +
 \theta_t
 \chi_1^c
 +
 \underbrace{
 \theta
 (-\dot\chi_1)
 }_{=-g_3[\lambda]}
 +
 v_t
 \\
 &=
 -
 \lambda^{-\frac{n}{2}}
 \dot\lambda
 (\Lambda_y{\sf Q})
 \chi_1
 +
 \theta_t
 \chi_1^c
 +
 v_t
 +
 \sum_{i=1}^3
 (-g_i[\lambda])
 \end{align*}
%%%%%%%%%%%%%%%%%%%%%%%%%%%%%%%%%%%%%%%%%%%%%%%%%%%%%%%%%%%%%%%%
 and
 \begin{align*}
 \Delta_xu
 &=
 \lambda^{-\frac{n+2}{2}}
 (\Delta_{y}{\sf Q})
 \chi_1
 +
 \underbrace{
 \lambda^{-\frac{n-2}{2}}
 \tfrac{2(\nabla_{y}{\sf Q}\cdot\nabla_{y}\chi_1)+{\sf Q}(\Delta_{y}\chi_1)}{\lambda^2}
 }_{=g_{4}[\lambda]}
 \\
 &\quad
 +
 \tfrac{(\Delta_{y}\frac{5{\sf b} T_1}{4})}{\lambda^2}
 \chi_1
 +
 \underbrace{
 \tfrac{2\nabla_{y}\frac{5{\sf b} T_1}{4}\cdot\nabla_{y}\chi_1}{\lambda^2}
 }_{=g_{5}[\lambda]}
 +
 \underbrace{
 (
 \tfrac{5{\sf b} T_1}{4}
 -
 \theta
 )
 \tfrac{(\Delta_{y}\chi_1)}{\lambda^2}
 }_{=g_{6}[\lambda]}
 \\
 &\quad
 +
 (\Delta_x\theta)
 \chi_1^c
 +
 \underbrace{
 \tfrac{2\nabla_x\theta\cdot(-\nabla_{y}\chi_1)}{\lambda}
 }_{=g_{7}[\lambda]}
 +
 \Delta_xv
 \\
 &=
 \lambda^{-\frac{n+2}{2}}
 (\Delta_{y}{\sf Q})
 \chi_1
 +
 \tfrac{(\Delta_{y}\frac{5{\sf b} T_1}{4})}{\lambda^2}
 \chi_1
 \\
 &\quad
 +
 (\Delta_x\theta)
 \chi_1^c
 +
 \Delta_xv
 +
 \sum_{i=4}^{7}
 g_i[\lambda].
 \end{align*}
%%%%%%%%%%%%%%%%%%%%%%%%%%%%%%%%%%%%%%%%%%%%%%%%%%%%%%%%%%%%%%%%
 The nonlinearity can be written as:
 \begin{align}
 \nonumber
 &
 f(u)
 =
 f
 (
 \lambda^{-\frac{n-2}{2}}
 {\sf Q}
 \chi_1
 +
 \tfrac{5{\sf b} T_1}{4}
 \chi_1
 +
 \theta
 \chi_1^c
 +
 v
 )
 \\
 \label{equation_6.3}
 &=
 f(
 \lambda^{-\frac{n-2}{2}}
 {\sf Q}
 \chi_1
 )
 +
 f'(
 \lambda^{-\frac{n-2}{2}}
 {\sf Q}
 \chi_1
 )
 \cdot
 (
 u-\lambda^{-\frac{n-2}{2}}
 {\sf Q}
 \chi_1
 )
 \\
 \nonumber
 &\quad
 +
 f(\theta\chi_1^c)
 +
 f'(\theta\chi_1^c)
 \cdot
 (
 u-\theta\chi_1^c
 )
 +
 N.
 \end{align}
 Here,
 $N$ is defined by:
 \begin{align}
 \label{equation_6.4}
 N
 &=
 f
 (
 \lambda^{-\frac{n-2}{2}}
 {\sf Q}
 \chi_1
 +
 \tfrac{5{\sf b} T_1}{4}
 \chi_1
 +
 \theta
 \chi_1^c
 +
 v
 )
 -
 f(
 \lambda^{-\frac{n-2}{2}}
 {\sf Q}
 \chi_1
 )
 \\
 \nonumber
 &
 -
 f'(
 \lambda^{-\frac{n-2}{2}}
 {\sf Q}
 \chi_1
 )
 \cdot
 (
 u-\lambda^{-\frac{n-2}{2}}{\sf Q}\chi_1
 )
 \\
 \nonumber
 &
 -
 f(\theta\chi_1^c)
 -
 f'(\theta\chi_1^c)
 \cdot
 (
 u-\theta\chi_1^c
 ).
 \end{align}
 Note that
 \begin{align*}
 f(u)=|u|u,
 \quad
 f'(u)=2|u|
 \qquad
 (n=6).
 \end{align*}
%%%%%%%%%%%%%%%%%%%%%%%%%%%%%%%%%%%%%%%%%%%%%%%%%%%%%%%%%%%%%%%%
 Using the expressions,
 we further simplify the terms on the right-hand side of \eqref{equation_6.3}.
%%%%%%%%%%%%%%%%%%%%%%%%%%%%%%%%%%%%%%%%%%%%%%%%%%%%%%%%%%%%%%%%
 \begin{align*}
 f(
 \lambda^{-\frac{n-2}{2}}
 {\sf Q}
 \chi_1
 )
 &=
 \lambda^{-\frac{n+2}{2}}
 f({\sf Q})
 \chi_1^2
 \\
 &=
 \lambda^{-\frac{n+2}{2}}
 f({\sf Q})
 \chi_1
 +
 \underbrace{
 \lambda^{-\frac{n+2}{2}}
 f({\sf Q})
 (\chi_1^2-\chi_1)
 }_{=g_{8}[\lambda]},
 \\
 f(\theta\chi_1^c)
 &=
 \underbrace{
 f(\theta)
 (1-\chi_1)^2
 }_{=g_{9}[\lambda]},
 \\
 f'(
 \lambda^{-\frac{n-2}{2}}
 {\sf Q}
 \chi_1
 )
 &\cdot
 (
 u
 -
 \lambda^{-\frac{n-2}{2}}
 {\sf Q}
 \chi_1
 )
 \\
 &=
 \tfrac{f'({\sf Q})\chi_1}{\lambda^2}
 \cdot
 (
 \tfrac{5{\sf b} T_1}{4}
 +
 \tfrac{5{\sf b} T_1}{4}
 (
 \chi_1-1
 )
 +
 \theta
 \chi_1^c
 +
 v
 )
 \\
 &=
 \tfrac{f'({\sf Q})}{\lambda^2}
 \tfrac{5{\sf b} T_1}{4}
 \chi_1
 +
 \underbrace{
 \tfrac{f'({\sf Q})}{\lambda^2}
 (\tfrac{5{\sf b} T_1}{4}-\theta)
 \chi_1(\chi_1-1)
 }_{=g_{10}[\lambda]}
 +
 \tfrac{f'({\sf Q})}{\lambda^2}
 v
 \chi_1
 \\
 &=
 \tfrac{f'({\sf Q})}{\lambda^2}
 \tfrac{5{\sf b} T_1}{4}
 \chi_1
 +
 \tfrac{f'({\sf Q})}{\lambda^2}
 v
 \chi_1
 +
 g_{10}[\lambda],
 \end{align*}
%%%%%%%%%%%%%%%%%%%%%%%%%%%%%%%%%%%%%%%%%%%%%%%%%%%%%%%%%%%%%%%%
 \begin{align*}
 &
 f'(\theta\chi_1^c)
 \cdot
 (u-\theta\chi_1^c)
 \\
 &=
 f'(\theta)
 \chi_1^c
 \cdot
 (
 \lambda^{-\frac{n-2}{2}}
 {\sf Q}
 \chi_1
 +
 \tfrac{5{\sf b} T_1}{4}
 \chi_1
 +
 v
 )
 \\
 &=
 \underbrace{
 f'(\theta)
 \lambda^{-\frac{n-2}{2}}
 {\sf Q}
 \chi_1(1-\chi_1)
 }_{=g_{11}[\lambda]}
 +
 \underbrace{
 f'(\theta)
 \tfrac{5{\sf b} T_1}{4}
 \chi_1(1-\chi_1)
 }_{=g_{12}[\lambda]}
 +
 f'(\theta)
 v
 \chi_1^c
 \\
 &=
 f'(\theta)
 v
 \chi_1^c
 +
 \sum_{i=11}^{12}
 g_i[\lambda].
 \end{align*}
%%%%%%%%%%%%%%%%%%%%%%%%%%%%%%%%%%%%%%%%%%%%%%%%%%%%%%%%%%%%%%%%
 From the above relations,
 it follows that
 \begin{align}
 \nonumber
 v_t
 &=
 \lambda^{-\frac{n}{2}}
 \dot\lambda
 (\Lambda_{y}{\sf Q})
 \chi_1
 -
 \theta_t
 \chi_1^c
 +
 \lambda^{-\frac{n+2}{2}}
 (\Delta_{y}{\sf Q})
 \chi_1
 \\
 \nonumber
 &\quad
 +
 \tfrac{(\Delta_{y}\tfrac{5{\sf b} T_1}{4})}{\lambda^2}
 \chi_1
 +
 (\Delta_x
 \theta)
 \chi_1^c
 +
 \Delta_xv
 \\
 \nonumber
 &\quad
 +
 \lambda^{-\frac{n+2}{2}}
 f(
 {\sf Q}
 )
 \chi_1
 +
 \tfrac{f'({\sf Q})}{\lambda^2}
 \tfrac{5{\sf b} T_1}{4}
 \chi_1
 +
 \tfrac{f'({\sf Q})}{\lambda^2}
 v
 \chi_1
 \\
 \nonumber
 &\quad
 +
 f'(\theta)
 v
 \chi_1^c
 +
 N
 +
 \sum_{i=1}^{12}g_i[\lambda]
 \\
 \nonumber
%%%%%%%%%%%%%%%%%%%%%%%%%%%%%%%%%%%%%%%%%%%%%%%%%%%%%%%%%%%%%%%%
 &=
 \lambda^{-\frac{n}{2}}
 \dot\lambda
 (\Lambda_{y}{\sf Q})
 \chi_1
 +
 \tfrac{(H_{y}\tfrac{5{\sf b} T_1}{4})}{\lambda^2}
 \chi_1
 +
 \Delta_xv
 \\
 \nonumber
 &\quad
 +
 \tfrac{f'({\sf Q})}{\lambda^2}
 v
 \chi_1
 +
 f'(\theta)
 v
 \chi_1^c
 +
 N
 +
 \sum_{i=1}^{12}g_i[\lambda]
 \\
 \label{equation_6.5}
 &=
 \lambda^{-2}
 (
 \lambda^{-\frac{n-4}{2}}
 \dot\lambda
 -
 \tfrac{5{\sf b}}{4}
 )
 (\Lambda_{y}{\sf Q})
 \chi_1
 +
 \Delta_xv
 +
 \tfrac{f'({\sf Q})}{\lambda^2}
 v
 \chi_1
 \\
 \nonumber
 &\quad
 +
 f'(\theta)
 v
 \chi_1^c
 +
 N
 +
 \sum_{i=1}^{12}g_i[\lambda].
 \end{align}
%%%%%%%%%%%%%%%%%%%%%%%%%%%%%%%%%%%%%%%%%%%%%%%%%%%%%%%%%%%%%%%%
 Define a cut of function $\chi_{\text{in}}$ by
 \begin{align*}
 \chi_{\text{in}}
 =
 \chi(\tfrac{4|x|}{R\lambda}).
 \end{align*}
 The constant $R$ will be chosen sufficiently large in Section \ref{section_7} - Section \ref{section_8}.
 We now rewrite $v(x,t)$ as
 \begin{align*}
 v(x,t)
 =
 \lambda(t)^{-\frac{n-2}{2}}
 \epsilon(y,t)
 \chi_{\text{in}}
 +
 w(x,t),
 \end{align*}
 where $\epsilon(y,t)$ and $w(x,t)$ are unknown functions.
 Note that
 \begin{align*}
 v_t
 &=
 \lambda^{-\frac{n-2}{2}}
 \epsilon_t
 \chi_{\text{in}}
 +
 \underbrace{
 \lambda^{-\frac{n}{2}}
 \dot
 \lambda
 (-\Lambda_{y}\epsilon)
 \chi_{\text{in}}
 }_{=-h_1[\lambda,\epsilon]}
 +
 \underbrace{
 \lambda^{-\frac{n-2}{2}}
 \epsilon
 \dot\chi_{\text{in}}
 }_{=-h_2[\lambda,\epsilon]}
 +
 w_t,
 \\
%%%%%%%%%%%%%%%%%%%%%%%%%%%%%%%%%%%%%%%%%%%%%%%%%%%%%%%%%%%%%%%%
 \Delta_xv
 &=
 \lambda^{-\frac{n+2}{2}}
 (\Delta_{y}\epsilon)
 \chi_{\text{in}}
 +
 \underbrace{
 2\lambda^{-\frac{n+2}{2}}
 (\nabla_{y}\epsilon\cdot\nabla_{y}\chi_{\text{in}})
 }_{=h_3[\lambda,\epsilon]}
 \\
 &\quad
 +
 \underbrace{
 \lambda^{-\frac{n+2}{2}}
 \epsilon
 (\Delta_{y}\chi_{\text{in}})
 }_{=h_4[\lambda,\epsilon]}
 +
 \Delta_xw
 \end{align*}
 and
%%%%%%%%%%%%%%%%%%%%%%%%%%%%%%%%%%%%%%%%%%%%%%%%%%%%%%%%%%%%%%%%
 \begin{align*}
 \tfrac{f'({\sf Q})}{\lambda^2}
 v
 \chi_1
 &=
 \tfrac{f'({\sf Q})}{\lambda^2}
 (
 \lambda^{-\frac{n-2}{2}}\epsilon\chi_{\text{in}}
 +
 w
 )
 \chi_1
 \\
 &=
 \lambda^{-\frac{n+2}{2}}
 f'({\sf Q})
 \epsilon
 \chi_{\text{in}}
 +
 \tfrac{f'({\sf Q})}{\lambda^2}
 w
 \chi_1,
 \\
%%%%%%%%%%%%%%%%%%%%%%%%%%%%%%%%%%%%%%%%%%%%%%%%%%%%%%%%%%%%%%%%
 f'(\theta)
 v
 \chi_1^c
 &=
 f'(\theta)
 (
 \lambda^{-\frac{n-2}{2}}\epsilon\chi_{\text{in}}
 +
 w
 )
 \chi_1^c
 =
 \underbrace{
 f'(\theta)
 w
 (1-\chi_1)
 }_{=k_1[w]}.
 \end{align*}
 Plugging the above expressions into \eqref{equation_6.4},
 we arrive at
 \begin{align}
 \label{equation_6.6}
 &
 \lambda^{-\frac{n-2}{2}}
 \epsilon_t
 +
 w_t
 =
 \lambda^{-2}
 (
 \lambda^{-\frac{n-4}{2}}
 \dot\lambda
 -
 \tfrac{5{\sf b}}{4}
 )
 (\Lambda_{y}{\sf Q})
 \chi_1
 +
 \lambda^{-\frac{n+2}{2}}
 (H_y\epsilon)
 \chi_{\text{in}}
 \\
 \nonumber
 &\quad
 +
 \Delta_xw
 +
 \tfrac{f'({\sf Q})}{\lambda^2}
 w
 \chi_1
 +
 N
 +
 \underbrace{
 \sum_{i=1}^{12}g_i[\lambda]
 +
 \sum_{i=1}^{5}h_i[\lambda,\epsilon]
 +
 k_1[w]
 }_{=g[\lambda]+h[\lambda,\epsilon]+k[w]}.
 \end{align}
 %ここまで18:28
%%%%%%%%%%%%%%%%%%%%%%%%%%%%%%%%%%%%%%%%%%%%%%%%%%%%%%%%%%%%%%%%
 We now introduce a parabolic system for $(\lambda(t),\epsilon(y,t),w(x,t))$.
 \begin{align}
 \label{equation_6.7}
 &
 \begin{cases}
 \lambda^2
 \epsilon_t
 =
 H_{y}
 \epsilon
 +
 \lambda^\frac{n-2}{2}
 (\lambda^{-\frac{n-4}{2}}
 \dot\lambda
 -
 \tfrac{5{\sf b}}{4}
 )
 (\Lambda_{y}{\sf Q})
 +
 \lambda^\frac{n-2}{2}
 f'({\sf Q})
 w
 \\
 \qquad
 \text{for } (y,t)\in B_{R}\times(t_{I},\infty),
 \\
 \epsilon
 =
 0
 \quad
 \text{for } (y,t)\in \pa B_{R}\times(t_{I},\infty),
 \\
 \epsilon|_{t=t_{I}}
 =
 \psi
 \quad
 \text{for } y\in B_{R},
 \end{cases}
 \\
 &
 \label{equation_6.8}
 \begin{cases}
 w_t
 =
 \Delta_xw
 +
 \lambda^{-2}
 (\lambda^{-\frac{n-4}{2}}
 \dot\lambda
 -
 \tfrac{5{\sf b}}{4}
 )
 (\Lambda_{y}{\sf Q})
 \cdot
 (\chi_1-\chi_{\text{in}})
 \\
 \qquad
 +
 \tfrac{f'({\sf Q})}{\lambda^2}
 w
 \cdot
 (\chi_1-\chi_{\text{in}})
 +
 N+g[\lambda]+h[\lambda,\epsilon]+k[w]
 \\
 \qquad
 \text{for } (x,t)\in\R^n\times(t_{I},\infty),
 \\
 w|_{t=t_{I}}=0
 \quad
 \text{for }
 x\in\R^n.
 \end{cases}
 \end{align}
%%%%%%%%%%%%%%%%%%%%%%%%%%%%%%%%%%%%%%%%%%%%%%%%%%%%%%%%%%%%%%%%
 Once a solution $(\lambda(t),\epsilon(y,t),w(x,t))$ of \eqref{equation_6.7} - \eqref{equation_6.8} is obtained,
 a function $u(x,t)$ defined by
 \begin{align*}
 u(x,t)
 &=
 \{
 \lambda(t)^{-\frac{n-2}{2}}
 {\sf Q}(y)
 +
 \tfrac{5{\sf b}}{4}
 T_1(y)
 \}
 \chi_1
 +
 \theta(x,t)
 \chi_1^c
 \\
 \nonumber
 &\quad
 +
 \lambda(t)^{-\frac{n-2}{2}}
 \epsilon(y,t)
 \chi_\text{in}
 +
 w(x,t)
 \quad
 \text{with }
 y=\tfrac{x}{\lambda(t)}
 \end{align*}
 gives a solution of \eqref{equation_6.1}.
 Therefore,
 our problem is reduced to constructing a solution to the parabolic system
 \eqref{equation_6.7} - \eqref{equation_6.8}
 within an appropriate class.
 In equation \eqref{equation_6.7},
 although the boundary condition for $\epsilon(y,t)$ is not explicitly stated,
 we impose the Dirichlet zero boundary condition to ensure the space-time decay of $\epsilon(y,t)$
 (see Section \ref{section_7}).
 For clarity,
 we summarize the definitions of the error terms $g_i$, $h_i$ and $k_i$ below,
 which have appeared in the preceding computations.
 \begin{enumerate}[(g1)]
 \item 
 $g_1[\lambda]
 =
 \lambda^{-\frac{n-2}{2}}
 (-{\sf Q})
 \dot\chi_1$
 
 \item
 $g_2[\lambda]
 =
 \tfrac{d}{dt}
 \{
 \tfrac{-5{\sf b}T_1}{4}
 \chi_1
 \}$
 
 \item
 $g_3[\lambda]
 =
 \theta
 \dot\chi_1$
 
 \item
 $g_4[\lambda]
 =
 \lambda^{-\frac{n-2}{2}}
 \tfrac{2(\nabla_{y}{\sf Q}\cdot\nabla_{y}\chi_1)+{\sf Q}(\Delta_{y}\chi_1)}{\lambda^2}$

 \item
 $g_5[\lambda]
 =
 \tfrac{2\nabla_{y}\frac{5{\sf b} T_1}{4}\cdot\nabla_{y}\chi_1}{\lambda^2}$

 \item
 $g_6[\lambda]
 =
 (
 \tfrac{5{\sf b} T_1}{4}
 -
 \theta
 )
 \tfrac{(\Delta_{y}\chi_1)}{\lambda^2}$
 
 \item
 $g_7[\lambda]
 =
 \tfrac{2\nabla_x\theta\cdot(-\nabla_{y}\chi_1)}{\lambda}$

 \item
 $g_8[\lambda]
 =
 \lambda^{-\frac{n+2}{2}}
 f({\sf Q})
 (\chi_1^2-\chi_1)$
 
 \item
 $g_9[\lambda]
 =
 f(\theta)
 (1-\chi_1)^2$

 \item
 $g_{10}[\lambda]
 =
 \tfrac{f'({\sf Q})}{\lambda^2}
 (\tfrac{5{\sf b} T_1}{4}-\theta)
 \chi_1
 (\chi_1-1)$

 \item
 $g_{11}[\lambda]
 =
 f'(\theta)
 \lambda^{-\frac{n-2}{2}}
 {\sf Q}
 \chi_1(1-\chi_1)$
 
 \item
 $g_{12}[\lambda]
 =
 f'(\theta)
 \tfrac{5{\sf b} T_1}{4}
 \chi_1(1-\chi_1)$
 \end{enumerate}
%%%%%%%%%%%%%%%%%%%%%%%%%%%%%%%%%%%%%%%%%%%%%%%%%%%%%%%%%%%%%%%%
 \begin{enumerate}[(h1)]
 \item
 $h_1[\lambda,\epsilon]
 =
 \lambda^{-\frac{n}{2}}
 \dot
 \lambda
 (\Lambda_{y}\epsilon)
 \chi_{\text{in}}$
 
 \item
 $h_2[\lambda,\epsilon]
 =
 \lambda^{-\frac{n-2}{2}}
 (-\epsilon)
 \dot\chi_{\text{in}}$
 
 \item
 $h_3[\lambda,\epsilon]
 =
 2\lambda^{-\frac{n+2}{2}}
 (\nabla_{y}\epsilon\cdot\nabla_{y}\chi_{\text{in}})$

 \item
 $h_4[\lambda,\epsilon]
 =
 \lambda^{-\frac{n+2}{2}}
 \epsilon
 (\Delta_{y}\chi_{\text{in}})$
 \end{enumerate}
%%%%%%%%%%%%%%%%%%%%%%%%%%%%%%%%%%%%%%%%%%%%%%%%%%%%%%%%%%%%%%%%
 \begin{enumerate}[(k1)]
 \item
 $k_1[w]
 =
 f'(\theta)
 w
 (1-\chi_1)$
 \end{enumerate}
%%%%%%%%%%%%%%%%%%%%%%%%%%%%%%%%%%%%%%%%%%%%%%%%%%%%%%%%%%%%%%%%

\subsection{Fixed Point Argument}
\label{section_6.2}
%%%%%%%%%%%%%%%%%%%%%%%%%%%%%%%%%%%%%%%%%%%%%%%%%%%%%%%%%%%%%%%%
 In this subsection,
 we present a strategy for solving the coupled parabolic system \eqref{equation_6.7} - \eqref{equation_6.8}
 by applying a fixed point argument to the function \(w(x,t)\).
 First,
 we introduce several notations and preparatory definitions.
 Let $\beta\in(\frac{1}{2},1)$ be as in Theorem \ref{theorem_1} - \ref{theorem_3}
 (see Section \ref{section_4} for its role).
 We fix another parameter $\beta'$ such that
 \begin{align}
 \label{equation_6.9}
 \beta'
 \in
 (1,\beta+1),
 \end{align}
 and introduce an auxiliary function:
 \begin{align}
 \label{equation_6.10}
 {\cal W}(x,t)
 &=
 \begin{cases}
 \dis
 t^{-1}
 (\log t)^{-\beta'}
 & \text{for } |x|<\sqrt{t},
 \\
 |x|^{-2}
 (\log|x|^2)^{-\beta'}
 & \text{for } |x|> \sqrt{t}.
 \end{cases}
 \end{align}
%%%%%%%%%%%%%%%%%%%%%%%%%%%%%%%%%%%%%%%%%%%%%%%%%%%%%%%%%%%%%%%%
 Let $\{t_j\}_{j\in\N}$, $\{R_j\}_{j\in\N}$, $\{t_j^+\}_{j\in\N}$ and $\{t_j^-\}_{j\in\N}$ be sequences given in Lemma \ref{lemma_5.1}.
 We rewrite them as
 \begin{itemize}
 \item 
 $t_j
 =
 e^{{\sf p}_j}$ with ${\sf p}_j={\sf n}_1^\frac{j}{1-\beta}$,
 \item
 $R_j=\sqrt{t_j}$,
 \item
 $t_j^+=(R_j\log R_j)^2$,
 \item
 $t_j^-=(\frac{R_j}{\log R_j})^2$.
 \end{itemize}
 Recall that $t_I$ is the initial time of our equation \eqref{equation_6.1}.
 We fix the initial time $t_I$ by setting
 \begin{align}
 \label{equation_6.11}
 t_I=t_1^-.
 \end{align}
%%%%%%%%%%%%%%%%%%%%%%%%%%%%%%%%%%%%%%%%%%%%%%%%%%%%%%%%%%%%%%%%
 Let $\tau\in\R$ be a large positive constant.
 Denote by $X_\tau$ the subset of $C(\R^n\times[t_I,\tau])$ consisting of functions ${\sf w}(x,t)$ satisfying
 \begin{align*}
 |{\sf w}(x,t)|
 \leq
 {\cal W}(x,t)
 \quad
 &
 \text{for }
 (x,t)\in\R^n\times[t_I,\tau],
 \\
 {\sf w}(x,t_I)=0
 \quad
 &
 \text{for }
 x\in\R^n.
 \end{align*}
%%%%%%%%%%%%%%%%%%%%%%%%%%%%%%%%%%%%%%%%%%%%%%%%%%%%%%%%%%%%%%%%
 We define the norm $\|\cdot\|$ on $C(\R^n\times[t_I,\tau])$ by
 \begin{align*}
 \|{\sf w}\|
 &=
 \sup_{(x,t)\in\R^n\times[t_I,\tau]}
 |{\sf w}(x,t)|.
 \end{align*}
%%%%%%%%%%%%%%%%%%%%%%%%%%%%%%%%%%%%%%%%%%%%%%%%%%%%%%%%%%%%%%%%
 Clearly, the subset $X_\tau$ is closed and convex
 in the Banach space $(C(\R^n\times[t_I,\tau]),\|\cdot\|)$.
 Define the extension map
 \[
 {\sf w}(x,t)\in X_\tau\mapsto {\sf w}_{\text{ex}}(x,t)\in C(\R^n\times[t_I,\infty))
 \]
 by
 \begin{align}
 \label{equation_6.12}
 {\sf w}_{\text{ex}}(x,t)
 =
 \begin{cases}
 {\sf w}(x,t) & \text{for } x\in\R^n,\ t\in[t_I,\tau],
 \\
 \mathcal W(x,t) & \text{for } x\in\R^n,\ t>\tau\, \text{ and if } {\sf w}(x,\tau)>\mathcal W(x,t),
 \\
 {\sf w}(x,\tau) & \text{for } x\in\R^n,\ t>\tau\, \text{ and if } |{\sf w}(x,\tau)|<\mathcal W(x,t),
 \\
 -
 \mathcal W(x,t) & \text{for } x\in\R^n,\ t>\tau\, \text{ and if } {\sf w}(x,\tau)<-\mathcal W(x,t).
 \end{cases}
 \end{align}
 To formulate our problem \eqref{equation_6.7} - \eqref{equation_6.8} as a fixed point problem,
 we define the solution map $\mathcal{T}_\tau:X_\tau\to X_\tau$ as follows.
 For given ${\sf w}\in X_\tau$,
 we first solve
 \begin{align}
 \label{equation_6.13}
 &
 \begin{cases}
 \lambda^2
 \epsilon_t
 =
 H_{y}
 \epsilon
 +
 \lambda^\frac{n-2}{2}
 (
 \lambda^{-\frac{n-4}{2}}
 \dot\lambda
 -
 \tfrac{5{\sf b}}{4}
 )
 (\Lambda_{y}{\sf Q})
 +
 \lambda^\frac{n-2}{2}
 f'({\sf Q})
 {\sf w}_\text{ex}
 \\
 \quad
 \text{for } (y,t)\in B_{R}\times(t_{I},\infty),
 \\
 \epsilon
 =
 0
 \quad
 \text{for } (y,t)\in \pa B_{R}\times(t_{I},\infty),
 \\
 \epsilon|_{t=t_{I}}
 =
 \psi
 \quad
 \text{for } y\in B_{R}.
 \end{cases}
 \end{align}
 This equation is a slight modification of \eqref{equation_6.7},
 and it involves two unknown functions: $\lambda(t)$ and $\epsilon(y,t)$.
 In this step,
 we can determine $\lambda(t)$ and $\epsilon(y,t)$ from the given ${\sf w}_\text{ex}$
 so that $\epsilon(y,t)$ becomes small
 (see Section \ref{section_7}).
 We denote them by $\lambda_{[{\sf w}]}(t)$ and $\epsilon_{[{\sf w}]}(y,t)$.
 Using the pair $(\lambda_{[{\sf w}]}(t),\epsilon_{[{\sf w}]}(y,t))$,
 we next consider
 \begin{align}
 \label{equation_6.14}
 \begin{cases}
 w_t
 =
 \Delta_xw
 +
 \lambda_{[{\sf w}]}^{-2}
 (\lambda_{[{\sf w}]}^{-\frac{n-4}{2}}
 \dot\lambda_{[{\sf w}]}
 -
 \tfrac{5{\sf b}}{4}
 )
 (\Lambda_{y}{\sf Q})
 \cdot
 (\chi_1-\chi_{\text{in}})
 \\
 \qquad
 +
 \tfrac{f'({\sf Q})}{\lambda_{\sf a}^2}
 {\sf w}_\text{ex}
 \cdot
 (\chi_1-\chi_{\text{in}})
 +
 N[\lambda_{[{\sf w}]},\epsilon_{[{\sf w}]},{\sf w}_\text{ex}]
 +
 g[\lambda_{[{\sf w}]}]
 \\
 \qquad
 +
 h[\lambda_{[{\sf w}]},\epsilon_{[{\sf w}]}]
 +
 k[{\sf w}_\text{ex}]
 \\
 \quad
 \text{for } (x,t)\in\R^n\times(t_{I},\infty),
 \\
 w|_{t=t_{I}}=0
 \quad
 \text{for }
 x\in\R^n.
 \end{cases}
 \end{align}
 This corresponds to equation \eqref{equation_6.8}.
 Equation \eqref{equation_6.14} admits a unique solution,
 which is denoted by $\tilde w_{[{\sf w}]}(x,t)$.
 Throughout this procedure,
 we can define the solution map ${\cal T}_\tau:X_\tau\to C(\R^n\times[t_I,\infty))$ by
 \[
 {\cal T}_\tau
 {\sf w}
 =
 \tilde w_{[{\sf w}]}.
 \]
 To ensure that ${\cal T}_\tau$ has a fixed point in $X_\tau$,
 it suffices to prove the following three properties.
 \begin{enumerate}[(i)]
 \item ${\cal T}_\tau:X_\tau\to X_\tau$ is well defined,
 \item ${\cal T}_\tau:(X_\tau,\|\cdot\|)\to(X_\tau,\|\cdot\|)$ is continuous,
 \item ${\cal T}_\tau:(X_\tau,\|\cdot\|)\to(X_\tau,\|\cdot\|)$ is compact.
 \end{enumerate}
 We do not give precise proofs of (ii) - (iii),
 since those are standard.
 Indeed,
 the construction of ${\cal T}_\tau:{\sf w}(x,t) \mapsto \tilde w_{[{\sf w}]}(x,t)$
 is uniquely determined at each step, which ensures property (ii).
 Furthermore,
 we note that
 the H\"older continuity of solutions is obtained
 by the $L^\infty$ bound of solutions
 from standard parabolic estimates (see Theorem 6.29 in \cite{Lieberman} p. 131).
 The H\"older continuity of solutions immediately implies (iii).
 Therefore,
 for any $\tau>0$,
 there exists a fixed point ${\sf w}^{(\tau)}(x,t)\in X_\tau$ such that ${\cal T}_\tau {\sf w}^{(\tau)}={\sf w}^{(\tau)}$.
 We fix a sequence $\{\tau_j\}_{j\in\N}$ so that $\lim_{j\to\infty}\tau_j=\infty$,
 and denote ${\sf w}_j(x,t):={\sf w}^{(\tau_j)}(x,t)$.
 We finally take a limit $(\lambda_{[{\sf w}_j]}(t),\epsilon_{[{\sf w}_j]}(y,t),{\sf w}_j(x,t))
 \to(\lambda_{\infty}(t),\epsilon_{\infty}(y,t),w_{\infty}(x,t))$
 as $j\to\infty$.
 A triple in this limit $(\lambda_{\infty}(t),\epsilon_{\infty}(y,t),w_{\infty}(x,t))$
 provides the solution stated in Theorem \ref{theorem_1} - Theorem \ref{theorem_3}.
 In the remainder of this paper, we focus on establishing the property (i).
%%%%%%%%%%%%%%%%%%%%%%%%%%%%%%%%%%%%%%%%%%%%%%%%%%%%%%%%%%%%%%%%

\section{Analysis of the Inner Solution}
\label{section_7}
%%%%%%%%%%%%%%%%%%%%%%%%%%%%%%%%%%%%%%%%%%%%%%%%%%%%%%%%%%%%%%%%
 In this section,
 we construct a pair of solutions $(\lambda(t),\epsilon(y,t))$
 in the same procedure as in Harada \cite{Harada_double} (see also \cite{Harada_6D}).
 The original idea can be traced back to
 Carmen Cort\'azar - Manuel del Pino - Monica Musso (Proposition 7.1 of \cite{Cortazar-delPino-Musso})
 and
 Juan D\'avila - Manuel del Pino -  Juncheng Wei (Section 6 - Section 7 of \cite{Davile-delPino-Wei}).
%%%%%%%%%%%%%%%%%%%%%%%%%%%%%%%%%%%%%%%%%%%%%%%%%%%%%%%%%%%%%%%%
 Throughout Section \ref{section_7},
 we fix the following notations and assumptions.
 \begin{itemize}
 \item 
 The function ${\sf w}(x,t)\in X_\tau\subset C(\R^n\times[t_I,\tau])$ is assumed to be a given function.
 \item 
 The function ${\sf w}_\text{ex}(x,t)\in C(\R^n\times[t_I,\infty))$ denotes the extension of ${\sf w}(x,t)$,
 as defined in Section \ref{section_6.2}.
 \item
 We choose a constant $R$ such that $R=\log \log t_I$.
 \item
 The symbol $C$ denotes a generic positive constant independent of $R$, and $t_I$.
 \item
 The constant $t_I$ will be chosen sufficiently large, independent of $R$ and $\tau$.
 \end{itemize}
%%%%%%%%%%%%%%%%%%%%%%%%%%%%%%%%%%%%%%%%%%%%%%%%%%%%%%%%%%%%%%%%

 \subsection{Choice of $\lambda(t)$}
 \label{section_7.1}
%%%%%%%%%%%%%%%%%%%%%%%%%%%%%%%%%%%%%%%%%%%%%%%%%%%%%%%%%%%%%%%%
 In this subsection,
 we determine $\lambda(t)$
 as explained in Section \ref{section_6.2}.
 Let
 $\{t_j\}_{j\in\N}$, $\{R_j\}_{j\in\N}$, $\{t_j^+\}_{j\in\N}$ and $\{t_j^-\}_{j\in\N}$
 be sequences defined in Section \ref{section_6.2}.
%%%%%%%%%%%%%%%%%%%%%%%%%%%%%%%%%%%%%%%%%%%%%%%%%%%%%%%%%%%%%%%%
 Until now,
 $n$ has been used to describe the general setting.
 From this section, we consider
 the specific case $n=6$ to proceed with a concrete analysis.
 For convenience,
 we rewrite \eqref{equation_6.13}:
 \begin{align}
 \label{EQ_7.1}
 &
 \begin{cases}
 \lambda^2
 \epsilon_t
 =
 H_{y}
 \epsilon
 +
 \lambda^2
 (\tfrac{\dot\lambda}{\lambda}
 -
 \tfrac{5{\sf b}}{4}
 )
 (\Lambda_{y}{\sf Q})
 +
 \lambda^2
 f'({\sf Q})
 {\sf w}_\text{ex}(\lambda y,t)
 \\
 \qquad
 \text{for } (y,t)\in B_{R}\times(t_{I},\infty),
 \\
 \epsilon
 =
 0
 \quad
 \text{for } (y,t)\in \pa B_{R}\times(t_{I},\infty),
 \\
 \epsilon|_{t=t_{I}}
 =
 \epsilon_0(y)
 \quad
 \text{for } y\in B_{R}.
 \end{cases}
 \end{align}
 In Section \ref{section_7.2},
 the initial data $\epsilon_0(y)$ will be chosen to ensure that $\epsilon(y,t)$ decays sufficiently as
 $y\to\infty$ and $t\to\infty$.
 \begin{itemize}
 \item
 For simplicity,
 we denote the inner product on $L_{y}^2(B_{R})$ and $L_y^2(B_R)$ norm by
 $\langle\cdot,\cdot\rangle$ and $\|\cdot\|_{L_y^2}$, respectively.
 \end{itemize}
 Recall that
 $(\mu_j^{(R)},\psi_j^{(R)}(y))$ is the $j$-th eigenvalue and the associated eigenfunction of the linearized problem on $B_R$
 (see Section \ref{section_3.3}).
 We define $\lambda(t)$ as the unique solution of
 \begin{align}
 \label{EQ_7.2}
 \begin{cases}
 (
 \tfrac{\dot\lambda}{\lambda}
 -
 \tfrac{5{\sf b}}{4}
 )
 =
 -
 \tfrac{\langle
 f'({\sf Q})
 {\sf w}_\text{ex}(\lambda y,t),
 \psi_2^{(R)}
 \rangle}{
 \langle
 (\Lambda_{y}{\sf Q}),
 \psi_2^{(R)}
 \rangle}
 \quad
 \text{for } t\in(t_I,\infty),
 \\[2mm]
 \dis
 \lambda(t)
 =
 e^{\frac{5A_1}{4(1-\beta)}(\log t_I)^{1-\beta}}
 \quad
 \text{for }
 t=t_I.
 \end{cases}
 \end{align}
 Put
 \begin{align}
 \label{EQ_7.3}
 \varrho(t)
 =
 \log
 \left(
 \lambda(t)
 e^{-\frac{5A_1}{4(1-\beta)}(\log t_I)^{1-\beta}}
 e^{-\int_{t_I}^t\frac{5{\sf b}(\zeta)}{4}d\zeta}
 \right).
 \end{align}
 From \eqref{EQ_7.2},
 $\varrho(t)$ solves
 \begin{align}
 \label{EQ_7.4}
 \begin{cases}
 \tfrac{d}{dt}
 \varrho
 =
 -
 \tfrac{\langle f'({\sf Q})
 {\sf w}_\text{ex}(\tilde\lambda[\varrho]y,t),\psi_2^{(R)}\rangle}{\langle(\Lambda_{y}{\sf Q}),\psi_2^{(R)}\rangle}
 \quad
 \text{for } t\in(t_I,\infty),
 \\[2mm]
 \varrho(t)
 =
 0
 \quad
 \text{for }
 t=t_I,
 \end{cases}
 \end{align}
 where
 we use the notation
 \begin{align*}
 \tilde\lambda[\varrho]
 =
 e^{\frac{5A_1}{4(1-\beta)}(\log t_I)^{1-\beta}}
 e^{\int_{t_I}^t\frac{5{\sf b}(\zeta)}{4}d\zeta}
 e^{\varrho}.
 \end{align*}
%%%%%%%%%%%%%%%%%%%%%%%%%%%%%%%%%%%%%%%%%%%%%%%%%%%%%%%%%%%%%%%% 9:37
 From \eqref{equation_6.12},
 we note that
 there exists $\beta'>1$ such that
 \begin{align*}
 |{\sf w}_\text{ex}(x,t)|
 &<
 {\cal W}(x,t)
 \quad
 \text{for }
 (x,t)\in\R^n\times(t_I,\infty),
 \\
 {\cal W}(x,t) 
 &<
 t^{-1}
 (\log t)^{-\beta'} 
 \quad
 \text{for }
 |x|<\sqrt t.
 \end{align*}
 Under the assumption that $\tilde\lambda[\varrho(t)] R < \sqrt{t}$ for $t \in (t_I, \infty)$,
 the estimate on ${\sf w}_\text{ex}(x,t)$ implies that the right-hand side of \eqref{EQ_7.4} is bounded by
 \begin{align*}
 |
 \tfrac{\langle f'({\sf Q})
 {\sf w}_\text{ex}(\tilde\lambda[\varrho]y,t),\psi_2^{(R)}\rangle}{\langle(\Lambda_{y}{\sf Q}),\psi_2^{(R)}\rangle}
 |
 &<
 Ct^{-1}(\log t)^{-\beta'}
 \langle
 f'({\sf Q}),|\psi_2^{(R)}|
 \rangle
 \\
 &<
 Ct^{-1}(\log t)^{-\beta'}
 \quad
 \text{for }t\in(t_I,\infty).
 \end{align*}
 Following the same approach as in Section 6.1 of Harada \cite{Harada_double}
 (see also Section 6.2 in Harada \cite{Harada_6D}),
 we can verify that for any given ${\sf w}(x,t)\in X_\tau$,
 there exists a unique solution $\varrho(t)$ of \eqref{EQ_7.4},
 which satisfies both
 \begin{align*}
 |\varrho(t)|
 &<
 C(\log t_I)^{1-\beta'}
 \quad
 \text{for } t\in[t_I,\infty)
 \text{ and}
 \\
 \tilde\lambda[\varrho(t)]R
 &<
 \sqrt{t}
 \quad
 \text{for } t\in[t_I,\infty).
 \end{align*}
 Recall that $\beta'>1$ from \eqref{equation_6.9}.
 Therefore
 from the definition of $\varrho(t)$ (see \eqref{EQ_7.3}),
 it follows that
 \begin{align}
 \label{EQ_7.5}
 &|
 \lambda(t)
 -
 e^{\frac{5A_1}{4(1-\beta)}(\log t_I)^{1-\beta}}
 e^{\int_{t_I}^{t}\frac{5{\sf b}(\zeta)}{4}d\zeta}
 |
 \\
 \nonumber
 &
 <
 C(\log t_I)^{1-\beta'}
  e^{\frac{5A_1}{4(1-\beta)}(\log t_I)^{1-\beta}}
 e^{\int_{t_I}^{t}\frac{5{\sf b}(\zeta)}{4}d\zeta}
 \\
 \nonumber
 &
 \text{for }
 t\in[t_I,\infty).
 \end{align}
 From the construction of $\lambda(t)$,
 the constant $C$ in \eqref{EQ_7.5} is independent of the choice of $t_I$.
 By choosing $t_I$ sufficiently large,
 we can assume that
 \begin{align}
 \label{EQ_7.6}
 &
 \tfrac{3}{4}
 e^{\frac{5A_1}{4(1-\beta)}(\log t_I)^{1-\beta}}
 e^{\int_{t_I}^{t}\frac{5{\sf b}(\zeta)}{4}d\zeta}
 < 
 \lambda(t)
 \\
 \nonumber
 &\quad
 <
 \tfrac{5}{4}
 e^{\frac{5A_1}{4(1-\beta)}(\log t_I)^{1-\beta}}
 e^{\int_{t_I}^{t}\frac{5{\sf b}(\zeta)}{4}d\zeta}
 \quad
 \text{for }
 t\in[t_I,\infty).
 \end{align}
 Furthermore,
 we can verify that the function $\lambda(t)$,
 constructed above,
 satisfies all the assumptions of Lemma \ref{lemma_5.1}.
 Therefore,
 by  Lemma \ref{lemma_5.1},
 we have
 \begin{align}
 \label{EQ_7.7}
 \log[\tfrac{\lambda(t_j^-)}{\lambda(t_1^-)}]
 &<
 -
 \tfrac{q_1}{2}
 {\sf n}_1^j
 +
 6q_1
 {\sf n}_1^{j-1}
 +
 \tfrac{C_1}{\beta'-1}
 (\log t_1^-)^{-(\beta'-1)}
 \\
 \nonumber
 &
 \text{ \rm for even } j\in\N,
 \\
 \label{EQ_7.8}
 \log[\tfrac{\lambda(t_j^-)}{\lambda(t_1^-)}]
 &>
 \tfrac{q_1}{2}
 {\sf n}_1^j
 -
 6q_1
 {\sf n}_1^{j-1}
 -
 \tfrac{C_1}{\beta'-1}
 (\log t_1^-)^{-(\beta'-1)}
 \\
 \nonumber
 &
 \text{ \rm for odd } j\in\N
 \end{align}
 and
 \begin{align}
 \label{EQ_7.9}
 \lambda(t)
 &<
 e^{-q_1(\log t_I)^{1-\beta}}
 e^{2q_1(\log t)^{1-\beta}}
 \quad
 \text{ \rm for }
 t\in[t_I,\infty),
 \end{align}
 where $q_1 = \frac{5A_1}{4(1 - \beta)}$, as defined in Section \ref{section_5.3}.
%%%%%%%%%%%%%%%%%%%%%%%%%%%%%%%%%%%%%%%%%%%%%%%%%%%%%%%%%%%%%%%%

 \subsection{Construction of $\epsilon(y,t)$}
 \label{section_7.2}
%%%%%%%%%%%%%%%%%%%%%%%%%%%%%%%%%%%%%%%%%%%%%%%%%%%%%%%%%%%%%%%%
 We now construct a solution $\epsilon(y,t)$ to \eqref{EQ_7.1}.
 Let $\lambda_{[{\sf w}]}(t)$ be the solution of \eqref{EQ_7.2} constructed in Section \ref{section_7.1}.
 From \eqref{EQ_7.2},
 it holds that
 \begin{align}
 \label{EQ_7.10}
 |
 \tfrac{\dot\lambda_{[{\sf w}]}}{\lambda_{[{\sf w}]}}
 -
 \tfrac{5{\sf b}(t)}{4}
 |
 &<
 C
 t^{-1}
 (\log t)^{-\beta'}
 \quad
 \text{for }
 t\in(t_I,\infty).
 \end{align}
 Note that ${\sf b}(t)=\theta(0,t)$ satisfies \eqref{equation_4.23} - \eqref{equation_4.24}.
%%%%%%%%%%%%%%%%%%%%%%%%%%%%%%%%%%%%%%%%%%%%%%%%%%%%%%%%%%%%%%%%
 We choose ${\sf r}_0>0$ sufficiently large so that
 \begin{align}
 \nonumber
 -H_{y}
 |y|^{-(n-4)}
 &=
 -
 \{
 \Delta_{y}+f'({\sf Q}(y))
 \}
 |y|^{-(n-4)}
 \\
 \nonumber
 &=
 2(n-4)|y|^{-(n-2)}
 -
 f'({\sf Q}(y))
 |y|^{-(n-4)}
 \\
 \label{EQ_7.11}
 &>
 (n-4)
 |y|^{-(n-2)}
 \quad
 \text{for }
 |y|>{\sf r}_0.
 \end{align}
 We introduce some notation.
 \begin{itemize}
 \item 
 $\chi_{{\sf r}_0}=\chi(\frac{|y|}{{\sf r}_0})$,
 \item
 $B_{R\setminus{\sf r}_0}=B_R\setminus B_{{\sf r}_0}$,
 \item
 $\pa B_{R\setminus{\sf r}_0}=\pa B_R\cup\pa B_{{\sf r}_0}$.
 \end{itemize}
 Consider
 \begin{align}
 \label{EQ_7.12}
 &
 \begin{cases}
 \dis
 \lambda_{[{\sf w}]}^2
 \pa_t
 \epsilon_\text{{\sf e}1}
 =
 H_{y}
 \epsilon_\text{{\sf e}1}
 +
 \lambda_{[{\sf w}]}^2
 (
 \tfrac{\dot\lambda_{[{\sf w}]}}{\lambda_{[{\sf w}]}}
 -
 \tfrac{5{\sf b}}{4}
 )
 (\Lambda_{y}{\sf Q})
 &
 \text{for }
 y\in{B}_{R\setminus{\sf r}_0},\ t\in(t_I,\infty),
 \\
 \epsilon_\text{{\sf e}1}
 =0
 &
 \text{for } y\in\pa{B}_{R\setminus{\sf r}_0},\ t\in(t_I,\infty),
 \\
 \epsilon_\text{{\sf e}1}|_{t=t_I}
 =0
 &
 \text{for } y\in {B}_{R\setminus{\sf r}_0},
 \end{cases}
 \\
 \label{EQ_7.13}
 &
 \begin{cases}
 \dis
 \lambda_{[{\sf w}]}^2
 \pa_t
 \epsilon_\text{{\sf e}2}
 =
 H_{y}
 \epsilon_\text{{\sf e}2}
 +
 \lambda_{[{\sf w}]}^2
 f'({\sf Q})
 {\sf w}_\text{ex}(\lambda_{[{\sf w}]}y,t)
 &
 \text{for }
 y\in{B}_{R\setminus{\sf r}_0},\ t\in(t_I,\infty),
 \\
 \epsilon_\text{{\sf e}2}
 =0
 &
 \text{for }
 y\in\pa{B}_{R\setminus{\sf r}_0},\ t\in(t_I,\infty),
 \\
 \epsilon_\text{{\sf e}2}|_{t=t_I}
 =0
 &
 \text{for } y\in{B}_{R\setminus{\sf r}_0},
 \end{cases}
 \\
 \label{EQ_7.14}
 &
 \begin{cases}
 \dis
 \lambda_{[{\sf w}]}^2
 \pa_t
 \epsilon_{{\sf in}}
 =
 H_{y}
 \epsilon_{{\sf in}}
 +
 \lambda_{[{\sf w}]}^2
 (
 \tfrac{\dot\lambda_{[{\sf w}]}}{\lambda_{[{\sf w}]}}
 -
 \tfrac{5{\sf b}}{4}
 )
 (\Lambda_{y}{\sf Q})
 \chi_{{\sf r}_0}
 +
 \lambda_{[{\sf w}]}^2
 f'({\sf Q})
 {\sf w}_\text{ex}(\lambda_{[{\sf w}]}y,t)
 \chi_{{\sf r}_0}
 \\
 \hspace{15mm}
 +
 \underbrace{
 2\nabla_{y}
 (\epsilon_\text{{\sf e}1}+\epsilon_\text{{\sf e}2})
 \cdot
 (-\nabla_{y}\chi_{{\sf r}_0})
 +
 (\epsilon_\text{{\sf e}1}+\epsilon_\text{{\sf e}2})
 (-\Delta_{y}\chi_{{\sf r}_0})
 }_{=l[\epsilon_\text{{\sf e}1},\epsilon_\text{{\sf e}2}]}
 \\
 \quad
 \text{for } y\in B_R,\ t\in(t_I,\infty),
 \\
 \epsilon_\text{{\sf in}}
 =0
 \quad
 \text{for } y\in B_R,\ t\in(t_I,\infty),
 \\
 \epsilon_\text{{\sf in}}|_{t=t_I}
 =
 \epsilon_0(y)
 \quad
 \text{for } y\in B_{R}.
 \end{cases}
 \end{align}
%%%%%%%%%%%%%%%%%%%%%%%%%%%%%%%%%%%%%%%%%%%%%%%%%%%%%%%%%%%%%%%% 10:28, Nov 14
 Once
 the triple $(\epsilon_{{\sf e}1}(y,t),\epsilon_{{\sf e}2}$ $(y,t),\epsilon_\text{\sf in}(y,t))$
 is constructed,
 a function $\epsilon(y,t)$ defined by
 \[
 \epsilon
 =
 \epsilon_{{\sf e}1}
 \cdot
 (1-\chi_{{\sf r}_0})
 +
 \epsilon_{{\sf e}2}
 \cdot
 (1-\chi_{{\sf r}_0})
 +
 \epsilon_{{\sf in}}
 \]
 provides a solution of \eqref{EQ_7.1}.
%%%%%%%%%%%%%%%%%%%%%%%%%%%%%%%%%%%%%%%%%%%%%%%%%%%%%%%%%%%%%%%%
 Put
 $\bar\epsilon_{{\sf e}1}(y,t)
 =\bar C_1\lambda_{[{\sf w}]}(t)^2t^{-1}(\log t)^{-\beta'}|y|^{-2}$.
 From \eqref{EQ_7.10} - \eqref{EQ_7.11},
 we easily see that
 \begin{align*}
 \lambda_{[{\sf w}]}^2
 \pa_t
 \bar\epsilon_{{\sf e}1}
 -
 H_{y}
 \bar\epsilon_{{\sf e}1}
 &>
 \left(
 \lambda_{[{\sf w}]}^2
 (
 \tfrac{2\dot\lambda_{[{\sf w}]}}{\lambda_{[{\sf w}]}}
 -
 \tfrac{1}{t}
 -
 \tfrac{\beta'}{t(\log t)}
 )
 +
 \tfrac{2}{|y|^2}
 \right)
 \bar\epsilon_{{\sf e}1}
 \\
 &>
 \left(
 \lambda_{[{\sf w}]}^2
 (
 \tfrac{5{\sf b}}{2}
 -
 \tfrac{2C}{t(\log t)^{\beta'}}
 -
 \tfrac{1}{t}
 -
 \tfrac{\beta'}{t(\log t)}
 )
 +
 \tfrac{2}{|y|^2}
 \right)
 \bar\epsilon_{{\sf e}1}
 \\
 &
 \text{for }
 y\in{B}_{R\setminus{\sf r}_0},\
 t\in(t_I,\infty).
 \end{align*}
 Since ${\sf b}(t)=\theta(0,t)$ satisfies \eqref{equation_4.24},
 we deduce that
 \begin{align*}
 \lambda_{[{\sf w}]}^2
 \pa_t
 \bar\epsilon_{{\sf e}1}
 -
 H_{y}
 \bar\epsilon_{{\sf e}1}
 &>
 \left(
 -
 \tfrac{2\lambda_{[{\sf w}]}^2}{t}
 +
 \tfrac{2}{|y|^2}
 \right)
 \bar\epsilon_{{\sf e}1}
 \\
 &
 \text{for }
 y\in{B}_{R\setminus{\sf r}_0},\
 t\in(t_I,\infty),
 \end{align*}
 if $t_I$ is sufficiently large.
 Combining the facts that $\lambda_{[{\sf w}]}(t)<t^\frac{1}{8}$ (see \eqref{EQ_7.9}) and $R=\log\log t_I$,
 we obtain
 \begin{align*}
 \lambda_{[{\sf w}]}^2
 \pa_t
 \bar\epsilon_{{\sf e}1}
 -
 H_{y}
 \bar\epsilon_{{\sf e}1}
 &>
 \tfrac{\bar\epsilon_{{\sf e}1}}{|y|^2}
 =
 \tfrac{\bar{C}_1\lambda_{[{\sf w}]}^2}{t(\log t)^{\beta'}|y|^4}
 \\
 &
 \text{for }
 y\in {B}_{R\setminus{\sf r}_0},\
 t\in(t_I,\infty). 
 \end{align*}
%%%%%%%%%%%%%%%%%%%%%%%%%%%%%%%%%%%%%%%%%%%%%%%%%%%%%%%%%%%%%%%% ここまで18:38, 21:10
 Furthermore,
 from \eqref{EQ_7.10},
 the right-hand side of \eqref{EQ_7.12} can be estimated as
 \begin{align*}
 |
 \lambda_{[{\sf w}]}^2
 (
 \tfrac{\dot\lambda_{[{\sf w}]}}{\lambda_{[{\sf w}]}}
 -
 \tfrac{5{\sf b}}{4}
 )
 (\Lambda_{y}{\sf Q})
 |
 &<
 \tfrac{C\lambda_{[{\sf w}]}^2}{t(\log t)^{\beta'}|y|^4}
 \quad
 \text{for }
 y\in {B}_{R\setminus{\sf r}_0},\
 t\in(t_I,\infty). 
 \end{align*}
 Hence,
 by choosing \(\bar C_1 > 0\) sufficiently large (independent of
 \(t_I\) and \(\tau\)), the function \(\bar\epsilon_{{\sf e}1}(y,t)\) can be used as a comparison function for \eqref{EQ_7.12}.
 Therefore,
 we have
 \begin{align}
 \label{EQ_7.15}
 |\epsilon_{{\sf e}1}(y,t)|
 &<
 \bar\epsilon_{{\sf e}1}(y,t)
 <
 \tfrac{\bar C_1\lambda_{[{\sf w}]}^2}{t(\log t)^{\beta'}|y|^2}
 \quad
 \text{for } y\in {B}_{R\setminus{\sf r}_0},\ t\in(t_I,\infty).
 \end{align}
 We next derive estimates for $\epsilon_{{\sf e}2}(y,t)$.
 From the definition of ${\sf w}\in X_\tau$,
 the second term on the right-hand side of \eqref{EQ_7.13}
 is bounded by
 \begin{align*}
 |\lambda_{[{\sf w}]}^2
 f'({\sf Q})
 {\sf w}_\text{ex}(\lambda_{[{\sf w}]}y,t)
 |
 &<
 \tfrac{2\lambda_{[{\sf w}]}^2{\sf Q}}{t(\log t)^{\beta'}}
 <
 \tfrac{C\lambda_{[{\sf w}]}^2}{t(\log t)^{\beta'}|y|^4}
 \\
 &
 \text{for } y\in {B}_{R\setminus{\sf r}_0},\ t\in(t_I,\infty).
 \end{align*}
 Therefore,
 in exactly the same way as in the case of $\epsilon_{{\sf e}1}(y,t)$,
 we obtain
 \begin{align}
 \label{EQ_7.16}
 |\epsilon_{{\sf e}2}(y,t)|
 &<
 \tfrac{
 \bar C_2
 \lambda_{[{\sf w}]}^2
 }{
 t(\log t)^{\beta'}
 |y|^2
 }
 \quad
 \text{for } y\in{B}_{R\setminus{\sf r}_0},\ t\in(t_I,\infty).
 \end{align}
%%%%%%%%%%%%%%%%%%%%%%%%%%%%%%%%%%%%%%%%%%%%%%%%%%%%%%%%%%%%%%%% 10:33, Nov 14
 We introduce a new time variable $s$.
 \begin{align}
 \label{EQ_7.17}
 s
 :=
 s(t)
 =
 \int_{t_I}^t\tfrac{dt'}{\lambda_{[{\sf w}]}(t')^2}.
 \end{align}
 In terms of the new variable $s$,
 equation \eqref{EQ_7.12} becomes
 \begin{align}
 \label{EQ_7.18}
 &
 \begin{cases}
 \dis
 \pa_s
 \epsilon_\text{{\sf e}1}
 =
 H_{y}
 \epsilon_\text{{\sf e}1}
 +
 \lambda_{[{\sf w}]}^2
 (
 \tfrac{\dot\lambda_{[{\sf w}]}}{\lambda_{[{\sf w}]}}
 -
 \tfrac{5{\sf b}}{4}
 )
 (\Lambda_{y}{\sf Q})
 &
 \text{for }
 y\in{B}_{R\setminus{\sf r}_0},\ s\in(0,\infty),
 \\
 \epsilon_\text{{\sf e}1}
 =0
 &
 \text{for } y\in\pa{B}_{R\setminus{\sf r}_0},\ s\in(0,\infty),
 \\
 \epsilon_\text{{\sf e}1}|_{s=0}
 =0
 &
 \text{for } y\in {B}_{R\setminus{\sf r}_0}.
 \end{cases}
 \end{align}
 Let $\bar{\mathtt{t}}(s)$ be the inverse function of $s(t)$ defined in \eqref{EQ_7.17}.
 By standard gradient estimates for parabolic equations,
 together with \eqref{EQ_7.15},
 it holds that
 for $s\in(1,\infty)$
 \begin{align}
 \nonumber
 &
 \sup_{y\in B_{R\setminus{\sf r}_0}}
 |\nabla_y\epsilon_{{\sf e}1}(y,s)|
 \\
 \nonumber
 &<
 C
 \sup_{s'\in(s-1,s)}
 \|\bar\epsilon_{{\sf e}1}(s)\|_{L_y^\infty(B_{R\setminus{\sf r}_0})}
 \\
 \nonumber
 &\quad
 +
 C
 \sup_{s'\in(s-1,s)}
 \|
 \lambda_{[{\sf w}]}^2
 (
 \tfrac{\dot\lambda_{[{\sf w}]}}{\lambda_{[{\sf w}]}}
 -
 \tfrac{5{\sf b}}{4}
 )
 (\Lambda_{y}{\sf Q})
 \|_{L_y^\infty(B_{R\setminus{\sf r}_0})}
 \\
 \label{EQ_7.19}
 &<
 \bar C_1'
 \sup_{s'\in(s-1,s)}
 \tfrac{\lambda_{[{\sf w}]}(\bar{\mathtt{t}}(s'))^2}{\bar{\mathtt{t}}(s')(\log\bar{\mathtt{t}}(s'))^{\beta'}},
 \end{align}
 and for $s\in(0,1)$
 \begin{align}
 \label{EQ_7.20}
 \sup_{y\in B_{R\setminus{\sf r}_0}}
 |\nabla_y\epsilon_{{\sf e}1}(y,s)|
 &<
 \bar C_1'
 \sup_{s'\in(0,s)}
 \tfrac{\lambda_{[{\sf w}]}(\bar{\mathtt{t}}(s'))^2}{\bar{\mathtt{t}}(s')(\log\bar{\mathtt{t}}(s'))^{\beta'}}.
 \end{align}
%%%%%%%%%%%%%%%%%%%%%%%%%%%%%%%%%%%%%%%%%%%%%%%%%%%%%%%%%%%%%%%%
 We now define $\lambda(s)$
 to simplify notation in the new variable $s$.
 \[
 \lambda(s) := \lambda_{[{\sf w}]}(\bar{\mathtt t}(s)).
 \]
 Then,
 we claim that
 \begin{align}
 \label{EQ_7.21}
 \log\tfrac{\lambda(s+\Delta s)}{\lambda(s)}
 &<
 \tfrac{C\Delta s}{t_I^\frac{3}{4}(\log t_I)^\beta}
 \quad
 \text{for }
 \Delta s\in(-s,\infty),
 \\
 \label{EQ_7.22}
 \log\tfrac{\bar{\mathtt t}(s+\Delta s)}{\bar{\mathtt t}(s)}
 &<
 \tfrac{C\Delta s}{t_I^\frac{3}{4}}
 \quad
 \text{for }
 \Delta s\in(-s,\infty).
 \end{align}
 From the definition of $s=s(t)$ (see \eqref{EQ_7.17}) and the estimate \eqref{EQ_7.10},
 we observe that
 for $\Delta s\in(-s,\infty)$
 \begin{align*}
 &
 \log\tfrac{\lambda(s+\Delta s)}{\lambda(s)}
 =
 \int_s^{s+\Delta s}
 \tfrac{d}{ds}
 \log\lambda(s)
 ds
 \\
 &=
 \int_s^{s+\Delta s}
 \tfrac{1}{\lambda(s)}
 \tfrac{d\bar{\mathtt t}}{ds}
 \tfrac{d\lambda}{dt}
 ds
 <
 C
 \int_s^{s+\Delta s}
 \lambda(s)
 |{\sf b}(\bar{\mathtt t}(s))|
 ds
 \\
 &<
 C
 \int_s^{s+\Delta s}
 \tfrac{\lambda(s)ds}{\bar{\mathtt t}(s)(\log(\bar{\mathtt t}(s)))^\beta},
 \end{align*}
 and for $\Delta s\in(-s,\infty)$
 \begin{align*}
 &
 \log \tfrac{\bar{\mathtt t}(s-\Delta s)}{\bar{\mathtt t}(s)}
 =
 \int_s^{s+\Delta s}
 \tfrac{d}{ds}
 \log \bar{\mathtt t}(s) ds
 \\
 &=
 \int_s^{s+\Delta s}
 \tfrac{1}{\bar{\mathtt t}(s)}
 \tfrac{d\bar{\mathtt t}}{ds}
 ds
 =
 \int_s^{s+\Delta s}
 \tfrac{\lambda(s)^2}{\bar{\mathtt t}(s)}
 ds.
 \end{align*}
 Note from \eqref{EQ_7.9} that $\lambda_{[{\sf w}]}(t)<Ct^{\frac{1}{8}}$
 for $t\in(t_I,\infty)$.
 Therefore,
 we have
 for $\Delta s\in(-s,\infty)$
 \begin{align*}
 \log\tfrac{\lambda(s-\Delta s)}{\lambda(s)}
 &<
 C
 \int_s^{s+\Delta s}
 \tfrac{ds}{\bar{\mathtt t}(s)^\frac{3}{4}(\log(\bar{\mathtt t}(s)))^\beta}
 <
 \tfrac{C\Delta s}{t_I^\frac{3}{4}(\log t_I)^\beta},
 \end{align*}
 and for $\Delta s\in(-s,\infty)$
 \begin{align*}
 &
 \log \tfrac{\bar{\mathtt t}(s-\Delta s)}{\bar{\mathtt t}(s)}
 <
 \tfrac{C\Delta s}{t_I^\frac{3}{4}}.
 \end{align*}
 This completes the proof of estimates \eqref{EQ_7.21} - \eqref{EQ_7.22}.
 Combining the estimates \eqref{EQ_7.19} - \eqref{EQ_7.22},
 we conclude
 \begin{align*}
 \sup_{y\in B_{R\setminus{\sf r}_0}}
 |\nabla_y\epsilon_{{\sf e}1}(y,s)|
 &<
 \bar C_1''
 \tfrac{\lambda_{[{\sf w}]}(\bar{\mathtt{t}}(s))^2}{\bar{\mathtt{t}}(s)(\log\bar{\mathtt{t}}(s))^{\beta'}}
 \quad
 \text{for }
 s\in(0,\infty).
 \end{align*}
 In terms of the original variable $t$,
 this estimate becomes:
 \begin{align}
 \label{EQ_7.23}
 \sup_{y\in B_{R\setminus{\sf r}_0}}
 |\nabla_y\epsilon_{{\sf e}1}(y,t)|
 &<
 \bar C_1''
 \tfrac{\lambda_{[{\sf w}]}(t)^2}{t(\log t)^{\beta'}}
 \quad
 \text{for }
 t\in(t_I,\infty).
 \end{align}
 Using the estimate \eqref{EQ_7.16} and repeating the same approach as in the case of 
 $\epsilon_{{\sf e}1}(y,t)$,
 we similarly obtain the gradient estimate for $\epsilon_{{\sf e}2}(y,t)$:
 \begin{align}
 \label{EQ_7.24}
 \sup_{y\in B_{R\setminus{\sf r}_0}}
 |\nabla_y\epsilon_{{\sf e}2}(y,t)|
 &<
 \bar C_2''
 \tfrac{\lambda_{[{\sf w}]}(t)^2}{t(\log t)^{\beta'}}
 \quad
 \text{for }
 t\in(t_I,\infty).
 \end{align}
%%%%%%%%%%%%%%%%%%%%%%%%%%%%%%%%%%%%%%%%%%%%%%%%%%%%%%%%%%%%%%%% ここまで18:41, 21:12, 11:38, Nov 14
 We finally construct $\epsilon_{{\sf in}}(y,t)$.
 Recall that $(\mu_k^{(R)},\psi_k^{(R)}(y))$ denotes the $k$-th eigenpair of the following eigenvalue problem:
 \begin{align*}
 \begin{cases}
 -H_y\psi
 =
 \mu\psi
 & \text{for } y\in B_{R},
 \\
 \psi=0
 & \text{for } y\in\pa B_{R},
 \\
 \psi \text{ is radial}.
 \end{cases}
 \end{align*}
 Here,
 $\mu_k^{(R)}$ is the $k$-th eigenvalue and $\psi_k^{(R)}$
 is the associated radially symmetric eigenfunction normalized as $\psi_k^{(R)}(0) = 1$ (see Section \ref{section_3.3}).
%%%%%%%%%%%%%%%%%%%%%%%%%%%%%%%%%%%%%%%%%%%%%%%%%%%%%%%%%%%%%%%%
 The functions
 $\epsilon_{{\sf e}1}(y,t)$ and $\epsilon_{{\sf e}2}(y,t)$
 denote the solutions to \eqref{EQ_7.12} and \eqref{EQ_7.13},
 respectively,
 constructed above.
 We decompose $\epsilon_{\sf in}(y,t)$ into components in the directions of 
 $\psi_1^{(R)}$ and $\psi_2^{(R)}$, and a remainder orthogonal to both in $L_y^2(B_R)$:
 \begin{align*}
 \epsilon_{\sf in}(y,t)
 =
 \langle \epsilon_{\sf in}, \psi_1^{(R)} \rangle
 \tfrac{\psi_1^{(R)}(y)}{\|\psi_1^{(R)}\|_{L_y^2}^2}
 +
 \langle \epsilon_{\sf in}, \psi_2^{(R)} \rangle
 \tfrac{\psi_2^{(R)}(y)}{\|\psi_2^{(R)}\|_{L_y^2}^2}
 +
 \epsilon_{\sf in}^\perp(y,t).
 \end{align*}
 We first estimate the projection of $\epsilon_{\sf in}$ along $\psi_1^{(R)}$.
 We continue to use the time variable $s$, as defined in \eqref{EQ_7.17}.
 From \eqref{EQ_7.14},
 we have
 \begin{align*}
 &
 \tfrac{d}{ds}
 \langle
 \epsilon_{\sf in},\psi_1^{(R)}
 \rangle
 \\
 &=
 -
 \mu_1^{(R)}
 \langle
 \epsilon_{\sf in},\psi_1^{(R)}
 \rangle
 +
 \underbrace{
 \langle
 \lambda_{[{\sf w}]}^2
 (
 \tfrac{\dot\lambda_{[{\sf w}]}}{\lambda_{[{\sf w}]}}
 -
 \tfrac{5{\sf b}}{4}
 )
 (\Lambda_{y}{\sf Q})
 \chi_{{\sf r}_0},
 \psi_1^{(R)}
 \rangle
 }_{=\mathcal{R}_1(s)}
 \\
 &
 +
 \underbrace{
 \langle
 \lambda_{[{\sf w}]}^2
 f'({\sf Q})
 {\sf w}_\text{ex}(\lambda_{[{\sf w}]}y,t)
 \chi_{{\sf r}_0},
 \psi_1^{(R)}
 \rangle
 }_{=\mathcal{R}_2(s)}
 +
 \underbrace{
 \langle
 l[\epsilon_{{\sf e}1},\epsilon_{{\sf e}2}],
 \psi_1^{(R)}
 \rangle
 }_{=\mathcal{R}_3(s)}.
 \end{align*}
 For notational convenience, let
 \[
 \omega
 =
 \mu_1^{(R)}.
 \]
 Integrating the differential equation,
 we get
 \begin{align}
 \label{EQ_7.25}
 &
 e^{\omega s}
 \langle
 \epsilon_{\text{\sf in}}(s),\psi_1^{(R)}
 \rangle
 -
 \langle
 \epsilon_{\text{\sf in}}(s)|_{s=0},\psi_1^{(R)}
 \rangle
 \\
 \nonumber
 &=
 \int_0^s
 e^{\omega\zeta}
 \{
 \mathcal{R}_1(\zeta)+\mathcal{R}_2(\zeta)+\mathcal{R}_3(\zeta)
 \}
 d\zeta.
 \end{align}
%%%%%%%%%%%%%%%%%%%%%%%%%%%%%%%%%%%%%%%%%%%%%%%%%%%%%%%%%%%%%%%% Nov 14, 14:31, Nov 14 15:48 from here
 We now take the initial data $\epsilon_{{\sf in}}|_{t=t_I}=\epsilon_0(y)$ such that
 \begin{align}
 \label{EQ_7.26}
 \epsilon_0(y)
 &=
 \alpha_I
 \psi_1^{(R)}(y)
 \end{align}
 with
 \begin{align}
 \label{EQ_7.27}
 \alpha_I
 =
 -
 \int_0^\infty
 e^{\omega\zeta}
 (\mathcal{R}_1+\mathcal{R}_2+\mathcal{R}_3)
 d\zeta.
 \end{align}
 The integral in \eqref{EQ_7.27} is finite since $\omega=\mu_1^{(R)}<0$.
 Combining \eqref{EQ_7.25} - \eqref{EQ_7.27},
 we observe that
 \begin{align}
 \label{EQ_7.28}
 \langle
 \epsilon_{{\sf in}}(s),\psi_1^{(R)}
 \rangle
 =
 -
 e^{-\omega s}
 \int_s^\infty
 e^{\omega\zeta}
 (
 \mathcal{R}_1+\mathcal{R}_2+\mathcal{R}_3
 )
 d\zeta.
 \end{align}
 From \eqref{EQ_7.10},
 together with the above estimates for $\epsilon_{{\sf e}1}(y,t)$ and $\epsilon_{{\sf e}2}(y,t)$,
 and Lemma \ref{lemma_3.2},
 we can verify that there exists a constant $C>0$ such that
 \begin{align}
 \label{EQ_7.29}
 |\mathcal{R}_1(\zeta)|+|\mathcal{R}_2(\zeta)|+|\mathcal{R}_3(\zeta)|
 &<
 \tfrac{C\lambda_{[{\sf w}]}(\bar{\mathtt{t}}(\zeta))^2}{\bar{\mathtt t}(\zeta)(\log\bar{\mathtt t}(\zeta))^{\beta'}}
 \quad
 \text{for }
 \zeta\in(0,\infty).
 \end{align}
 Note that $\bar{\mathtt t}(s)|_{s=0}=t_I$.
 Since $\omega=\mu_1^{(R)}<-\frac{e_0}{2}$, where $e_0>0$ (see Lemma \ref{lemma_3.1}),
 applying integration by parts,
 we see that
 \begin{align*}
 \int_0^\infty
 \tfrac{
 e^{\omega\zeta}\lambda_{[{\sf w}]}(\bar{\mathtt t}(\zeta))^2d\zeta
 }{
 \bar{\mathtt t}(\zeta)(\log\bar{\mathtt t}(\zeta))^{\beta'}}
 =
 \tfrac{
 \lambda_{[{\sf w}]}(t_I)^2
 }{
 (-\omega)t_I(\log t_I)^{\beta'}}
 -
 \int_0^\infty
 \tfrac{e^{\omega\zeta}}{\omega}
 \tfrac{d}{d\zeta}
 (
 \tfrac{
 \lambda_{[{\sf w}]}(\bar{\mathtt t}(\zeta))^2
 }{
 \bar{\mathtt t}(\zeta)(\log\bar{\mathtt t}(\zeta))^{\beta'}}
 )
 d\zeta.
 \end{align*}
%%%%%%%%%%%%%%%%%%%%%%%%%%%%%%%%%%%%%%%%%%%%%%%%%%%%%%%%%%%%%%%% ここまで19:55, 21:13
 Using \eqref{EQ_7.10} and \eqref{EQ_7.17}, we compute the last term as follows:
 \begin{align}
 \label{EQ_7.30}
 &
 \tfrac{d}{d\zeta}
 (
 \tfrac{
 \lambda_{[{\sf w}]}(\bar{\mathtt t})^2
 }{
 \bar{\mathtt t}(\log\bar{\mathtt t})^{\beta'}}
 )
 =
 \tfrac{dt}{d\zeta}
 \cdot
 \tfrac{d}{dt}
 (
 \tfrac{
 \lambda_{[{\sf w}]}(t)^2
 }{
 t(\log t)^{\beta'}}
 )
 |_{t=\bar{\mathtt t}}
 \\
 \nonumber
 &=
 \lambda_{[{\sf w}]}^2
 \cdot
 (
 \tfrac{2\dot\lambda_{[{\sf w}]}}{\lambda_{[{\sf w}]}}
 -
 \tfrac{1}{\bar{\mathtt t}}
 -
 \tfrac{\beta'}{\bar{\mathtt t}(\log \bar{\mathtt t})}
 )
 \tfrac{
 \lambda_{[{\sf w}]}^2
 }{
 \bar{\mathtt t}(\log \bar{\mathtt t})^{\beta'}}
 \\
 \nonumber
 &=
 \lambda_{[{\sf w}]}^2
 \cdot
 (
 \tfrac{5{\sf b}}{4}
 +
 \mathtt h
 -
 \tfrac{1}{\bar{\mathtt t}}
 -
 \tfrac{\beta'}{\bar{\mathtt t}(\log\bar{\mathtt t})}
 )
 \tfrac{
 \lambda_{[{\sf w}]}^2
 }{\bar{\mathtt t}(\log\bar{\mathtt t})^{\beta'}},
 \end{align}
 where
 \begin{itemize}
 \item 
 $|{\mathtt h}|<\tfrac{C}{ \bar{\mathtt t}(\log\bar{\mathtt t})^{\beta'} }$,
 \item
 $|{\sf b}(t)|<\tfrac{2A_1}{t(\log t)^\beta}$ (see \eqref{equation_4.24}) and
 \item
 $\lambda_{[{\sf w}]}(t)<
 \tfrac{5}{4}
 e^{\frac{5A_1}{4(1-\beta)}(\log t_I)^{1-\beta}}
 e^{\int_{t_I}^{t}\frac{5{\sf b}(\zeta)}{4}d\zeta}$
 (see \eqref{EQ_7.6}).
 \end{itemize}
%%%%%%%%%%%%%%%%%%%%%%%%%%%%%%%%%%%%%%%%%%%%%%%%%%%%%%%%%%%%%%%%
 Therefore,
 since
 $\lim_{R\to\infty}\omega=-e_0$ (see Lemma \ref{lemma_3.1}),
 we can choose $t_I$ sufficiently large so that
 \begin{align}
 \label{EQ_7.31}
 \int_0^\infty
 \tfrac{
 e^{\omega\zeta}\lambda_{[{\sf w}]}(\bar{\mathtt t}(\zeta))^2d\zeta
 }{
 \bar{\mathtt t}(\zeta)(\log\bar{\mathtt t}(\zeta))^{\beta'}}
 &<
 \tfrac{
 4\lambda_{[{\sf w}]}(t_I)^2
 }{
 e_0t_I(\log t_I)^{\beta'}}.
 \end{align}
%%%%%%%%%%%%%%%%%%%%%%%%%%%%%%%%%%%%%%%%%%%%%%%%%%%%%%%%%%%%%%%% From here Nov 14 16:33
 As a consequence,
 from \eqref{EQ_7.27},
 we obtain a bound for $\alpha_I$.
 \begin{align}
 \label{EQ_7.32}
 |\alpha_I|
 &<
 \tfrac{
 C\lambda_{[{\sf w}]}(t_I)^2
 }{
 e_0t_I(\log t_I)^{\beta'}}.
 \end{align}
 In exactly the same manner as in \eqref{EQ_7.31},
 we can verify that
 \[
 \int_s^\infty
 \tfrac{
 e^{\omega\zeta}\lambda_{[{\sf w}]}(\bar{\mathtt t}(\zeta))^2d\zeta
 }{
 \bar{\mathtt t}(\zeta)(\log\bar{\mathtt t}(\zeta))^{\beta'}}
 <
 \tfrac{
 4e^{\omega s}\lambda_{[{\sf w}]}(\bar{\mathtt t}(s))^2
 }{
 e_0\bar{\mathtt t}(s)(\log \bar{\mathtt t}(s))^{\beta'}}
 \quad
 \text{for }
 s\in(0,\infty).
 \]
 Therefore,
 from \eqref{EQ_7.28} - \eqref{EQ_7.29},
 we obtain
 \begin{align*}
 \nonumber
 |\langle\epsilon_{{\sf in}}(s),\psi_1^{(R)}\rangle|
 &=
 |
 -
 e^{-\omega s}
 \int_s^\infty
 e^{\omega\zeta}
 (\mathcal{R}_1+\mathcal{R}_2+\mathcal{R}_3)
 d\zeta
 |
 \\
 \nonumber
 &
 <
 e^{-\omega s}
 \int_s^\infty
 \tfrac{
 e^{\omega\zeta}\lambda_{[{\sf w}]}(\bar{\mathtt t})^2d\zeta
 }{
 \bar{\mathtt t}(\log\bar{\mathtt t})^{\beta'}}
 d\zeta
 \\
 &
 <
 \tfrac{
 \lambda_{[{\sf w}]}(\bar{\mathtt t}(s))^2
 }{\bar{\mathtt t}(s)(\log\bar{\mathtt t}(s))^{\beta'}}
 \quad
 \text{for }
 s\in(0,\infty).
 \end{align*}
 This gives
 \begin{align}
 \label{EQ_7.33}
 |\langle\epsilon_{{\sf in}}(t),\psi_1^{(R)}\rangle|
 <
 \tfrac{\lambda_{[{\sf w}]}(t)^2
 }{t(\log t)^{\beta'}}
 \quad
 \text{for }
 t\in(t_I,\infty).
 \end{align}
%%%%%%%%%%%%%%%%%%%%%%%%%%%%%%%%%%%%%%%%%%%%%%%%%%%%%%%%%%%%%%%% Nov 14 17:03 from here
 We next consider the projection onto the direction of $\psi_2^{(R)}$.
 From the defining condition for $\lambda_{[{\sf w}]}(t)$
 (see Section \ref{section_7.1}),
 we recall that
 \[
 \langle\epsilon(t),\psi_{2}^{(R)}\rangle=0
 \quad
 \text{for }
 t\in[t_I,\infty).
 \]
 Since $\epsilon(y,t)$ is given by
 $\epsilon
 =
 \epsilon_{{\sf e}1}
 \cdot
 (1-\chi_{{\sf r}_0})
 +
 \epsilon_{{\sf e}2}
 \cdot
 (1-\chi_{{\sf r}_0})
 +
 \epsilon_{{\sf in}}$,
 it follows from \eqref{EQ_7.15} - \eqref{EQ_7.16} that
 \begin{align*}
 |\langle\epsilon_{\sf in},\psi_2^{(R)}\rangle|
 &=
 |
 \langle
 \epsilon_{{\sf e}1}\cdot(1-\chi_{{\sf r}_0})+\epsilon_{{\sf e}2}\cdot(1-\chi_{{\sf r}_0}),\psi_2^{(R)}
 \rangle
 |
 \\
 &<
 \tfrac{C\lambda_{[{\sf w}]}(t)^2}{t(\log t)^{\beta'}}
 \int_{y\in B_{R\setminus{\sf r}_0}}
 |y|^{-2}
 |\psi_2^{(R)}(y)|
 dy.
 \end{align*}
 Recall that, when $n=6$,
 we have $|\psi_2^{(R)}(y)|<c(1+|y|)^{-4}$ for $y\in B_{R}$
 (see Lemma \ref{lemma_3.2}).
 Therefore,
 we have
 \begin{align}
 \label{EQ_7.34}
 |\langle \epsilon_{{\sf in}},\psi_2^{(R)} \rangle|
 &<
 \tfrac{C\lambda_{[{\sf w}]}(t)^2}{t(\log t)^{\beta'}}
 \int_{y\in B_{R\setminus{\sf r}_0}}
 |y|^{-6}
 dy
 <
 \tfrac{C\lambda_{[{\sf w}]}(t)^2\log R}{t(\log t)^{\beta'}}
 \\
 \nonumber
 &
 \text{for }
 t\in[t_I,\infty).
 \end{align}
 This provides a bound on the projection onto $\psi_2^{(R)}$.
%%%%%%%%%%%%%%%%%%%%%%%%%%%%%%%%%%%%%%%%%%%%%%%%%%%%%%%%%%%%%%%% ここまで18:53, 21:19
 Finally,
 we turn to the estimate of the orthogonal component $\epsilon_{\sf in}^\perp$.
 Let
 $\omega_3
 :=
 \mu_3^{(R)}$.
 We multiply \eqref{EQ_7.14} by $\epsilon_{{\sf in}}^\perp$ and integrate by parts.
 Then we have
 \begin{align*}
 \nonumber
 &
 \tfrac{\lambda_{[{\sf w}]}^2}{2}
 \tfrac{d}{dt}
 \|\epsilon_{{\sf in}}^{\perp}\|_{L_{y}^2}^2
 \\
 &=
 \underbrace{
 \langle
 \epsilon_{{\sf in}}^{\perp},H_y\epsilon_{{\sf in}}^{\perp}
 \rangle
 }_{<-\omega_3\|\epsilon_{{\sf in}}^{\perp}\|_{L_{y}^2}^2}
 +
 \underbrace{
 \langle
 \lambda_{[{\sf w}]}^2
 (
 \tfrac{\dot\lambda_{[{\sf w}]}}{\lambda_{[{\sf w}]}}
 -
 \tfrac{5{\sf b}}{4}
 )
 (\Lambda_{y}{\sf Q})
 \chi_{{\sf r}_0},
 \epsilon_{{\sf in}}^{\perp}
 \rangle
 }_{=\mathcal{U}_1(t)}
 \\
 &\quad
 +
 \underbrace{
 \langle
 \lambda_{[{\sf w}]}^2
 f'({\sf Q})
 {\sf w}_\text{ex}(\lambda_{[{\sf w}]}y,t)
 \chi_{{\sf r}_0},
 \epsilon_{{\sf in}}^{\perp}
 \rangle
 }_{=\mathcal{U}_2(t)}
 +
 \underbrace{
 \langle
 l[\epsilon_{{\sf e}1},\epsilon_{{\sf e}2}],
 \epsilon_{{\sf in}}^{\perp}
 \rangle
 }_{=\mathcal{U}_3(t)}.
 \end{align*}
%%%%%%%%%%%%%%%%%%%%%%%%%%%%%%%%%%%%%%%%%%%%%%%%%%%%%%%%%%%%%%%% Nov 14, 18:54 from here
 Following the same argument as in \eqref{EQ_7.29},
 we see that
 \begin{align*}
 |\mathcal{U}_1|
 +
 |\mathcal{U}_2|
 +
 |\mathcal{U}_3|
 &<
 \tfrac{C\lambda_{[{\sf w}]}^2}{t(\log t)^{\beta'}}
 \|\epsilon_{{\sf in}}^\perp\|_{L_{y}^2}
 \\
 &<
 \tfrac{C}{2\omega_3}
 (
 \tfrac{\lambda_{[{\sf w}]}^2}{t(\log t)^{\beta'}}
 )^2
 +
 \tfrac{\omega_3}{2}
 \|\epsilon_{{\sf in}}^\perp\|_{L_{y}^2}^2
 \quad
 \text{for }
 t\in[t_I,\infty).
 \end{align*}
 Therefore,
 integrating the equation over time $s\in(0,s)$,
 we obtain
 \begin{align}
 \label{EQ_7.35}
 &e^{\omega_3 s}
 \|\epsilon_{{\sf in}}^{\perp}(s)\|_{L_{y}^2}^2
 -
 \underbrace{
 \|\epsilon_{{\sf in}}^{\perp}(s)|_{s=0}\|_{L_{y}^2}^2
 }_{=0}
 <
 \int_0^s
 e^{\omega_3\zeta}
 \tfrac{C}{\omega_3}
 (
 \tfrac{\lambda_{[{\sf w}]}(\bar{\mathtt t})^2}{\bar{\mathtt t}(\log\bar{\mathtt t})^{\beta'}}
 )^2
 d\zeta.
 \end{align}
 Since we took the initial data as
 $\epsilon_{{\sf in}}|_{s=0}=\epsilon_0(y)=\alpha_I\psi_1^{(R)}(y)$ (see \eqref{EQ_7.26}),
 it is clear that
 $\epsilon_{{\sf in}}^\perp|_{s=0}=0$.
 We now compute the integral on the right-hand side of \eqref{EQ_7.35}.
 Note from \eqref{EQ_7.30} that
%%%%%%%%%%%%%%%%%%%%%%%%%%%%%%%%%%%%%%%%%%%%%%%%%%%%%%%%%%%%%%%%
 \begin{align*}
 \tfrac{d}{d\zeta}
 (
 \tfrac{
 \lambda_{[{\sf w}]}(\bar{\mathtt t})^2
 }{
 \bar{\mathtt t}(\log\bar{\mathtt t})^{\beta'}}
 )^2
 &=
 2
 \lambda_{[{\sf w}]}(\bar{\mathtt t})^2
 \cdot
 (
 \tfrac{5|{\sf b}|}{4}
 +
 {\mathtt h}
 -
 \tfrac{1}{\bar{\mathtt t}}
 -
 \tfrac{\beta'}{\bar{\mathtt t}(\log\bar{\mathtt t})}
 )
 \cdot
 (
 \tfrac{
 \lambda_{[{\sf w}]}(\bar{\mathtt t})^2
 }{\bar{\mathtt t}(\log\bar{\mathtt t})^{\beta'}}
 )^2
 \\
 &<
 2
 \lambda_{[{\sf w}]}(\bar{\mathtt t})^2
 \cdot
 (2{\sf b})
 \cdot
 (
 \tfrac{
 \lambda_{[{\sf w}]}(\bar{\mathtt t})^2
 }{\bar{\mathtt t}(\log\bar{\mathtt t})^{\beta'}}
 )^2
 \\
 &<
 \tfrac{8A_1\lambda_{[{\sf w}]}(\bar{\mathtt t})^2}{\bar{\mathtt t}(\log\bar{\mathtt t})^{\beta}}
 \cdot
 (
 \tfrac{
 \lambda_{[{\sf w}]}(\bar{\mathtt t})^2
 }{\bar{\mathtt t}(\log\bar{\mathtt t})^{\beta'}}
 )^2.
 \end{align*}
%%%%%%%%%%%%%%%%%%%%%%%%%%%%%%%%%%%%%%%%%%%%%%%%%%%%%%%%%%%%%%%%
 Hence,
 the integral in \eqref{EQ_7.35} is bounded by
 \begin{align*}
 &
 \int_0^s
 e^{\omega_3\zeta}
 (
 \tfrac{\lambda_{[{\sf w}]}(\bar{\mathtt t})^2}{\bar{\mathtt t}(\log\bar{\mathtt t})^{\beta'}}
 )^2
 d\zeta
 -
 \left[
 \tfrac{e^{\omega_3\zeta}}{\omega_3}
 (
 \tfrac{\lambda_{[{\sf w}]}(\bar{\mathtt t})^2}{\bar{\mathtt t}(\log\bar{\mathtt t})^{\beta'}}
 )^2
 \right]_{s=0}^{s=s}
 \\
 \nonumber
 &
 =
 \int_0^s
 \tfrac{e^{\omega_3\zeta}}{\omega_3}
 \tfrac{d}{ds}
 (
 \tfrac{\lambda_{[{\sf w}]}(\bar{\mathtt t})^2}{\bar{\mathtt t}(\log\bar{\mathtt t})^{\beta'}}
 )^2
 d\zeta
 \\
 &<
 \int_0^s
 \tfrac{8A_1\lambda_{[{\sf w}]}(\bar{\mathtt t})^2}{\bar{\mathtt t}(\log\bar{\mathtt t})^{\beta}}
 \cdot
 \tfrac{e^{\omega_3\zeta}}{\omega_3}
 (
 \tfrac{
 \lambda_{[{\sf w}]}(\bar{\mathtt t})^2
 }{\bar{\mathtt t}(\log\bar{\mathtt t})}
 )^2
 d\zeta.
 \end{align*}
%%%%%%%%%%%%%%%%%%%%%%%%%%%%%%%%%%%%%%%%%%%%%%%%%%%%%%%%%%%%%%%%
 Since we may assume
 $\tfrac{8A_1\lambda_{[{\sf w}]}(\bar{\mathtt t}(s))^2}{\bar{\mathtt t}(s)(\log\bar{\mathtt t}(s))^{\beta}}<\frac{1}{2}$
 for $s\in(0,\infty)$ (see \eqref{EQ_7.6} for the estimate of $\lambda_{[{\sf w}]}(t)$),
 we get a bound for the above integral:
 \begin{align}
 \nonumber
 \int_0^s
 e^{\omega_3\zeta}
 (
 \tfrac{\lambda_{[{\sf w}]}(\bar{\mathtt t})^2}{\bar{\mathtt t}(\log\bar{\mathtt t})^{\beta'}}
 )^2
 d\zeta
 &<
 2
 \left[
 \tfrac{e^{\omega_3\zeta}}{\omega_3}
 (
 \tfrac{\lambda_{[{\sf w}]}(\bar{\mathtt t})^2}{\bar{\mathtt t}(\log\bar{\mathtt t})^{\beta'}}
 )^2
 \right]_{\zeta=0}^{\zeta=s}
 \\
 \label{EQ_7.36} 
 &<
 \tfrac{2e^{\omega_3s}}{\omega_3}
 (
 \tfrac{\lambda_{[{\sf w}]}(\bar{\mathtt t}(s))^2}{\bar{\mathtt t}(s)(\log\bar{\mathtt t}(s))^{\beta'}}
 )^2
 \quad
 \text{for }
 s\in(0,\infty).
 \end{align}
%%%%%%%%%%%%%%%%%%%%%%%%%%%%%%%%%%%%%%%%%%%%%%%%%%%%%%%%%%%%%%%% Nov 14 19:18 from here
 By substituting \eqref{EQ_7.36} into \eqref{EQ_7.35} and using the lower bound
 $\omega_3=\mu_3^{(R)}>k_3R^{-3}$ (see Lemma \ref{lemma_3.4}),
 we obtain
 \begin{align}
 \label{EQ_7.37}
 \|\epsilon_{{\sf in}}^\perp(s)\|_{L_{y}^2}
 &<
 \tfrac{C}{\omega_3}
 \tfrac{\lambda_{[{\sf w}]}(\bar{\mathtt t}(s))^2}{\bar{\mathtt t}(s)(\log\bar{\mathtt t}(s))^{\beta'}}
 <
 \tfrac{2CR^3}{k_3}
 \tfrac{\lambda_{[{\sf w}]}(\bar{\mathtt t}(s))^2}{\bar{\mathtt t}(s)(\log\bar{\mathtt t}(s))^{\beta'}}
 \quad
 \text{for }
 s\in(0,\infty).
 \end{align}
 Combining \eqref{EQ_7.33} - \eqref{EQ_7.34} and \eqref{EQ_7.37},
 and returning to the original time variable $t$,
 we conclude
 \begin{align}
 \label{EQ_7.38}
 \|\epsilon_{{\sf in}}(t)\|_{L_{y}^2}
 &<
 \tfrac{C(\log R+R^3)\lambda_{[{\sf w}]}(t)^2}{t(\log t)^{\beta'}}
 <
 \tfrac{CR^3\lambda_{[{\sf w}]}(t)^2}{t(\log t)^{\beta'}}
 \quad
 \text{for }
 t\in(t_I,\infty).
 \end{align}
%%%%%%%%%%%%%%%%%%%%%%%%%%%%%%%%%%%%%%%%%%%%%%%%%%%%%%%%%%%%%%%% ここまで20:08, 21:23
 We now derive an $L^\infty$ bound for $\epsilon_{{\sf in}}(y,t)$,
 using the time variable $s$ introduced in \eqref{EQ_7.17}.
 Applying standard parabolic estimates to solutions of \eqref{EQ_7.14},
 we deduce the following $L^\infty$ bound:
 \begin{align}
 \nonumber
 &
 \|\epsilon_{{\sf in}}(y,s)\|_{L_y^\infty(B_R)}
 \\
 \nonumber
 &<
 C\sup_{s'\in(s-1,s)}
 \|\epsilon_{{\sf in}}(s')\|_{L_{y}^2(B_R)}
 \\
 \nonumber
 &\quad
 +
 C\sup_{s'\in(s-1,s)}
 \|(\text{inhomogeneous term in \eqref{EQ_7.14}})\|_{L_y^\infty(B_R)}
 \\
 \label{EQ_7.39}
 &<
 C\sup_{s'\in(s-1,s)}
 \tfrac{R^3\lambda_{[{\sf w}]}(\bar{\mathtt t}(s'))^2}{\bar{\mathtt t}(s')(\log\bar{\mathtt t}(s'))^{\beta'}}
 \quad
 \text{for }
 s\in(1,\infty)
 \end{align}
 and
 \begin{align}
 \nonumber
 &
 \|\epsilon_{{\sf in}}(y,s)\|_{L_y^\infty(B_R)}
 \\
 \nonumber
 &<
 C\sup_{s'\in(0,s)}
 \|\epsilon_{{\sf in}}(s')\|_{L_{y}^2(B_R)}
 \\
 \nonumber
 &\quad
 +
 C\sup_{s'\in(0,s)}
 \|(\text{inhomogeneous term in \eqref{EQ_7.14}})\|_{L_y^\infty(B_R)}
 \\
 \label{EQ_7.40}
 &<
 C\sup_{s'\in(0,s)}
 \tfrac{R^3\lambda_{[{\sf w}]}(\bar{\mathtt t}(s'))^2}{\bar{\mathtt t}(s')(\log\bar{\mathtt t}(s'))^{\beta'}}
 \quad
 \text{for }
 s\in(0,1).
 \end{align}
 Applying \eqref{EQ_7.21} - \eqref{EQ_7.22} to
 the estimates \eqref{EQ_7.39} - \eqref{EQ_7.40},
 we obtain
 \begin{align*}
 \|\epsilon_{{\sf in}}(y,s)\|_{L_y^\infty(B_R)}
 <
 C
 \tfrac{R^3\lambda_{[{\sf w}]}(\bar{\mathtt t}(s))^2}{\bar{\mathtt t}(s)(\log\bar{\mathtt t}(s))^{\beta'}}
 \quad
 \text{for }
 s\in(0,\infty).
 \end{align*}
 Rewriting this estimate in terms of the original time variable $t$, 
 we have
 \begin{align}
 \label{EQ_7.41}
 \|\epsilon_{{\sf in}}(y,t)\|_{L_y^\infty(B_R)}
 <
 C
 \tfrac{R^3\lambda_{[{\sf w}]}(t)^2}{t(\log t)^{\beta'}}
 \quad
 \text{for }
 t\in(t_I,\infty).
 \end{align}
%%%%%%%%%%%%%%%%%%%%%%%%%%%%%%%%%%%%%%%%%%%%%%%%%%%%%%%%%%%%%%%% until here Nov 14 21:02
 We recall from \eqref{EQ_7.32} and Lemma \ref{lemma_3.2} that
 \begin{align}
 \label{EQ_7.42}
 |\epsilon_0(y)|
 &=
 |
 \alpha_I
 \psi_1^{(R)}(y)
 |
 <
 \tfrac{C\lambda_{[{\sf w}]}(t_I)^2}{t_I(\log t_I)^{\beta'}}
 (1+|y|)^{-\frac{n-1}{2}}
 e^{-|y|\sqrt{e_0}}
 \\
 \nonumber
 &
 \text{for }
 y\in B_{R}. 
 \end{align}
 Note that $\epsilon_{{\sf in}}(y,t)$ satisfies
 \begin{align*}
 \lambda_{[{\sf w}]}^2\pa_t\epsilon_{{\sf in}}=H_{y}\epsilon_{{\sf in}}
 \quad
 \text{for }
 y\in {B}_{R\setminus2{\sf r}_0},\ t\in(t_I,\infty). 
 \end{align*}
 We employ a comparison argument, as in \eqref{EQ_7.15},
 taking
 \begin{align*}
 \bar\epsilon_{{\sf in}}(y,t)=\bar C
 R^3\lambda_{[{\sf w}]}^2t^{-1}(\log t)^{-\beta'}(2{\sf r}_0|y|^{-1})^{\frac{7}{2}}
 \end{align*}
 as a comparison function.
 Using the boundary estimate from \eqref{EQ_7.41} on $\pa{B}_{R\setminus2{\sf r}_0}$,
 together with the initial time estimate \eqref{EQ_7.42},
 we deduce that
 \begin{align}
 \label{EQ_7.43}
 |\epsilon_{{\sf in}}(y,t)|
 &<
 \tfrac{
 \bar C_\text{\sf in}(2{\sf r}_0)^\frac{7}{2}R^3\lambda_{[{\sf w}]}(t)^2}
 {t(\log t)^{\beta'}}
 |y|^{-\frac{7}{2}}
 \quad
 \text{for }
 y\in {B}_{R\setminus2{\sf r}_0},\
 t\in(t_I,\infty).
 \end{align}
%%%%%%%%%%%%%%%%%%%%%%%%%%%%%%%%%%%%%%%%%%%%%%%%%%%%%%%%%%%%%%%% until here
 We now return to equation \eqref{EQ_7.1}, using the time variable $s$ defined in \eqref{EQ_7.17}.
 Recall that
 \[
 \epsilon
 =
 \epsilon_{{\sf e}1}
 \cdot
 (1-\chi_{{\sf r}_0})
 +
 \epsilon_{{\sf e}2}
 \cdot
 (1-\chi_{{\sf r}_0})
 +
 \epsilon_{{\sf in}}
 \]
 gives a solution of \eqref{EQ_7.1},
 and satisfies (see \eqref{EQ_7.15}, \eqref{EQ_7.16} and \eqref{EQ_7.43})
 \begin{align}
 \label{EQ_7.44}
 |\epsilon(y,t)|
 &<
 \tfrac{
 CR^3\lambda_{[{\sf w}]}(t)^2}
 {t(\log t)^{\beta'}(1+|y|^2)^{\frac{7}{4}}}
 +
 \tfrac{
 C
 \lambda_{[{\sf w}]}(t)^2
 }{
 t(\log t)^{\beta'}
 |y|^2
 }
 \quad
 \text{for }
 y\in {B}_{R},\
 t\in(t_I,\infty).
 \end{align}
 We now apply Lemma \ref{lemma_3.8},
 which provides gradient estimates for parabolic equations.
 Throughout this argument, we work in the rescaled time variable $s$.
 Note that $V(y)=p{\sf Q}(y)^{p-1}$ satisfies the assumptions on $V$ in Lemma \ref{lemma_3.8}.
 Hence,
 applying Lemma \ref{lemma_3.8} together with \eqref{EQ_7.44},
 we deduce that
 if
 $(y_0,s_0)$ satisfies
 $1<|y_0|<\frac{R}{2}$ and
 $\tfrac{|y_0|^2}{64}<s_0<\infty$,
 then
 \begin{align*}
 \nonumber
 &
 |\nabla_y\epsilon(y_0,s_0)|
 \\
 &<
 \tfrac{C}{|y_0|}
 \sup_{s\in(s_0-\frac{|y_0|^2}{64},s_0)}
 \sup_{y\in B(y_0,\frac{|y_0|}{4})}
 |\epsilon(y,s)|
 \\
 &\quad
 +
 \tfrac{C}{|y_0|}
 \sup_{s\in(s_0-\frac{|y_0|^2}{64},s_0)}
 \sup_{y\in B(y_0,\frac{|y_0|}{4})}
 |y_0|^2
 |(\text{inhomogeneous term in \eqref{EQ_7.1}})(y,s)|
 \\
 &<
 \tfrac{C}{|y_0|}
 \sup_{s\in(s_0-\frac{|y_0|^2}{64},s_0)}
 \left(
 \tfrac{
 R^3\lambda_{[{\sf w}]}(\bar{\mathtt t}(s))^2}
 {\bar{\mathtt t}(s)(\log\bar{\mathtt t}(s))^{\beta'}(1+|y_0|^2)^{\frac{7}{4}}}
 +
 \tfrac{
 \lambda_{[{\sf w}]}(\bar{\mathtt t}(s))^2
 }{
 \bar{\mathtt t}(s)(\log\bar{\mathtt t}(s))^{\beta'}
 (1+|y_0|^2)
 }
 \right)
 \\
 &\quad
 +
 \tfrac{C}{|y_0|}
 \sup_{s\in(s_0-\frac{|y_0|^2}{64},s_0)}
 \tfrac{
 \lambda_{[{\sf w}]}(\bar{\mathtt t}(s))^2
 }{
 \bar{\mathtt t}(s)(\log\bar{\mathtt t}(s))^{\beta'}
 (1+|y_0|^2)
 }.
 \end{align*}
 Using \eqref{EQ_7.21} - \eqref{EQ_7.22} and the fact that $|y_0|<R=\log\log t_I$,
 we get
 \begin{align}
 \label{EQ_7.45}
 &
 |\nabla_y\epsilon(y_0,s_0)|
 \\
 \nonumber
 &<
 \tfrac{C}{|y_0|}
 \left(
 \tfrac{
 R^3\lambda_{[{\sf w}]}(\bar{\mathtt t}(s_0))^2}
 {\bar{\mathtt t}(s_0)(\log\bar{\mathtt t}(s_0))^{\beta'}(1+|y_0|^2)^{\frac{7}{4}}}
 +
 \tfrac{
 \lambda_{[{\sf w}]}(\bar{\mathtt t}(s_0))^2
 }{
 \bar{\mathtt t}(s_0)(\log\bar{\mathtt t}(s_0))^{\beta'}
 (1+|y_0|^2)
 }
 \right)
 \\
 \nonumber
 &
 \text{for }
 1<|y_0|<\tfrac{R}{2}
 \text{ and }
 \tfrac{|y_0|^2}{64}<s_0<\infty.
 \end{align}
%%%%%%%%%%%%%%%%%%%%%%%%%%%%%%%%%%%%%%%%%%%%%%%%%%%%%%%%%%%%%%%% ここまで20:13, 21:28
 To estimate $\nabla_y\epsilon(y,s)$ for
 $1<|y_0|<\frac{R}{2}$ and $0<s_0<\tfrac{|y_0|^2}{64}$,
 we introduce
 \begin{align*}
 \tilde \epsilon(y,s)
 &=
 \epsilon(y,s)
 -
 \epsilon_0(y)
 =
 \epsilon(y,s)
 -
 \alpha_I
 \psi_1^{(R)}(y).
 \end{align*}
 Note that $\tilde\epsilon(y,s)|_{s=0}=0$ for $y\in B_R$
 and that
 $\tilde\epsilon(y,s)$ satisfies equation \eqref{EQ_7.1} with an additional inhomogeneous term
 $H_y\epsilon_0(y)$ $(=\alpha_IH_y\psi_1^{(R)})$.
 We now apply the second case of Lemma \ref{lemma_3.8}.
 Proceeding as in the previous case,
 we get
 \begin{align}
 \nonumber
 &
 |\nabla_y\epsilon(y_0,s_0)|
 \\
 \nonumber
 &<
 \tfrac{C}{|y_0|}
 \sup_{s\in(0,s_0)}
 \sup_{y\in B(y_0,\frac{|y_0|}{4})}
 |\epsilon(y,s)|
 \\
 \nonumber
 &\quad
 +
 \tfrac{C}{|y_0|}
 \sup_{s\in(0,s_0)}
 \sup_{y\in B(y_0,\frac{|y_0|}{4})}
 |y_0|^2
 |(\text{inhomogeneous term in \eqref{EQ_7.1}})(y,s)|
 \\
 \nonumber
 &\quad
 +
 \tfrac{C}{|y_0|}
 \sup_{y\in B(y_0,\frac{|y_0|}{4})}
 |y_0|^2
 |\alpha_IH_y\psi_1^{(R)}(y)|
 \\
 \nonumber
 &<
 \tfrac{C}{|y_0|}
 \sup_{s\in(0,s_0)}
 \left(
 \tfrac{
 R^3\lambda_{[{\sf w}]}(\bar{\mathtt t}(s))^2}
 {\bar{\mathtt t}(s)(\log\bar{\mathtt t}(s))^{\beta'}(1+|y_0|^2)^{\frac{7}{4}}}
 +
 \tfrac{
 \lambda_{[{\sf w}]}(\bar{\mathtt t}(s))^2
 }{
 \bar{\mathtt t}(s)(\log\bar{\mathtt t}(s))^{\beta'}
 (1+|y_0|^2)
 }
 \right)
 \\
 \nonumber
 &\quad
 +
 \tfrac{C}{|y_0|}
 \sup_{s\in(0,s_0)}
 \tfrac{
 \lambda_{[{\sf w}]}(\bar{\mathtt t}(s))^2
 }{
 \bar{\mathtt t}(s)(\log\bar{\mathtt t}(s))^{\beta'}
 (1+|y_0|^2)
 }
 +
 \tfrac{C}{|y_0|}
 \tfrac{
 \lambda_{[{\sf w}]}(t_I)^2
 }{
 t_I(\log t_I)^{\beta'}
 }
 \tfrac{
 |y_0|^2e^{-|y_0|\sqrt{e_0}}
 }{
 (1+|y_0|^2)^{\frac{n-1}{4}}
 }
 \\
 \label{EQ_7.46}
 &<
 \tfrac{C}{|y_0|}
 \left(
 \tfrac{
 R^3\lambda_{[{\sf w}]}(\bar{\mathtt t}(s_0))^2}
 {\bar{\mathtt t}(s_0)(\log\bar{\mathtt t}(s_0))^{\beta'}(1+|y_0|^2)^{\frac{7}{4}}}
 +
 \tfrac{
 \lambda_{[{\sf w}]}(\bar{\mathtt t}(s_0))^2
 }{
 \bar{\mathtt t}(s_0)(\log\bar{\mathtt t}(s_0))^{\beta'}
 (1+|y_0|^2)
 }
 \right)
 \\
 \nonumber
 &
 \text{for }
 1<|y_0|<\tfrac{R}{2}
 \text{ and }
 0<s_0<\tfrac{|y_0|^2}{64}.
 \end{align}
 Therefore,
 combining \eqref{EQ_7.45} - \eqref{EQ_7.46},
 and rewriting the result in terms of the original time variable $t$,
 we arrive at
 \begin{align}
 \label{EQ_7.47}
 |\nabla_y\epsilon(y,t)|
 &<
 \tfrac{
 CR^3\lambda_{[{\sf w}]}(t)^2}{t(\log t)^{\beta'}|y|^{\frac{9}{2}}}
 +
 \tfrac{
 C\lambda_{[{\sf w}]}(t)^2}{t(\log t)^{\beta'}|y|^3}
 \\
 \nonumber
 &
 \text{for }
 1<|y|<\tfrac{R}{2}
 \text{ and }
 t\in(t_I,\infty).
 \end{align}
%%%%%%%%%%%%%%%%%%%%%%%%%%%%%%%%%%%%%%%%%%%%%%%%%%%%%%%%%%%%%%%%

\section{Analysis of the Outer Solution}
\label{section_8}
%%%%%%%%%%%%%%%%%%%%%%%%%%%%%%%%%%%%%%%%%%%%%%%%%%%%%%%%%%%%%%%%
 In this section,
 we solve the outer problem \eqref{equation_6.14}.
 For convenience,
 we restate it below:
 \begin{align}
 \label{equation_8.1}
 \begin{cases}
 w_t
 =
 \Delta_xw
 +
 \lambda_{[{\sf w}]}^{-2}
 (
 \tfrac{\dot\lambda_{[{\sf w}]}}{\lambda_{[{\sf w}]}}
 -
 \tfrac{5{\sf b}}{4}
 )
 (\Lambda_{y}{\sf Q})
 \cdot
 (\chi_1-\chi_{\text{in}})
 \\
 \qquad
 +
 \tfrac{f'({\sf Q})}{\lambda_{[{\sf w}]}^2}
 {\sf w}_\text{ex}
 \cdot
 (\chi_1-\chi_{\text{in}})
 +
 N[\lambda_{[{\sf w}]},\epsilon_{[{\sf w}]},{\sf w}_\text{ex}]
 +
 g[\lambda_{[{\sf w}]}]
 \\
 \qquad
 +
 h[\lambda_{[{\sf w}]},\epsilon_{[{\sf w}]}]
 +
 k[{\sf w}_\text{ex}]
 \\
 \quad
 \text{for } (x,t)\in\R^n\times(t_{I},\infty),
 \\
 w|_{t=t_{I}}=0
 \quad
 \text{for }
 x\in\R^n.
 \end{cases}
 \end{align}
 Note that equation \eqref{equation_8.1} contains only one unknown function $w(x,t)$,
 while the functions
 ${\sf w}_\text{ex}(x,t)$, $\lambda_{[{\sf w}]}(t)$ and $\epsilon_{[{\sf w}]}(y,t)$ are all given.
 Let $\tilde w_{[{\sf w}]}(x,t)$ denote the unique solution to \eqref{equation_8.1}.
 As stated in Section \ref{section_6.2},
 to complete the proof of Theorem~\ref{theorem_3}, it suffices to verify that
 \[
 \tilde w_{[{\sf w}]}\in X_\tau
 \quad
 \text{for any given }
 {\sf w}\in X_\tau.
 \]
 Using the estimates for $\epsilon_{[\sf w]}(y,t)$ and $\lambda_{[{\sf w}]}(t)$
 established in Section~\ref{section_7},
 together with the asymptotic behavior of $\theta(x,t)$ and ${\sf b}(t)=\theta(0,t)$ described in
 in Section~\ref{section_4},
 and the definition of $X_\tau$ given in Section~\ref{section_6.2},
 we find that all the functions
 $\theta(x,t)$, ${\sf b}(t)$, ${\sf w}_\text{ex}(x,t)$, $\lambda_{[{\sf w}]}(t)$,
 and $\epsilon_{[{\sf w}]}(y,t)$ satisfy the assumptions (A1) - (A8), (B1) - (B3), and (D1),
 as listed in the Appendix.
 Therefore,
 all the estimates stated in Lemmas~\ref{lemma_9.1} - \ref{lemma_9.4} are applicable.
 In particular,
 there exist $\delta_1>0$, $\delta_2>0$, $\delta_3>0$ and $C>0$
 such that
 \begin{align}
 \label{equation_8.2}
 \left|
 \sum_{j=1}^{12}g_j[\lambda_{[{\sf w}]}]
 \right|
 &<
 \tfrac{1}{\lambda_{[{\sf w}]}^2}
 \tfrac{C}{t^{1+\delta_1}(\log t)^{\beta}}
 {\bf 1}_{|y|<1}
 +
 \tfrac{1}{\lambda_{[{\sf w}]}^2}
 \tfrac{C|y|^{-(2+\delta_2)}}{t^{1+\delta_1}(\log t)^\beta}
 {\bf 1}_{1<|y|<2(\frac{\sqrt t}{\lambda_{[{\sf w}]}})^{1-\kappa}}
 \\
 \nonumber
 &\quad
 +
 \tfrac{C}{t^2(\log t)^{2\beta}} 
 {\bf 1}_{|x|<\sqrt t}
 +
 \tfrac{C}{|x|^4(\log|x|^2)^{2\beta}}
 {\bf 1}_{|x|>\sqrt t}
 \\
 \nonumber
 &\text{for all }
 (y,t)\in\R^n\times(t_I,\infty),
 \\
%%%%%%%%%%%%%%%%%%%%%%%%%%%%%%%%%%%%%%%%%%%%%%%%%%%%%%%%%%%%%%%%
 \label{equation_8.3}
 \left|
 \sum_{j=1}^4
 h_j[\epsilon_{[{\sf w}]}]
 \right|
 &<
 \tfrac{1}{\lambda_{[{\sf w}]}^2}
 \tfrac{C}{t^{1+\delta_1}(\log t)^{\beta'}}
 {\bf 1}_{|y|<1}
 +
 \tfrac{1}{\lambda_{[{\sf w}]}^2}
 \tfrac{C|y|^{-(2+\delta_2)}}{t^{1+\delta_1}(\log t)^{\beta'}}
 {\bf 1}_{1<|y|<\frac{R}{2}}
 \\
 \nonumber
 &\quad
 +
 \tfrac{1}{\lambda_{[{\sf w}]}^2}
 \tfrac{CR^{-\delta_3}|y|^{-(2+\delta_2)}}{t(\log t)^{\beta'}}
 {\bf 1}_{1<|y|<\frac{R}{2}}
 \\
 \nonumber
 &\text{for all }
 (y,t)\in\R^n\times(t_I,\infty),
 \\
 \label{equation_8.4}
 |k_1[{\sf w}_\text{ex}]|
 &<
 \tfrac{C}{t^2(\log t)^{\beta+\beta'}}
 {\bf 1}_{|x|<\sqrt t}
 +
 |x|^{-4}
 (\log|x|^2)^{-(\beta+\beta')}
 {\bf 1}_{|x|>\sqrt t}
 \\
 \nonumber
 &\text{for all }
 (x,t)\in\R^n\times(t_I,\infty)
 \end{align}
 and
%%%%%%%%%%%%%%%%%%%%%%%%%%%%%%%%%%%%%%%%%%%%%%%%%%%%%%%%%%%%%%%%
 \begin{align}
 \label{equation_8.5}
 \left|
 N[\lambda_{[{\sf w}]},\epsilon_{[{\sf w}]},{\sf w}_\text{ex}]
 \right|
 &<
 \tfrac{1}{\lambda_{[{\sf w}]}^2}
 \tfrac{C|y|^{-(2+\delta_2)}}{t^{1+\delta_1}(\log t)^\beta}
 {\bf 1}_{(\frac{\sqrt t}{\lambda_{[{\sf w}]}})^{1-\kappa}<|y|<2(\frac{\sqrt t}{\lambda_{[{\sf w}]}})^{1-\kappa}}
 \\
 \nonumber
 &\quad
 +
 \tfrac{C}{t^2(\log t)^{2\beta}}
 {\bf 1}_{|x|<\sqrt t}
 +
 \tfrac{C}{|x|^4(\log|x|^2)^{2\beta}}
 {\bf 1}_{|x|>\sqrt t}
 \\
 \nonumber
 &\text{for all }
 (x,t)\in\R^n\times(t_I,\infty).
 \end{align}
 We emphasize that all the constants $\delta_1$, $\delta_2$, $\delta_3$ and $C$
 are independent of $R$ and $t_I$.
%%%%%%%%%%%%%%%%%%%%%%%%%%%%%%%%%%%%%%%%%%%%%%%%%%%%%%%%%%%%%%%%
 Furthermore,
 the second and third terms on the right-hand side of \eqref{equation_8.1}
 can be estimated as
 \begin{align}
 \nonumber
 &
 |
 \lambda_{[{\sf w}]}^{-2}
 (\tfrac{\dot\lambda_{[{\sf w}]}}{\lambda_{[{\sf w}]}}
 -
 \tfrac{5{\sf b}}{4}
 )
 \cdot
 (\Lambda_{y}{\sf Q})
 \cdot
 (\chi_1-\chi_{\text{in}})
 |
 \\
 \nonumber
 &<
 \tfrac{1}{\lambda_{[{\sf w}]}^2}
 \tfrac{C|y|^{-4}}{t(\log t)^{\beta'}}
 {\bf 1}_{\frac{R}{4}<|y|<(\frac{\sqrt t}{\lambda_{[{\sf w}]}})^{1-\kappa}}
 \\
 \label{equation_8.6}
 &<
 \tfrac{1}{\lambda_{[{\sf w}]}^2}
 \tfrac{CR^{-1}|y|^{-3}}{t(\log t)^{\beta'}}
 {\bf 1}_{\frac{R}{4}<|y|<(\frac{\sqrt t}{\lambda_{[{\sf w}]}})^{1-\kappa}}
 \end{align}
 and
 \begin{align}
 \nonumber
 |
 \tfrac{f'({\sf Q})}{\lambda_{[{\sf w}]}^2}
 {\sf w}_\text{ex}
 \cdot
 (\chi_1-\chi_{\text{in}})
 |
 &<
 \tfrac{1}{\lambda_{[{\sf w}]}^2}
 \tfrac{C|y|^{-4}}{t(\log t)^{\beta'}}
 {\bf 1}_{\frac{R}{4}<|y|<(\frac{\sqrt t}{\lambda_{[{\sf w}]}})^{1-\kappa}}
 \\
 \label{equation_8.7}
 &<
 \tfrac{1}{\lambda_{[{\sf w}]}^2}
 \tfrac{CR^{-1}|y|^{-3}}{t(\log t)^{\beta'}}
 {\bf 1}_{\frac{R}{4}<|y|<(\frac{\sqrt t}{\lambda_{[{\sf w}]}})^{1-\kappa}}.
 \end{align}
 Combining \eqref{equation_8.1} - \eqref{equation_8.7},
 we can rewrite \eqref{equation_8.1} in the form:
 \begin{align}
 \label{equation_8.8}
 \begin{cases}
 \dis
 w_t
 =
 \Delta_xw
 +
 \sum_{j=1}^3
 F_j(x,t)
 &
 \text{for } (x,t)\in\R^n\times(t_{I},\infty),
 \\
 w|_{t=t_{I}}=0
 &
 \text{for }
 x\in\R^n.
 \end{cases}
 \end{align}
 The inhomogeneous terms
 $F_j(x,t)$ ($j=1,2,3$) are functions satisfying
 \begin{align}
 \label{EQ_8.9}
 |F_1(x,t)|
 &<
 \tfrac{1}{\lambda_{[{\sf w}]}^2}
 \tfrac{C}{t^{1+\delta_1}(\log t)^{\beta}}
 {\bf 1}_{|y|<1}
 +
 \tfrac{1}{\lambda_{[{\sf w}]}^2}
 \tfrac{C|y|^{-(2+\delta_2)}}{t^{1+\delta_1}(\log t)^\beta}
 {\bf 1}_{1<|y|<2(\frac{\sqrt t}{\lambda_{[{\sf w}]}})^{1-\kappa}}
 \\
 \nonumber
 &\quad
 +
 \tfrac{1}{\lambda_{[{\sf w}]}^2}
 \tfrac{CR^{-\delta_3}|y|^{-(2+\delta_2)}}{t(\log t)^{\beta'}}
 {\bf 1}_{1<|y|<\frac{R}{2}},
 \\
 \label{EQ_8.10}
 |F_2(x,t)|
 &<
 \tfrac{C}{t^2(\log t)^{2\beta}} 
 {\bf 1}_{|x|<\sqrt t},
 \\
 \label{EQ_8.11}
 |F_3(x,t)|
 &<
 \tfrac{C}{|x|^4(\log|x|^2)^{2\beta}}
 {\bf 1}_{|x|>\sqrt t}.
 \end{align}
 To derive \eqref{EQ_8.9} - \eqref{EQ_8.10},
 we used the fact that $\beta'>\beta$.
 Let $\bar {\sf r}_1$ be a small positive constant.
 We first consider the localized problem:
 \begin{align}
 \label{EQ_8.12}
 \begin{cases}
 \dis
 \pa_tw_1
 =
 \Delta_xw_1
 +
 F_1(x,t)
 &
 \text{for } x\in B_{\bar{\sf r}_1\sqrt t},\ t\in(t_{I},\infty),
 \\
 w_1=0
 &
 \text{for }
 x\in\pa B_{\bar{\sf r}_1\sqrt t},\
 t\in(t_I,\infty),
 \\
 w_1|_{t=t_{I}}=0
 &
 \text{for }
 x\in B_{\bar{\sf r}_1\sqrt t}.
 \end{cases}
 \end{align}
 Let us define
 \begin{align*}
 \bar w_1(x,t)
 &=
 \tfrac{\bar C_1(1+|y|^2)^{-\frac{\delta_2}{2}}}{t^{1+\delta_1}(\log t)^\beta}
 +
 \tfrac{\bar C_1R^{-\delta_3}(1+|y|^2)^{-\frac{\delta_2}{2}}}{t(\log t)^{\beta'}}.
 \end{align*}
 One can verify that
 there exist positive constants $D_j>0$ ($j=1,2,3$)
 depending only on $\delta_1$, $\delta_2$, $\beta$ and $\beta'$
 such that
 \begin{align*}
 \pa_t\bar w_1-\Delta_x\bar w_1
 &>
 (
 -
 \tfrac{D_1}{t}
 -
 D_2
 |
 \tfrac{\dot\lambda_{[{\sf w}]}}{\lambda_{[{\sf w}]}}
 |
 +
 \tfrac{D_3}{\lambda_{[{\sf w}]^2}}
 \tfrac{1}{1+|y|^2})
 \bar w_1
 \\
 &>
 \begin{cases}
 \tfrac{1}{\lambda_{[{\sf w}]}^2}
 (
 -
 \tfrac{D_1\lambda_{[{\sf w}]}^2}{t}
 -
 D_2
 |
 \lambda_{[{\sf w}]}
 \dot\lambda_{[{\sf w}]}
 |
 +
 \tfrac{D_3}{2}
 )
 \bar w_1
 &
 \text{for }
 |y|<1,
 \\
 (
 -
 \tfrac{D_1}{t}
 -
 D_2
 |
 \tfrac{\dot\lambda_{[{\sf w}]}}{\lambda_{[{\sf w}]}}
 |
 +
 \tfrac{D_3}{2|x|^2})
 \bar w_1
 &
 \text{for }
 |y|>1.
 \end{cases}
 \end{align*}
 Recall from \eqref{EQ_7.10} and \eqref{EQ_7.9} that 
 $|\frac{\dot\lambda_{[{\sf w}]}}{\lambda_{[{\sf w}]}}|<\frac{C}{t(\log t)^\beta}$
 and 
 $\lambda_{[{\sf w}]} < C t^{1/8}$.
 Therefore,
 if $\bar{\sf r}_1<\sqrt{\frac{D_3}{4D_1}}$,
 then
 \begin{align}
 \label{EQ_8.13}
 \pa_t\bar w_1-\Delta_x\bar w_1
 &>
 \begin{cases}
 \tfrac{D_3}{2\lambda_{[{\sf w}]}^2}
 \bar w_1
 &
 \text{for }
 |y|<1
 \\
 \tfrac{D_3}{4|x|^2}
 \bar w_1
 &
 \text{for }
 |y|>1,\
 |x|<\bar{\sf r}_1\sqrt t
 \end{cases}
 \quad
 \text{for }
 t\in(t_I,\infty).
 \end{align}
 As a consequence,
 there exists a constant $\bar C_1^*>0$, depending on $D_j$ ($j=1,2,3$) in \eqref{EQ_8.13}
 and on $C$ in \eqref{EQ_8.9},
 such that
 if $\bar C_1>\bar C_1^*$,
 then
 we have
 \begin{align}
 \label{EQ_8.14}
 \pa_t\bar w_1-\Delta_x\bar w_1
 >
 |F_1(x,t)|
 \quad
 \text{for }
 x\in B_{\bar{\sf r}_1\sqrt t},\
 t\in(t_I,\infty).
 \end{align}
 By a comparison argument,
 we deduce that
 \begin{align}
 \nonumber
 |w_1(x,t)|
 &<
 \bar w_1(x,t)
 \label{EQ_8.15}
 <
 \tfrac{\bar C_1(1+|y|^2)^{-\frac{\delta_2}{2}}}{t^{1+\delta_1}(\log t)^\beta}
 +
 \tfrac{\bar C_1R^{-\delta_3}(1+|y|^2)^{-\frac{\delta_2}{2}}}{t(\log t)^{\beta'}}
 \\
 &
 \text{for }
 x\in B_{\bar{\sf r}_1\sqrt t},\
 t\in(t_I,\infty).
 \end{align}
%%%%%%%%%%%%%%%%%%%%%%%%%%%%%%%%%%%%%%%%%%%%%%%%%%%%%%%%%%%%%%%%
%%%%%%%%%%%%%%%%%%%%%%%%%%%%%%%%%%%%%%%%%%%%%%%%%%%%%%%%%%%%%%%%
% 変更箇所
 Applying Lemma \ref{lemma_3.8} with $\rho(t)=\bar{\sf r}_1\sqrt{t}$
 to the solution $w_1(x,t)$ of \eqref{EQ_8.12},
 we obtain
 \begin{align*}
 \nonumber
 &
 |\nabla_xw_1(x_0,t_0)|
 \\
 &<
 \tfrac{C}{|x_0|}
 \sup_{t\in(t_0-\frac{|x_0|^2}{64},t_0)}
 \sup_{x\in B(x_0,\frac{|x_0|}{4})}
 \left(
 |w_1(x,t)|
 +
 |x_0|^2
 |F_1(x,t)| 
 \right)
 \\
 &
 \text{for all }
 1<|x_0|<\tfrac{{\bar{\sf r}_1}}{2}\sqrt{t_0}
 \text{ and }
 t_0\in(t_I+\tfrac{|x_0|^2}{64},\infty)
 \end{align*}
 and
 \begin{align*}
 \nonumber
 &
 |\nabla_xw_1(x_0,t_0)|
 \\
 &<
 \tfrac{C}{|x_0|}
 \sup_{t\in(t_I,t_0)}
 \sup_{x\in B(x_0,\frac{|x_0|}{4})}
 \left(
 |w_1(x,t)|
 +
 |x_0|^2
 |F_1(x,t)| 
 \right)
 \\
 &
 \text{for all }
 1<|x_0|<\tfrac{{\bar{\sf r}_1}}{2}\sqrt{t_0}
 \text{ and }
 t_0\in(t_I,t_I+\tfrac{|x_0|^2}{64}).
 \end{align*}
 Combining these with \eqref{EQ_8.9} and \eqref{EQ_8.15},
 we obtain
 \begin{align*}
 \nonumber
 &
 |\nabla_xw_1(x_0,t_0)|
 \\
 &<
 \tfrac{C}{|x_0|^{1+\delta_2}}
 \sup_{t\in(t_0-\frac{|x_0|^2}{64},t_0)}
 \left(
 \tfrac{\lambda_{[{\sf w}]}(t)^{\delta_2}}{t^{1+\delta_1}(\log t)^\beta}
 +
 \tfrac{R^{-\delta_3}\lambda_{[{\sf w}]}(t)^{\delta_2}}{t(\log t)^{\beta'}}
 \right)
 \\
 &
 \text{for all }
 1<|x_0|<\tfrac{{\bar{\sf r}_1}}{2}\sqrt{t_0}
 \text{ and }
 t_0\in(t_I+\tfrac{|x_0|^2}{64},\infty)
 \end{align*}
 and
 \begin{align*}
 \nonumber
 &
 |\nabla_xw_1(x_0,t_0)|
 \\
 &<
 \tfrac{C}{|x_0|^{1+\delta_2}}
 \sup_{t\in(t_I,t_0)}
 \left(
 \tfrac{\lambda_{[{\sf w}]}(t)^{\delta_2}}{t^{1+\delta_1}(\log t)^\beta}
 +
 \tfrac{R^{-\delta_3}\lambda_{[{\sf w}]}(t)^{\delta_2}}{t(\log t)^{\beta'}}
 \right)
 \\
 &
 \text{for all }
 1<|x_0|<\tfrac{{\bar{\sf r}_1}}{2}\sqrt{t_0}
 \text{ and }
 t_0\in(t_I,t_I+\tfrac{|x_0|^2}{64}).
 \end{align*}
%%%%%%%%%%%%%%%%%%%%%%%%%%%%%%%%%%%%%%%%%%%%%%%%%%%%%%%%%%%%%%%%
 As in \eqref{EQ_7.21},
 we verify that
 \begin{align}
 \label{EQ_8.16}
 \log\tfrac{\lambda_{[{\sf w}]}(t+\Delta t)}{\lambda_{[{\sf w}]}(t)}
 <
 \tfrac{C\Delta t}{t(\log t)^\beta}
 \quad
 \text{for }
 \Delta t\in(-t+t_I,\infty).
 \end{align}
 This implies
 \begin{align*}
 \sup_{t\in(t_0-\frac{|x_0|^2}{64},t_0)}
 \lambda_{[{\sf w}]}(t)
 &<
 C
 \lambda_{[{\sf w}]}(t_0)
 \quad
 \text{if }
 t_0\in(t_I+\tfrac{|x_0|^2}{64},\infty),
 \\
 \sup_{t\in(t_I,t_0)}
 \lambda_{[{\sf w}]}(t)
 &<
 C
 \lambda_{[{\sf w}]}(t_0)
 \quad
 \text{if }
 t_0\in(t_I,t_I+\tfrac{|x_0|^2}{64}).
 \end{align*}
 Therefore,
 it follows that
 \begin{align}
 \label{EQ_8.17}
 |\nabla_xw_1(x_0,t_0)|
 &<
 \tfrac{C}{|x_0|^{1+\delta_2}}
 \left(
 \tfrac{\lambda_{[{\sf w}]}(t_0)^{\delta_2}}{t_0^{1+\delta_1}(\log t_0)^\beta}
 +
 \tfrac{R^{-\delta_3}\lambda_{[{\sf w}]}(t_0)^{\delta_2}}{t_0(\log t_0)^{\beta'}}
 \right)
 \\
 \nonumber
 &
 \text{for all }
 1<|x_0|<\tfrac{{\bar{\sf r}_1}}{2}\sqrt{t_0}
 \text{ and }
 t_0\in(t_I,\infty).
 \end{align}
%%%%%%%%%%%%%%%%%%%%%%%%%%%%%%%%%%%%%%%%%%%%%%%%%%%%%%%%%%%%%%%%
 We now construct a solution $w(x,t)$ of \eqref{equation_8.8} using the function
 $w_1(x,t)$ obtained previously.
 Let $\chi_{\bar{\sf r}_1}=\chi(\frac{4|x|}{\bar{\sf r}_1\sqrt t})$,
 and express $w(x,t)$ as
 \begin{align}
 \label{EQ_8.18}
 w(x,t)
 =
 w_1(x,t)
 \chi_{\bar{\sf r}_1}
 +
 w_{2,3}(x,t).
 \end{align}
 Then,
 $w_{2,3}(x,t)$ satisfies
 \begin{align}
 \label{EQ_8.19}
 \begin{cases}
 \dis
 \pa_t
 w_{2,3}
 =
 \Delta_x
 w_{2,3}
 +
 2\nabla_xw_1\cdot\nabla_x\chi_{\bar{\sf r}_1}
 +
 w_1\Delta_x\chi_{\bar{\sf r}_1}
 -
 w_1\pa_t\chi_{\bar{\sf r}_1}
 \\
 \qquad
 \qquad
 +
 F_2
 +
 F_3
 \quad
 \text{for } (x,t)\in\R^n\times(t_{I},\infty),
 \\
 w_{2,3}|_{t=t_{I}}=0
 \quad
 \text{for }
 x\in\R^n.
 \end{cases}
 \end{align}
 From \eqref{EQ_8.10} - \eqref{EQ_8.11}, \eqref{EQ_8.15} and \eqref{EQ_8.17},
 we can estimate the inhomogeneous terms on the right-hand side of \eqref{EQ_8.19}.
 \begin{align*}
 &
 |\nabla_xw_1\cdot\nabla_x\chi_{\bar{\sf r}_1}|
 +
 |w_1\Delta_x\chi_{\bar{\sf r}_1}|
 \\
 &<
 \tfrac{C}{|x|^{2+\delta_2}}
 \left(
 \tfrac{\lambda_{[{\sf w}]}(t)^{\delta_2}}{t^{1+\delta_1}(\log t)^\beta}
 +
 \tfrac{R^{-\delta_3}\lambda_{[{\sf w}]}(t)^{\delta_2}}{t(\log t)^{\beta'}}
 \right)
 {\bf 1}_{\frac{\bar{\sf r}_1}{4}\sqrt t<|x|<\frac{\bar{\sf r}_1}{2}\sqrt t}
 \\
 &<
 \tfrac{C}{t}
 (\tfrac{\lambda_{[{\sf w}]}(t)}{\sqrt t})^{\delta_2}
 \left(
 \tfrac{1}{t^{1+\delta_1}(\log t)^\beta}
 +
 \tfrac{R^{-\delta_3}}{t(\log t)^{\beta'}}
 \right)
 {\bf 1}_{\frac{\bar{\sf r}_1}{4}\sqrt t<|x|<\frac{\bar{\sf r}_1}{2}\sqrt t},
 \\
 &
 |w_1\pa_t\chi_{\bar{\sf r}_1}|
 \\
 &<
 \tfrac{C}{t|x|^{\delta_2}}
 \left(
 \tfrac{\lambda_{[{\sf w}]}(t)^{\delta_2}}{t^{1+\delta_1}(\log t)^\beta}
 +
 \tfrac{R^{-\delta_3}\lambda_{[{\sf w}]}(t)^{\delta_2}}{t(\log t)^{\beta'}}
 \right)
 {\bf 1}_{\frac{\bar{\sf r}_1}{4}\sqrt t<|x|<\frac{\bar{\sf r}_1}{2}\sqrt t}
 \\
 &<
 \tfrac{C}{t}
 (\tfrac{\lambda_{[{\sf w}]}(t)}{\sqrt t})^{\delta_2}
 \left(
 \tfrac{1}{t^{1+\delta_1}(\log t)^\beta}
 +
 \tfrac{R^{-\delta_3}}{t(\log t)^{\beta'}}
 \right)
 {\bf 1}_{\frac{\bar{\sf r}_1}{4}\sqrt t<|x|<\frac{\bar{\sf r}_1}{2}\sqrt t}.
 \end{align*}
 Recall that $\lambda_{[{\sf w}]}(t)<Ct^\frac{1}{4}$ (see \eqref{EQ_7.9}).
 Therefore,
 we have
 \begin{align*}
 &
 |\nabla_xw_1\cdot\nabla_x\chi_{\bar{\sf r}_1}|
 +
 |w_1\Delta_x\chi_{\bar{\sf r}_1}|
 +
 |w_1\pa_t\chi_{\bar{\sf r}_1}|
 \\
 &<
 \tfrac{C}{t^{1+\frac{\delta_2}{4}}}
 \left(
 \tfrac{1}{t^{1+\delta_1}(\log t)^\beta}
 +
 \tfrac{R^{-\delta_3}}{t(\log t)^{\beta'}}
 \right)
 {\bf 1}_{\frac{\bar{\sf r}_1}{4}\sqrt t<|x|<\frac{\bar{\sf r}_1}{2}\sqrt t}.
 \end{align*}
 Applying Lemma \ref{lemma_3.9} - Lemma \ref{lemma_3.10} to a solution $w_{2,3}(x,t)$ of \eqref{EQ_8.19},
 we obtain
 \begin{align}
 \label{EQ_8.20}
 |w_{2,3}(x,t)|
 &<
 \tfrac{C}{t^{1+\delta_1+\frac{\delta_2}{4}}(\log t)^\beta}
 {\bf 1}_{|x|<\sqrt t}
 +
 \tfrac{CR^{-\delta_3}}{t^{1+\frac{\delta_2}{4}}(\log t)^{\beta'}}
 {\bf 1}_{|x|<\sqrt t}
 \\
 \nonumber
 &\quad
 +
 \tfrac{C}{|x|^{2+2\delta_1+\frac{\delta_2}{2}}(\log|x|^2)^\beta}
 {\bf 1}_{|x|<\sqrt t}
 +
 \tfrac{CR^{-\delta_3}}{|x|^{2+\frac{\delta_2}{2}}(\log t)^{\beta'}}
 {\bf 1}_{|x|<\sqrt t}
 \\
 \nonumber
 &\quad
 +
 \tfrac{C}{t(\log t)^{2\beta}}
 {\bf 1}_{|x|<\sqrt t}
 +
 C|x|^{-2}
 (\log|x|^2)^{-2\beta}
 {\bf 1}_{|x|>\sqrt t}
 \\
 \nonumber
 &\quad
 \text{for }
 (x,t)\in\R^n\times(t_I,\infty).
 \end{align}
 Combining \eqref{EQ_8.15} and \eqref{EQ_8.20},
 we finally arrive at
 \begin{align*}
 \nonumber
 &
 |w(x,t)|
 =
 |w_1(x,t)\chi_{\bar{\sf r}_1}+w_{2,3}(x,t)|
 \\
 &<
 \tfrac{C(1+|y|^2)^{-\frac{\delta_2}{2}}}{t^{1+\delta_1}(\log t)^\beta}
 {\bf 1}_{|x|<\bar{\sf r}_1\sqrt t}
 +
 \tfrac{CR^{-\delta_3}(1+|y|^2)^{-\frac{\delta_2}{2}}}{t(\log t)^{\beta'}}
 {\bf 1}_{|x|<\bar{\sf r}_1\sqrt t}
 \\
 &\quad
 +
 \tfrac{C}{t^{1+\delta_1+\frac{\delta_2}{4}}(\log t)^\beta}
 {\bf 1}_{|x|<\sqrt t}
 +
 \tfrac{CR^{-\delta_3}}{t^{1+\frac{\delta_2}{4}}(\log t)^{\beta'}}
 {\bf 1}_{|x|<\sqrt t}
 \\
 \nonumber
 &\quad
 +
 \tfrac{C}{|x|^{2+2\delta_1+\frac{\delta_2}{2}}(\log|x|^2)^\beta}
 {\bf 1}_{|x|<\sqrt t}
 +
 \tfrac{CR^{-\delta_3}}{|x|^{2+\frac{\delta_2}{2}}(\log t)^{\beta'}}
 {\bf 1}_{|x|<\sqrt t}
 \\
 \nonumber
 &\quad
 +
 \tfrac{C}{t(\log t)^{2\beta}}
 {\bf 1}_{|x|<\sqrt t}
 +
 C|x|^{-2}
 (\log|x|^2)^{-2\beta}
 {\bf 1}_{|x|>\sqrt t}
 \\
 \nonumber
 &\quad
 \text{for }
 (x,t)\in\R^n\times(t_I,\infty).
 \end{align*}
 This ensures that
 $w(x,t)\in X_\tau$.
 Therefore, the construction is complete, and the proof of Theorem~\ref{theorem_3} is concluded.
%%%%%%%%%%%%%%%%%%%%%%%%%%%%%%%%%%%%%%%%%%%%%%%%%%%%%%%%%%%%%%%%

 \section{Appendix}
 \label{section_9}
 \subsection{List of Functions $g$, $h$ and $k$}
 \label{section_9.1}
%%%%%%%%%%%%%%%%%%%%%%%%%%%%%%%%%%%%%%%%%%%%%%%%%%%%%%%%%%%%%%%%
 We summarize the definitions of $g_i$, $h_i$ and $k_i$.
 \begin{enumerate}[(g1)]
 \item 
 $g_1[\lambda]
 =
 \lambda^{-\frac{n-2}{2}}
 (-{\sf Q})
 \dot\chi_1$
 
 \item
 $g_2[\lambda]
 =
 \tfrac{d}{dt}
 \{
 \tfrac{-5{\sf b}T_1}{4}
 \chi_1
 \}$
 
 \item
 $g_3[\lambda]
 =
 \theta
 \dot\chi_1$
 
 \item
 $g_4[\lambda]
 =
 \lambda^{-\frac{n-2}{2}}
 \tfrac{2(\nabla_{y}{\sf Q}\cdot\nabla_{y}\chi_1)+{\sf Q}(\Delta_{y}\chi_1)}{\lambda^2}$

 \item
 $g_5[\lambda]
 =
 \tfrac{2\nabla_{y}\frac{5{\sf b} T_1}{4}\cdot\nabla_{y}\chi_1}{\lambda^2}$

 \item
 $g_6[\lambda]
 =
 (
 \tfrac{5{\sf b} T_1}{4}
 -
 \theta
 )
 \tfrac{(\Delta_{y}\chi_1)}{\lambda^2}$
 
 \item
 $g_7[\lambda]
 =
 \tfrac{2\nabla_x\theta\cdot(-\nabla_{y}\chi_1)}{\lambda}$

 \item
 $g_8[\lambda]
 =
 \lambda^{-\frac{n+2}{2}}
 f({\sf Q})
 (\chi_1^2-\chi_1)$
 
 \item
 $g_9[\lambda]
 =
 f(\theta)
 (1-\chi_1)^2$

 \item
 $g_{10}[\lambda]
 =
 \tfrac{f'({\sf Q})}{\lambda^2}
 (\tfrac{5{\sf b} T_1}{4}-\theta)
 \chi_1
 (\chi_1-1)$

 \item
 $g_{11}[\lambda]
 =
 f'(\theta)
 \lambda^{-\frac{n-2}{2}}
 {\sf Q}
 \chi_1(1-\chi_1)$
 
 \item
 $g_{12}[\lambda]
 =
 f'(\theta)
 \tfrac{5{\sf b} T_1}{4}
 \chi_1(1-\chi_1)$
 \end{enumerate}
%%%%%%%%%%%%%%%%%%%%%%%%%%%%%%%%%%%%%%%%%%%%%%%%%%%%%%%%%%%%%%%%
 \begin{enumerate}[(h1)]
 \item
 $h_1[\lambda,\epsilon]
 =
 \lambda^{-\frac{n}{2}}
 \dot
 \lambda
 (\Lambda_{y}\epsilon)
 \chi_{\text{in}}$
 
 \item
 $h_2[\lambda,\epsilon]
 =
 \lambda^{-\frac{n-2}{2}}
 (-\epsilon)
 \dot\chi_{\text{in}}$
 
 \item
 $h_3[\lambda,\epsilon]
 =
 2\lambda^{-\frac{n+2}{2}}
 (\nabla_{y}\epsilon\cdot\nabla_{y}\chi_{\text{in}})$

 \item
 $h_4[\lambda,\epsilon]
 =
 \lambda^{-\frac{n+2}{2}}
 \epsilon
 (\Delta_{y}\chi_{\text{in}})$
 \end{enumerate}
%%%%%%%%%%%%%%%%%%%%%%%%%%%%%%%%%%%%%%%%%%%%%%%%%%%%%%%%%%%%%%%%
 \begin{enumerate}[(k1)]
 \item
 $k_1[w]
 =
 f'(\theta)
 w
 (1-\chi_1)$
 \end{enumerate}
%%%%%%%%%%%%%%%%%%%%%%%%%%%%%%%%%%%%%%%%%%%%%%%%%%%%%%%%%%%%%%%%
%%%%%%%%%%%%%%%%%%%%%%%%%%%%%%%%%%%%%%%%%%%%%%%%%%%%%%%%%%%%%%%%
 Throughout this section,
 $\chi_1$ and $\chi_\text{in}$ denote the cut off functions given by
 \begin{itemize}
 \item
 $\chi_1
 =
 \chi(\tfrac{|x|}{ \lambda(t)^\kappa(\sqrt t)^{1-\kappa} })$
 \quad
 \text{for some constant }
 $\kappa\in(0,1)$,
 \item
 $\chi_{\text{in}}
 =
 \chi(\frac{4|x|}{R\lambda(t)})$.
 \end{itemize}
 Let us use
 a notation
 \begin{align*}
 {\bf 1}_{|x|\sim r}
 =
 {\bf 1}_{r<|x|<2r}.
 \end{align*}
 Since the inner variable $y$ is defined by $y=\frac{x}{\lambda(t)}$,
 we have the equivalence:
 \begin{align}
 \label{equation_9.1}
 {\bf 1}_{|y|\sim(\frac{\sqrt t}{\lambda})^{1-\kappa}}
 \ \overset{\text{equivalent}}{\Longleftrightarrow} \
 {\bf 1}_{|x|\sim\lambda^\kappa(\sqrt t)^{1-\kappa}}
 \end{align}
 Under these assumptions,
 it holds that
 \begin{align*}
 |\dot\chi_1|
 &<
 Ct^{-1}
 {\bf 1}_{|y|\sim(\frac{\sqrt t}{\lambda})^{1-\kappa}},
 \\
 |\nabla_y\chi_1|
 &<
 C|y|^{-1}
 {\bf 1}_{|y|\sim(\frac{\sqrt t}{\lambda})^{1-\kappa}},
 \\
 |\Delta_y\chi_1|
 &<
 C|y|^{-2}
 {\bf 1}_{|y|\sim(\frac{\sqrt t}{\lambda})^{1-\kappa}},
 \\
 |\dot\chi_\text{in}|
 &<
 Ct^{-1}
 {\bf 1}_{|y|\sim\frac{R}{4}},
 \\
 |\nabla_y\chi_\text{in}|
 &<
 C|y|^{-1}
 {\bf 1}_{|y|\sim\frac{R}{4}},
 \\
 |\Delta_y\chi_\text{in}|
 &<
 C|y|^{-2}
 {\bf 1}_{|y|\sim\frac{R}{4}}.
 \end{align*}
%%%%%%%%%%%%%%%%%%%%%%%%%%%%%%%%%%%%%%%%%%%%%%%%%%%%%%%%%%%%%%%%
 \begin{lem}
 \label{lemma_9.1}
 Let $n=6$, $\beta>0$ and $t_0>e$,
 and
 assume that there exists $C_1>0$ such that the following conditions hold{\rm:}
 \begin{enumerate}[\rm({\rm A}1)]
 \item ${\sf b}(t)=\theta(0,t)$
 \quad \text{for all} $t\in(t_0,\infty)$
 \item
 $|{\sf b}(t)|<\frac{C_1}{t(\log t)^\beta}$
 \quad \text{for all} $t\in(t_0,\infty)$
 \item
 $|\dot{\sf b}(t)|<\frac{C_1}{t^2(\log t)^\beta}$
 \quad \text{for all} $t\in(t_0,\infty)$
 \item
 $\frac{|\dot\lambda(t)|}{\lambda(t)}<\frac{C_1}{t(\log t)^\beta}$
 \quad \text{for all} $t\in(t_0,\infty)$
 \item
 $0<\lambda(t)<e^{C_1(\log t)^{1-\beta}}$
 \quad \text{for all} $t\in(t_0,\infty)$
 \item
 $|\theta(x,t)|<\frac{C_1}{t(\log t)^\beta}$ \quad {\rm for all} $|x|<\sqrt t$, $t>t_0$
 \item
 $|\theta(x,t)|<\frac{C_1}{|x|^2(\log|x|^2)^\beta}$ \quad {\rm for all} $|x|>\sqrt t$, $t>t_0$
 \item
 $|\nabla_x\theta(x,t)|<\frac{|x|}{t}\frac{C_1}{t(\log t)^\beta}$ \quad {\rm for all}
 $|x|<1$, $t>t_0$
 \end{enumerate}
 Then,
 for any $\kappa\in(0,\frac{1}{2})$,
 there exist $\delta_1>0$, $\delta_2>0$ and $C>0$ such that
 \begin{align*}
 \left|
 \sum_{j=1}^{12}g_j[\lambda]
 \right|
 &<
 \tfrac{1}{\lambda^2}
 \tfrac{C}{t^{1+\delta_1}(\log t)^{\beta}}
 {\bf 1}_{|y|<1}
 +
 \tfrac{1}{\lambda^2}
 \tfrac{C|y|^{-(2+\delta_2)}}{t^{1+\delta_1}(\log t)^\beta}
 {\bf 1}_{1<|y|<2(\frac{\sqrt t}{\lambda})^{1-\kappa}}
 \\
 &\quad
 +
 \tfrac{C}{t^2(\log t)^{2\beta}} 
 {\bf 1}_{|x|<\sqrt t}
 +
 \tfrac{C}{|x|^4(\log|x|^2)^{2\beta}}
 {\bf 1}_{|x|>\sqrt t}
 \\
 &\text{for all }
 (y,t)\in\R^n\times(t_0,\infty).
 \end{align*}
 All the constants $\delta_1$, $\delta_2$ and $C$ depend only on $\beta$, $\kappa$ and $C_1$
 {\rm(}as defined in {\rm (A1) - (A8)}{\rm)}.
 \end{lem}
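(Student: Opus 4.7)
The plan is to estimate each of the twelve functions $g_j[\lambda]$ pointwise and then sort the resulting bounds into the four indicator categories of the conclusion. First I would classify the terms by the support of their cutoff factor. The terms $g_1$, $g_3$, $g_4$, $g_5$, $g_6$, $g_7$, $g_8$, $g_{10}$, $g_{11}$, $g_{12}$ each carry one of $\dot\chi_1$, $\nabla_y\chi_1$, $\Delta_y\chi_1$, $(\chi_1^2-\chi_1)$, or $\chi_1(1-\chi_1)$, hence live on the transition annulus $|y|\sim(\sqrt t/\lambda)^{1-\kappa}$; by the equivalence \eqref{equation_9.1} this corresponds to $|x|\sim\lambda^\kappa(\sqrt t)^{1-\kappa}$, which lies strictly inside $|x|<\sqrt t$ because (A5) forces $\lambda\leq e^{C_1(\log t)^{1-\beta}}=t^{o(1)}$. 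The term $g_2=\frac{d}{dt}\{-\tfrac{5{\sf b} T_1}{4}\chi_1\}$ splits by the product rule into a $\dot{\sf b}T_1\chi_1$ piece, a ${\sf b}(\Lambda_yT_1)(\dot\lambda/\lambda)\chi_1$ piece, and a ${\sf b}T_1\dot\chi_1$ piece. Finally, $g_9=|\theta|^2(1-\chi_1)^2$ sits on the outer region $|x|\geq c\lambda^\kappa(\sqrt t)^{1-\kappa}$ and will feed the third and fourth categories via (A6)--(A7).

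For the annulus-supported terms I would use the pointwise asymptotics ${\sf Q}(y),\,|\Lambda_y{\sf Q}(y)|\leq C(1+|y|)^{-4}$, $|\nabla_y{\sf Q}(y)|\leq C(1+|y|)^{-5}$, $T_1(y)=\tfrac{4}{5}+O(|y|^{-2})$, $|\nabla_yT_1(y)|\leq C(1+|y|)^{-3}$, together with $|\dot\chi_1|\leq C/t$, $|\nabla_y\chi_1|\leq C/|y|$, $|\Delta_y\chi_1|\leq C/|y|^2$ on the annulus. This bounds each of $g_1$, $g_4$, $g_8$ by $C\lambda^{-2}|y|^{-4}t^{-1}$ times the annulus indicator; the key algebraic step is the split $|y|^{-4}=|y|^{-(2+\delta_2)}|y|^{-(2-\delta_2)}$ combined with $|y|^{-(2-\delta_2)}\leq C(\lambda/\sqrt t)^{(1-\kappa)(2-\delta_2)}$ on the annulus. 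Since (A5) gives $\lambda/\sqrt t\leq Ct^{-1/2}e^{C_1(\log t)^{1-\beta}}$, choosing $\kappa\in(0,\tfrac{1}{2})$ and $\delta_2$ small yields a strictly positive exponent gain $t^{-\delta_1}$, with the $(\log t)^{-\beta}$ factor coming from an extra $|{\sf b}|$ or $|\theta|$ factor through (A2) or (A6) for the $g_3$, $g_5$, $g_6$, $g_{10}$, $g_{11}$ terms, and recovered by absorbing a power of $\lambda^2/t$ into the residual slack for the purely ground-state terms $g_1$, $g_4$, $g_8$. The three pieces of $g_2$ are handled analogously, the $\dot{\sf b}T_1\chi_1$ and ${\sf b}(\Lambda_yT_1)(\dot\lambda/\lambda)\chi_1$ pieces being bounded via (A3)--(A4) and the remaining $\dot\chi_1$ piece treated as an annulus term. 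Similarly, $g_9$ and $g_{12}$ carry $|\theta|^2$ or $|\theta||{\sf b}|$ and thus contribute the $(\log t)^{-2\beta}$ decay in the third and fourth categories.

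The main technical obstacle is the boundary-layer gradient term $g_7=2\nabla_x\theta\cdot(-\nabla_y\chi_1)/\lambda=-2\nabla_x\theta\cdot\nabla_x\chi_1$: the hypothesis (A8) only controls $\nabla_x\theta$ on $|x|<1$, whereas the support of $\nabla_x\chi_1$ is the distant annulus $|x|\sim\lambda^\kappa(\sqrt t)^{1-\kappa}\gg 1$. To overcome this I would invoke Lemma~\ref{lemma_4.1}, which, since $\theta$ is the specific heat solution from Section~\ref{section_4} with decay exponent $\gamma=2$, upgrades (A8) to the full-scale estimate $|\nabla_x\theta(x,t)|\leq C|x|/(t^{2}(\log t)^\beta)$ on the entire ball $|x|<\sqrt t$. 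Inserting this together with $|\nabla_x\chi_1|\leq C/(\lambda^\kappa(\sqrt t)^{1-\kappa})$ yields $|g_7|\leq C/(t^2(\log t)^\beta){\bf 1}_{|x|\sim\lambda^\kappa(\sqrt t)^{1-\kappa}}$; multiplying and dividing by $\lambda^2|y|^{-(2+\delta_2)}$ and applying the same $\lambda/\sqrt t$ slack as above puts this in the second category. I would close the proof by fixing $\delta_1,\delta_2\in(0,1)$ small enough that every estimate above is valid simultaneously, which is possible because each of the twelve bounds contributes a strict positive margin coming from (A5); summing them gives the conclusion.
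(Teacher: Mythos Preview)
Your overall plan matches the paper's: bound each $g_j$ pointwise on its support, then on the annulus $|y|\sim(\sqrt t/\lambda)^{1-\kappa}$ convert surplus powers of $|y|^{-1}$ into powers of $\lambda/\sqrt t$ and absorb these via (A5). Your observation about $g_7$ is sharp: the paper's own estimate $|g_7|<Ct^{-2}(\log t)^{-\beta}$ on the annulus really requires the gradient bound of Lemma~\ref{lemma_4.1} for all $|x|<\sqrt t$, so (A8) as printed (with $|x|<1$) is not enough and the paper is tacitly using the stronger bound.

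There is, however, one concrete error. You claim that $g_4$ is bounded by $C\lambda^{-2}|y|^{-4}t^{-1}$ on the annulus, but from the definition $g_4=\lambda^{-4}\bigl(2\nabla_y{\sf Q}\cdot\nabla_y\chi_1+{\sf Q}\,\Delta_y\chi_1\bigr)$ one only gets $|g_4|\leq C\lambda^{-4}|y|^{-6}$, and on the annulus
\[
\frac{\lambda^{-4}|y|^{-6}}{\lambda^{-2}|y|^{-4}t^{-1}}=\frac{t}{|x|^{2}}\sim\Bigl(\tfrac{t}{\lambda^{2}}\Bigr)^{\kappa}\to\infty,
\]
so your stated bound is too small. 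The correct manipulation is the paper's:
\[
\lambda^{-4}|y|^{-6}=\lambda^{-2}\cdot\tfrac{t}{\lambda^{2}}|y|^{-(4-\delta)}\cdot\tfrac{1}{t}|y|^{-(2+\delta)}
\leq \lambda^{-2}\Bigl(\tfrac{\lambda}{\sqrt t}\Bigr)^{-2+(1-\kappa)(4-\delta)}\tfrac{1}{t}|y|^{-(2+\delta)},
\]
and the exponent $-2+(1-\kappa)(4-\delta)$ is positive only when $(1-\kappa)(4-\delta)>2$, i.e.\ for small $\delta$ only when $\kappa<\tfrac12$. This is precisely the origin of the restriction $\kappa\in(0,\tfrac12)$ in the lemma; with your claimed $g_4$ bound the argument would go through for every $\kappa\in(0,1)$, which is false. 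Once you replace your $g_4$ estimate by $C\lambda^{-4}|y|^{-6}$ and add the condition $-2+(1-\kappa)(4-\delta)>0$ alongside $(1-\kappa)(2-\delta)>0$ and $2-(1-\kappa)(2+\delta)>0$ (these are the paper's conditions \eqref{equation_9.2}), the rest of your argument is correct and coincides with the paper's proof.
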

%%%%%%%%%%%%%%%%%%%%%%%%%%%%%%%%%%%%%%%%%%%%%%%%%%%%%%%%%%%%%%%% 16:39
 \begin{proof}
 Note that
 \begin{itemize}
 \item $f(u)=|u|u$ and $f'(u)=|u|$ when $n=6$,
 \item $T_1(y)=\frac{4}{5}+O(|y|^{-2})$ as $|y|\to\infty$ when $n=6$ (see \eqref{e_5.5}),
 \item $|\nabla_yT_1(y)|<C|y|^{-3}$ as $|y|\to\infty$ when $n=6$ (see \eqref{e_5.6}),
 \end{itemize}
 Using the above three facts and assumptions (A1) - (A8),
 we verify that 
 \begin{align*}
 |g_1|
 &<
 Ct^{-1}
 \lambda^{-2}
 |y|^{-4}
 {\bf 1}_{|y|\sim(\frac{\sqrt t}{\lambda})^{1-\kappa}},
%%%%%%%%%%%%%%%%%%%%%%%%%%%%%%%%%%%%%%%%%%%%%%%%%%%%%%%%%%%%%%%%
 \\
 |g_2|
 &<
 \tfrac{C}{t^2(\log t)^\beta}
 {\bf 1}_{|y|<2(\frac{\sqrt t}{\lambda})^{1-\kappa}},
 \\
%%%%%%%%%%%%%%%%%%%%%%%%%%%%%%%%%%%%%%%%%%%%%%%%%%%%%%%%%%%%%%%%
 |g_3|
 &<
 \tfrac{C}{t^2(\log t)^\beta}
 {\bf 1}_{|y|\sim(\frac{\sqrt t}{\lambda})^{1-\kappa}},
 \\
%%%%%%%%%%%%%%%%%%%%%%%%%%%%%%%%%%%%%%%%%%%%%%%%%%%%%%%%%%%%%%%%
 |g_4|
 &<
 C\lambda^{-4}
 |y|^{-6}
 {\bf 1}_{|y|\sim(\frac{\sqrt t}{\lambda})^{1-\kappa}},
 \end{align*}
%%%%%%%%%%%%%%%%%%%%%%%%%%%%%%%%%%%%%%%%%%%%%%%%%%%%%%%%%%%%%%%%
 \begin{align*}
 |g_5|
 &<
 \tfrac{C}{t(\log t)^\beta}
 \lambda^{-2}
 |y|^{-4}
 {\bf 1}_{|y|\sim(\frac{\sqrt t}{\lambda})^{1-\kappa}},
 \\
%%%%%%%%%%%%%%%%%%%%%%%%%%%%%%%%%%%%%%%%%%%%%%%%%%%%%%%%%%%%%%%%
 |g_6|
 &<
 \tfrac{C}{t(\log t)^\beta}
 \lambda^{-2}
 |y|^{-4}
 {\bf 1}_{|y|\sim(\frac{\sqrt t}{\lambda})^{1-\kappa}}
 \\
 &\quad
 +
 \tfrac{C}{t^2(\log t)^\beta}
 {\bf 1}_{|y|\sim(\frac{\sqrt t}{\lambda})^{1-\kappa}},
 \\
%%%%%%%%%%%%%%%%%%%%%%%%%%%%%%%%%%%%%%%%%%%%%%%%%%%%%%%%%%%%%%%%
 |g_7|
 &<
 \tfrac{C}{t^2(\log t)^\beta}
 {\bf 1}_{|y|\sim(\frac{\sqrt t}{\lambda})^{1-\kappa}},
 \\
%%%%%%%%%%%%%%%%%%%%%%%%%%%%%%%%%%%%%%%%%%%%%%%%%%%%%%%%%%%%%%%%
 |g_8|
 &<
 C
 \lambda^{-4}
 |y|^{-8}
 {\bf 1}_{|y|\sim(\frac{\sqrt t}{\lambda})^{1-\kappa}},
 \end{align*}
%%%%%%%%%%%%%%%%%%%%%%%%%%%%%%%%%%%%%%%%%%%%%%%%%%%%%%%%%%%%%%%%
 \begin{align*}
 |g_9|
 &<
 \tfrac{C}{t^2(\log t)^{2\beta}}
 {\bf 1}_{\lambda^\kappa(\sqrt t)^{1-\kappa}<|x|<\sqrt t}
 \\
 &\quad
 +
 \tfrac{C}{|x|^4(\log|x|^2)^{2\beta}}
 {\bf 1}_{|x|>\sqrt t},
 \\
%%%%%%%%%%%%%%%%%%%%%%%%%%%%%%%%%%%%%%%%%%%%%%%%%%%%%%%%%%%%%%%%
 |g_{10}|
 &<
 \tfrac{C}{t(\log t)^\beta}
 \lambda^{-2}
 |y|^{-6}
 {\bf 1}_{|y|\sim(\frac{\sqrt t}{\lambda})^{1-\kappa}}
 \\
 &\quad
 +
 \tfrac{C}{t^2(\log t)^\beta}
 |y|^{-2}
 {\bf 1}_{|y|\sim(\frac{\sqrt t}{\lambda})^{1-\kappa}},
 \\
%%%%%%%%%%%%%%%%%%%%%%%%%%%%%%%%%%%%%%%%%%%%%%%%%%%%%%%%%%%%%%%%
 |g_{11}|
 &<
 \tfrac{C}{t(\log t)^\beta}
 \lambda^{-2}
 |y|^{-4}
 {\bf 1}_{|y|\sim(\frac{\sqrt t}{\lambda})^{1-\kappa}},
 \\
%%%%%%%%%%%%%%%%%%%%%%%%%%%%%%%%%%%%%%%%%%%%%%%%%%%%%%%%%%%%%%%%
 |g_{12}|
 &<
 \tfrac{C}{t^2(\log t)^{2\beta}}
 {\bf 1}_{|y|\sim(\frac{\sqrt t}{\lambda})^{1-\kappa}}.
 \end{align*}
%%%%%%%%%%%%%%%%%%%%%%%%%%%%%%%%%%%%%%%%%%%%%%%%%%%%%%%%%%%%%%%%
 To estimate $g_i$,
 it suffices to derive the following bounds:
 \begin{itemize}
 \item Estimate for $g_1$:
 \begin{align*}
 &t^{-1}
 \lambda^{-2}
 |y|^{-4}
 {\bf 1}_{|y|\sim(\frac{\sqrt t}{\lambda})^{1-\kappa}}
 \\
 &=
 \lambda^{-2}
 \cdot
 t^{-1}
 |y|^{-2+\delta_1}
 |y|^{-(2+\delta_1)}
 {\bf 1}_{|y|\sim(\frac{\sqrt t}{\lambda})^{1-\kappa}}
 \\
 &<
 \lambda^{-2}
 \cdot
 (\tfrac{\lambda}{\sqrt t})^{(1-\kappa)(2-\delta)}
 \cdot
 \tfrac{1}{t}
 |y|^{-(2+\delta)}
 {\bf 1}_{|y|\sim(\frac{\sqrt t}{\lambda})^{1-\kappa}}.
 \end{align*}
%%%%%%%%%%%%%%%%%%%%%%%%%%%%%%%%%%%%%%%%%%%%%%%%%%%%%%%%%%%%%%%%
% g2
 \item estimate for $g_2$ in $|y|<1$
 \begin{align*}
 \tfrac{1}{t^2(\log t)^\beta}
 &=
 \lambda^{-2}
 \cdot
 (\tfrac{\lambda}{\sqrt t})^2
 \cdot
 \tfrac{1}{t(\log t)^\beta},
 \end{align*}
%%%%%%%%%%%%%%%%%%%%%%%%%%%%%%%%%%%%%%%%%%%%%%%%%%%%%%%%%%%%%%%%
% g2
 \item Estimate for $g_2$ in $|y|>1$:
 \begin{align*}
 &
 \tfrac{1}{t^2(\log t)^\beta}
 {\bf 1}_{|y|<2(\frac{\sqrt t}{\lambda})^{1-\kappa}}
 \\
 &=
 \lambda^{-2}
 \cdot
 \tfrac{\lambda^2}{t}
 |y|^{2+\delta}
 \cdot
 \tfrac{1}{t(\log t)^\beta}
 |y|^{-(2+\delta)}
 {\bf 1}_{|y|<2(\frac{\sqrt t}{\lambda})^{1-\kappa}}
 \\
 &<
 \lambda^{-2}
 \cdot
 (\tfrac{\lambda}{\sqrt t})^{2-(1-\kappa)(2+\delta)}
 \cdot
 \tfrac{1}{t(\log t)^\beta}
 |y|^{-(2+\delta)}
 {\bf 1}_{|y|<2(\frac{\sqrt t}{\lambda})^{1-\kappa}}.
 \end{align*}
%%%%%%%%%%%%%%%%%%%%%%%%%%%%%%%%%%%%%%%%%%%%%%%%%%%%%%%%%%%%%%%%
% g2
 \item Estimate for $g_3$, the last term of $g_6$, $g_7$, the last term of $g_{10}$:
 \begin{align*}
 &
 \tfrac{1}{t^2(\log t)^\beta}
 {\bf 1}_{|y|\sim(\frac{\sqrt t}{\lambda})^{1-\kappa}}
 \\
 &=
 \lambda^{-2}
 \cdot
 \tfrac{\lambda^2}{t}
 |y|^{2+\delta}
 \cdot
 \tfrac{1}{t(\log t)^\beta}
 |y|^{-(2+\delta)}
 {\bf 1}_{|y|\sim(\frac{\sqrt t}{\lambda})^{1-\kappa}}
 \\
 &<
 \lambda^{-2}
 \cdot
 (\tfrac{\lambda}{\sqrt t})^{2-(1-\kappa)(2+\delta)}
 \cdot
 \tfrac{1}{t(\log t)^\beta}
 |y|^{-(2+\delta)}
 {\bf 1}_{|y|\sim(\frac{\sqrt t}{\lambda})^{1-\kappa}}.
 \end{align*}
%%%%%%%%%%%%%%%%%%%%%%%%%%%%%%%%%%%%%%%%%%%%%%%%%%%%%%%%%%%%%%%%
 \item Estimate for $g_4$ and $g_8$:
 \begin{align*}
 &
 \lambda^{-4}
 |y|^{-6}
 {\bf 1}_{|y|\sim(\frac{\sqrt t}{\lambda})^{1-\kappa}}
 \\
 &=
 \lambda^{-2}
 \cdot
 \tfrac{t}{\lambda^2}
 |y|^{-4+\delta}
 \cdot
 \tfrac{1}{t}
 |y|^{-(2+\delta)}
 {\bf 1}_{|y|\sim(\frac{\sqrt t}{\lambda})^{1-\kappa}}
 \\
 &<
 \lambda^{-2}
 \cdot
 (\tfrac{\lambda}{\sqrt t})^{-2+(1-\kappa)(4-\delta)}
 \cdot
 \tfrac{1}{t}
 |y|^{-(2+\delta)}
 {\bf 1}_{|y|\sim(\frac{\sqrt t}{\lambda})^{1-\kappa}}.
 \end{align*}
%%%%%%%%%%%%%%%%%%%%%%%%%%%%%%%%%%%%%%%%%%%%%%%%%%%%%%%%%%%%%%%%
 \item Estimate for $g_5$, the first term of $g_6$, the first term of $g_{10}$ and $g_{11}$:
 \begin{align*}
 &
 \tfrac{1}{t(\log t)^\beta}
 \lambda^{-2}
 |y|^{-4}
 {\bf 1}_{|y|\sim(\frac{\sqrt t}{\lambda})^{1-\kappa}}
 \\
 &=
 \lambda^{-2}
 \cdot
 |y|^{-2+\delta}
 \cdot
 \tfrac{1}{t(\log t)^\beta}
 |y|^{-(2+\delta)}
 {\bf 1}_{|y|\sim(\frac{\sqrt t}{\lambda})^{1-\kappa}}
 \\
 &<
 \lambda^{-2}
 \cdot
 (\tfrac{\lambda}{\sqrt t})^{(1-\kappa)(2-\delta)}
 \cdot
 \tfrac{1}{t(\log t)^\beta}
 |y|^{-(2+\delta)}
 {\bf 1}_{|y|\sim(\frac{\sqrt t}{\lambda})^{1-\kappa}}.
 \end{align*}
 \end{itemize}
 We choose parameters $\kappa>0$ and $\delta>0$ so that
 \begin{align}
 \label{equation_9.2}
 \begin{cases}
 (1-\kappa)(2-\delta)>0,
 \\
 2-(1-\kappa)(2+\delta)>0,
 \\
 -2+(1-\kappa)(4-\delta)>0.
 \end{cases}
 \end{align}
 For any given $\kappa\in(0,\frac{1}{2})$,
 there exists $\delta_*>0$ such that if $\delta\in(0,\delta_*)$,
 then the conditions in \eqref{equation_9.2} holds.
 From assumption (A5),
 we have
 $\frac{\lambda}{\sqrt t}<Ct^{-\frac{1}{4}}$
 for some constant $C>0$ depending only on $\beta$ and $C_1$.
 Using this inequality together with the estimates established above,
 we obtain the required bounds.
 The proof is complete.
 \end{proof}
%%%%%%%%%%%%%%%%%%%%%%%%%%%%%%%%%%%%%%%%%%%%%%%%%%%%%%%%%%%%%%%%

%%%%%%%%%%%%%%%%%%%%%%%%%%%%%%%%%%%%%%%%%%%%%%%%%%%%%%%%%%%%%%%%
 \begin{lem}
 \label{lemma_9.2}
 Let $n=6$, $\beta'>0$ and $t_0>e$.
 Assume {\rm (A4) - (A5)}.
 Suppose further that
 there exists $C_2>0$ such that
 a parameter $R$ and a function $\epsilon(y,t)$ satisfy the following conditions{\rm:}
 \begin{enumerate}[\rm({\rm B}1)]
 \setlength{\itemsep}{2mm}
 \item
 $1<R<2\log\log t_0$
 
 \item
 $\tfrac{|\epsilon(y,t)|}{\lambda(t)^2}
 <
 \tfrac{C_2R^3}{t(\log t)^{\beta'}}
 {\bf 1}_{|y|<1}
 +
 \tfrac{C_2}{t(\log t)^{\beta'}}
 \tfrac{1}{|y|^2}
 {\bf 1}_{1<|y|<R}
 +
 \tfrac{C_2}{t(\log t)^{\beta'}}
 \tfrac{R^3}{|y|^\frac{7}{2}}
 {\bf 1}_{1<|y|<R}$
 \\
 \quad {\rm for all} $(y,t)\in B_R\times(t_0,\infty)$
 
 \item
 $\tfrac{|\nabla_y\epsilon(y,t)|}{\lambda(t)^2}
 <
 \tfrac{C_2R^3}{t(\log t)^{\beta'}}
 {\bf 1}_{|y|<1}
 +
 \tfrac{C_2}{t(\log t)^{\beta'}}
 \tfrac{1}{|y|^3}
 {\bf 1}_{1<|y|<R}
 +
 \tfrac{C_2}{t(\log t)^{\beta'}}
 \tfrac{R^3}{|y|^\frac{9}{2}}
 {\bf 1}_{1<|y|<R}$
 \\
 \quad {\rm for all} $(y,t)\in B_R\times(t_0,\infty)$
 \end{enumerate}
 Then,
 there exist constants $\delta_1,\delta_2,\delta_3>0$ and $C>0$ such that
 \begin{align*}
 \left|
 \sum_{j=1}^4
 h_j[\epsilon]
 \right|
 &<
 \tfrac{1}{\lambda(t)^2}
 \tfrac{C}{t^{1+\delta_1}(\log t)^{\beta'}}
 {\bf 1}_{|y|<1}
 +
 \tfrac{1}{\lambda(t)^2}
 \tfrac{C|y|^{-(2+\delta_2)}}{t^{1+\delta_1}(\log t)^{\beta'}}
 {\bf 1}_{1<|y|<\frac{R}{2}}
 \\
 &\quad
 +
 \tfrac{1}{\lambda(t)^2}
 \tfrac{CR^{-\delta_3}|y|^{-(2+\delta_2)}}{t(\log t)^{\beta'}}
 {\bf 1}_{1<|y|<\frac{R}{2}}
 \\
 &\text{for all }
 (y,t)\in \R^n\times(t_0,\infty).
 \end{align*}
 The constants $\delta_1$, $\delta_2$, $\delta_3$ and $C$ depend only on $\beta'$ and $C_2$,
 as given in conditions {\rm (B1) - (B3)}.
 \end{lem}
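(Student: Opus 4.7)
The plan is to estimate each $h_j[\epsilon]$ separately by exploiting the geometry of $\chi_\text{in}$ and then recombine. The key observation is that $\chi_\text{in}(x,t)=\chi(4|y|/R)$ equals $1$ on $\{|y|\leq R/4\}$, vanishes on $\{|y|\geq R/2\}$, and its derivatives are all supported in the annulus $\{R/4\leq|y|\leq R/2\}$. Consequently $h_2$, $h_3$, $h_4$ vanish outside this annulus, while $h_1$ is supported throughout $\{|y|\leq R/2\}$. Differentiating the argument of $\chi$ in $t$ gives $|\dot\chi_\text{in}|\leq C(|y|/R)|\dot\lambda|/\lambda$, which by (A4) is bounded by $C/(t(\log t)^\beta)$ on its support; the spatial bounds $|\nabla_y\chi_\text{in}|\leq C|y|^{-1}$ and $|\Delta_y\chi_\text{in}|\leq C|y|^{-2}$ are standard.

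I would split the support into three zones and treat each separately. \emph{Zone (i),} $|y|<1$: only $h_1$ contributes; using (A4) together with the $|y|<1$ clause of (B2), (B3) yields $|h_1|\leq C\lambda^{-2}\lambda^2R^3/(t^2(\log t)^{\beta+\beta'})$, which matches the first term of the target once $\lambda^2R^3/(t(\log t)^\beta)$ is absorbed into $t^{-\delta_1}$. \emph{Zone (ii),} $1<|y|<R/4$: again only $h_1$ is nonzero; the pointwise identity $|\Lambda_y\epsilon|\leq 2|\epsilon|+|y||\nabla_y\epsilon|$ combined with the middle clauses of (B2), (B3) yields $|\Lambda_y\epsilon|\leq C\lambda^2/(t(\log t)^{\beta'})\cdot(|y|^{-2}+R^3|y|^{-7/2})$, and rewriting $|y|^{-7/2}=|y|^{-(2+\delta_2)}|y|^{-(3/2-\delta_2)}\leq|y|^{-(2+\delta_2)}$ for $|y|>1$ produces the middle term of the target with any $\delta_2<3/2$. \emph{Zone (iii),} $R/4\leq|y|\leq R/2$: all four $h_j$ contribute, but the sharper evaluations $|\epsilon|\leq C\lambda^2R^{-1/2}/(t(\log t)^{\beta'})$ and $|\nabla_y\epsilon|\leq C\lambda^2R^{-3/2}/(t(\log t)^{\beta'})$ (the $R^3/|y|^{7/2}$ piece of (B2), (B3) dominating at $|y|\sim R$) combine with the cutoff-derivative bounds to give each $h_j$ a bound of the form $\lambda^{-2}R^{-(2+\delta_2)}R^{-\delta_3}/(t(\log t)^{\beta'})$ with $\delta_3$ of order $1/2$, matching the third term of the target.

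The closing step verifies that the slowly-growing factors fit into the small exponents $t^{-\delta_1}$, $R^{-\delta_3}$, $|y|^{-\delta_2}$. By (A5) and (B1), $\lambda(t)^2R^{O(1)}\leq e^{2C_1(\log t)^{1-\beta}}(\log\log t_0)^{O(1)}$ grows more slowly than any positive power of $t$, so for sufficiently small $\delta_1,\delta_2,\delta_3>0$ the estimates close provided $t_0$ is chosen large enough. The main technical obstacle is the bookkeeping in Zone (iii), where one must simultaneously track the $R^{-1/2}$ gain from the decay of $\epsilon$ at $|y|\sim R$, the $|y|^{-1}$ and $|y|^{-2}$ losses from the cutoff derivatives, and the $|\dot\lambda|/\lambda$ factor in $h_1$ and $\dot\chi_\text{in}$, all while tolerating the $R^3$ prefactor in the hypotheses on $\epsilon$; setting $\delta_3\leq 1/2$ and $\delta_2<3/2$ balances these competing scales.
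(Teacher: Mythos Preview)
Your approach is correct and essentially the same as the paper's: both bound each $h_j$ pointwise using the support properties of $\chi_{\text{in}}$, the hypotheses (B2)--(B3) on $\epsilon$ and (A4) on $\dot\lambda/\lambda$, and then absorb the slowly growing factors $\lambda^2 R^{O(1)}/t$ into $t^{-\delta_1}$ via (A5) and (B1); the paper organizes the computation term-by-term in $j$ while you organize it zone-by-zone in $|y|$, but the ingredients are identical. The only minor imprecision is in your closing constraint: Zone~(iii) with $|h_3|,|h_4|\lesssim \lambda^{-2}R^{-5/2}/(t(\log t)^{\beta'})$ actually forces $\delta_2+\delta_3\le \tfrac12$ (not the independent bounds $\delta_3\le\tfrac12$, $\delta_2<\tfrac32$), so a choice such as $\delta_2=\delta_3=\tfrac14$---which is effectively what the paper uses---makes everything close.
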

%%%%%%%%%%%%%%%%%%%%%%%%%%%%%%%%%%%%%%%%%%%%%%%%%%%%%%%%%%%%%%%%
 \begin{proof}
 From (B2) - (B3) and (A4),
 we see that
 \begin{align*}
 |h_1[\epsilon]|
 &<
 \tfrac{C}{t(\log t)^\beta}
 \tfrac{|\Lambda_y\epsilon|}{\lambda^2}
 {\bf 1}_{|y|<\frac{R}{2}}
 \\
 &<
 \tfrac{CR^3}
 {t^2(\log t)^{\beta+\beta'}}
 {\bf 1}_{|y|<1}
 \\
 &\quad
 +
 \tfrac{C}{t^2(\log t)^{\beta+\beta'}}
 |y|^{-2}
 {\bf 1}_{1<|y|<\frac{R}{2}}
 \\
 &\quad
 +
 \tfrac{C}{t^2(\log t)^{\beta+\beta'}}
 R^3
 |y|^{-\frac{7}{2}}
 {\bf 1}_{1<|y|<\frac{R}{2}}
 \\
 &=
 \lambda^{-2}
 \cdot
 \tfrac{\lambda^2R^3}{t}
 \cdot
 \tfrac{C}
 {t(\log t)^{\beta+\beta'}}
 {\bf 1}_{|y|<1}
 \\
 &\quad
 +
 \lambda^{-2}
 \cdot
 \tfrac{\lambda^2}{t}
 \cdot
 \tfrac{C}{t(\log t)^{\beta+\beta'}}
 |y|^{-2}
 {\bf 1}_{1<|y|<\frac{R}{2}}
 \\
 &\quad
 +
 \lambda^{-2}
 \cdot
 \tfrac{\lambda^2R^3}{t}
 \cdot
 \tfrac{C}{t(\log t)^{\beta+\beta'}}
 |y|^{-\frac{7}{2}}
 {\bf 1}_{1<|y|<\frac{R}{2}},
 \\
%%%%%%%%%%%%%%%%%%%%%%%%%%%%%%%%%%%%%%%%%%%%%%%%%%%%%%%%%%%%%%%%
 |h_2[\epsilon]|
 &<
 \lambda^{-2}
 \cdot
 \tfrac{\lambda^2}{t}
 \cdot
 \tfrac{C}{t(\log t)^{\beta'}}
 |y|^{-2}
 {\bf 1}_{|y|\sim\frac{R}{4}}
 \\
 &\quad
 +
 \lambda^{-2}
 \cdot
 \tfrac{\lambda^2R^3}{t}
 \cdot
 \tfrac{C}{t^2(\log t)^{\beta'}}
 |y|^{-\frac{7}{2}}
 {\bf 1}_{|y|\sim\frac{R}{4}},
 \end{align*}
%%%%%%%%%%%%%%%%%%%%%%%%%%%%%%%%%%%%%%%%%%%%%%%%%%%%%%%%%%%%%%%%
 \begin{align*}
 |h_3[\epsilon]|
 &<
 \lambda^{-2}
 \cdot
 \tfrac{C}{t(\log t)^{\beta'}}
 |y|^{-4}
 {\bf 1}_{|y|\sim\frac{R}{4}}
 \\
 &\quad
 +
 \lambda^{-2}
 \cdot
 \tfrac{C}{t(\log t)^{\beta'}}
 R^3
 |y|^{-\frac{11}{2}}
 {\bf 1}_{|y|\sim\frac{R}{4}}
 \\
 &<
 \lambda^{-2}
 \cdot
 \tfrac{CR^{-1}}{t(\log t)^{\beta'}}
 |y|^{-3}
 {\bf 1}_{|y|\sim\frac{R}{4}}
 \\
 &\quad
 +
 \lambda^{-2}
 \cdot
 \tfrac{CR^{-\frac{1}{4}}}{t(\log t)^{\beta'}}
 |y|^{-\frac{9}{4}}
 {\bf 1}_{|y|\sim\frac{R}{4}},
 \\
%%%%%%%%%%%%%%%%%%%%%%%%%%%%%%%%%%%%%%%%%%%%%%%%%%%%%%%%%%%%%%%%
 |h_4[\epsilon]|
 &<
 \lambda^{-2}
 \cdot
 \tfrac{CR^{-1}}{t(\log t)^{\beta'}}
 |y|^{-3}
 {\bf 1}_{|y|\sim\frac{R}{4}}
 \\
 &\quad
 +
 \lambda^{-2}
 \cdot
 \tfrac{CR^{-\frac{1}{4}}}{t(\log t)^{\beta'}}
 |y|^{-\frac{9}{4}}
 {\bf 1}_{|y|\sim\frac{R}{4}}.
 \end{align*}
 From (B1) and (A5),
 we easily see that
 \begin{itemize}
 \item 
 $\tfrac{\lambda^2R^3}{t}<t^{-\frac{3}{4}}$ for $t\in(t_0,\infty)$,
 
 \item
 $|y|{\bf 1}_{|y|<\frac{R}{2}}
 <R{\bf 1}_{|y|<\frac{R}{2}}<t^\frac{1}{32}{\bf 1}_{|y|<\frac{R}{2}}$.
 \end{itemize}
 Combining these estimates with the pointwise estimates for $h_j[\epsilon]$,
 we obtain the desired inequalities.
 The proof is completed.
 \end{proof}
%%%%%%%%%%%%%%%%%%%%%%%%%%%%%%%%%%%%%%%%%%%%%%%%%%%%%%%%%%%%%%%%

%%%%%%%%%%%%%%%%%%%%%%%%%%%%%%%%%%%%%%%%%%%%%%%%%%%%%%%%%%%%%%%%
 \begin{lem}
 \label{lemma_9.3}
 Let $n=6$, $\beta'>0$ and $t_0>e$.
 Assume {\rm (A6) - (A7)}, and suppose further that
 a function $w(x,t)$ satisfies the following pointwise bound for all
 $(x,t)\in\R^n\times(t_0,\infty)$.
 \begin{enumerate}[\rm({\rm D}1)]
 \item
 $|w(x,t)|
 <
 \begin{cases}
 \dis
 t^{-1}
 (\log t)^{-\beta'}
 & \text{for } |x|< \sqrt{t}, t\in(t_0,\infty)
 \\
 |x|^{-2}
 (\log|x|^2)^{-\beta'}
 & \text{for } |x|> \sqrt{t}, t\in(t_0,\infty)
 \end{cases}$
 \end{enumerate}
 Then,
 there exists a constant $C$
 such that
%%%%%%%%%%%%%%%%%%%%%%%%%%%%%%%%%%%%%%%%%%%%%%%%%%%%%%%%%%%%%%%%
 \begin{align*}
 |k_1[w]|
 &<
 \tfrac{C}{t^2(\log t)^{\beta+\beta'}}
 {\bf 1}_{|x|<\sqrt t}
 +
 |x|^{-4}
 (\log|x|^2)^{-(\beta+\beta')}
 {\bf 1}_{|x|>\sqrt t}
 \\
 &
 \text{for }
 (x,t)\in\R^n\times(t_0,\infty).
 \end{align*}
 \end{lem}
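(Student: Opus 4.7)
The plan is straightforward: since $n=6$, the nonlinearity satisfies $f'(u) = 2|u|$, so $k_1[w] = f'(\theta)w(1-\chi_1)$ obeys the trivial pointwise bound $|k_1[w]| \leq 2|\theta(x,t)|\cdot|w(x,t)|$ (using $|1-\chi_1|\leq 1$). The proof reduces to multiplying the pointwise estimates on $\theta$ provided by (A6)--(A7) with the pointwise estimate on $w$ provided by (D1), split according to the two regimes $|x|<\sqrt{t}$ and $|x|>\sqrt{t}$ that already match on both sides.

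First, I would treat the inner region $|x|<\sqrt{t}$: combining $|\theta(x,t)| \leq C_1 t^{-1}(\log t)^{-\beta}$ from (A6) with $|w(x,t)| \leq t^{-1}(\log t)^{-\beta'}$ from (D1) yields
\begin{align*}
|k_1[w](x,t)| \leq 2 C_1 \cdot t^{-2}(\log t)^{-(\beta+\beta')},
\end{align*}
which is exactly the first term in the claimed bound with $C = 2C_1$.

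Second, in the outer region $|x|>\sqrt{t}$, I would multiply $|\theta(x,t)| \leq C_1 |x|^{-2}(\log|x|^2)^{-\beta}$ from (A7) by $|w(x,t)| \leq |x|^{-2}(\log|x|^2)^{-\beta'}$ from (D1), obtaining
\begin{align*}
|k_1[w](x,t)| \leq 2C_1 \cdot |x|^{-4}(\log|x|^2)^{-(\beta+\beta')},
\end{align*}
which matches the second term in the claimed bound. Adjusting $C$ to absorb the factor $2C_1$ completes the proof.

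There is essentially no obstacle in this argument --- the lemma is a direct consequence of the multiplicativity of decay rates, and the two regimes in hypothesis (D1) and in assumptions (A6)--(A7) are chosen to align precisely with the two regimes of the conclusion. The only thing to check is that the cutoff $(1-\chi_1)$ does not introduce any additional factor, which is immediate since it is a smooth function bounded by one.
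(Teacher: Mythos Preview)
Your proof is correct and is exactly the straightforward computation the paper has in mind; the paper itself simply writes ``Since the proof is straightforward, we omit the details.'' The only minor remark is that the paper's stated bound omits the constant $C$ in front of the second (outer-region) term, but this is clearly a typographical slip and your version with $2C_1$ in front of both terms is what is actually being proved.
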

%%%%%%%%%%%%%%%%%%%%%%%%%%%%%%%%%%%%%%%%%%%%%%%%%%%%%%%%%%%%%%%%
 \begin{proof}
 Since the proof is straightforward, we omit the details.
 \end{proof}
%%%%%%%%%%%%%%%%%%%%%%%%%%%%%%%%%%%%%%%%%%%%%%%%%%%%%%%%%%%%%%%%
 Finally,
 we provide an estimate for the nonlinear term $N[\lambda,\epsilon,w]$.
 Recall the definition of $N[\lambda,\epsilon,w]$ given in \eqref{equation_6.4}:
 \begin{align*}
 N[\lambda,\epsilon,w]
 &=
 f
 (
 \lambda^{-2}
 {\sf Q}
 \chi_1
 +
 \tfrac{5{\sf b}}{4}
 T_1
 \chi_1
 +
 \theta
 \chi_1^c
 +
 \lambda^{-2}
 \epsilon
 \chi_\text{in}
 +
 w
 )
 \\
 \nonumber
 &\quad
 -
 f(
 \lambda^{-2}{\sf Q}
 \chi_1
 )
 -
 f'(
 \lambda^{-2}{\sf Q}
 \chi_1
 )
 (
 \tfrac{5{\sf b}}{4}
 T_1
 \chi_1
 +
 \theta
 \chi_1^c
 +
 \lambda^{-2}
 \epsilon
 \chi_\text{in}
 +
 w
 )
 \\
 &\quad
 -
 f(\theta\chi_1^c)
 -
 f'(\theta\chi_1^c)
 (
 \lambda^{-2}
 {\sf Q}
 \chi_1
 +
 \tfrac{5{\sf b}}{4}
 T_1
 \chi_1
 +
 \lambda^{-2}
 \epsilon
 \chi_\text{in}
 +
 w
 ),
 \end{align*}
 where $\chi_1^c=1-\chi_1$.
%%%%%%%%%%%%%%%%%%%%%%%%%%%%%%%%%%%%%%%%%%%%%%%%%%%%%%%%%%%%%%%%
 \begin{lem}
 \label{lemma_9.4}
 Let $n=6$, $\beta'>\beta>0$ and $t_0>e$.
 Assume {\rm (A1) - (A8)}, {\rm (B1) - (B3)} and {\rm (D1)}.
 Then,
 for any given $\kappa\in(0,1)$,
 there exist $\delta_1>0$, $\delta_2>0$ and $C>0$ such that
 \begin{align*}
 \left|
 N[\lambda,\epsilon,w]
 \right|
 &<
 \tfrac{1}{\lambda^2}
 \tfrac{C|y|^{-(2+\delta_2)}}{t^{1+\delta_1}(\log t)^\beta}
 {\bf 1}_{(\frac{\sqrt t}{\lambda})^{1-\kappa}<|y|<2(\frac{\sqrt t}{\lambda})^{1-\kappa}}
 \\
 &\quad
 +
 \tfrac{C}{t^2(\log t)^{2\beta}}
 {\bf 1}_{|x|<\sqrt t}
 +
 \tfrac{C}{|x|^4(\log|x|^2)^{2\beta}}
 {\bf 1}_{|x|>\sqrt t}
 \\
 &\text{for }
 (x,t)\in\R^n\times(t_0,\infty).
 \end{align*}
 All the constants $\delta_1$, $\delta_2$ and $C$ depend only on
 $\beta$, $\beta'$, $\kappa$, $C_1$ from {\rm (A1) - (A8)},
 and $C_2$ from {\rm (B1) - (B3)}.
 \end{lem}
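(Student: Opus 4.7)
The plan rests on the elementary pointwise inequality
\[
|f(a+b) - f(a) - f'(a)b| \leq b^2 \qquad \text{for all } a, b \in \R,
\]
which holds because $f(u) = |u|u$ has a globally $2$-Lipschitz derivative $f'(u) = 2|u|$; this substitutes for the unavailable second-order Taylor expansion of $f$ at $u=0$. Set $A = \lambda^{-\frac{n-2}{2}}{\sf Q}\chi_1$, $B = \theta\chi_1^c$, and $e = \tfrac{5{\sf b}}{4}T_1\chi_1 + \lambda^{-\frac{n-2}{2}}\epsilon\chi_{\text{in}} + w$, so that $u = A+B+e$ and
\[
N = f(A+B+e) - f(A) - f'(A)(B+e) - f(B) - f'(B)(A+e).
\]

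First I would split $\R^n \times (t_0,\infty)$ into three regions according to the value of $\chi_1$: the inner region $\mathcal{R}_{\rm I} = \{\chi_1 \equiv 1\}$ (where $B \equiv 0$), the outer region $\mathcal{R}_{\rm III} = \{\chi_1 \equiv 0\}$ (where $A \equiv 0$), and the transition annulus $\mathcal{R}_{\rm II}$, which lies entirely in $\{(\sqrt t/\lambda)^{1-\kappa} < |y| < 2(\sqrt t/\lambda)^{1-\kappa}\}$. On $\mathcal{R}_{\rm I}$ the formula collapses to $N = f(A+e) - f(A) - f'(A)e$, giving $|N| \leq e^2$; combined with (A2), (B2), (D1) and the boundedness of $T_1$, each summand of $e$ is $O(t^{-1}(\log t)^{-\beta})$, so $|N| \leq Ct^{-2}(\log t)^{-2\beta}$, matching the second piece of the target. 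On $\mathcal{R}_{\rm III}$ the formula becomes $N = f(B+w) - f(B) - f'(B)w$, giving $|N| \leq w^2$; by (D1) this is controlled by $Ct^{-2}(\log t)^{-2\beta'}$ for $|x| < \sqrt t$ and $C|x|^{-4}(\log|x|^2)^{-2\beta'}$ for $|x| > \sqrt t$, absorbed in the last two pieces of the target since $\beta' > \beta$.

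The heart of the argument is the transition annulus $\mathcal{R}_{\rm II}$, where both cut-offs are strictly between $0$ and $1$ and $N$ is not a single second-order remainder. Here $\chi_{\text{in}}$ vanishes (since $R\lambda \ll \lambda^\kappa(\sqrt t)^{1-\kappa}$ by (B1) and (A5)) and ${\sf Q}(y) \sim |y|^{-4}$ since $|y|$ is large, so $A$ is the smallest of the three. The plan is to expand $f(A+B+e)$ around $B+e$ rather than around $A$:
\[
f(A+B+e) = f(B+e) + f'(B+e)A + R_1, \qquad |R_1| \leq A^2,
\]
and then $f(B+e) = f(B) + f'(B)e + R_2$ with $|R_2| \leq e^2$ and $|f'(B+e) - f'(B)| \leq 2|e|$. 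Substituting back into the definition of $N$, the terms $f(B) + f'(B)(A+e)$ cancel exactly, leaving
\[
N = R_1 + R_2 + [f'(B+e) - f'(B)]A - f(A) - f'(A)(B+e),
\]
hence $|N| \leq C(e^2 + A^2 + A|B| + A|e|) \leq C(e^2 + A^2 + A|B|)$. The $e^2$ term again contributes to the $t^{-2}(\log t)^{-2\beta}$ piece. For the remaining two, the strong spatial decay $A \lesssim \lambda^{-2}|y|^{-4}$ in $\mathcal{R}_{\rm II}$ provides more $|y|$-decay than the target $\lambda^{-2}|y|^{-(2+\delta_2)} t^{-1-\delta_1}(\log t)^{-\beta}$ demands, and the slack is converted into the required powers of $t$ and $\lambda$ using (A5), namely $\lambda \leq e^{C_1(\log t)^{1-\beta}} = o(t^\epsilon)$ for every $\epsilon > 0$; this fixes $\delta_1, \delta_2 > 0$ sufficiently small.

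The only non-routine step is the expansion in $\mathcal{R}_{\rm II}$. The pointwise formula for $N$ mixes two linearizations (around $A$ and around $B$), and neither alone is a valid second-order remainder. Expanding around $B+e$ exploits the smallness of $A$ at $|y| \sim (\sqrt t/\lambda)^{1-\kappa}$ and makes the cancellation of $f(B) + f'(B)(A+e)$ transparent; once that cancellation is identified, the remaining estimates are bookkeeping using (A1)--(A8), (B1)--(B3), (D1), and the definition of $\mathcal{R}_{\rm II}$.
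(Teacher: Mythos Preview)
Your proposal is correct and essentially matches the paper's proof: both split according to the value of $\chi_1$, apply the quadratic remainder bound $|f(a+b)-f(a)-f'(a)b|\le b^2$ in the regions where one of $A,B$ vanishes, and in the transition annulus single out the smallest term $A$ to arrive at the same pointwise bound $|N|\lesssim A^2+e^2+A|B|+A|e|$, which is then converted to the stated form using (A5). The only cosmetic difference is that the paper reaches this bound in one step by writing $N=[f(u)-f(B)-f'(B)(u-B)]-f(A)-f'(A)(B+e)$ and bounding the bracket as a Taylor remainder around $B$, whereas you use a two-stage expansion (first around $B+e$, then $f(B+e)$ around $B$).
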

%%%%%%%%%%%%%%%%%%%%%%%%%%%%%%%%%%%%%%%%%%%%%%%%%%%%%%%%%%%%%%%%
 \begin{proof}
 Note that, for $n=6$, we have $f(u)=|u|u$.
 From the definition of $\chi_1$ (see below \eqref{equation_6.2}),
 it is clear that
 \begin{align*}
 \chi_1=
 \begin{cases}
 1 & \text{for } |y|<(\frac{\sqrt t}{\lambda})^{1-\kappa},
 \\
 0 & \text{for } |y|>2(\frac{\sqrt t}{\lambda})^{1-\kappa},
 \end{cases}
 \quad
 \chi_\text{in}=
 \begin{cases}
 1 & \text{for } |y|<\frac{R}{4},
 \\
 0 & \text{for } |y|>\frac{R}{2}.
 \end{cases}
 \end{align*}
 Using these definitions,
 the nonlinear term $N$ can be estimated as follows.
 \\
 (i) Region $|y|<1$, $t\in(t_0,\infty)$:
 \begin{align*}
 |N[\lambda,\epsilon,w]|
 &=
 |
 f
 (
 \lambda^{-2}
 {\sf Q}
 +
 \tfrac{5{\sf b} T_1}{4}
 +
 \lambda^{-2}
 \epsilon\chi_{\text{in}}
 +
 w
 )
 \\
 &\qquad
 -
 f(
 \lambda^{-2}
 {\sf Q}
 )
 -
 f(
 \lambda^{-2}
 {\sf Q}
 )
 (
 \tfrac{5{\sf b} T_1}{4}
 +
 \lambda^{-2}
 \epsilon\chi_{\text{in}}
 +
 w
 )
 |
 \\
 &<
 C
 (
 \tfrac{5{\sf b} T_1}{4}
 +
 \lambda^{-2}
 \epsilon\chi_{\text{in}}
 +
 w
 )^2
 \\
 &<
 \tfrac{C}{t^2(\log t)^{2\beta}}
 +
 \tfrac{CR^6}{t^2(\log t)^{2\beta'}}
 +
 \tfrac{C}{t^2(\log t)^{2\beta'}}
 \\
 &<
 \tfrac{C}{t^2(\log t)^{2\beta}},
 \quad \text{since } R<2\log\log t_0 \text{ and }\beta' > \beta.
 \end{align*}
% In the last line,
% we used (B1) and the assumption $\beta'>\beta$.
%%%%%%%%%%%%%%%%%%%%%%%%%%%%%%%%%%%%%%%%%%%%%%%%%%%%%%%%%%%%%%%%
 \\
 (ii) Region $1<|y|<\frac{R}{2}$, $t\in(t_0,\infty)$:
 \begin{align*}
 |N[\lambda,\epsilon,w]|
 &=
 |
 f
 (
 \lambda^{-2}
 {\sf Q}
 +
 \tfrac{5{\sf b} T_1}{4}
 +
 \lambda^{-2}
 \epsilon\chi_{\text{in}}
 +
 w
 )
 \\
 &\qquad
 -
 f(
 \lambda^{-2}
 {\sf Q}
 )
 -
 f'(
 \lambda^{-2}
 {\sf Q}
 )
 (
 \tfrac{5{\sf b} T_1}{4}
 +
 \lambda^{-2}
 \epsilon\chi_{\text{in}}
 +
 w
 )
 |
 \\
 &<
 C(
 \tfrac{5{\sf b} T_1}{4}
 +
 \lambda^{-2}
 \epsilon\chi_{\text{in}}
 +
 w
 )^2
 \\
 &<
 \tfrac{C}{t^2(\log t)^{2\beta}}
 +
 \tfrac{C|y|^{-4}}{t^2(\log t)^{2\beta'}}
 +
 \tfrac{CR^6|y|^{-7}}{t^2(\log t)^{2\beta'}}
 +
 \tfrac{C}{t^2(\log t)^{2\beta'}}
 \\
 &<
 \tfrac{C}{t^2(\log t)^{2\beta}},
 \quad \text{since } R<2\log\log t_0 \text{ and }\beta' > \beta.
 \end{align*}
 (iii) Region $\frac{R}{2}<|y|<(\frac{\sqrt t}{\lambda})^{1-\kappa}$, $t\in(t_0,\infty)$:
 \begin{align*}
 |N[\lambda,\epsilon,w]|
 &=
 |
 f(
 \lambda^{-2}
 {\sf Q}
 +
 \tfrac{5{\sf b} T_1}{4}
 +
 w
 )
 \\
 &\quad
 -
 f(
 \lambda^{-2}
 {\sf Q}
 )
 -
 f'(
 \lambda^{-2}
 {\sf Q}
 )
 (
 \tfrac{5{\sf b} T_1}{4}
 +
 w
 )
 |
 \\
 &<
 C(
 \tfrac{5{\sf b} T_1}{4}
 +
 w
 )^2
 \\
 &<
 \tfrac{C}{t^2(\log t)^{2\beta}}
 +
 \tfrac{C}{t^2(\log t)^{2\beta'}}
 \\
 &<
 \tfrac{C}{t^2(\log t)^{2\beta}},
 \quad \text{since } \beta' > \beta.
 \end{align*}
%%%%%%%%%%%%%%%%%%%%%%%%%%%%%%%%%%%%%%%%%%%%%%%%%%%%%%%%%%%%%%%%
 (iv) Region $(\frac{\sqrt t}{\lambda})^{1-\kappa}<|y|<2(\frac{\sqrt t}{\lambda})^{1-\kappa}$, $t\in(t_0,\infty)$:
 \begin{align*}
 &
 |N[\lambda,\epsilon,w]|
 \\
 &=
 |
 f
 (
 \lambda^{-2}
 {\sf Q}
 \chi_1
 +
 \tfrac{5{\sf b}}{4}
 T_1
 \chi_1
 +
 \theta
 \chi_1^c
 +
 w
 )
 \\
 \nonumber
 &\quad
 -
 f(
 \lambda^{-2}{\sf Q}
 \chi_1
 \bigr)
 -
 f'\bigl(
 \lambda^{-2}{\sf Q}
 \chi_1
 )
 (
 \tfrac{5{\sf b}}{4}
 T_1
 \chi_1
 +
 \theta
 \chi_1^c
 +
 w
 )
 \\
 &\quad
 -
 f(\theta\chi_1^c)
 -
 f'(\theta\chi_1^c)
 (
 \lambda^{-2}
 {\sf Q}
 \chi_1
 +
 \tfrac{5{\sf b}}{4}
 T_1
 \chi_1
 +
 w
 )
 |
 \\
 &<
 |
 f(
 \lambda^{-2}{\sf Q}
 \chi_1
 )
 |
 +
 |
 f'(
 \lambda^{-2}{\sf Q}
 \chi_1
 )
 (
 \tfrac{5{\sf b}}{4}
 T_1
 \chi_1
 +
 \theta
 \chi_1^c
 +
 w
 )
 |
 \\
 &\quad
 +
 C(
 \lambda^{-2}
 {\sf Q}
 \chi_1
 +
 \tfrac{5{\sf b}}{4}
 T_1
 \chi_1
 +
 w
 )^2
 \\
 &<
 C\lambda^{-4}
 |y|^{-8}
 +
 \lambda^{-2}
 \tfrac{C|y|^{-4}}{t(\log t)^\beta}
 +
 \lambda^{-2}
 \tfrac{C|y|^{-4}}{t(\log t)^\beta}
 \\
 &\quad
 +
 \lambda^{-2}
 \tfrac{C|y|^{-4}}{t(\log t)^{\beta'}}
 +
 C
 \lambda^{-4}
 |y|^{-8}
 +
 \tfrac{C}{t^2(\log t)^{2\beta}}
 +
 \tfrac{C}{t^2(\log t)^{2\beta'}}
\\
 &<
 C\lambda^{-4}
 |y|^{-8}
 +
 \lambda^{-2}
 \tfrac{C|y|^{-4}}{t(\log t)^\beta}
 +
 \tfrac{C}{t^2(\log t)^{2\beta}}.
 \end{align*}
%%%%%%%%%%%%%%%%%%%%%%%%%%%%%%%%%%%%%%%%%%%%%%%%%%%%%%%%%%%%%%%%
 (v) Region $|y|>2(\frac{\sqrt t}{\lambda})^{1-\kappa}$ and $|x|<\sqrt t$, $t\in(t_0,\infty)$:
 \begin{align*}
 |N[\lambda,\epsilon,w]|
 &=
 |
 f
 \bigl(
 \theta
 +
 w
 \bigr)
 -
 f\bigl(\theta\bigr)
 -
 f'\bigl(\theta\bigr)
 w
 |
 \\
 &<
 C
 w^2
 <
 \tfrac{C}{t^2(\log t)^{2\beta'}}
 {\bf 1}_{|x|<\sqrt t}
 \\
 &<
 \tfrac{C}{t^2(\log t)^{2\beta}}
 {\bf 1}_{|x|<\sqrt t},
 \quad
 \text{since }
 \beta'>\beta.
 \end{align*}
%%%%%%%%%%%%%%%%%%%%%%%%%%%%%%%%%%%%%%%%%%%%%%%%%%%%%%%%%%%%%%%%
 (vi) Region $|x|>\sqrt t$, $t\in(t_0,\infty)$:
 \begin{align*}
 |N[\lambda,\epsilon,w]|
 &=
 |
 f(
 \theta
 +
 w
 )
 -
 f\bigl(\theta\bigr)
 -
 f'\bigl(\theta\bigr)
 w
 |
 \\
 &<
 C
 w^2
 <
 C
 |x|^{-4}
 (\log|x|^2)^{-2\beta'}
 {\bf 1}_{|x|>\sqrt t}. 
 \end{align*}
 Therefore,
 the pointwise estimates for $N[\lambda,\epsilon,w]$
 in all regions have been established.
 This completes the proof.
 \end{proof}
%%%%%%%%%%%%%%%%%%%%%%%%%%%%%%%%%%%%%%%%%%%%%%%%%%%%%%%%%%%%%%%%

 \subsection{Proof for \eqref{e_5.22} - \eqref{e_5.25}}
 \label{section_9.2}
%%%%%%%%%%%%%%%%%%%%%%%%%%%%%%%%%%%%%%%%%%%%%%%%%%%%%%%%%%%%%%%%
 In this subsection,
 we provide the proof for \eqref{e_5.22} - \eqref{e_5.25}.
 For convenience,
 we rewrite them as follows.
 For even $j\in\N$,
 \begin{align}
 \label{EQ_9.3}
 \log[\tfrac{\lambda(t_j^-)}{\lambda(t_1^-)}]
 &<
 -
 q_1(\log t_j^-)^{1-\beta}
 +
 3q_1(\log t_{j-1}^+)^{1-\beta}
 +
 \int_{t_1^+}^{t_j^-}
 |D(s)|ds,
 \\
 \label{EQ_9.4}
 \log[\tfrac{\lambda(t_j^-)}{\lambda(t_1^-)}]
 &>
 -
 q_1
 (\log t_j^-)^{1-\beta}
 -
 q_1(\log t_{j-1}^+)^{1-\beta}
 -
 \int_{t_1^+}^{t_j^-}
 |D(s)|ds.
 \end{align}
 For odd $j\in\N$,
%%%%%%%%%%%%%%%%%%%%%%%%%%%%%%%%%%%%%%%%%%%%%%%%%%%%%%%%%%%%%%%%
 \begin{align}
 \label{EQ_9.5}
 \log[\tfrac{\lambda(t_j^-)}{\lambda(t_1^-)}]
 &<
 q_1(\log t_j^-)^{1-\beta}
 +
 q_1(\log t_{j-1}^+)^{1-\beta}
 +
 \int_{t_1^+}^{t_j^-}
 |D(s)|ds,
 \\
 \label{EQ_9.6}
 \log[\tfrac{\lambda(t_j^-)}{\lambda(t_1^-)}]
 &>
 q_1(\log t_j^-)^{1-\beta}
 -
 3q_1
 (\log t_{j-1}^+)^{1-\beta}
 -
 \int_{t_1^+}^{t_j^-}
 |D(s)|ds.
 \end{align}
%%%%%%%%%%%%%%%%%%%%%%%%%%%%%%%%%%%%%%%%%%%%%%%%%%%%%%%%%%%%%%%%
 \begin{proof}
 The proofs of \eqref{EQ_9.3} - \eqref{EQ_9.6} are carried out by induction.
 Since the arguments are similar,
 we present only the proof for \eqref{EQ_9.3}. 
 Recall the definition of $D(t)$:
 \begin{align*}
 D(t)
 =
 \begin{cases}
 D_j(t)
 &
 \text{if }
 t\in(t_j^+,t_{j+1}^-),
 \\[2mm]
 0
 &
 \text{if }
 t\in\bigcup_{j\in\N}(t_j^-,t_j^+).
 \end{cases}
 \end{align*}
 Due to \eqref{e_5.21} and \eqref{e_5.18},
 for \( j = 2 \),
 we have
 \begin{align*}
 &
 \log[\tfrac{\lambda(t_2^-)}{\lambda(t_1^-)}]
 =
 \log[\tfrac{\lambda(t_2^-)}{\lambda(t_1^+)}]
 +
 \log[\tfrac{\lambda(t_1^+)}{\lambda(t_1^-)}]
 \\
 &<
 -
 q_1(\log t_2^-)^{1-\beta}
 +
 q_1(\log t_1^+)^{1-\beta}
 +
 \int_{t_1^+}^{t_2^-}
 D_1(s)
 ds
 \\
 &\quad
 +
 2q_1
 (\log t_1^+)^{1-\beta}
 -
 2q_1
 (\log t_1^-)^{1-\beta}
 \\
 &=
 -
 q_1(\log t_2^-)^{1-\beta}
 +
 3q_1(\log t_1^+)^{1-\beta}
 -
 2q_1(\log t_1^-)^{1-\beta}
 \\
 &\quad
 +
 \int_{t_1^+}^{t_2^-}
 D_1(s)
 ds.
 \end{align*}
 Thus,
 \eqref{EQ_9.3} holds for \(j = 2\).
%%%%%%%%%%%%%%%%%%%%%%%%%%%%%%%%%%%%%%%%%%%%%%%%%%%%%%%%%%%%%%%%
 Suppose that \eqref{EQ_9.3} holds for a certain even $j=J\in\N$.
 Then,
 we have
 \begin{align*}
 &
 \log[\tfrac{\lambda(t_{J+2}^-)}{\lambda(t_1^-)}]
 \\
 &=
 \log[\tfrac{\lambda(t_{J+2}^-)}{\lambda(t_{J+1}^+)}]
 +
 \log[\tfrac{\lambda(t_{J+1}^+)}{\lambda(t_{J+1}^-)}]
 +
 \log[\tfrac{\lambda(t_{J+1}^-)}{\lambda(t_J^+)}]
 \\
 &\quad
 +
 \log[\tfrac{\lambda(t_J^+)}{\lambda(t_J^-)}]
 +
 \log[\tfrac{\lambda(t_J^-)}{\lambda(t_1^-)}]
 \\
 &<
 -
 q_1(\log t_{J+2}^-)^{1-\beta}
 +
 q_1(\log t_{J+1}^+)^{1-\beta}
 +
 \int_{t_{J+1}^+}^{t_{J+2}^-}
 D_{J+1}(s)
 ds
 \\
 &\quad
 +
 2q_1
 (\log t_{J+1}^+)^{1-\beta}
 -
 2q_1
 (\log t_{J+1}^-)^{1-\beta}
 \\
 &\quad
 +
 q_1(\log t_{J+1}^-)^{1-\beta}
 -
 q_1(\log t_{J}^+)^{1-\beta}
 +
 \int_{t_J^+}^{t_{J+1}^-}
 D_J(s)
 ds
 \\
 &\quad
 +
 2q_1
 (\log t_J^+)^{1-\beta}
 -
 2q_1
 (\log t_J^-)^{1-\beta}
 \\
 &\quad
 -
 q_1
 (\log t_J^-)^{1-\beta}
 +
 3q_1
 (\log t_{J-1}^+)^{1-\beta}
 +
 \int_{t_1^+}^{t_{J}^-}
 |D(s)|
 ds
 \\
 &<
 -
 q_1(\log t_{J+2}^-)^{1-\beta}
 +
 3q_1(\log t_{J+1}^+)^{1-\beta}
 \\
 &\quad
 -
 q_1
 (\log t_{J+1}^-)^{1-\beta}
 +
 q_1(\log t_{J}^+)^{1-\beta}
 \\
 &\quad
 -
 3q_1
 (\log t_J^-)^{1-\beta}
 +
 3q_1(\log t_{J-1}^+)^{1-\beta}
 +
 \int_{t_1^+}^{t_{J+2}^-}
 |D(s)|
 ds
 \\
 &<
 -
 q_1(\log t_{J+2}^-)^{1-\beta}
 +
 3q_1(\log t_{J+1}^+)^{1-\beta}
 +
 \int_{t_1^+}^{t_{J+2}^-}
 |D(s)|
 ds.
 \end{align*}
 This completes the inductive step, and therefore the proof of \eqref{EQ_9.3}.
 \end{proof}

 \newpage
 \subsection{List of Notations}
%%%%%%%%%%%%%%%%%%%%%%%%%%%%%%%%%%%%%%%%%%%%%%%%%%%%%%%%%%%%%%%%
 We here collect notations for convenience.
 \begin{table}[h]
 \begin{tabular}{llcccc}
 \hline
 $f(u)$
 & nonlinear term defined by $f(u)=|u|^{p-1}u$
 \
 ($p=\frac{n+2}{n-2}$, $n=6$)
 \\
 $\nabla_x$, $\nabla_y$
 &
 standard nabla in each variable
 \\
 $\Delta_x$, $\Delta_y$
 &
 standard laplacian in each variable
 \\
 $\Lambda_y$
 & linear operator related to the scaling
 $u(x)\mapsto\lambda^\frac{2}{p-1}u(\lambda x)$
 \\
 &
 defined by $\Lambda_y=\frac{n-2}{2}+y\cdot\nabla_y$
 \
 (below \eqref{e_5.2} p. \pageref{e_5.2})
 \\
 $H_y$
 & linearized operator around ${\sf Q}(y)$
 \\
 &
 define by $H_y=\Delta_y+f'({\sf Q}(y))$
 \
 (below \eqref{e_5.2} p. \pageref{e_5.2})
%%%%%%%%%%%%%%%%%%%%%%%%%%%%%%%%%%%%%%%%%%%%%%%%%%%%%%%%%%%%%%%%
 \\ \hline
 ${\sf Q}(y)$
 & positive solution of $-\Delta_y{\sf Q}={\sf Q}^p$
 \\
 $T_1(y)$
 & solution of $H_yT_1=-\Lambda_y{\sf Q}$
 \
 (\eqref{e_5.3} p. \pageref{e_5.3})
 \\
 $\theta(x,t)$
 & specific solution of
 $\pa_t\theta=\Delta_x\theta$ 
 \
 (see Section \ref{section_4})
 \\
 $\lambda(t)$ &
 scaling function depending on $t$
 \\
 ${\sf b}(t)$
 &
 value of $\theta(x,t)$ at the origin:
 \
 (beginning of Section \ref{section_6})
 \\ \hline
 \end{tabular}
%%%%%%%%%%%%%%%%%%%%%%%%%%%%%%%%%%%%%%%%%%%%%%%%%%%%%%%%%%%%%%%%
 \begin{tabular}{llcccc}
 $\chi$ &
 standard cut off function defined by
 $\chi(\zeta)=\begin{cases}
 1 &\text{if } \zeta<1 \\
 0 &\text{if } \zeta>2
 \end{cases}$
 \\
 $\chi_1$ &
 cut off function defined by
 $\chi_1=\chi(\frac{|x|}{\lambda^\kappa(\sqrt t)^{1-\kappa}})$,
 where $\kappa\in(0,\frac{1}{2})$
 \\
 &
 is arbitrarily chosen
 \
 (see below \ref{equation_6.2} p. \pageref{equation_6.2})
 \\
 $\chi_1^c$
 &
 cut off function defined by
 $\chi_1^c
 =
 1-\chi_1$
 (beginning of Section \ref{section_6.1})
 \\
 $\chi_\text{in}$ &
 cut off function defined by
 $\chi_\text{in}=\chi(\frac{4|x|}{R\lambda})$
 \
 (below \eqref{equation_6.5} p. \pageref{equation_6.5})
 \\ \hline
 \end{tabular}
%%%%%%%%%%%%%%%%%%%%%%%%%%%%%%%%%%%%%%%%%%%%%%%%%%%%%%%%%%%%%%%%
 \begin{tabular}{llcccc}
 $t_I$
 & initial time of our equation: $u_t=\Delta u+|u|^{p-1}u$
 \\
 $R$
 & large positive constant determining the size of the inner region
 \\
 &
 taken as $R=\log|\log t_I|$
 \\
 $\beta$
 &
 parameter describing the decay of the initial data:
 $\beta\in(\frac{1}{2},1)$
 \\
 &
 (statement of Theorems \ref{theorem_1} - Theorems \ref{theorem_2} and Section \ref{section_4})
 \\
 $\beta'$
 &
 parameter for defining a subset in the fixed point argument:
 \\
 &
 $\beta'\in(1,\beta+1)$
 \\ \hline
 \end{tabular}
 \\
 \begin{tabular}{llcccc}
 $B(x_0,R)=\{x\in\R^n;|x-x_0|<R\}$
 \\
 $B_R=\{x\in\R^n;|x|<R\}$
 \\
 $B_{R\setminus r}=\{x\in\R^n;r<|x|<R\}$
 \\
 $\pa B_{R\setminus r}=\{x\in\R^n;|x|=r\}\cup\{x\in\R^n;|x|=R\}$
 \\ \hline
 \end{tabular}
 \end{table}
%%%%%%%%%%%%%%%%%%%%%%%%%%%%%%%%%%%%%%%%%%%%%%%%%%%%%%%%%%%%%%%%

%%%%%%%%%%%%%%%%%%%%%%%%%%%%%%%%%%%%%%%%%%%%%%%%%%%%%%%%%%%%%%%%
\section*{Acknowledgement}
%The author would like to thank the referee for his/her careful reading of this manuscript.
The author is partly supported by
Grant-in-Aid for Young Scientists (B) No. 26800065.
%%%%%%%%%%%%%%%%%%%%%%%%%%%%%%%%%%%%%%%%%%%%%%%%%%%%%%%%%%%%%%%%

%%%%%%%%%%%%%%%%%%%%%%%%%%%%%%%%%%%%%%%%%%%%%%%%%%%%%%%%%%%%%%%%

%%%%%%%%%%%%%%%%%%%%%%%%%%%%%%%%%%%%%%%%%%%%%%%%%%%%%%%%%%%%%%%%
\end{document}